\theoremstyle{plain}
\newtheorem{thm}{Theorem}[section]
\newtheorem{thmintro}{Theorem}
\newtheorem{lem}[thm]{Lemma}
\newtheorem{question}[thm]{Question}
\newtheorem{cor}[thm]{Corollary}
\newtheorem{prop}[thm]{Proposition}
\theoremstyle{definition}
\newtheorem{defn}[thm]{Definition}
\newtheorem{rmk}[thm]{Remark}
\newtheorem{rmks}[thm]{Remarks}
\newtheorem{Step}{Step}
\newtheorem{example}[thm]{Example}
\numberwithin{equation}{section}
\newcommand{\alg}{{\mathrm{alg}}}
\newcommand{\cl}{{\mathrm{cl}}}
\newcommand{\Gm}{\mathbf{G}_\mathrm{m}}
\newcommand{\isoto}{\myxrightarrow{\,\sim\,}}
\def\myrightarrow{{\setbox\z@\hbox{$\rightarrow$}\dimen0\ht\z@\multiply\dimen0 6\divide\dimen0 10\ht\z@\dimen0\box\z@}}
\def\myrightarrowfill@{\arrowfill@\relbar\relbar\myrightarrow}
\newcommand{\myxrightarrow}[2][]{\ext@arrow 0359\myrightarrowfill@{#1}{#2}}
\def\myleftarrow{{\setbox\z@\hbox{$\leftarrow$}\dimen0\ht\z@\multiply\dimen0 6\divide\dimen0 10\ht\z@\dimen0\box\z@}}
\def\myleftarrowfill@{\arrowfill@\myleftarrow\relbar\relbar}
\newcommand{\myxleftarrow}[2][]{\ext@arrow 3095\myleftarrowfill@{#1}{#2}}
\newcommand{\kP}{{\mathfrak P}}
\newcommand{\kQ}{{\mathfrak Q}}
\newcommand{\kR}{{\mathfrak R}}
\newcommand{\kS}{{\mathfrak S}}
\newcommand{\kU}{{\mathfrak U}}
\newcommand{\kX}{{\mathfrak X}}
\newcommand{\kY}{{\mathfrak Y}}
\newcommand{\kZ}{{\mathfrak Z}}
\newcommand{\sC}{{\mathscr C}}
\newcommand{\sH}{{\mathscr H}}
\newcommand{\sL}{{\mathscr L}}
\newcommand{\sE}{{\mathscr E}}
\newcommand{\sF}{{\mathscr F}}
\newcommand{\sM}{{\mathscr M}}
\newcommand{\sO}{{\mathscr O}}
\newcommand{\sP}{{\mathscr P}}
\newcommand{\sS}{{\mathscr S}}
\newcommand{\sU}{{\mathscr U}}
\newcommand{\A}{{\mathbf A}}
\renewcommand{\C}{{\mathbf C}}
\newcommand{\N}{{\mathbf N}}
\renewcommand{\P}{{\mathbf P}}
\newcommand{\R}{{\mathbf R}}
\newcommand{\Z}{{\mathbf Z}}
\newcommand{\CH}{\mathrm{CH}}
\newcommand{\Gal}{\mathrm{Gal}}
\newcommand{\Id}{\mathrm{Id}}
\newcommand{\BM}{\mathrm{BM}}
\newcommand{\Pic}{\mathrm{Pic}}
\newcommand{\an}{\mathrm{an}}
\newcommand{\GL}{\mathrm{GL}}
\newcommand{\Spec}{\mathrm{Spec}}
\newcommand{\Sper}{\mathrm{Sper}}
\let\phivar\phi
\renewcommand{\phi}{\varphi}
\renewcommand{\emptyset}{\varnothing}
\newcommand{\surj}{\twoheadrightarrow}
\newcommand{\inj}{\hookrightarrow}
\newcommand{\et}{\text{ét}}
\newcommand{\bS}{{\mathbf S}}
\newcommand{\Frac}{\mathrm{Frac}}
\newcommand{\ci}{\sC^\infty}
\newcommand{\Bbsecondezero}{B_b''^{\mkern1.5mu0}}
\date{July 24th, 2019; revised on December 14th, 2020}
\title{The tight approximation property}
 \author{Olivier Benoist}
\address{D\'epartement de math\'ematiques et applications, \'Ecole normale sup\'erieure, 45~rue d'Ulm, 75230 Paris Cedex 05, France}
 \email{olivier.benoist@ens.fr}
 \author{Olivier Wittenberg}
\address{Institut Galil\'ee, Universit\'e Sorbonne Paris Nord, 99~avenue Jean-Baptiste Cl\'ement, 93430 Villetaneuse, France}
\email{wittenberg@math.univ-paris13.fr}
\begin{document}
\begin{abstract}
This article introduces and studies the tight approximation property, a property of algebraic
varieties defined over the function field of a complex or real curve
that refines the weak
approximation property (and the known cohomological obstructions to it)
by incorporating an approximation condition
in the Euclidean topology.
We prove that the tight approximation property is a stable birational invariant,
is compatible with fibrations,
and satisfies descent under torsors of linear algebraic groups.
Its validity for a number of rationally connected varieties follows.
Some concrete consequences are:
smooth loops in the real locus of a
smooth compactification of a real linear algebraic group, or in a smooth cubic hypersurface
of dimension~$\geq 2$, can be approximated
by rational algebraic curves;
homogeneous spaces of linear algebraic groups over the function field of a real curve
satisfy weak approximation.
\end{abstract}

\maketitle

\section{Introduction}

One of the basic results of the theory of rationally connected varieties
states that on any smooth and proper variety over~$\C$
which is rationally connected---that is, on which two general points can be joined
by a rational curve---any finite set of points
can in fact be joined by a single rational curve
(see \cite[Chapter~IV, Theorem~3.9]{Kollarbook}).

Questions on the existence of algebraic curves
on algebraic varieties have been at the core of further developments of the theory.

Graber, Harris and Starr~\cite{ghs}
have shown that a dominant map $f:\kX\to B$ between smooth proper varieties over~$\C$, where $B$ is a connected curve, admits a section if its geometric generic fibre is rationally connected.
Equivalently, the generic fibre~$X$ of~$f$ admits a rational point over the function field $F=\C(B)$.
Under the same hypotheses, Hassett and Tschinkel~\cite{HT} have proved that~$f$ admits sections with any given
prescribed jet, of any finite order, along any finite subset of $B(\C)$ over which~$f$ is smooth
(see also \cite{Hassett}).
The case of a product fibration
(\emph{i.e.}\ $\kX$ is the product of~$B$ with a variety over~$\C$, and~$f$ is the projection)
and jet of order~$0$
recovers the result mentioned at the beginning of the introduction.
A more general property, which takes into account the singular fibres of~$f$ as well, is expected to hold: the
\emph{weak approximation property} for~$X$, that is,
the density of
$X(F)$ diagonally embedded in $\prod_{b \in B(\C)} X(F_b)$,
where~$F_b$ denotes the completion of~$F$ at $b \in B(\C)$, with respect to
the product of the $b$\nobreakdash-adic topologies.
Weak approximation over $F=\C(B)$ is easily seen to be a stable birational invariant, to be compatible with
fibrations
(in the sense that whenever $g:X\to X'$ is a dominant morphism between smooth varieties
over~$F$,
weak approximation holds for~$X$ as soon as it holds for~$X'$ and for the fibres of~$g$ above
the $F$\nobreakdash-points of a dense open subset of~$X'$; see \cite{CTG}),
and it has been shown to hold for smooth proper models of homogeneous spaces of linear groups
(\emph{loc.\ cit.})\ and for smooth cubic hypersurfaces of dimension~$\geq 2$ (see~\cite{tiancubic}).

A related line of investigation concerns the integral Hodge conjecture for $1$\nobreakdash-cycles:
for any smooth and proper rationally connected variety~$X$ over~$\C$,
the integral homology group $H_2(X(\C),\Z)$ is expected to be spanned by classes of rational curves.
This is known to hold when $\dim(X)\leq 3$
(see \cite{voisinthreefolds}, \cite{voisinsomeaspects}, \cite{TianZong}).

Analogous questions have been formulated for varieties defined over the field~$\R$ of
real numbers.  Given a finite set of closed
points on a smooth, proper, rationally connected real algebraic variety
(by which we mean a variety defined over~$\R$ whose underlying complex algebraic
variety is rationally connected), there is an obvious obstruction to the existence of a single
rational curve passing smoothly through them: the real points in the set
must lie in the same connected component of the real
locus of the variety.  Under this hypothesis, and assuming in addition that the real locus
in question is non-empty,
Kollár  \cite{Kollocal, Kolfertile} has shown that such a rational curve exists.
Similarly, given a dominant map $f:\kX\to B$ between smooth proper varieties over~$\R$ whose target $B$ is a connected curve and whose geometric generic fibre is rationally connected, there are topological obstructions
to the existence of a section of~$f$, resp.\ to weak approximation for the generic fibre~$X$ over
the function field $F=\R(B)$. Namely,
the map $f(\R):\kX(\R)\to B(\R)$
must admit a $\ci$ section, resp.\ a~$\ci$ section with a prescribed jet.
These obstructions were studied, at first under a slightly different guise based on a reciprocity law
and on unramified cohomology in analogy with the Brauer--Manin obstruction in number theory,
by Colliot-Thélène~\cite{CTgroupes}, Scheiderer~\cite{Scheiderer}, Ducros~\cite{Ducros},
thus leading to the natural hope that a section of~$f$ exists, with a prescribed jet of finite
order along a finite set of closed points of~$B$, as soon as a~$\ci$ section with this prescribed jet exists.
This conjecture, which in the case of a product fibration with non-empty real locus and jet of order~$0$
amounts to the theorem
of Kollár recalled above,
is still very much open.
It has been verified when~$X$ is a smooth proper model of a
homogeneous space of a linear algebraic group with connected geometric stabilizer
(in \cite{CTgroupes, ducrosclassiques, Scheiderer}) and when~$X$ is a conic bundle surface over~$\P^1_F$ (in~\cite{Ducros}) or more generally a variety fibred
into Severi--Brauer varieties over~$\P^1_F$ (in~\cite{DucrosCRAS}).
More recently, Pál and Szabó~\cite{PalSzabo} proved its compatibility with fibrations
whose base is rational over~$F$, thus extending Ducros' results about conic bundle surfaces and fibrations into Severi--Brauer varieties.

In a different but related direction,
the homology group $H_1(X(\R),\Z/2\Z)$ is expected to be spanned by classes of algebraic curves,
perhaps of rational curves,
for any smooth and proper rationally connected variety~$X$ over~$\R$
(see \cite{BWI, BWII}).  It makes sense to ask the following much stronger  $\ci$ approximation question
(in the sense of the
$\ci$ compact-open topology \cite[Chapter~2, \textsection1]{Hirsch}):

\begin{question}
\label{q:ciapprox}
Let~$B$ be a smooth projective connected curve over $\R$ and~$X$ be a smooth, proper rationally connected variety over $\R$.  Let $\varepsilon: B(\R)\to X(\R)$ be a~$\ci$ map.
Are there morphisms of algebraic varieties $B \to X$ which induce~$\ci$ maps $B(\R)\to X(\R)$ arbitrarily close
to~$\varepsilon$ in the $\ci$ compact-open topology?
\end{question}

Building on the Stone--Weierstrass theorem, Bochnak and Kucharz  \cite[\S 2]{BKrat} provide a positive answer for $X=\P^n_\R$ and more generally for all varieties that are rational over~$\R$.
To this day, the answer to Question~\ref{q:ciapprox} is not known for a single~$X$
that possesses a real point and that is not rational over~$\R$,
despite renewed interest in the problem (see \cite{kollarmangolte}).
One of our goals in the present article is to remedy this situation, by establishing
the following theorem:

\begin{thmintro}
\label{intro:thmA}
Question~\ref{q:ciapprox} admits a positive answer if~$X$ is birationally equivalent
to a variety belonging to any of the following families:
\begin{enumerate}
\item smooth cubic hypersurfaces in $\P^n_\R$ for $n \geq 3$;
\item smooth intersections of two quadrics in $\P^n_\R$ for $n \geq 4$;
\item homogeneous spaces of linear algebraic groups over~$\R$.
\end{enumerate}
\end{thmintro}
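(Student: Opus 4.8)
The plan is to deduce Theorem~\ref{intro:thmA} from the tight approximation property over the function field $F = \R(B)$. The first step is a translation. Let $X$ be a smooth proper rationally connected variety over $\R$. Since $X$ is proper, the valuative criterion of properness identifies the set of morphisms of $\R$-varieties $B \to X$ with $X_F(F)$, where $X_F := X \times_\R F$; moreover a $\ci$ map $\varepsilon \colon B(\R) \to X(\R)$ is exactly a $\ci$ section of the constant (hence everywhere smooth) fibration $X \times B \to B$, the $\ci$ compact-open topology on such sections being the Euclidean topology that enters the definition of the tight approximation property. So Question~\ref{q:ciapprox} has a positive answer for $X$ as soon as the tight approximation property holds for $X_F$ over $F$. (When $B(\R) = \emptyset$ this reduces to weak approximation over $F$, known in all the cases below; the content lies at the real places of $F$.) As the tight approximation property is a stable birational invariant, and $X$ is, by hypothesis, birationally equivalent to a member of one of the three families, it suffices to establish the property, over $F = \R(B)$, for a smooth proper model of each such member after base change from $\R$ to $F$ --- equivalently (base change preserving each class, and the members of the first two families being already smooth and proper), for smooth cubic hypersurfaces in $\P^n_F$ with $n \geq 3$, for smooth intersections of two quadrics in $\P^n_F$ with $n \geq 4$, and for homogeneous spaces of linear algebraic groups over $F$.

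The second step establishes the tight approximation property for these classes by the d\'evissage afforded by its structural properties --- stability under stable birational equivalence, compatibility with fibrations, and descent along torsors under linear algebraic groups --- starting from the case of $F$-rational varieties, where, over $\R$ and for the constant fibration, this is the Stone--Weierstrass argument of Bochnak--Kucharz \cite{BKrat}, here carried out over $F$, together with a handful of further explicit cases (quadrics and conics, algebraic tori, Severi--Brauer varieties). For a smooth cubic hypersurface $X \subset \P^n_F$ with $n \geq 3$, one reduces $X$, via blowups and the geometry of lines on $X$, to a conic bundle over a rational base, exactly as in the proof of the corresponding weak approximation statement \cite{tiancubic}; compatibility with fibrations then concludes. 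For a smooth intersection of two quadrics $X \subset \P^n_F$ with $n \geq 4$, one uses the geometry of the pencil of quadrics: for $n = 4$, $X$ is a del Pezzo surface of degree $4$, birational to a conic bundle surface over $\P^1_F$ (cf.\ \cite{Ducros}); for $n \geq 5$, a birational model of $X$ is $F$-rational as soon as $X$ has an $F$-point. For a homogeneous space $G/H$ of a linear algebraic group over $F$, one follows the structure of the proof of weak approximation for such spaces (known when the geometric stabilizer is connected \cite{CTgroupes}): successive reductions --- to connected stabilizer, by descent along the finite torsor $G/H^0 \to G/H$; to reductive groups, via the Levi decomposition; and, through $z$-extensions and flasque resolutions of tori --- bring one to rational varieties, to algebraic tori over $F = \R(B)$, and to semisimple simply connected groups over $F$, each of which is treated directly.

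The main obstacle is precisely the passage from \emph{weak} to \emph{tight} in the fibration and descent steps: one must propagate an approximation property that simultaneously controls the $b$-adic data at all closed points $b$ of $B$ and the Euclidean topology of the real loci, through a morphism with degenerate fibres (a conic or quadric bundle) and through a torsor under a linear algebraic group. Over the completions $F_b$ these steps rely on the implicit function theorem and on Diophantine facts about $H^1$; in the tight setting one must in addition approximate and lift $\ci$ sections over $B(\R)$ across degenerations of the fibres and across the real points of torsors, and understand $H^1$ of linear algebraic groups over $F$ together with its real-topological refinement --- this is where the genuinely new work lies, and the structural properties quoted above are designed precisely to make it go through. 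By contrast, the reduction of each of the three families to rational varieties, conics and tori is classical (the geometry of lines on cubics and of pencils of quadrics, the Levi decomposition, flasque resolutions of tori), and the base cases ultimately rest on the Stone--Weierstrass theorem.
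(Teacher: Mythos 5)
Your overall architecture is the same as the paper's: translate Question~\ref{q:ciapprox} into the tight approximation property for $X_F$ over $F=\R(B)$ via the product fibration (note that this translation is not purely formal: the tight property approximates \emph{holomorphic} sections over neighbourhoods of $B(\R)$, and passing from an arbitrary $\ci$ map to such a section requires the Whitney-type argument of Proposition~\ref{realsecprop} and Corollary~\ref{realsectight}, which you gloss over), then invoke stable birational invariance, the fibration theorem and descent. However, two of your intermediate claims are genuine gaps. For~(1) you enlarge the class to \emph{all} smooth cubic hypersurfaces over~$F$ and assert a reduction to conic bundles ``exactly as in'' Tian's proof of weak approximation. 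For cubic surfaces over~$F$ this fails: such a surface need not contain an $F$\nobreakdash-line, Tian's argument is not a conic-bundle reduction, and no tight analogue of it is known -- the paper explicitly records that tight approximation for cubic surfaces over~$F$ is open (Example~\ref{cubique}). What makes Theorem~\ref{intro:thmA}~(1) accessible is that the cubic is defined over~$\R$ and has dimension~$\geq 2$, hence contains a line defined over~$\R$; projecting $X_F$ from this line yields a conic bundle over $\P^{n-2}_F$, and Theorem~\ref{up} (via Proposition~\ref{fibrationenquadriques}) concludes. For~(2), your claim that for $n\geq 5$ an $F$\nobreakdash-point forces a birational model to be $F$\nobreakdash-rational is unjustified and false in general: already for $(2,2)$ threefolds in $\P^5$ rationality over a non-closed field is obstructed by intermediate Jacobian torsors (over~$\R$, by disconnectedness of the real locus), so a rational point does not suffice. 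The uniform correct argument for all $n\geq 4$ is the one in Example~\ref{ic2c}: the pencil of hyperplane sections through a point $x\in X(F)$ that are singular at~$x$ gives a quadric bundle over $\P^1_F$, then Proposition~\ref{fibrationenquadriques}. You also leave unaddressed the existence of that $F$\nobreakdash-point, which is genuinely needed when $B(\R)=\emptyset$ (the paper uses Witt's theorem together with Comessatti, or period--index results).

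For~(3), your d\'evissage (descent along $G/H^0\to G/H$, Levi decomposition, $z$\nobreakdash-extensions and flasque resolutions, with a ``direct'' treatment of semisimple simply connected groups) differs from the paper's, which never needs the simply connected case: it proves tight approximation for quasi-trivial tori (rational), for all tori by descent (Theorem~\ref{descent}), for connected groups via the birational structure of a reductive group as a torus fibration over a rational variety (Proposition~\ref{group}), and then for homogeneous spaces -- with arbitrary, possibly disconnected stabilizers, which is exactly what Theorem~\ref{intro:thmA}~(3) requires -- by descent along $H\to S\backslash H$ combined with Scheiderer's Hasse principle to produce the $F$\nobreakdash-point (Theorem~\ref{homogeneous}). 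You do not indicate how your simply connected base case would be handled over~$F$ in the tight setting. More importantly, the entire weight of the theorem lies in the fibration and descent theorems for the tight property (Theorems~\ref{up} and~\ref{descent}, the latter resting on lifting torsors over rings of $G$\nobreakdash-equivariant meromorphic functions to~$F$, Proposition~\ref{liftH1}); your plan names this as ``where the genuinely new work lies'' but supplies none of it, so as written it establishes only the classical reductions, and for~(1) and~(2) even those reductions need to be corrected as above.
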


Even the particular case of cubic surfaces is new, and in fact disproves
a conjecture formulated by Bochnak and Kucharz in \cite[p.~12]{BKalgebraicapprox}:
according to Theorem~\ref{intro:thmA}~(1), any real smooth cubic surface whose real locus has two connected components
is a ``Weierstrass'' surface in the terminology of \emph{op.\ cit.}, even though it is not rational.

Theorem~\ref{intro:thmA} is a by-product of more general results
that we now set out to explain.

To approach the many questions that we have mentioned so far,
we propose and study, in this article,
the \emph{tight approximation} property (from the French \emph{approximation fine}), which simultaneously
 generalizes weak approximation and~$\ci$ approximation, both in the complex context and in the real context.
We formulate it in a relative setting, for sections
of a morphism over a smooth projective connected curve,
though ultimately it is a property of the generic fiber.

\begin{defn}[see Definitions \ref{deftight} and~\ref{deftight2}]
\label{intro:deftight}
A smooth variety~$X$ over the function field of a complex curve~$B$
satisfies the \textit{tight approximation property}
if for any proper flat morphism $f:\kX \to B$ 
such that~$\kX$ is regular and whose
generic fiber is birationally equivalent to~$X$,
any holomorphic section $u: \Omega\to\kX(\C)$ of $f(\C):\kX(\C)\to B(\C)$
over an open subset $\Omega \subset B(\C)$
can be approximated arbitrarily well, in
the $\ci$ compact-open topology,
by maps $\Omega \to \kX(\C)$ induced by algebraic
sections $B\to\kX$ of $f$ having the same jets as $u$
along any prescribed finite subset of $\Omega$,
at any prescribed
finite order. (By an ``algebraic section of~$f$'', we mean a morphism of algebraic varieties $B \to \kX$
that is a section of~$f$.)

The definition of \emph{tight approximation} for a smooth variety~$X$ over the function field of a real curve~$B$
is the same, except that we require the open subset $\Omega \subset B(\C)$ to contain $B(\R)$ and to be stable under $\Gal(\C/\R)$
and the holomorphic map~$u$ to be $\Gal(\C/\R)$\nobreakdash-equivariant.
\end{defn}

Let us immediately point out three essential features of this definition.
First, by prescribing jets of finite order at finitely many points,
the tight approximation property incorporates in its very definition a condition of weak approximation type.
This is necessary,
even if one is only interested in~$\ci$ approximation,
to obtain a birationally invariant property,
as was already noted by Bochnak and Kucharz \cite{BKrat}.
Secondly,
the tight approximation property also incorporates a~$\ci$ approximation condition.
This is necessary,
even if one is only interested in weak approximation
or in the homology of the real locus,
to obtain a property that behaves well in fibrations (into
conics over an arbitrary rationally connected base, for instance),
as can be seen already in the proof of \cite[Theorem~6.1]{BWII}.
Finally,
unlike Question~\ref{q:ciapprox},
the
tight approximation property is formulated for one-dimensional families of
real varieties.
This also turns out to be crucial
for the property to behave well in fibrations,
even if one is only interested in~$\ci$ approximation on
individual real varieties.

We henceforth let~$B$ denote a smooth projective connected curve over~$\R$ and
do not assume that~$B$
is geometrically connected; in a tautological way, this makes the complex
context a particular case of the real one.
We let $F=\R(B)$
and now discuss in detail what we prove concerning the tight approximation property.

As a consequence of Whitney's approximation theorem of $\ci$ maps by real analytic maps,
the tight approximation property for~$X$ implies that any $\ci$ section
of $f(\R):\kX(\R)\to B(\R)$ can be approximated arbitrarily well
by the real locus of an algebraic section $B \to \kX$ with a prescribed jet, of any finite order,
at finitely many points of~$B$
(Proposition~\ref{implitight}).
Thus, tight approximation implies, in the case of a product fibration, a positive answer
to Question~\ref{q:ciapprox}, and at the same time, for arbitrary fibrations, the best possible weak approximation statement
(which coincides with weak approximation itself when $B(\R)=\emptyset$, \emph{e.g.}\ when $B$ is a complex curve).

We prove that the property appearing in Definition~\ref{intro:deftight} does not depend on the choice of the
model $f:\kX\to B$ (Theorem~\ref{birinv}).
We also prove,
using a version of Runge's approximation theorem for compact Riemann
surfaces,
that~$\P^d_F$ satisfies tight approximation, and deduce that the tight approximation property is a stable birational
invariant (Theorem~\ref{projective}, Proposition~\ref{stableinv}).

The first main theorem of the article asserts the compatibility of the tight approximation property
with fibrations:

\begin{thmintro}[Theorem~\ref{up}]
\label{intro:thmB}
Let $g: X\to X'$ be a dominant morphism between smooth varieties over~$F$.
If~$X'$ and the fibers of~$g$ above the $F$\nobreakdash-points of a dense open subset of~$X'$ satisfy the
tight approximation property, then so does~$X$.
\end{thmintro}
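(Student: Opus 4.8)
The plan is to reduce to a local statement on the base curve and then combine a section of $X'$ with sections of the fibers, gluing holomorphically. Fix a model $f:\kX\to B$ with generic fiber birational to $X$; after shrinking and using birational invariance (Theorem~\ref{birinv}) I may assume $g$ extends to a morphism $\kg:\kX\to\kX'$ over $B$, where $\kX'\to B$ is a regular proper flat model of $X'$, and that $\kg$ is smooth over a dense open subset of $\kX'$. Given a holomorphic section $u:\Omega\to\kX(\C)$ (with the $\Gal(\C/\R)$-equivariance and $B(\R)\subset\Omega$ conditions in the real case), prescribed points $b_1,\dots,b_r\in\Omega$ and jet orders $N_i$, I first push forward: $u':=\kg(\C)\circ u:\Omega\to\kX'(\C)$ is a holomorphic section of $\kX'\to B$. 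Applying the tight approximation property of $X'$, I obtain an algebraic section $s':B\to\kX'$ close to $u'$ on a fixed compact neighborhood of the prescribed points, agreeing with $u'$ to order $N_i$ at $b_i$, and (after a small perturbation, possible since $\kg$ is smooth over a dense open) landing in the smooth locus of $\kg$ away from finitely many bad points of $B$.

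Next I pull back along $s'$. The fiber product $\kX\times_{\kX',s'}B$ is a proper flat $B$-scheme whose generic fiber $Y$ is (a base change of) a fiber of $g$ over an $F$-point of a dense open of $X'$ — precisely the varieties assumed to satisfy tight approximation. The composition $u$ lifts to a holomorphic section $\tilde u$ of this fiber product over $\Omega$ (over the locus where $s'$ meets the smooth locus of $\kg$; near the finitely many bad points I need a separate small argument, see below). Applying tight approximation to $Y$ gives an algebraic section $\tilde s:B\to\kX\times_{\kX',s'}B$ approximating $\tilde u$ and matching jets at the $b_i$; composing with the projection to $\kX$ yields the desired algebraic section $s:B\to\kX$, which approximates $u$ and has the prescribed jets, because both the push-forward and the pull-back steps were jet-exact.

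The main obstacle is the behavior at the finitely many points of $B(\C)$ where $s'$ leaves the smooth locus of $\kg$ (equivalently, where the model $\kX\times_{\kX',s'}B$ fails to be regular or where $\tilde u$ does not obviously extend). Two issues must be handled: making sense of ``tight approximation for $Y$'' when the chosen integral model is not regular — resolved by invoking birational invariance to replace it by a regular model, at the cost of tracking that the holomorphic section $\tilde u$ still lifts; and ensuring $\tilde u$ is defined on all of a $\Gal(\C/\R)$-stable $\Omega\supset B(\R)$, which forces me to choose the perturbation of $s'$ so that $s'(B(\C))$ avoids the (codimension $\geq 1$) non-smooth locus of $\kg$ over the relevant part of $B$ — a transversality/general position argument, available because $B(\C)$ is a real surface and the bad locus has real codimension $\geq 2$ in $\kX'(\C)$, so a generic small $\ci$ perturbation of the section misses it, and I can then re-approximate by an algebraic section. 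I also need to keep everything $\Gal(\C/\R)$-equivariant throughout, which is routine once the perturbations are chosen equivariantly. A final bookkeeping point is that ``dense open subset of $X'$'' in the hypothesis must be arranged to contain the image of the generic point of the section $s'$; since $s'$ can be taken arbitrarily close to $u'$, whose image meets any prescribed dense open (as $u'$ is a section of a model of $X'$ and its generic behavior is controlled), this is not a genuine obstruction.
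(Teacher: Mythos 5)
There are two genuine gaps, and the first one is fatal to the strategy as written. You propose to arrange, by ``a transversality/general position argument,'' that the section avoids the non-smooth locus of $\kg:\kX\to\kX'$, on the grounds that the bad locus has real codimension $\geq 2$. But real codimension $2$ is exactly the critical case: the image $u(\Omega)$ (resp.\ $s'(B(\C))$) is a real surface, and a holomorphic curve meets a divisor (the non-smooth locus of $\kg$ in $\kX$, or the discriminant in $\kX'$) in isolated points with \emph{positive} local intersection multiplicities, so these intersection points cannot be removed by any small perturbation, $\ci$ or holomorphic; moreover a $\ci$ perturbation of $u$ would no longer be a holomorphic section at all. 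The paper's perturbation lemma (Proposition~\ref{tubularR}~(ii)) is only used to ensure that no connected component of $u(\Omega)$ is \emph{entirely contained} in the bad locus, so that the intersection is a finite set $\Sigma$; handling those finitely many points is the technical heart of the proof and requires a genuinely new idea, namely Proposition~\ref{toroidaltheorem} combined with the weak toroidalization theorem of Abramovich--Denef--Karu: one modifies both $\kX$ and $\kX'$ so that the \emph{strict transform} of $u$ lands in the smooth locus of the modified map (a N\'eron-smoothening statement over a higher-dimensional base). Nothing in your proposal substitutes for this step.

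The second gap is in the pull-back step. You assert that $u$ ``lifts to a holomorphic section $\tilde u$'' of $\kX\times_{\kX',s'}B\to B$; it does not, because $u$ lies over $u'=\kg(\C)\circ u$ and not over the approximating algebraic section $s'$, so $(u,\Id)$ is not a point of the fiber product. What the paper does instead, once $\kg(\C)$ has been made submersive along $u(\Omega)$, is to construct a $G$\nobreakdash-equivariant holomorphic splitting $w:W\to\kX(\C)$ of $\kg(\C)$ on a neighbourhood $W$ of $u'(\Omega)$ with $w\circ u'=u$ (Proposition~\ref{tubularR}~(iii)); then $v_n:=w\circ s_n'(\C)$ is an honest holomorphic section over $s_n'$, close to $u$ and with the same $r$\nobreakdash-jets at the $b_i$ (this is also what makes your ``jet-exactness'' claim true), and one applies tight approximation of the fibers to its strict transform in a resolution of $\kX\times_{\kX',s_n'}B$, finishing with a diagonal argument. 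Your global outline (approximate on the base, then in the fibers of the pulled-back family, using birational invariance for the non-regular fiber product) does match the paper's, but without the toroidal modification step and the holomorphic splitting $w$ the argument does not go through.
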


It should be noted that Theorem~\ref{intro:thmB}---unlike the main theorem of~\cite{PalSzabo},
which only deals with weak approximation---does
not assume the base to be rational over~$F$, or even geometrically rational.
As was remarked above, for such a
general fibration statement to be accessible,
it is crucial to incorporate a~$\ci$ approximation condition into the property under consideration.

The proof of Theorem~\ref{intro:thmB} relies on
the weak toroidalization theorem
of Abramovich, Denef and Karu~\cite{ADK} and on some toroidal geometry.
Letting~$\kX$ and~$\kX'$ denote proper regular models of~$X$ and~$X'$ over~$B$
such that~$g$ extends to a morphism $g:\kX \to \kX'$,
the key tool here is an algebro-geometric statement
according to which for $x\in \kX$, a general germ of curve on~$\kX$ through~$x$
whose image on~$\kX'$ is smooth
can be made to miss the non-smooth
locus of $g:\kX \to \kX'$ by replacing~$\kX$ and~$\kX'$ with suitable modifications and the germ
with its strict transform. (See Proposition~\ref{toroidaltheorem} for a precise statement.)
This assertion, valid for a morphism $g:\kX \to \kX'$ of smooth varieties over an arbitrary field of
characteristic~$0$, can be viewed as an extension of
the Néron smoothening process to higher-dimensional bases
(see \cite[3.1/3]{neronmodels}).

As a corollary to Theorem~\ref{intro:thmB},
we deduce
Theorem~\ref{intro:thmA}~(1) and (2).
Indeed, smooth cubic hypersurfaces
of dimension~$\geq 2$ over~$\R$, as well as smooth complete intersections of two quadrics of dimension $\geq 2$ over $\R$ with a real point,
are birationally equivalent to quadric bundles over projective spaces,
and positive-dimensional quadrics over~$F$ satisfy the tight approximation
property (see
Examples~\ref{quadrique}, \ref{cubique} and~\ref{ic2c}).

The second main theorem asserts that the method of descent under arbitrary linear groups,
pioneered over number fields by Colliot-Thélène and Sansuc and extended to non-abelian
groups by Harari and Skorobogatov, can be successfully applied
for verifying the tight approximation property:

\begin{thmintro}[Theorem~\ref{descent}]
\label{intro:thmC}
Let $X$ be a smooth variety over~$F$
and~$S$ be a linear algebraic group over~$F$.
Let $Q\to X$ be a left $S$\nobreakdash-torsor.
If every twist of~$Q$ by a right $S$\nobreakdash-torsor over~$F$ satisfies the tight approximation property,
then so does~$X$.
\end{thmintro}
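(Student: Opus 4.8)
The plan is to adapt the descent method of Colliot-Th\'el\`ene and Sansuc, in the non-abelian form due to Harari and Skorobogatov, to the analytic context of tight approximation. By Theorem~\ref{birinv} it suffices to verify the defining property of tight approximation for one conveniently chosen model of $X$. So I would fix: a proper flat regular model $f:\kX\to B$ with generic fibre birationally equivalent to $X$; a dense open $B^{\circ}\subseteq B$ over which $S$ extends to a smooth affine group scheme $\sS\to B^{\circ}$; a dense open $\kU\subseteq f^{-1}(B^{\circ})$ with complement of codimension $\geq 2$ in $f^{-1}(B^{\circ})$ over which $Q\to X$ extends to an $\sS$-torsor $\kR\to\kU$ (shrinking $B^{\circ}$ if necessary); and, for each right $S$-torsor $\sigma$ over $F$, a proper flat regular model $\kQ^{\sigma}\to B$ of the twist $Q^{\sigma}$ containing as a dense open the twist $\kR^{\sigma}$ of $\kR$ by $\sigma$, and equipped with a morphism $\pi^{\sigma}:\kQ^{\sigma}\to\kX$ extending $Q^{\sigma}\to X$ (possible after blowing up $\kQ^{\sigma}$ and re-taking a regular model). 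Two standard facts about torsors are then recorded: for a field extension $L/F$ and $x\in X(L)$, the point $x$ lifts to $Q^{\sigma}(L)$ if and only if $x^{*}Q$ corresponds to $\sigma$ in $H^{1}(L,S)$ (matching the left/right conventions); and if $L=F_{b}$ is the completion of $F$ at a closed point $b$ of $B$---so $L\cong\R((t))$ or $L\cong\C((t))$---then $H^{1}(L,S)$ is finite, and trivial when $L\cong\C((t))$ and $S$ is connected.

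Now let $u:\Omega\to\kX(\C)$ be a holomorphic section (in the real case, $\Gal(\C/\R)$-equivariant over a $\Gal(\C/\R)$-stable $\Omega\supseteq B(\R)$), with prescribed jets along a finite set $\Sigma\subseteq\Omega$ and a compact $K\subseteq\Omega$ on which closeness is required. If $\Omega=B(\C)$ then $u$ is algebraic by GAGA and there is nothing to prove; otherwise I would replace $\Omega$ by a relatively compact $\ci$-bordered (in the real case $\Gal(\C/\R)$-stable) neighbourhood of $K\cup\Sigma$, so that $\Omega$---and hence $\Omega^{0}$, the complement in $\Omega$ of the finitely many points lying over $B\smallsetminus B^{\circ}$ or mapped by $u$ into $\kX\smallsetminus\kU$---is a non-compact, hence Stein, Riemann surface with a compatible real structure. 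Over $\Omega^{0}$ the pull-back $u^{*}\kR$ is a holomorphic torsor under $u^{*}\sS$; using Grauert's Oka principle together with the homotopy type and real structure of $\Omega^{0}$, its isomorphism class, together with the classes $x_{b}^{*}Q\in H^{1}(F_{b},S)$ of the germ of $u$ at the points $b\in\Omega\smallsetminus\Omega^{0}$, is determined by finitely much (combinatorial) cohomological data.

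The crucial step is then to produce a right $S$-torsor $\sigma$ over $F$ realizing this finite collection of local data. Granting such a $\sigma$: over $\Omega^{0}$ the section $u$ lifts to $\kR^{\sigma}$, and these lifts, after correction by the chosen trivializations, glue to a holomorphic section $\tilde u$ of $\kQ^{\sigma}(\C)\to B(\C)$ over $\Omega^{0}$ with $\pi^{\sigma}\circ\tilde u=u$; by properness of $\pi^{\sigma}:\kQ^{\sigma}\to\kX$ (a morphism of schemes proper over $B$), $\tilde u$ stays in a compact set near each point of $\Omega\smallsetminus\Omega^{0}$ and extends holomorphically across it, and the identity $\pi^{\sigma}\circ\tilde u=u$ persists by continuity. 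Thus $\tilde u:\Omega\to\kQ^{\sigma}(\C)$ is a holomorphic section of $\kQ^{\sigma}(\C)\to B(\C)$ lifting $u$ (equivariantly in the real case). Establishing the existence of $\sigma$---a local--global principle for $H^{1}(F,S)$ along the curve $B$, the exact analogue of the Brauer--Manin step in Colliot-Th\'el\`ene--Sansuc descent, to be handled by d\'evissage to tori and semisimple simply connected groups and by the non-abelian bookkeeping of Harari--Skorobogatov---is what I expect to be the main obstacle, together with its interaction with Grauert's Oka principle needed to assemble the local lifts of $u$ into the single global lift $\tilde u$.

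It remains to invoke the hypothesis: $Q^{\sigma}$ satisfies tight approximation, so applied to the model $\kQ^{\sigma}\to B$, to $\Omega$, to $\tilde u$, and to the jets of $\tilde u$ along $\Sigma$, it yields an algebraic section $\tilde s:B\to\kQ^{\sigma}$ that is $\ci$-close to $\tilde u$ on $K$ and has the same jets as $\tilde u$ along $\Sigma$ to the prescribed order. Then $s:=\pi^{\sigma}\circ\tilde s:B\to\kX$ is an algebraic section of $f$; since $\pi^{\sigma}$ is a morphism of smooth $\C$-varieties (hence $\ci$), $s$ is $\ci$-close to $\pi^{\sigma}\circ\tilde u=u$ on $K$, and for each $x_{0}\in\Sigma$ the jet of $s=\pi^{\sigma}\circ\tilde s$ at $x_{0}$---which depends only on the germ of $\pi^{\sigma}$ at $\tilde s(x_{0})=\tilde u(x_{0})$ and on the jet of $\tilde s$ at $x_{0}$, i.e. on those of $\tilde u$---agrees with that of $\pi^{\sigma}\circ\tilde u=u$; so $s$ has the prescribed jets along $\Sigma$. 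Besides the choice of $\sigma$, the remaining points needing care are the bad reduction of the torsor, making the models $\kQ^{\sigma}$ proper over $B$, and carrying out the whole construction $\Gal(\C/\R)$-equivariantly in the real case.
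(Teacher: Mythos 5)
Your overall architecture matches the paper's: lift the analytic section to a suitable twist of the torsor, invoke the hypothesis for that twist, and push the resulting algebraic sections back down (the jet and $\ci$-closeness bookkeeping at the end, and the extension of the lift across the finitely many bad points via properness and the valuative criterion, are all fine and are exactly what the paper does). But the step you flag as "the main obstacle" --- producing a right $S$-torsor $\sigma$ over $F$ whose class matches the analytic class of $u^{*}\kR$ together with the local classes at the bad points --- is genuinely the heart of the theorem, and the route you sketch for it would not work. Dévissage to tori and semisimple simply connected groups, in the style of Colliot-Thélène--Sansuc and Harari--Skorobogatov over number fields, cannot handle an arbitrary linear algebraic group $S$ here: the theorem explicitly allows $S$ to be disconnected, and the component group $S/S^{0}$ is exactly the part for which no such dévissage is available. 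Moreover, the relevant surjectivity statement is not a Brauer--Manin-type local--global principle; what is needed is that a class in $H^{1}(\sM(\Omega)^{G},S)$ (where $\sM(\Omega)^{G}$ is the field-like ring of $G$-equivariant meromorphic functions on a domain with $\ci$ boundary containing $B(\R)$) becomes, after shrinking $\Omega$, the restriction of a class in $H^{1}(F,S)$. This is Proposition~\ref{liftH1} of the paper, and its proof uses two inputs absent from your sketch: for the finite quotient $S/S^{0}$, a $G$-equivariant Riemann existence theorem argument, in which one shrinks $\Omega$ so that the boundary components are contractible and not $G$-stable, trivializes the torsor there, and glues with the trivial torsor over the complement to extend it to an algebraic torsor over a dense open of $B$; and for the connected part, Scheiderer's theorems over fields of virtual cohomological dimension~$1$ --- the bijectivity of $h_{S^{0}}:H^{1}(\cdot,S^{0})\to H^{0}(\Sper(\cdot),\sH^{1}(S^{0}))$, the homeomorphism $\Sper(\sM(\Omega)^{G})\simeq\Sper(F)$ coming from $B(\R)\subset\Omega$ (Proposition~\ref{spectrereel}), and his lifting result along $1\to S^{0}\to S\to S/S^{0}\to 1$ --- together with a twisting argument to compare the two fibres of $H^{1}(\cdot,S)\to H^{1}(\cdot,S/S^{0})$. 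Without these inputs the existence of $\sigma$ is unproven, so the proposal as it stands has a genuine gap precisely at its crucial step.

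A secondary, less serious imprecision: you describe the matching data as the isomorphism class of $u^{*}\kR$ over $\Omega^{0}$ "determined by finitely much combinatorial cohomological data" via Grauert's Oka principle. The paper instead works directly with the class in non-abelian Galois cohomology of the ring $\sM(\Omega)^{G}$ of equivariant meromorphic functions, which is what makes Scheiderer's field-theoretic results applicable (via the computation that this ring has virtual cohomological dimension~$1$ and the real-spectrum comparison with $F$); an Oka-principle/topological reformulation does not by itself put you in a position to apply any known surjectivity statement for $H^{1}(F,S)$, and the permitted shrinking of $\Omega$ (which your formulation does not exploit) is essential for the gluing argument in the finite case.
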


We note that Theorem~\ref{intro:thmC} does not assume~$X$ to be proper or~$S$ to be connected.
For the weak approximation property,
a similar descent theorem was established
by Ducros~\cite[Théorème~5.22]{Ducros} under the assumption that~$S$ is connected.

Using standard reductions in the theory of homogeneous spaces,
together with Scheiderer's Hasse principle for homogeneous spaces of linear algebraic groups over~$F$
(see \cite{Scheiderer}),
we deduce from the combination of Theorem~\ref{intro:thmB}
and Theorem~\ref{intro:thmC}:

\begin{thmintro}[Theorem~\ref{homogeneous}]
\label{intro:thmD}
Homogeneous spaces of connected linear algebraic groups over $F$ satisfy the tight approximation property.
\end{thmintro}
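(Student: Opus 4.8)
The plan is to deduce Theorem~\ref{intro:thmD} from Theorems~\ref{intro:thmB} and~\ref{intro:thmC} by the standard dévissage for homogeneous spaces, Scheiderer's Hasse principle \cite{Scheiderer} being used to discard the configurations in which there is nothing to approximate. Let~$X$ be a homogeneous space under a connected linear algebraic group~$G$ over~$F$, let $f:\kX\to B$ be a regular proper model, and suppose given a $\Gal(\C/\R)$\nobreakdash-equivariant holomorphic section of~$f(\C)$ over an open $\Omega\supseteq B(\R)$. Restricting it near the real points of~$B$ produces an $F_b$\nobreakdash-point of~$X$ at every $b\in B(\R)$; at the remaining closed points of~$B$, whose completions contain~$\C((t))$ and therefore have cohomological dimension~$\leq 1$, a homogeneous space of a connected linear group automatically has a rational point. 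As~$F$ has virtual cohomological dimension~$\leq 1$, Scheiderer's Hasse principle then gives $X(F)\neq\emptyset$, so that $X\cong G/H$ for a closed subgroup~$H\subseteq G$; when $X(F)=\emptyset$, tight approximation holds vacuously. The same remark will be invoked repeatedly below, to discard the twisted torsors over~$F$ with no rational point that the dévissage produces.

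Next I would carry out the usual dévissage on the pair~$(G,H)$: fibrations $G/H\to G/L$ (for $H\subseteq L\subseteq G$) with rational fibers, obtained by contracting unipotent radicals; and torsors $G/H'\to G/H$ (for $H'\trianglelefteq H$), along identity components, derived subgroups, and central isogenies. All of these are of the shape to which Theorem~\ref{intro:thmB} or Theorem~\ref{intro:thmC} applies—bearing in mind that tight approximation, valid for~$\P^d_F$ (Theorem~\ref{projective}) and a stable birational invariant (Proposition~\ref{stableinv}), holds in particular for every rational variety, and that in the applications of Theorem~\ref{intro:thmC} the relevant twists are again homogeneous spaces of~$G$ of a controlled type (with connected, then semisimple, stabilizer). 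One thereby reduces to the case $X=G/H_1$ with~$G$ now semisimple simply connected and~$H_1$ semisimple. A final application of Theorem~\ref{intro:thmC}, to the torsor $G\to G/H_1$—whose twists over~$F$ are the $G$\nobreakdash-torsors over~$F$, hence, after one discards via the Hasse principle those without a rational point, $G$ itself—reduces the problem to the tight approximation property for (a)~tori and (b)~semisimple simply connected groups, over arbitrary function fields of real curves.

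Case~(a) I would settle with a flasque resolution $1\to T'\to Q\to T\to 1$ with~$Q$ quasi\nobreakdash-trivial: a quasi\nobreakdash-trivial torus is a special group, so $H^1(F,Q)=0$ and the connecting map $T(F)\to H^1(F,T')$ is surjective; hence every twist of the $T'$\nobreakdash-torsor $Q\to T$ by a class of $H^1(F,T')$ is, by translation on~$T$, the pullback of $Q\to T$ along an automorphism of~$T$, so is isomorphic as an $F$\nobreakdash-variety to the rational variety~$Q$. Theorem~\ref{intro:thmC} then gives tight approximation for~$T$. For case~(b), Weil restriction—compatible with tight approximation through a change of base curve—and Theorem~\ref{intro:thmB} for products reduce one to absolutely almost simple simply connected groups over finite extensions of~$F$; these are stably rational, by the classification and explicit rational parametrizations (the Cayley transform for the orthogonal groups, the reduced norm for type~$\mathrm{A}$, and so on) together with the facts that the Brauer group of such a field is $2$\nobreakdash-torsion and generated by quaternion algebras and that its quadratic and hermitian forms obey local–global principles over the real places (Witt, Scheiderer). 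Stable birational invariance of tight approximation then finishes the proof.

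The main difficulty, I expect, lies first in organizing the dévissage so that every step is genuinely a fibration to which Theorem~\ref{intro:thmB} applies (with fibers rational, also after the twists that occur) or a torsor to which Theorem~\ref{intro:thmC} applies (with the twists over~$F$ kept under control), so that one lands precisely on the list (a)--(b); and second, for~(b), in going through the classification of semisimple simply connected groups over function fields of real curves to check that they are stably rational, so that no approximation input is needed beyond the tight approximation property of projective space. Scheiderer's Hasse principle is what legitimizes discarding, at every stage, the twisted torsors carrying no rational point.
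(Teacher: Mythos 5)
Your overall frame --- use the holomorphic section over $\Omega\supseteq B(\R)$ to get local points at the real places, invoke Scheiderer's Hasse principle to obtain $X(F)\neq\emptyset$ in the only case where there is something to approximate, and then run descent (Theorem~\ref{descent}) and fibrations (Theorem~\ref{up}) --- is exactly the paper's, and your case~(a) is essentially Proposition~\ref{torus}: a quasi-trivial cover $1\to T'\to Q\to T\to 1$ with $H^1(F,Q)=0$, so that all twists of $Q\to T$ are isomorphic to the rational variety $Q$, and Corollary~\ref{rat} plus Theorem~\ref{descent} conclude. But your treatment of the group case contains a genuine gap. You reduce, via a dévissage of the stabilizer and a Weil-restriction step, to the claim that absolutely almost simple simply connected groups over finite extensions of $F$ are \emph{stably rational}, and you justify this only by appeal to ``the classification and explicit rational parametrizations''. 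This is a substantial unproven assertion: (stable) rationality of simply connected groups is delicate in general (anisotropic forms such as spin groups of sums of squares or anisotropic unitary groups exist over $F$ whenever $B(\R)\neq\emptyset$, and over general fields nontrivial $SK_1$ even obstructs retract rationality of $SL_1(D)$), and the paper explicitly remarks, after Proposition~\ref{group}, that stable rationality of connected linear groups is open even over $\R$. Whether your claim is in fact true over fields of virtual cohomological dimension~$1$ would require a genuine case-by-case argument that your proposal does not supply; in addition, the compatibility of tight approximation with Weil restriction along a change of the base curve, which you invoke in passing, is itself unproved.

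The paper avoids this entirely, and this is the real content of Proposition~\ref{group}: after splitting off the unipotent radical, a connected reductive group over $F$ is \emph{birational to a torus fibration over an $F$-rational variety} (by the structure results of \cite{SGA32} on maximal tori and Cartan subgroups), so Theorem~\ref{up}, the torus case and Corollary~\ref{rat} give tight approximation for every connected linear group with no rationality input on semisimple groups whatsoever. Your dévissage of the stabilizer is likewise unnecessary: since Theorem~\ref{descent} allows an arbitrary, possibly disconnected, linear group $S$, one applies it directly to the torsor $H\to S\backslash H$ for $X=S\backslash H$; its twists by right $S$-torsors over $F$ are right $H$-torsors over $F$, i.e.\ homogeneous spaces with trivial stabilizer, which in the only relevant case are trivialized by Scheiderer's Hasse principle and thus fall under the group case. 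If you replace your steps (b) and the stabilizer dévissage by these two observations, your argument becomes the paper's proof of Theorem~\ref{homogeneous}.
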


In more detail, the proof of Theorem~\ref{intro:thmD} starts with the remark that the
tight approximation property holds for quasi-trivial tori over~$F$ since it holds for
projective spaces (Theorem~\ref{projective});
we then
successively deduce
its validity for arbitrary tori over~$F$
(by descent, using Theorem~\ref{intro:thmC}, in Proposition~\ref{torus}),
then for connected linear algebraic groups over~$F$
(exploiting the structure of a fibration into tori over an affine space over~$F$,
using Theorem~\ref{intro:thmB}, in Proposition~\ref{group}),
then for homogeneous spaces of connected linear algebraic groups over~$F$ that possess an $F$\nobreakdash-point
(using Theorem~\ref{intro:thmC} again),
and finally for arbitrary homogeneous spaces of connected linear algebraic groups over~$F$
(using Scheiderer's Hasse principle).

Theorem~\ref{intro:thmA}~(3) immediately results from Theorem~\ref{intro:thmD}, in the particular case of a product fibration.

On the other hand,
Scheiderer's work allows us to conclude from Theorem~\ref{intro:thmD}
that homogeneous spaces of connected linear algebraic groups over $F$
satisfy the weak approximation property (Theorem~\ref{weakhomoreal}).
This solves a conjecture put forward by Colliot-Thélène~\cite{CTgroupes}.
Scheiderer~\cite{Scheiderer} had proved it when the stabilizer of a
geometric point is connected and had left the general case open.

In fact, Scheiderer allows in \emph{op.\ cit.}\ the field~$\R$ to be replaced with an arbitrary real closed field.  Following his lead, we adapt the proof of Theorem~\ref{intro:thmD}
and establish
Colliot-Thélène's conjecture for function fields of curves over real closed
fields:

\begin{thmintro}[Theorem~\ref{weakhomo}]
\label{intro:thmE}
Homogeneous spaces of connected linear algebraic groups over the function
field of a curve over a real closed field satisfy the weak approximation
property.
\end{thmintro}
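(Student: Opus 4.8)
The plan is to mimic the deduction of Theorem~\ref{intro:thmD} and its corollary Theorem~\ref{weakhomoreal}, replacing~$\R$ throughout by an arbitrary real closed field~$R$, and then to transfer the resulting statement about tight approximation over~$R(B)$ into the desired weak approximation statement. The first point to settle is which pieces of the machinery of Theorems~\ref{intro:thmB} and~\ref{intro:thmC} survive the passage from~$\R$ to~$R$. The algebro-geometric input---the weak toroidalization theorem of~\cite{ADK}, the modification statement in Proposition~\ref{toroidaltheorem}, and the structure theory of linear algebraic groups and their homogeneous spaces---is valid over any field of characteristic~$0$, hence over~$R$. What genuinely uses the topology of~$\R$ is the \emph{definition} of tight approximation (holomorphic sections, the $\ci$ compact-open topology, the $\Gal(\C/\R)$-action). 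Over a general real closed field~$R$ one no longer has a Euclidean topology or holomorphic functions; the natural substitute, following Scheiderer, is to work with the real spectrum~$\Sper$ and semialgebraic sections, or more economically to define a weak approximation property directly in terms of the places of~$R(B)$ corresponding to closed points of~$B$ (both the residue-field-$R$ points and the residue-field-$R[\sqrt{-1}\mkern2mu]$ points). First I would isolate exactly the formal properties of ``tight approximation over~$\R(B)$'' that the proofs of Theorems~\ref{intro:thmB}--\ref{intro:thmD} actually invoke: stable birational invariance, validity for~$\P^d$ (via Runge's theorem), compatibility with fibrations, descent under linear torsors, and the use of Scheiderer's Hasse principle at the very end. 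Then I would verify that a suitably formulated ``semialgebraic tight approximation property'' over~$R(B)$ satisfies the same formal properties, the only non-formal inputs being a semialgebraic/real-closed-field analogue of Runge's approximation theorem on~$B$ and Scheiderer's Hasse principle, which is already stated over arbitrary real closed fields in~\cite{Scheiderer}.

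Concretely, the key steps, in order, would be: (1)~formulate the semialgebraic version of Definitions~\ref{deftight} and~\ref{deftight2} over~$R(B)$, using semialgebraic sections of $f:\kX\to B$ over semialgebraic opens of the real spectrum (equivalently, over~$R$-semialgebraic subsets of $B(R)$ together with the ``complex'' places); (2)~establish that this property is a stable birational invariant and holds for~$\P^d_{R(B)}$, the latter by replacing Runge's theorem for compact Riemann surfaces with its semialgebraic counterpart over~$R$ (a Stone--Weierstrass-type statement for semialgebraic functions on $B(R)$, which is available in real algebraic geometry; one approximates a semialgebraic section by a rational one with prescribed jets using partial fractions on~$B$ exactly as in the proof of Theorem~\ref{projective}); (3)~re-run the proof of Theorem~\ref{intro:thmB} verbatim---the toroidal-geometry arguments are field-independent and Proposition~\ref{toroidaltheorem} is already stated over an arbitrary characteristic-zero field---checking that the $\ci$-approximation bookkeeping translates to semialgebraic bookkeeping; (4)~re-run the descent argument of Theorem~\ref{intro:thmC}, which is purely cohomological and formal once tight approximation is known to be a reasonable property; (5)~deduce the analogue of Theorem~\ref{intro:thmD} for homogeneous spaces of connected linear groups over~$R(B)$, using the chain quasi-trivial tori~$\to$~tori~$\to$~connected linear groups~$\to$~homogeneous spaces with a rational point~$\to$~arbitrary homogeneous spaces, the last step invoking Scheiderer's Hasse principle over~$R$; (6)~finally, extract weak approximation over~$R(B)$ from semialgebraic tight approximation by the same argument that yields Theorem~\ref{weakhomoreal} from Theorem~\ref{intro:thmD}, namely: a local section over the completion $F_b$ at a closed point~$b$ can be approximated by a semialgebraic section of a model near~$b$, which the tight approximation property then globalizes.

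The main obstacle will be step~(2), and more precisely making sense of ``holomorphic/semialgebraic section over an open subset of $B(\C)$'' when the base field is an abstract real closed field with no archimedean topology. Over~$\R$ the definition rests on the analytification functor and on Whitney/Runge approximation; over~$R$ one must instead phrase everything in terms of the real spectrum of~$B$ and of semialgebraic continuous (or Nash) sections, and then reprove just enough approximation theory---essentially a relative Stone--Weierstrass theorem for semialgebraic sections of a smooth projective curve over~$R$, together with a jet-interpolation statement---to get the case of~$\P^d_{R(B)}$ off the ground. A clean way to do this is via the Tarski--Seidenberg transfer principle: formulate the weak-approximation-with-jets statement for~$\P^d_{R(B)}$ as an elementary statement in the language of ordered fields with parameters, observe it holds over~$\R$ by Theorem~\ref{projective} and a standard argument, and conclude that it holds over every real closed field. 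Once the base case and the formal properties are in place, steps~(3)--(6) are faithful transcriptions of the arguments already carried out over~$\R$, and no new geometric difficulty arises; Scheiderer's Hasse principle, which is the one deep external ingredient, is already available in the required generality.
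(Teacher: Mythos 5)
Your plan mislocates the real difficulty, and the step you declare ``purely cohomological and formal'' is precisely the one that fails to transcribe. Over~$\R$, the proof of the descent theorem (Theorem~\ref{descent}) is not formal: its engine is Proposition~\ref{liftH1}, which lifts a torsor class from $H^1(\sM(\Omega)^G,S)$, for~$\Omega$ an analytic neighbourhood of $B(\R)$ in $B(\C)$, to $H^1(F,S)$, and this rests on transcendental facts about open Riemann surfaces: Riemann's existence theorem for the finite quotient of~$S$, the virtual cohomological dimension~$1$ of $\sM(\Omega)^G$ (Corollary~\ref{cohodimvirt1}, via the Artin--Guralnick vanishing of Brauer groups of open Riemann surfaces), and the comparison $\Sper(\sM(\Omega)^G)\simeq\Sper(F)$ of Proposition~\ref{spectrereel}. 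Over an abstract real closed field~$R$ there is no field of meromorphic functions on a ``neighbourhood of $B(R)$ in $B(C)$'' with these properties, so your step~(4) is not a transcription but the heart of the matter. The same is true of step~(3): the proof of Theorem~\ref{up} uses holomorphic tubular neighbourhoods, Stein theory and Whitney approximation (Propositions~\ref{tubularR} and~\ref{realsecprop}), none of which you replace. Finally, the Tarski--Seidenberg transfer in step~(2) does not apply as stated: the (tight) approximation property quantifies over models and over semialgebraic opens and sections of unbounded complexity, hence is not first-order, and over~$\R$ it is phrased with holomorphic rather than semialgebraic data, so there is nothing to transfer before a nontrivial equivalence is proved.

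The paper's proof of Theorem~\ref{weakhomo} avoids all of this by never defining tight approximation over $R(B)$. It proves a descent theorem directly for the conjunction of weak approximation and the strong Hasse principle (Theorem~\ref{th:wadescent}), whose key input is Proposition~\ref{prop:horrible}: a surjectivity statement onto the fibre product of $\prod_{b\in\Sigma}H^1(F_b,S)$ and $H^0(\Sper(F),\sH^1(S))$, proved by reducing to finite~$S$ via Scheiderer's Theorem~4.1 and Corollary~6.6 and then constructing the required torsor by $G$\nobreakdash-equivariant triangulations of $B(C)$, gluing of semialgebraic equivariant torsors, and Huber's Riemann existence theorem over $C=R(\sqrt{-1})$. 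The descent is then applied to the torsor $H\to S\backslash H$: its twists $_PH$ are right $H$\nobreakdash-torsors over~$F$, for which Scheiderer already supplies weak approximation and the strong Hasse principle, and his Hasse principle for homogeneous spaces provides the rational point needed to write $X=S\backslash H$; no analogue of Propositions~\ref{torus} and~\ref{group}, nor of Theorem~\ref{up}, is needed. If you want to repair your outline, the realistic target is such a descent statement for weak approximation itself, and the work to be done is the real-closed substitute for Proposition~\ref{liftH1}, which is where semialgebraic topology and Huber's theorem must enter.
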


If~$X$ is a smooth compactification of a homogeneous
space of a connected linear algebraic group over~$\R$,
it is a noteworthy
 consequence of Theorem~\ref{intro:thmD} (in fact, of
Theorem~\ref{intro:thmA}~(3))
that
the homology group $H_1(X(\R),\Z/2\Z)$ is spanned by classes of rational curves.
By contrast, it is still an open question whether $H_2(X(\C),\Z)$ is spanned by classes of algebraic
curves, \emph{i.e.}\ whether~$X_\C$ satisfies the integral Hodge conjecture for $1$\nobreakdash-cycles.

In Proposition~\ref{dP}, we verify that $H_1(X(\R),\Z/2\Z)$ is again spanned by classes
of rational curves if $X$ is a rationally connected surface, even though
Question~\ref{q:ciapprox} remains unanswered in this case.

In view of the stable birational invariance of the tight approximation property,
of its compatibility with fibrations, of its compatibility with descent under torsors of linear algebraic groups, and of its
validity for homogeneous spaces of linear algebraic groups,
it makes sense to raise the question of the validity of the tight approximation property
for arbitrary rationally connected varieties
(Question~\ref{qmain}).

For some varieties that are not known to satisfy tight approximation,
the ideas underlying
the proof of Theorem~\ref{intro:thmB} turn out to still be useful for proving the weaker property
that $H_1(X(\R),\Z/2\Z)$ is spanned by classes of algebraic curves, when~$X$ is smooth, proper and defined over~$\R$.
We explore this direction in the last section of the article, where
we formulate a version of this property for varieties defined over~$F$ rather than over~$\R$
(see Definition~\ref{defn:falg}, Proposition~\ref{birinvBH}),
prove the analogue of Theorem~\ref{intro:thmB} for this property (Theorem~\ref{fibBH})
and apply it to smooth cubic hypersurfaces.
Using the results of \cite{BWII} about
cubic surfaces over~$F$, we deduce:

\begin{thmintro}[Corollary~\ref{corBHcubic}]
\label{intro:thmF}
For any smooth proper variety~$X$ over~$\R$
which is, birationally, an iterated fibration
into smooth cubic hypersurfaces of dimension $\geq 2$,
the group $H_1(X(\R),\Z/2\Z)$ is spanned by classes of algebraic curves.
\end{thmintro}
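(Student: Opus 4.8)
The statement to prove is Theorem~\ref{intro:thmF} (= Corollary~\ref{corBHcubic}): for a smooth proper real variety~$X$ that is birationally an iterated fibration into smooth cubic hypersurfaces of dimension~$\geq 2$, the group $H_1(X(\R),\Z/2\Z)$ is spanned by classes of algebraic curves. Since this is called a \emph{corollary}, the intended route is to \emph{not} prove it from scratch but to deduce it by combining three already-announced ingredients: (i)~the birational invariance of the relevant property over~$F$ (Proposition~\ref{birinvBH}), which reduces us to checking the property on one convenient model; (ii)~the fibration theorem for this property (Theorem~\ref{fibBH}), the analogue of Theorem~\ref{intro:thmB}, which lets us build the property up along an iterated fibration provided it holds for the base and for the fibers; and (iii)~the input from~\cite{BWII} that smooth cubic hypersurfaces of dimension~$\geq 2$ over~$F$ satisfy the relevant $\R$-$\mathrm{alg}$ property. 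So the skeleton is: base case (cubic hypersurfaces over~$F$) $\Rightarrow$ induction via Theorem~\ref{fibBH} $\Rightarrow$ transfer back from~$F$ to~$\R$.

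\smallskip

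\textbf{Step 1: Set up the iterated fibration and pass to~$F$.} By hypothesis~$X$ is birationally equivalent to the total space of a tower $X_r \to X_{r-1} \to \cdots \to X_1 \to X_0 = \Spec(\R)$ in which each morphism is, generically, a fibration into smooth cubic hypersurfaces of dimension~$\geq 2$. I would first reduce~$r$ by one: set $B$ to be a smooth projective model of~$X_1$ (a curve if $\dim X_1=1$; if $\dim X_1>1$ one instead works fiberwise over a curve inside it, but the clean statement to quote is the one over $F=\R(B)$ with $B$ the relevant curve)—in any case the construction is arranged so that the remaining fibration $X \dashrightarrow X_1$ becomes, after generic base change, a fibration over $F=\R(B)$ whose generic fiber $Y/F$ is again birationally an iterated fibration into smooth cubic hypersurfaces of dimension~$\geq 2$, now of one less level. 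By Proposition~\ref{birinvBH} the $\R$-$\mathrm{alg}$ property (Definition~\ref{defn:falg}) depends only on the birational class of~$Y$ over~$F$.

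\smallskip

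\textbf{Step 2: Induct with the fibration theorem.} Apply Theorem~\ref{fibBH} to the dominant morphism $g\colon Y \to Y'$ over~$F$ coming from the next level of the tower. Its hypotheses require the $\R$-$\mathrm{alg}$ property for~$Y'$ and for the fibers of~$g$ over $F$-points of a dense open of~$Y'$. The fibers are smooth cubic hypersurfaces of dimension~$\geq 2$ (over the relevant function fields), for which the property holds by the cited results of~\cite{BWII}; and~$Y'$ is itself birationally an iterated fibration into smooth cubic hypersurfaces of dimension~$\geq 2$, of strictly smaller length, so the property for~$Y'$ follows by the inductive hypothesis—the base of the induction being length~$0$ (where~$Y'=\Spec F$ and there is nothing to prove) or length~$1$ (a single cubic hypersurface, handled directly by~\cite{BWII}). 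Hence the $\R$-$\mathrm{alg}$ property holds for~$Y$, and then, by Theorem~\ref{fibBH} applied to the top fibration and Proposition~\ref{birinvBH}, for~$X$ viewed over~$F$.

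\smallskip

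\textbf{Step 3: Descend the conclusion to~$\R$ and the main obstacle.} The property of Definition~\ref{defn:falg} is phrased over~$F=\R(B)$ in terms of sections of a model $\kX\to B$; specializing to the constant family (i.e.~taking $B$ a curve with $\kX = X\times B$, or more precisely unwinding the definition so that "algebraic section $B\to\kX$" becomes "algebraic curve on~$X$") translates it into the statement that $H_1(X(\R),\Z/2\Z)$ is spanned by classes of algebraic curves; this is exactly the bridge used for the analogous passage from the tight approximation property to Question~\ref{q:ciapprox} in the introduction, specialized to the $1$-dimensional setting. The routine parts are Steps~1 and~3—choosing the model, checking birational invariance bookkeeping, and unwinding definitions. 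The genuinely substantive content is entirely imported: it lives in Theorem~\ref{fibBH} (whose proof, the analogue of Theorem~\ref{intro:thmB}, rests on weak toroidalization and the Néron-type smoothening of Proposition~\ref{toroidaltheorem}) and in the cubic-surface results of~\cite{BWII}. So the main obstacle in the corollary itself is really just verifying that the fibers and bases appearing at every level of the tower genuinely fall under the hypotheses of Theorem~\ref{fibBH}—in particular that "fibration into smooth cubic hypersurfaces of dimension~$\geq 2$" is a property that propagates to the generic fiber over~$F$ after the base changes performed in Step~1, and that the dimension bound~$\geq 2$ is preserved so that~\cite{BWII} applies at the bottom.
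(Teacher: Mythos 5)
Your skeleton (birational invariance via Proposition~\ref{birinvBH}, induction along the tower via Theorem~\ref{fibBH}, then descent from~$F$ to~$\R$ through a constant family and Remark~\ref{rmkBH}~(i)) is the paper's skeleton, but there is a genuine gap at the base of your induction. You assert that the required property for the fibers---smooth cubic hypersurfaces of dimension $\geq 2$ over the relevant function fields---``holds by the cited results of \cite{BWII}''. The results of \cite{BWII} only cover cubic \emph{surfaces} over~$F$. For cubic hypersurfaces of dimension $d>2$ over~$F$ the statement you need is precisely Theorem~\ref{cubicfib} of the paper, which is not a quotation but a further induction on~$d$: one slices the cubic by a generic pencil of hyperplane sections, obtaining (after blowing up the base locus) a fibration over $\P^1_F$ into cubics of dimension $d-1$, and applies Theorem~\ref{fibBH} again, with Remark~\ref{rmkBH}~(ii) supplying the property for the base $B\times\P^1_\R$ and Proposition~\ref{birinvBH} cleaning up the modifications; the surface case itself is handled either by the cited results of \cite{BWII} or by an odd-degree base change producing a line, after which the cubic is a conic bundle and Corollary~\ref{BHtight} applies. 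One cannot shortcut this via Example~\ref{cubique}, since the line argument there is only available when the cubic is defined over~$\R$, not over~$F$. Without this dimension induction, your Step~2 has no input whenever a fiber in the tower has dimension $>2$.

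Two smaller points. First, your Step~1 is off: the bottom layer $X_1$ of the tower is itself a cubic hypersurface of dimension $\geq 2$, not a curve, so taking $B$ to be ``a smooth projective model of $X_1$'' does not make sense; the paper simply takes $B=\P^1_\R$ and $\kX=X\times B$, so that the generic fiber $X_{\R(t)}$ carries the whole tower over~$F$, and Remark~\ref{rmkBH}~(i) converts $f$\nobreakdash-algebraicity of $H_1(\kX(\R),\Z/2\Z)$ into the desired statement about $H_1(X(\R),\Z/2\Z)$ (your Step~3 description in terms of ``sections becoming curves'' is not quite the right bridge, though it is in the right spirit). Second, the hypothesis of Theorem~\ref{fibBH} is not only about fibers over $F$\nobreakdash-points: it requires the conclusion for pullbacks along arbitrary morphisms $\nu:\widetilde{B}\to\kX'$ from real curves, i.e.\ for the cubic fibers over all function fields $\widetilde{F}=\R(\widetilde{B})$; your parenthetical ``over the relevant function fields'' gestures at this, but verifying it again reduces to Theorem~\ref{cubicfib} over arbitrary real function fields, which your argument does not supply.
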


We stress that it is a key point in our proof of Theorem~\ref{intro:thmF} that we work, all the way through,
with one-dimensional families of real cubic hypersurfaces
rather than with real cubic hypersurfaces,
even though it is in the latter that we are ultimately interested.
Indeed, applying our fibration theorems (Theorem~\ref{up} or Theorem~\ref{fibBH})
leads one to consider fibers over
$F$\nobreakdash-rational points of the base that are not $\R$\nobreakdash-points, even when the base
and the total space are defined over~$\R$.  For the same reason,
as we do not know
the tight approximation property for cubic surfaces
over~$F$,
an answer to Question~\ref{q:ciapprox}
for the varieties appearing in Theorem~\ref{intro:thmF} is out of reach.

The article ends with an appendix gathering $\Gal(\C/\R)$\nobreakdash-equivariant versions
of a number of tools of complex analytic geometry that we use throughout, as well as a
description of the relationship, for $\Gal(\C/\R)$\nobreakdash-equivariant Riemann surfaces with compact real locus,
between the real points and the orderings of the field of equivariant meromorphic functions
(Proposition~\ref{spectrereel}).

\bigskip
\emph{Acknowledgements.}
Scheiderer's paper \cite{Scheiderer} plays a fundamental role in our results on homogeneous spaces of linear
algebraic groups.
Its influence is gratefully acknowledged.
We thank Roland Huber for sending us a copy of~\cite{huberthesis},
Dmitri Pavlov for having made \cite{Guralnick} available to us, Jean-Louis Colliot-Thélène for a useful comment on a previous version of this article and the referees for their careful work.

\bigskip
\subsection{Notation and conventions}
\label{conventions}
Let $k$ be a field.
We denote by $\overline{k}$ an algebraic closure of~$k$. A \emph{variety} over $k$ is a separated scheme of finite type over $k$.  If $f: X\to Y$ is
a morphism of varieties over $k$, and $k\subset k'$ is a field extension, we define $f(k'): X(k')\to Y(k')$
to be the induced map on $k'$-points. We say that $f$ is a \textit{modification} if it is proper and birational,
a \emph{regular modification} if in addition~$X$ is regular.
A \textit{rational curve} on a variety $X$ over $k$ is a non-constant morphism $f: C\to X$ where $C$ is a smooth projective
geometrically connected curve of genus~$0$ over $k$.
We say that a variety $X$ over $k$ is \textit{rationally connected} if $X_{\overline{k}}$ is rationally connected in the sense of Koll\'ar--Miyaoka--Mori
 \cite[IV Definition 3.2]{Kollarbook}.
If~$G$ denotes an algebraic group over~$k$, a \emph{homogeneous space} of~$G$ is a variety~$X$ over~$k$
endowed with an action of~$G$ such that the induced action of $G(\overline{k})$ on~$X(\overline{k})$
is transitive; in particular~$X$ is non-empty, while~$X(k)$ may be empty.

Let $\R$ and $\C$ be the fields of real and complex numbers, and let $G$ be the Galois group $\Gal(\C/\R)=\Z/2\Z$.
Let $\sigma\in G$ be the complex conjugation. In all this text (with the exception of~\textsection\ref{subsec:wahom}, where the ground
field is an arbitrary real closed field), we fix a smooth projective connected curve $B$ over $\R$ with function
field $F$ and generic point $\eta$.
Let $F_b=\Frac(\widehat{\sO_{B,b}})$ be the completion of $F$ associated with a closed point $b\in B$.
If $X$ is a variety over $F$, a \textit{model} of $X$ is a flat morphism $f: \kX\to B$ of varieties over $\R$ with
a dense open embedding $X\subset\kX_\eta$ of varieties over $F$.

All $\ci$ or real-analytic manifolds and all complex spaces are assumed to be Hausdorff and second countable. If $M$ and $M'$ are $\ci$  manifolds, we endow the space of $\ci$ maps $\ci(M,M')$ with the weak $\ci$
topology \cite[p.~36]{Hirsch}.  We refer to \cite[p.~60]{Hirsch} for the definition of the $r$-jet of a $\ci$ map
$g: M\to M'$ at $x\in M$.
We denote by $[M]\in H_d(M,\Z/2\Z)$ the fundamental class of a compact $\ci$ manifold of dimension $d$.

Let $\Omega$ be a complex manifold of dimension $1$, and let $\sO_{\Omega,x}$ be the discrete valuation ring of germs of holomorphic complex-valued functions at $x\in\Omega$. If $X$ is a variety over~$\C$, morphisms $\Spec(\sO_{\Omega,x})\to X$ correspond bijectively to germs at $x\in\Omega$ of holomorphic maps $\Omega\to X(\C)$. Let $f:X\to Y$ be a modification of varieties over~$\C$ and let $g:\Omega\to Y(\C)$ be a holomorphic map such that the image by~$g$ of any connected component of $\Omega$ meets the locus where $f$ is an isomorphism. By the valuative criterion of properness applied to the $(\sO_{\Omega,x})_{x\in\Omega}$, the morphisms $\Spec(\sO_{\Omega,x})\to Y$ induced by~$g$ lift uniquely to morphisms $\Spec(\sO_{\Omega,x})\to X$. This gives rise to a holomorphic lift $h:\Omega\to X(\C)$ of~$g$, called the \textit{strict transform} of~$g$. These constructions also work if $\Omega$ is a real-analytic manifold of dimension $1$ and $g$ is a real-analytic map, using the discrete valuation rings $\sO^{\an}_{\Omega,x}$ of germs of real-analytic real-valued functions at $x\in\Omega$.

\section{Tight approximation}
\label{sectight}
Recall that we have fixed in \S\ref{conventions} a smooth projective connected curve~$B$ over~$\R$, and that  $F=\R(B)$.

\subsection{Tight approximation for models over \texorpdfstring{$B$}{𝐵}}

Tight approximation will be a property of smooth varieties over $F$ (see Definition \ref{deftight2}).
It is convenient to first define it for a proper regular model over $B$ of such a variety.

\begin{defn}
\label{deftight}
Let $f: \kX\to B$ be a proper flat
 morphism with $\kX$ regular.
One says that $f$
 satisfies the \textit{tight approximation property}
 if for all $G$-stable open neighbourhoods $\Omega$ of $B(\R)$ in $B(\C)$, all $m\geq 0$,
all $b_1,\dots, b_m \in \Omega$, all $r\geq 0$ and all $G$-equivariant holomorphic sections $u: \Omega\to\kX(\C)$ of $f(\C):\kX(\C)\to B(\C)$ over $\Omega$, there exists a sequence $s_n: B\to\kX$ of sections of $f$ having the same $r$-jets as $u$ at the $b_i$ and such that $s_n(\C)|_\Omega$ converges to $u$ in $\ci(\Omega,\kX(\C))$.
\end{defn}

In practice, to verify that the tight approximation property holds, we will use the following variant of Definition \ref{deftight}.

\begin{prop}
\label{deftightK}
A proper flat
 morphism $f: \kX\to B$ with $\kX$ regular satisfies the tight approximation property
if and only if for all $G$-stable compact subsets $B(\R)\subset K\subset B(\C)$,
all $m\geq 0$, all $b_1,\dots, b_m \in K$, all $r\geq 0$, all $G$-stable Stein open neighbourhoods $\Omega$ of $K$ in $B(\C)$ and all $G$-equivariant holomorphic sections $u: \Omega\to\kX(\C)$ of $f(\C)$ over $\Omega$, there exists a sequence $s_n: B\to\kX$ of sections of $f$ with the same $r$-jets as $u$ at the $b_i$ such that $s_n(\C)|_K$ converges uniformly to $u|_K$.
\end{prop}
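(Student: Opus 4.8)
The plan is to prove the equivalence by showing that each formulation can be deduced from the other, the content being a translation between data indexed by open neighbourhoods $\Omega$ of $B(\R)$ and data indexed by compact sets $K$ together with Stein neighbourhoods of $K$. The forward implication is essentially immediate: assuming the tight approximation property of Definition~\ref{deftight}, suppose we are given $K$, the $b_i\in K$, an integer $r$, a $G$-stable Stein open neighbourhood $\Omega$ of $K$, and a $G$-equivariant holomorphic section $u:\Omega\to\kX(\C)$. Since $K\supset B(\R)$ is compact and $G$-stable, $\Omega$ is in particular a $G$-stable open neighbourhood of $B(\R)$, so Definition~\ref{deftight} directly produces a sequence of algebraic sections $s_n$ with the prescribed $r$-jets at the $b_i$ and with $s_n(\C)|_\Omega\to u$ in $\ci(\Omega,\kX(\C))$. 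Restricting to the compact set $K\subset\Omega$, convergence in the weak $\ci$ topology implies uniform convergence of $s_n(\C)|_K$ to $u|_K$, which is exactly the conclusion required by Proposition~\ref{deftightK}. No smoothing or shrinking is needed here.

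The converse is the substantive direction. Suppose the condition of Proposition~\ref{deftightK} holds, and let $\Omega$ be an arbitrary $G$-stable open neighbourhood of $B(\R)$, with $b_1,\dots,b_m\in\Omega$, $r\geq 0$, and $u:\Omega\to\kX(\C)$ a $G$-equivariant holomorphic section. We must approximate $u$ in $\ci(\Omega,\kX(\C))$, i.e.\ uniformly together with all derivatives on every compact subset of $\Omega$. The idea is to exhaust $\Omega$ by a suitable increasing sequence of $G$-stable compact sets and to apply the compact-set criterion to each of them. First I would fix, once and for all, a $G$-stable compact set $K_0\subset\Omega$ containing $B(\R)$ and all the $b_i$ in its interior; then, given any compact $L\subset\Omega$ and any $\varepsilon>0$ and any order $\ell$ of derivatives, I would choose a $G$-stable compact set $K\subset\Omega$ that contains $K_0\cup L$ in its interior, and a $G$-stable \emph{Stein} open neighbourhood $\Omega'$ of $K$ with $\overline{\Omega'}\subset\Omega$ compact. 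The existence of such a Stein $\Omega'$ is where the one-dimensionality of $B(\C)$ enters: every open subset of a non-compact Riemann surface is Stein, and an open subset of a compact Riemann surface is Stein as soon as its complement has nonempty interior in each component; in the $G$-equivariant setting one replaces $\Omega'$ by $\Omega'\cap\sigma(\Omega')$, which is still Stein and still $G$-stable, using the appendix's equivariant version of the relevant Stein-theoretic facts. Applying the criterion of Proposition~\ref{deftightK} to the data $(K,\,b_1,\dots,b_m,\,r,\,\Omega',\,u|_{\Omega'})$ yields algebraic sections $s_n$ with the right $r$-jets at the $b_i$ and with $s_n(\C)|_K\to u|_K$ uniformly.

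The one remaining point is to upgrade uniform convergence on $K$ to $\ci$ convergence on $L$. This is a standard application of the Cauchy integral formula: the $s_n(\C)$ and $u$ are holomorphic sections of $\kX(\C)\to B(\C)$ over a neighbourhood of $K$, and once $n$ is large, $s_n(\C)(K)$ and $u(K)$ lie in a fixed compact region of $\kX(\C)$; working in local holomorphic coordinates adapted to the submersion $f(\C)$ near that region, uniform closeness on $K$ forces closeness of all derivatives up to order $\ell$ on the slightly smaller compact set $L$ (whose distance to $\partial K$ is positive). Equivariance is preserved throughout because all the chosen objects are $G$-stable and $u$ and the $s_n(\C)$ are $G$-equivariant. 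A diagonal argument over an exhaustion $L_1\subset L_2\subset\cdots$ of $\Omega$ by compacts, with $\varepsilon\to 0$ and $\ell\to\infty$, then produces a single sequence converging to $u$ in $\ci(\Omega,\kX(\C))$ with the prescribed jets, completing the proof. The main obstacle is organizational rather than deep: one must be careful that the jet conditions at the $b_i$ are imposed uniformly (hence the fixed $K_0$ containing all $b_i$ in its interior, independent of $L$) and that the Stein neighbourhoods can be chosen $G$-equivariantly with compact closure inside $\Omega$; the analytic core—producing Stein neighbourhoods on curves and passing from $C^0$ to $\ci$ estimates for holomorphic maps—is routine given the equivariant complex-analytic toolkit collected in the appendix.
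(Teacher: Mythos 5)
Your forward implication and the core of your converse (apply the compact--Stein criterion along an exhaustion, upgrade uniform convergence on compacts to $\ci$ convergence via Cauchy estimates for holomorphic maps, then diagonalize while keeping the jet conditions) are correct and essentially the paper's argument, which simply notes that once $\Omega\neq B(\C)$ the set $\Omega$ is itself Stein and invokes H\"ormander's theorem for the $C^0$-to-$\ci$ upgrade. However, there is a genuine gap: you never treat the case $\Omega=B(\C)$, which is an admissible $G$-stable open neighbourhood of $B(\R)$ in Definition~\ref{deftight}. In that case the compacts $L\subset\Omega$ over which you must achieve $\ci$ convergence include $B(\C)$ itself, and your recipe breaks down at the very first step: a compact $K$ containing such an $L$ in its interior must be all of $B(\C)$, and $B(\C)$ admits no Stein open neighbourhood (its only neighbourhood is $B(\C)$, which is compact), so the hypothesis of Proposition~\ref{deftightK} gives you nothing for it. Nor can the missing case be recovered from what the criterion does give you: uniform approximation of $u$ on compacts missing a point of each component does not control the approximating sections near the omitted points, since there is no maximum principle for holomorphic maps into a compact target (think of $z\mapsto \varepsilon/z$ as a map to $\P^1$, small on the boundary circle but hitting $\infty$).

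The paper closes this case with an extra idea your proposal lacks: when $\Omega=B(\C)$, the $G$-equivariant holomorphic section $u$ of the proper map $f(\C)$ over the compact curve is automatically algebraic by (equivariant) GAGA (\S\ref{Ranal}), so the statement of Definition~\ref{deftight} holds trivially with $s_n=u$; one may then assume $\Omega\neq B(\C)$, which (together with $G$-stability, so that $\Omega$ contains no full compact component) is exactly what makes $\Omega$ Stein and your construction of the auxiliary Stein neighbourhoods legitimate. With that one case added, your argument goes through; without it, the proof of the ``sufficient'' direction is incomplete.
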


\begin{proof}
The condition is obviously necessary, and we show that it is sufficient.
Since the statement in Definition \ref{deftight} is automatically verified for $\Omega=B(\C)$ as $u$ is then algebraic by GAGA (see \S\ref{Ranal}), we may assume that $\Omega\neq B(\C)$, hence that $\Omega$ is Stein \cite[p.~134]{Stein}.
By \cite[Theorem 2.2.3]{Hormander}, the sequence $s_n(\C)|_\Omega$ converges to $u$ in $\ci(\Omega,\kX(\C))$ if and only if it converges to $u$ uniformly on every compact subset $K\subset \Omega$.
Choosing an exhaustion of $\Omega$ by $G$-stable compact subsets containing the $b_i$ finishes the proof.
\end{proof}

\subsection{Particular cases and variants}
\label{particular}

\subsubsection{Complex fibrations}
\label{complexfibrations}

If $B'$ is a smooth projective connected curve over $\C$ with generic point $\eta'$ and $f': \kX'\to B'$ is a proper flat morphism with $\kX'$ regular, one can mimic Definition \ref{deftight}, by disregarding all references to the action of $G$. The resulting property of $f'$ is equivalent to the tight approximation property for $f: \kX\to B$, where $f=f'$, $\kX=\kX'$ and $B=B'$, but
 $B$ is viewed as a (not geometrically connected) smooth projective connected curve over $\R$ by composing its structural morphism with $\Spec(\C)\to\Spec(\R)$. (Use the fact that $B(\C)=(B'\times_{\Spec(\R)}\Spec(\C))(\C)$ is the disjoint union of two copies of $B'(\C)$ exchanged by the action of $G$, and the similar description of $\kX(\C)$.) In this way, Definition \ref{deftight} encompasses both real and complex fibrations.

\subsubsection{Constant fibrations}
\label{constantfibrations}

When $\kX=X\times B$ for some smooth and proper variety $X$ over $\R$ and $f$ is the second projection,
 the tight approximation property generalizes
Bochnak and Kucharz' property $(B)$ for the variety $X$, defined in \cite[p.~604]{BKrat} when $K=B(\R)$
(where $K\subset B(\C)$ is a compact subset as in Proposition \ref{deftightK})
and in \cite[p.~88]{BKC} when $B$ is a smooth projective connected curve over $\C$ (keeping \S\ref{complexfibrations} in mind).

\subsubsection{Nash approximation}
\label{Nash}

  In \cite{DLS}, Demailly, Lempert and Shiffman consider a variant of the tight approximation property. They restrict to constant complex fibrations (as in \S\S \ref{complexfibrations}--\ref{constantfibrations}), allow possibly higher-dimensional bases, but only look for Nash sections, not algebraic ones. Dropping the algebraicity requirement on the section yields a very general theorem \cite[Theorem 1.1]{DLS}. Tight approximation cannot possibly hold in this generality (see Proposition \ref{tightimpliesRC}).

\subsection{Sections over the real locus}
\label{realsecpar}

The tight approximation property is only interesting for morphisms $f: \kX\to B$ as above such that $f(\R): \kX(\R)\to B(\R)$ has a $\ci$ section. (Otherwise, it holds trivially as the property to be checked is vacuous.) This condition is equivalent to the existence of a $\sC^0$ section of $f(\R)$ with values in the locus $U\subset X(\R)$ where $f(\R)$ is submersive. Indeed, $\ci$ sections of $f(\R)$ always take values in $U$, and one can approximate $\sC^0$ sections with values in $U$ by $\ci$ sections (using the normal form theorem for submersions and \cite[Chapter~2, Theorem 2.4]{Hirsch}).
When a $\ci$ section of $f(\R)$ exists, the tight approximation property allows one to approximate it by algebraic sections, as we show in Corollary~\ref{realsectight}.

\begin{prop}
\label{realsecprop}
Let $f: \kX\to B$ be a proper flat
 morphism with $\kX$ regular. Let $v: B(\R)\to \kX(\R)$ be a $\ci$ section  of $f(\R)$. Choose $b_1,\dots, b_m\in B(\R)$ and $r\geq 0$.
Then there exist a sequence of $G$-stable open neighbourhoods $\Omega_n$ of $B(\R)$ in $B(\C)$ and $G$-equivariant holomorphic sections $u_n:\Omega_n\to \kX(\C)$ of $f(\C)$ over $\Omega_n$ such that the $(u_n)|_{B(\R)}$ have the same $r$-jets as $v$ at the $b_i$ and converge to $v$ in $\ci(B(\R),\kX(\R))$.
\end{prop}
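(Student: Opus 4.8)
The plan is to produce the $u_n$ in two stages: first approximate~$v$ by \emph{real-analytic} sections of $f(\R)$ carrying the prescribed jets, then complexify these, $G$\nobreakdash-equivariantly, in a neighbourhood of~$B(\R)$. Recall from \S\ref{realsecpar} that~$v$ takes its values in the submersive locus $U\subset\kX(\R)$; along~$U$ the scheme~$\kX$ is smooth, so~$U$ is open in the real-analytic manifold $\kX_{\mathrm{sm}}(\R)$ and $\pi:=f(\R)|_U:U\to B(\R)$ is a surjective real-analytic submersion with compact base. Thus the statement only involves the real-analytic submersion~$\pi$, the complex manifold $\kX_{\mathrm{sm}}(\C)$, and its antiholomorphic involution~$\sigma$.

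\emph{Step 1: real-analytic sections with prescribed jets.} I would construct real-analytic sections $w_n:B(\R)\to U$ of~$\pi$ converging to~$v$ in $\ci(B(\R),U)$ with $j^r_{b_i}w_n=j^r_{b_i}v$ for all~$n$ and~$i$. Embed~$U$ as a closed real-analytic submanifold of some~$\R^N$ (Grauert) and fix a real-analytic retraction $\rho:V\to U$ of a tubular neighbourhood $V\subset\R^N$. Choose a real-analytic function $\Pi:B(\R)\to\R$ vanishing to order exactly $2(r+1)$ at each~$b_i$ and nowhere else (the even order $2(r+1)$ removes any sign obstruction to the existence of~$\Pi$ on the circles making up~$B(\R)$), together with a real-analytic map $Q:B(\R)\to\R^N$ with $j^{2r+1}_{b_i}Q=j^{2r+1}_{b_i}v$ for every~$i$ (polynomial Hermite interpolation after a real-analytic embedding of~$B(\R)$ into an affine space). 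Then $\rho_0:=(v-Q)/\Pi$ is a well-defined $\ci$ map $B(\R)\to\R^N$; approximating it in the $\ci$ topology by real-analytic maps $\rho_{0,n}\to\rho_0$ (Whitney) and setting $g_n:=Q+\Pi\,\rho_{0,n}$ produces real-analytic maps $g_n:B(\R)\to\R^N$ converging to~$v$ with $j^{2r+1}_{b_i}g_n=j^{2r+1}_{b_i}v$. For~$n$ large one has $g_n(B(\R))\subset V$, so $\rho\circ g_n:B(\R)\to U$ is real-analytic, converges to $\rho\circ v=v$, and (using $\rho|_U=\mathrm{id}$ and $g_n(b_i)=v(b_i)$) has the same $r$-jet as~$v$ at each~$b_i$; hence $\pi\circ\rho\circ g_n$ is a real-analytic diffeomorphism of~$B(\R)$, $\ci$-close to the identity, with trivial $r$-jet at each~$b_i$, and $w_n:=(\rho\circ g_n)\circ(\pi\circ\rho\circ g_n)^{-1}$ works. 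This step is the main obstacle: the division by~$\Pi$ is what lets one match the jets exactly, while the retraction and the reparametrisation of the base are what bring us back to the space of sections of~$\pi$.

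\emph{Step 2: equivariant complexification.} Each~$w_n$ is a real-analytic map from the compact totally real submanifold $B(\R)\subset B(\C)$ to $\kX_{\mathrm{sm}}(\R)\subset\kX_{\mathrm{sm}}(\C)$; since $B(\C)$ and $\kX_{\mathrm{sm}}(\C)$ are complexifications of $B(\R)$ and $\kX_{\mathrm{sm}}(\R)$, the map~$w_n$ extends to a holomorphic map $\widetilde u_n:\Omega_n\to\kX_{\mathrm{sm}}(\C)\subset\kX(\C)$ over an open neighbourhood~$\Omega_n$ of~$B(\R)$ in~$B(\C)$ (standard extension of real-analytic maps to complexifications). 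Because~$B(\R)$ is a uniqueness set for holomorphic functions on~$B(\C)$, the holomorphic maps $f(\C)\circ\widetilde u_n$ and $\Omega_n\hookrightarrow B(\C)$ coincide near~$B(\R)$, as they coincide on~$B(\R)$; likewise $z\mapsto\sigma(\widetilde u_n(\sigma(z)))$ is holomorphic and agrees with~$w_n$ on~$B(\R)$, hence with~$\widetilde u_n$ near~$B(\R)$. Shrinking~$\Omega_n$ to a $G$-stable neighbourhood of~$B(\R)$ on which all these coincidences hold, we get a $G$-equivariant holomorphic section $u_n:=\widetilde u_n$ of $f(\C)$ over~$\Omega_n$ with $u_n|_{B(\R)}=w_n$; in particular the $u_n|_{B(\R)}$ have the same $r$-jets as~$v$ at the~$b_i$ and converge to~$v$ in $\ci(B(\R),\kX(\R))$, which is what we wanted.
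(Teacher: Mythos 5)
Your proof is correct and follows essentially the same route as the paper's: produce real-analytic approximations with the prescribed $r$-jets, restore the section property by the reparametrization $w_n=(\rho\circ g_n)\circ(\pi\circ\rho\circ g_n)^{-1}$ (the paper's $v'_n=v_n\circ(f(\R)\circ v_n)^{-1}$), then extend $G$-equivariantly by local power series and recover the section property by analytic continuation. The only real difference is that your Step~1 reproves by hand (embedding, division by $\Pi$, tubular retraction) the jet-interpolating real-analytic approximation that the paper isolates as Lemma~\ref{Whitney} and deduces from Whitney and Tognoli.
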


\begin{cor}
\label{realsectight}
Under the assumptions of Proposition \ref{realsecprop}, if $f$ satisfies the tight approximation property, there exists a sequence $s_n: B\to\kX$ of sections of $f$ such that the $s_n(\R)$ have the same $r$-jets as $v$ at the $b_i$ and converge to $v$ in $\ci(B(\R),\kX(\R))$.
\end{cor}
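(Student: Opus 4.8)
The plan is to deduce Corollary~\ref{realsectight} from Proposition~\ref{realsecprop} by feeding the approximating holomorphic sections~$u_n$ produced there into the tight approximation property of~$f$. First I would invoke Proposition~\ref{realsecprop} with the given data $v$, $b_1,\dots,b_m$ and $r$, obtaining a sequence of $G$\nobreakdash-stable open neighbourhoods $\Omega_n$ of $B(\R)$ in $B(\C)$ together with $G$\nobreakdash-equivariant holomorphic sections $u_n:\Omega_n\to\kX(\C)$ of $f(\C)$ such that $(u_n)|_{B(\R)}$ has the same $r$\nobreakdash-jet as $v$ at each $b_i$ and $(u_n)|_{B(\R)}\to v$ in $\ci(B(\R),\kX(\R))$.

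Next, for each fixed $n$, I would apply the tight approximation property for $f$ (in the form of Definition~\ref{deftight}) to the $G$\nobreakdash-equivariant holomorphic section $u_n:\Omega_n\to\kX(\C)$, with the same points $b_1,\dots,b_m\in B(\R)\subset\Omega_n$ and the same integer $r$. This yields a sequence of algebraic sections $s_{n,k}:B\to\kX$ of $f$ having the same $r$\nobreakdash-jets as $u_n$ at the $b_i$ — hence, by the jet compatibility just noted, the same $r$\nobreakdash-jets as $v$ at the $b_i$ — and such that $s_{n,k}(\C)|_{\Omega_n}\to u_n$ in $\ci(\Omega_n,\kX(\C))$ as $k\to\infty$. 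Restricting to $B(\R)$, which is a compact subset of $\Omega_n$, I get $s_{n,k}(\R)\to u_n|_{B(\R)}$ in $\ci(B(\R),\kX(\R))$ as $k\to\infty$.

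Finally I would extract a diagonal subsequence. Fix a metric inducing the weak $\ci$ topology on the relevant space of maps on the compact manifold $B(\R)$; since $B(\R)$ is compact this topology is metrizable. For each $n$, choose $k(n)$ large enough that $s_{n,k(n)}(\R)$ is within distance $1/n$ of $u_n|_{B(\R)}$, and set $s_n:=s_{n,k(n)}$. Each $s_n$ has the same $r$\nobreakdash-jet as $v$ at every $b_i$ (a property that is preserved at every stage), and by the triangle inequality $s_n(\R)\to v$ in $\ci(B(\R),\kX(\R))$ since both $s_n(\R)-u_n|_{B(\R)}\to 0$ and $u_n|_{B(\R)}\to v$. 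This produces the desired sequence.

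The only point requiring any care is the interchange of the two limiting processes, i.e.\ the diagonal argument and the assertion that the weak $\ci$ topology on maps out of the compact manifold $B(\R)$ is metrizable so that ``within distance $1/n$'' makes sense; this is standard (see \cite[Chapter~2, \textsection1]{Hirsch}), so there is no real obstacle here. One should also note that the $r$\nobreakdash-jet equality at the $b_i$ is a closed condition that is exactly preserved — not merely approximately — at each application of the tight approximation property and of Proposition~\ref{realsecprop}, so no approximation is lost on the jets.
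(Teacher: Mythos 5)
Your argument is correct and is exactly the (implicit) argument the paper has in mind: the corollary is stated without proof precisely because it follows by feeding the holomorphic sections $u_n$ of Proposition~\ref{realsecprop} into Definition~\ref{deftight} over $\Omega_n$ with the points $b_i$ and order $r$, and then diagonalizing using the metrizability of the weak $\ci$ topology on $\ci(B(\R),\kX(\R))$. The jet bookkeeping and the restriction from $\ci(\Omega_n,\kX(\C))$-convergence to $\ci(B(\R),\kX(\R))$-convergence are handled as you describe, so there is nothing to add.
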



\begin{proof}[Proof of Proposition \ref{realsecprop}]
By Whitney's approximation theorem (Lemma \ref{Whitney} below),
there exists a sequence $v_n:  B(\R)\to \kX(\R)$ of real-analytic maps
with the same $r$-jets as $v$ at the $b_i$ converging to $v$ in $\ci(B(\R),\kX(\R))$.
If $n\gg 0$, the map $f(\R)\circ v_n$ is a diffeomorphism
of $B(\R)$ \cite[Corollary 5.7]{Michor},
 and $v'_n:=v_n\circ(f(\R)\circ v_n)^{-1}$ converges to $v$ in $\ci(B(\R),\kX(\R))$
 \cite[Theorem 7.6]{Michor}. Locally at $x\in B(\R)$, the map $v'_n$ is given by convergent power series, and extends in a unique way to a germ of holomorphic map $B(\C)\to\kX(\C)$ at $x$. By uniqueness, these local extensions glue to a $G$-equivariant holomorphic map $u_n: \Omega_n\to \kX(\C)$ on some $G$-stable open neighbourhood $\Omega_n$ of $B(\R)$ in $B(\C)$ each connected component of which meets~$B(\R)$, and $u_n$ is a section of $f(\C)$ over $\Omega_n$ by analytic continuation.
\end{proof}

\begin{lem}
\label{Whitney}
Let $g: M\to M'$ be a $\ci$ map between real-analytic manifolds, let $S\subset M$ be a discrete subset, let $r: S\to \N$ be a function, and let $\sU\subset \ci(M,M')$ be a neighbourhood of $g$ for the strong $\ci$ topology \cite[p.~36]{Whitney}.
Then there exists a real-analytic map $h\in\sU$ that has the same $r(s)$-jet as $g$ at $s$ for all $s\in S$.
\end{lem}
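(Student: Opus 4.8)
The plan is to reduce, by standard manoeuvres, to the approximation of a $\ci$ function on Euclidean space by real-analytic functions with prescribed jets at a discrete set, and then to carry out that approximation by combining the classical Whitney real-analytic approximation theorem with an explicit jet-correction built from rapidly decreasing rational functions.

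\emph{Reductions.} Embed $M'$ as a closed real-analytic submanifold of some $\R^N$ (Grauert's embedding theorem) and choose a real-analytic tubular-neighbourhood retraction $\rho\colon V\to M'$, with $V\subset\R^N$ open and $\rho$ restricting to the identity on $M'$. If $\tilde h\colon M\to\R^N$ is real-analytic, takes values in $V$, and is sufficiently close to $g$ (viewed as a map into $\R^N$) in the strong topology, then $h:=\rho\circ\tilde h\colon M\to M'$ is real-analytic and close to $g$; and since the $r$-jet of a composition depends only on the $r$-jets of the factors while $\rho$ fixes $M'$ pointwise, $h$ has the same $r(s)$-jet as $g$ at each $s\in S$. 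This reduces the lemma to $M'=\R^N$, hence, coordinate by coordinate, to $M'=\R$. Next embed $M$ as a closed real-analytic submanifold of some $\R^P$ and extend $g\colon M\to\R$ to a $\ci$ function $G$ on $\R^P$ (for instance $G=\chi\cdot(g\circ\pi)$, where $\pi$ is a real-analytic retraction from a tubular neighbourhood of $M$ onto $M$ and $\chi$ is a $\ci$ cutoff supported in that neighbourhood and equal to $1$ near $M$). As restriction $\ci(\R^P)\to\ci(M)$ is continuous for the strong topologies, preserves real-analyticity, and sends the ambient $r(s)$-jet at $s\in S$ to the $r(s)$-jet along $M$, it suffices to find, in any prescribed strong neighbourhood of $G$ in $\ci(\R^P)$, a real-analytic function $H$ with $j^{r(s)}_sH=j^{r(s)}_sG$ for all $s\in S$.

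\emph{Approximation with prescribed jets on $\R^P$.} First apply the classical Whitney approximation theorem (real-analytic approximation of $\ci$ functions on $\R^P$ in the strong topology, via convolution with Gaussians of variable width) to obtain a real-analytic $H_0$ as close to $G$ as we wish; using that the strong topology allows the closeness to be as small as we please at each point of $S$, we may assume $H_0$ is extremely close to $G$, together with finitely many derivatives, near each $s\in S$. We then correct the jets. Enumerate $S=\{s_1,s_2,\dots\}$ compatibly with an exhaustion of $\R^P$ by compact sets, so that $|s_k|\to\infty$, and set $H:=H_0+\sum_{k\ge1}\phi_k$ with $\phi_k:=R_k\,\pi_k\,D_k$, where: $R_k:=\prod_{j<k}\|x-s_j\|^{2(r(s_j)+1)}(1+\|x-s_j\|^{2(r(s_j)+1)})^{-1}$ is a bounded real-analytic factor vanishing to order $>r(s_j)$ at $s_j$ for $j<k$ and nonzero at $s_k$; $\pi_k$ is a polynomial chosen so that $R_k\pi_kD_k$ has, at $s_k$, the same $r(s_k)$-jet as $G-H_0-\sum_{j<k}\phi_j$ (possible since $R_k(s_k)\ne0$ and $D_k$ has $r(s_k)$-jet $1$); and $D_k:=(1+\|x-s_k\|^{2(r(s_k)+1)})^{-N_k}$ is a rapidly decreasing factor, real-analytic on a complex neighbourhood of $\R^P$ and with $r(s_k)$-jet equal to $1$ at $s_k$ since $\|x-s_k\|^{2(r(s_k)+1)}$ vanishes to order $2(r(s_k)+1)>r(s_k)$. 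By construction $\phi_k$ carries exactly the missing $r(s_k)$-jet at $s_k$ and the zero jet at $s_1,\dots,s_{k-1}$, so $H$ has the prescribed jet at every point of $S$. Taking the exponents $N_k$ large concentrates $\phi_k$ near $s_k$; and if $H_0$ was chosen close enough to $G$ at the points of $S$ — the required closeness being fixed in advance in terms of the (fixed) enumeration, of $r$, and of the $N_k$ — then the series $\sum_k\phi_k$ converges in the strong $\ci$ topology and, crucially, locally uniformly on a complex neighbourhood of $\R^P$, so that $H$ is real-analytic and lies in the prescribed neighbourhood of $G$. Unwinding the reductions gives the lemma.

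\emph{Main obstacle.} The delicate point is this last one: keeping the infinite correction series convergent to a genuinely real-analytic — not merely $\ci$ — function, while ensuring that each term fixes precisely one jet and perturbs none of the earlier ones. This is what dictates the rational choice of the decreasing factors $D_k$: an exponential factor $\exp(-\lambda\|x-s_k\|^{2(r(s_k)+1)})$ would still have the correct jet and decay, but its holomorphic extension blows up too fast on complex strips, whereas a polynomial correction cannot decay at all; and it is what necessitates the bookkeeping that fixes, in the right order, the closeness of $H_0$, the exponents $N_k$, and the effective supports of the $\phi_k$.
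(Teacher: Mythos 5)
Your reductions are sound and in fact parallel the paper's own: after embedding via Grauert(--Morrey), the paper also reduces to $M'=\R$ by the argument of Whitney's Lemma~22, exactly as your tubular-retraction step does. The divergence is at the final step: the paper simply invokes Tognoli's Theorem~3.3 for real-analytic approximation with prescribed jets at the points of $S$, whereas you try to reprove that statement by an explicit correction series $H=H_0+\sum_k\phi_k$. That is precisely where your argument has a genuine gap: the convergence/bookkeeping step, which you yourself flag as the main obstacle, is asserted rather than carried out, and as you have organized the choices it does not close.

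Concretely: (1) the order of quantifiers is circular — you say the required closeness of $H_0$ to $G$ is ``fixed in advance in terms of \dots the $N_k$'', but each $N_k$ can only be chosen after $\pi_k$, and $\pi_k$ (hence the size of $\phi_k$, hence how large $N_k$ must be taken) depends on $H_0$ and on all earlier $\phi_j$. (2) Your only mechanism preventing $\phi_j$ from polluting the jets at the other points of $S$ is the decay of $D_j$, but $D_j=(1+\|x-s_j\|^{2(r(s_j)+1)})^{-N_j}$ is essentially equal to $1$ on the ball of radius $1/2$ around $s_j$ no matter how large $N_j$ is; for points of $S$ within distance $1$ of $s_j$ (nothing in the hypotheses prevents this) the pollution is therefore comparable to the correction itself and is not controlled by $N_j$, and its derivatives up to order $r(s_k)$ at such points even grow with $N_j$, so the transfer constants entering your recursion are not computable before the construction starts — feeding back into (1). (3) ``Converges in the strong $\ci$ topology'' is not the right statement: in the Whitney strong topology a series of nonzero analytic terms never converges (convergent sequences are eventually constant outside a compact set); what is actually needed is that the $\phi_k$ be small, with summable weights, for the locally finite family of seminorms defining $\sU$, together with uniform bounds for the holomorphic extensions on one pre-chosen (pinching) complex neighbourhood of $\R^P$ — itself requiring an argument, since the complex zeros of $1+\|x-s_j\|^{2(r(s_j)+1)}$ approach $\R^P$ at distance roughly $1/r(s_j)$ near the unit sphere around $s_j$. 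These defects are repairable — for instance, build into every $\phi_k$ a pre-chosen convergent product of factors vanishing to order $>r(s_j)$ at \emph{all} other $s_j$, with exponents fixed in advance from $S$ and $r$ alone, so that the corrections are decoupled and all constants can be fixed before $H_0$ is chosen — or you can simply quote Tognoli's theorem for this step, as the paper does; but as written the construction does not yet prove the lemma.
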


\begin{proof}
We may assume~$M$ and~$M'$ to be connected.
By the Grauert--Morrey theorem \cite[Theorem 3]{Grauert}, we may then assume
that~$M$ and~$M'$ are embedded in Euclidean spaces. In this case, the lemma without jets is Whitney's approximation theorem \cite[Theorem 2]{Whitney}. With jets, it is claimed without proof in \cite[p.~654]{Whitney}. It can be reduced to the case where $M'=\R$ by the argument of \cite[Lemma 22]{Whitney}, in which case it follows from \cite[Theorem 3.3]{Tognoli}.
\end{proof}

\subsection{Weak approximation}
\label{parapprox}
We finally discuss the relation between the tight approximation property and the more classical weak approximation property.

\begin{defn}
\label{weakdef}
A smooth variety $X$ over $F$ satisfies the \textit{weak approximation property} if the image of the diagonal map $X(F)\to\prod_b X(F_{b})$ is dense with respect to the product topology (endowing each~$X(F_b)$ with
the topology defined by the discrete valuation on~$F_b$), where the product runs over all closed points $b\in B$.

We will say that a proper flat morphism $f:\kX\to B$ with $\kX$ regular satisfies the weak approximation property if so does its generic fiber $\kX_\eta$.
\end{defn}

This property may fail since local data at finitely many real points of $B$ might not be interpolable by a $\ci$ section of $f(\R)$, let alone by an algebraic section of~$f$. This is precisely Colliot-Th\'el\`ene's reciprocity obstruction \cite[\S3]{CTgroupes} to the weak approximation property, as reinterpreted by Ducros (\cite[Th\'eor\`eme 4.3]{Ducros}, see also \cite[Proposition 3.19]{PalSzabo}).

\begin{defn}
\label{reciprocity}
Let $f:\kX\to B$ be a proper flat morphism with $\kX$ regular.
Let $\Xi\subset \prod_b\kX_\eta(F_{b})$ be the subset of $(v_b)\in\prod_b\kX_\eta(F_{b})$ whose projection in $\prod_{b\in B(\R)}\kX_\eta(F_{b})$ is induced by a $\ci$ section $v:B(\R)\to\kX(\R)$ of $f(\R)$.
We say that \textit{the reciprocity obstruction is the only obstruction to the validity of the weak approximation property for $f$} if the image of the diagonal map $\kX_{\eta}(F)\to\prod_b\kX_\eta(F_{b})$ is dense in $\Xi$. We say that there is \textit{no reciprocity obstruction to the validity of the weak approximation property for $f$} if $\Xi$ is dense in $\prod_b\kX_\eta(F_{b})$.
\end{defn}

\begin{rmk}
Using
\cite[Lemme~3.5, Lemme~3.6]{CTgroupes},
\cite[Théorème~3.5, proof of Proposition~4.1]{Ducros}
and \cite[Chapter~2, Theorem 2.4]{Hirsch},
one checks that the closure of~$\Xi$
in $\prod_b\kX_\eta(F_{b})$ coincides
with the subset denoted $\sE(\kX_\eta)$ in \cite[Définition~1.8]{Ducros}.
\end{rmk}

The tight approximation property is designed to take into account the reciprocity obstruction, by incorporating the assumption that $B(\R)\subset\Omega$ into Definition \ref{deftight}.

\begin{prop}
\label{implitight}
Let $f:\kX\to B$ be a proper flat morphism with $\kX$ regular that satisfies the tight approximation property. Choose $(v_b)\in\prod_b\kX_\eta(F_{b})$ whose projection in $\prod_{b\in B(\R)}\kX_\eta(F_{b})$ is induced by a $\ci$ section $v:B(\R)\to\kX(\R)$ of $f(\R)$. Let $\Sigma\subset B$ be a finite set of closed points, and fix $r\geq 0$.
Then there exists a sequence $s_n: B\to\kX$ of sections of $f$ that coincide to order $r$ with $v_{b}:\Spec(\widehat{\sO_{B,b}})\to \kX$ for all $b\in \Sigma$ and such that $s_n(\R)$ converges to $v$ in $\ci(B(\R),\kX(\R))$.
\end{prop}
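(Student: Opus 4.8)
The plan is to convert the local data into a single $G$-equivariant holomorphic section defined near $B(\R)$ and near the complex points of~$\Sigma$, then feed it into the tight approximation property of~$f$ and extract a diagonal subsequence.

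Write $\Sigma_\R=\Sigma\cap B(\R)$. First I would produce the germ near~$B(\R)$ by applying Proposition~\ref{realsecprop} to~$v$, the points of~$\Sigma_\R$ and~$r$: this yields $G$-stable open neighbourhoods $\Omega_n$ of~$B(\R)$ in~$B(\C)$ and $G$-equivariant holomorphic sections $u_n\colon\Omega_n\to\kX(\C)$ of $f(\C)$ such that each $u_n|_{B(\R)}$ has the same $r$-jet as~$v$ at every point of~$\Sigma_\R$ and $u_n|_{B(\R)}\to v$ in $\ci(B(\R),\kX(\R))$. Since for $b\in\Sigma_\R$ the section $v_b\colon\Spec(\widehat{\sO_{B,b}})\to\kX$ is, by hypothesis, the one induced by~$v$ (that is, its formal Taylor expansion at~$b$, which lands in the submersive locus of~$f$ by \S\ref{realsecpar}), having the same $r$-jet as~$v$ at~$b$ is exactly coinciding to order~$r$ with~$v_b$. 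Shrinking the~$\Omega_n$, I may assume that $\overline{\Omega_n}$ avoids the finitely many points of $B(\C)$ lying over $\Sigma\setminus\Sigma_\R$.

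Next I would handle the complex points. Fix $b\in\Sigma\setminus\Sigma_\R$; by the valuative criterion of properness applied to~$f$, $v_b$ extends to a section $v_b\colon\Spec(\widehat{\sO_{B,b}})\to\kX$, necessarily a closed immersion with image a regular subscheme $Z\cong\Spec(\widehat{\sO_{B,b}})$ of~$\kX$. Since $Z$ maps isomorphically onto its image in~$B$, the differential of~$f$ at the closed point~$x$ of~$Z$ is surjective, so $f$ is smooth at~$x$ (Jacobian criterion, $\kX$ and~$B$ being smooth over~$\R$). By the holomorphic submersion normal form, applied near one of the two points of $B(\C)$ lying over~$b$ and transported to the other by~$\sigma$, there is thus a $G$-equivariant holomorphic section~$w_b$ of $f(\C)$ on a small $G$-stable neighbourhood~$D_b$ of those two points having the same $r$-jet as~$v_b$; the~$D_b$ may be chosen with pairwise disjoint closures, each disjoint from every~$\overline{\Omega_n}$. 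Set $\Omega'_n=\Omega_n\cup\bigcup_{b\in\Sigma\setminus\Sigma_\R}D_b$, a $G$-stable open neighbourhood of~$B(\R)$ containing every point of $B(\C)$ over~$\Sigma$, and let $u'_n$ be the $G$-equivariant holomorphic section of $f(\C)$ over~$\Omega'_n$ agreeing with~$u_n$ on~$\Omega_n$ and with~$w_b$ on~$D_b$. Then $u'_n|_{B(\R)}\to v$ in $\ci(B(\R),\kX(\R))$, and $u'_n$ coincides to order~$r$ with~$v_b$ at every $b\in\Sigma$: at the real points by the previous paragraph, and at a complex point because the $r$-jet of~$w_b$ at either point of $B(\C)$ over~$b$ matches that of~$v_b$, the two being $\sigma$-conjugate.

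Finally I would apply the tight approximation property of~$f$ (Definition~\ref{deftight}) with $\Omega=\Omega'_n$, the points of $B(\C)$ lying over~$\Sigma$, the integer~$r$, and $u=u'_n$, obtaining sections $s_{n,k}\colon B\to\kX$ of~$f$ which all have the same $r$-jet as~$u'_n$ at those points—hence, for every~$k$, coincide to order~$r$ with~$v_b$ at every $b\in\Sigma$—and which satisfy $s_{n,k}(\C)|_{\Omega'_n}\to u'_n$ in $\ci(\Omega'_n,\kX(\C))$ as $k\to\infty$; restricting to~$B(\R)$ gives $s_{n,k}(\R)\to u'_n|_{B(\R)}$ in $\ci(B(\R),\kX(\R))$. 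Since $\ci(B(\R),\kX(\R))$ is metrizable ($B(\R)$ being compact) and $u'_n|_{B(\R)}\to v$, setting $s_n=s_{n,k(n)}$ for $k(n)$ growing fast enough yields $s_n(\R)\to v$, which is what we want. I expect the only genuinely delicate point—and the step I would write out carefully—to be the jet bookkeeping at the complex points of~$\Sigma$: identifying an $r$-jet of an $\R$-morphism $\Spec(\widehat{\sO_{B,b}})\to\kX$ (for~$b$ of residue field~$\C$) with a $\sigma$-conjugate pair of $r$-jets of holomorphic germs at the two points of $B(\C)$ over~$b$, and checking that the equivariant constructions above respect this identification. The smoothness of~$f$ along~$Z$, the existence of~$w_b$, the gluing, and the diagonal extraction are all routine.
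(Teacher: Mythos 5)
Your argument is correct and follows the same skeleton as the paper's proof---Proposition~\ref{realsecprop} for the real locus, a separate $G$-equivariant holomorphic germ at the complex points of $\Sigma$, gluing over disjoint $G$-stable neighbourhoods, then Definition~\ref{deftight} followed by a diagonal extraction (the paper leaves the diagonal step and the jet bookkeeping implicit)---but it takes a genuinely different route at the complex places. There the paper views the ring of convergent power series $\sO_{B(\C),b}\subset\widehat{\sO_{B,b}}$ as a Henselian discrete valuation ring with completion $\widehat{\sO_{B,b}}$ (Nagata) and invokes Greenberg's approximation theorem to replace $v_b$, to order $r$, by a convergent section, which analytifies to the desired local holomorphic section; this works with no smoothness considerations at all. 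You instead observe that $f$ is automatically smooth at the image of the closed point under $v_b$---since $f\circ v_b$ is the canonical map $\Spec(\widehat{\sO_{B,b}})\to B$, the chain rule makes $df$ surjective there, and the Jacobian criterion applies because $\kX$ and $B$ are smooth over $\R$ in characteristic $0$---and then obtain the germ by truncating the formal section at order $r$ in a holomorphic submersion chart. This is valid and more elementary (no appeal to Greenberg), and it is in the spirit of the N\'eron-smoothening theme of the paper. One caveat: your assertion that $v_b$ is ``a closed immersion with regular image $Z\subset\kX$'' is not literally true (the image of $\Spec(\widehat{\sO_{B,b}})$ is not a subscheme of $\kX$, and $v_b$ is not an immersion into $\kX$), but nothing in your argument uses it; the tangent-space computation at the closed point, which is what the smoothness claim rests on, is exactly right, so this is a phrasing flaw rather than a gap.
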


\begin{cor}
\label{tightweak}
Let $f:\kX\to B$ be a proper flat morphism with $\kX$ regular. If $f$ satisfies the tight approximation property, then the reciprocity obstruction is the only obstruction to the validity of the weak approximation property for $f$.
\end{cor}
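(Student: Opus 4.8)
The plan is to obtain Corollary~\ref{tightweak} as a direct consequence of Proposition~\ref{implitight}, once the definitions have been unwound; the only slightly delicate point will be to pin down a convenient neighbourhood basis for the topology on $\prod_b\kX_\eta(F_b)$.

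By Definition~\ref{reciprocity}, what has to be proved is that the image of the diagonal map $\kX_\eta(F)\to\prod_b\kX_\eta(F_b)$ is dense in the subset~$\Xi$. First I would record two consequences of the valuative criterion of properness, valid because~$f$ is proper: the set $\kX_\eta(F)$ is identified with the set of algebraic sections $B\to\kX$ of~$f$, and, for every closed point~$b$, one has $\kX_\eta(F_b)=\kX(\widehat{\sO_{B,b}})$. Under the latter identification I would check that, around a point~$v_b$, the sets of $F_b$\nobreakdash-points coinciding with~$v_b$ to order~$r$ (i.e.\ inducing the same $\widehat{\sO_{B,b}}/\mathfrak{m}_b^{r+1}$\nobreakdash-point of~$\kX$), for $r\geq0$, form a neighbourhood basis of~$v_b$ in the valuation topology: any point coinciding with~$v_b$ to order~$0$ factors through an affine open of~$\kX$ containing the closed point of~$v_b$ (as $\Spec(\widehat{\sO_{B,b}})$ is local), and embedding such an open into an affine space reduces the assertion to the elementary case of $F_b^N$. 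It follows that, for $(v_b)_b\in\prod_b\kX_\eta(F_b)$, the sets
\[
V_{\Sigma,r}=\Bigl\{(w_b)_b\in\textstyle\prod_b\kX_\eta(F_b):w_b\text{ coincides with }v_b\text{ to order }r\text{ for all }b\in\Sigma\Bigr\},
\]
where $\Sigma\subset B$ runs over the finite sets of closed points and $r\geq0$, form a neighbourhood basis of $(v_b)_b$ for the product topology.

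With this in place, the argument is immediate. Let $(v_b)_b\in\Xi$ and let $V_{\Sigma,r}$ be a basic neighbourhood of it. By definition of~$\Xi$, the projection of $(v_b)_b$ to $\prod_{b\in B(\R)}\kX_\eta(F_b)$ is induced by a $\ci$ section $v\colon B(\R)\to\kX(\R)$ of $f(\R)$; hence Proposition~\ref{implitight} applies to the data $\bigl((v_b)_b,\,v,\,\Sigma,\,r\bigr)$ and yields in particular one algebraic section $s\colon B\to\kX$ of~$f$ that coincides to order~$r$ with $v_b\colon\Spec(\widehat{\sO_{B,b}})\to\kX$ for every $b\in\Sigma$. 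Then the point $s\in\kX_\eta(F)$ lies in $V_{\Sigma,r}$. Since $(v_b)_b\in\Xi$ and the basic neighbourhood $V_{\Sigma,r}$ were arbitrary, this shows that $\kX_\eta(F)$ is dense in~$\Xi$, as required.

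I do not expect a genuine difficulty here: all the content is concentrated in Proposition~\ref{implitight}, hence ultimately in the tight approximation hypothesis, and the only step requiring attention is the neighbourhood-basis computation of the second paragraph. In particular the $\ci$\nobreakdash-approximation half of the conclusion of Proposition~\ref{implitight} plays no role: the topology on $\prod_b\kX_\eta(F_b)$ only involves finitely many places and finite-order congruences at once, so a single interpolating section already lands inside the prescribed neighbourhood of~$\Xi$.
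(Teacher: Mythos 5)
Your proof is correct and follows exactly the route intended by the paper, which states Corollary~\ref{tightweak} as an immediate consequence of Proposition~\ref{implitight} without further argument. Your explicit verification that finite-order congruence conditions at finitely many places form a neighbourhood basis in $\prod_b\kX_\eta(F_b)$ is the standard unwinding of the product of the $b$\nobreakdash-adic topologies and is exactly what is implicitly being used.
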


\begin{proof}[Proof of Proposition \ref{implitight}]
Let $\Sigma_1\subset \Sigma$ (resp.\ $\Sigma_2\subset \Sigma$) be the subset of points with real (resp.\ complex) residue fields.
By Proposition \ref{realsecprop}, there exists a sequence of $G$-stable open neighbourhoods $\Omega_n$ of $B(\R)$ in $B(\C)$ and of $G$-equivariant holomorphic sections $u_n:\Omega_n\to \kX(\C)$ of $f(\C)$ over $\Omega_n$ such that the $(u_n)|_{B(\R)}$ coincide with $v_{b}$ to order $r$ for all $b\in \Sigma_1$ and converge to $v$ in $\ci(B(\R),\kX(\R))$.
For $b\in \Sigma_2$, let $\sO_{B(\C),b}\subset \widehat{\sO_{B,b}}$ be the subring of convergent power series. It is a Henselian discrete valuation ring with completion~$\widehat{\sO_{B,b}}$ by \cite[Theorem~45.5]{Nagata}. By Greenberg's theorem \cite[Corollary~1]{Greenberg}, we may assume that $v_{b}$ is given by a morphism $v_{b}:\Spec(\sO_{B(\C),b})\to\kX$ for all $b\in \Sigma_2$. The $(v_{b})_{b\in \Sigma_2}$ then give rise to a $G$-equivariant holomorphic section $u':\Omega'\to\kX(\C)$ of $f(\C)$ over some $G$-stable open subset $\Omega'\subset B(\C)$ containing~$\Sigma_2(\C)$.
After shrinking~$\Omega_1$ and~$\Omega'$
and then replacing~$\Omega_n$ with $\Omega_1\cap\Omega_n$ for all~$n$, we may assume that $\Omega_n \cap \Omega'=\emptyset$ for all~$n$.
Applying Definition \ref{deftight} to the section $(u_n,u'):\Omega_n\cup\Omega'\to\kX(\C)$ of $f(\C)$ over $\Omega_n\cup\Omega'$, to the finite set $\Sigma(\C)\subset B(\C)$ and to the integer $r$ concludes the proof.
\end{proof}

Using the following proposition, it is sometimes possible to deduce the weak approximation property itself (for instance when $B(\R)=\varnothing$; see also Theorem \ref{weakhomoreal}).

\begin{prop}
\label{weakconnected}
Let $f:\kX\to B$ be a proper flat morphism with $\kX$ regular. Assume that the reciprocity obstruction is the only obstruction to the validity of the weak approximation property for $f$
and that there exists a dense open subset $U\subset B$ such that $\kX_b(\R)$ is connected for all $b\in U(\R)$. Then $f$ satisfies the weak approximation property.
\end{prop}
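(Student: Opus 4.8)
The plan is to deduce weak approximation from the already-available fact that the reciprocity obstruction is the only obstruction, by showing that the connectedness of the real loci $\kX_b(\R)$ for $b\in U(\R)$ forces the set $\Xi$ of Definition~\ref{reciprocity} to be dense in $\prod_b \kX_\eta(F_b)$, so that there is in fact no reciprocity obstruction at all. Once that is established, the hypothesis "the reciprocity obstruction is the only obstruction" literally says that $\kX_\eta(F)$ is dense in $\Xi = \prod_b \kX_\eta(F_b)$, which is the weak approximation property; so the whole content is the density of $\Xi$.

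To prove $\Xi$ is dense, I would fix a finite set $T$ of closed points of $B$, a point $(v_b)_{b\in B}\in\prod_b\kX_\eta(F_b)$, and a finite jet order $r$, and produce an element of $\Xi$ agreeing with $(v_b)$ to order $r$ at every $b\in T$. First I would enlarge the local data at the points of $T$ to an honest section over a neighbourhood: by Greenberg's theorem (as in the proof of Proposition~\ref{implitight}), for each $b\in T$ with complex residue field the section $v_b$ may be taken to factor through $\Spec(\sO_{B(\C),b})$, hence to extend to a $G$-equivariant holomorphic section over a small $G$-stable open subset $\Omega_0\subset B(\C)$ containing $T_{\C}\setminus B(\R)$; for $b\in T\cap B(\R)$ the datum $v_b$ similarly extends to a holomorphic germ near $b$. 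The real task is then to build a $\ci$ section $v\colon B(\R)\to\kX(\R)$ of $f(\R)$ whose jet at each real $b\in T$ is the prescribed one. Away from $T\cap B(\R)$ one only needs \emph{some} section; here I use the remark of \S\ref{realsecpar} that a $\ci$ section exists iff a continuous section into the submersive locus exists, and such a continuous section can be produced fibrewise: over each $b\in U(\R)$ the fibre $\kX_b(\R)$ is connected (and non-empty, being connected), and the submersive locus is a dense open subset of $\kX(\R)$ whose fibres over $U(\R)$ are therefore non-empty; one patches local continuous sections over a covering of $B(\R)$, using connectedness of the fibres to reconcile choices on overlaps, and arranges agreement with the prescribed germs near the finitely many points of $T\cap B(\R)$ (which lie in $U(\R)$ after possibly shrinking $U$, since $\kX$ is regular so the submersive locus is nonempty on a dense open and $T$ is finite). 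The resulting $\ci$ section $v$, together with the germs at the points of $T$, defines an element of $\prod_b\kX_\eta(F_b)$ lying in $\Xi$ by definition and matching $(v_b)$ to order $r$ on $T$; density of $\Xi$ follows.

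The main obstacle is the patching step: over the finitely many points of $B\setminus U$ one has no control on the topology of the fibre of $f(\R)$, so one must choose the covering of $B(\R)$ by intervals in such a way that these bad points lie in the interior of intervals over which a local $\ci$ section is supplied (e.g.\ by the given local data, or, if none is prescribed there, by the hypothesis that $f(\R)$ already admits \emph{some} $\ci$ section, which is exactly the non-vacuous case — in the vacuous case $\Xi=\emptyset$ and there is nothing to prove), and then glue along the overlaps, which lie over $U(\R)$, using the connectedness of the fibres there to homotope one local section to the other within a single fibre and hence, by a standard partition-of-unity argument in the submersive normal form, to interpolate between them. Once continuity is arranged, smoothness and the jet conditions are obtained by the approximation argument already used in \S\ref{realsecpar} and Proposition~\ref{realsecprop}.
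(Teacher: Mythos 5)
Your overall strategy is the paper's: reduce the statement to showing that there is no reciprocity obstruction (density of $\Xi$ in $\prod_b\kX_\eta(F_b)$), turn the prescribed $F_b$\nobreakdash-data at real points into real-analytic local sections with the right $r$\nobreakdash-jets via Greenberg's theorem, and glue these into a global $\ci$ section of $f(\R)$ using connectedness of the fibres over $U(\R)$ together with local triviality. However, two of your justifications fail precisely at the points the paper is careful about. First, connectedness of $\kX_b(\R)$ does \emph{not} give non-emptiness: in the intended application (Proposition~\ref{connectedhomo} feeding into Theorem~\ref{weakhomoreal}) the real loci of the fibres may be empty, in which case $\kX_\eta(F_b)=\emptyset$ for real $b$ and weak approximation holds vacuously because the product is empty. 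The paper dispatches this case at the outset and, in the remaining case, obtains surjectivity of $f^{-1}(U(\R))\to U(\R)$ from properness (an $F_b$\nobreakdash-point specializes to a real point of the fibre). Relatedly, your inference ``the submersive locus is a dense open subset of $\kX(\R)$, therefore its fibres over $U(\R)$ are non-empty'' is not valid: a dense open subset need not meet a given fibre; the paper instead shrinks $U$ so that $f$ is smooth over $U$, which is what makes every real point of those fibres a submersive point.

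Second, your fallback ``in the vacuous case $\Xi=\emptyset$ and there is nothing to prove'' is false as stated: if $\Xi=\emptyset$ the hypothesis is vacuous but the conclusion is not, unless $\prod_b\kX_\eta(F_b)$ itself is empty; the correct trivial case is $\kX_\eta(F_b)=\emptyset$ for some $b$, and invoking an already-existing global $\ci$ section at bad points is both unjustified and unnecessary (the fixed element $(v_b)$ supplies local data at every closed point, which is how the paper proceeds after enlarging $\{b_1,\dots,b_m\}$ so that $B(\R)\setminus\{b_1,\dots,b_m\}=U(\R)$). Two smaller points: the discussion of the complex points of $T$ is superfluous, since $\Xi$ imposes no condition there, and the parenthetical claiming the real points of $T$ ``lie in $U(\R)$ after possibly shrinking $U$'' cannot be right, since shrinking $U$ cannot create containment. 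All of these slips are reparable with the dichotomy and the shrinking of $U$ just described, after which your construction coincides with the paper's proof.
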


\begin{proof}
After shrinking~$U$, we may assume that~$f$ is smooth above~$U$.
If $\kX_{\eta}(F_b)=\varnothing$ for some $b\in B(\R)$, then~$f$ satisfies
the weak approximation property.  Otherwise,
as~$f$ is proper,
the map $f^{-1}(U(\R))\to U(\R)$ induced by~$f$
is surjective.
On the other hand
it is a locally trivial fibration with connected fibres,
by Ehresmann's theorem;
therefore it possesses a~$\ci$ section $v:U(\R)\to\kX(\R)$, which we fix.

Choose $r\geq 0$, $b_1,\dots,b_m\in B(\R)$ and $v_{b_i}\in \kX_{\eta}(F_{b_i})$ for $1\leq i \leq m$. After enlarging $\{b_1,\dots ,b_m\}$ and shrinking~$U$, we may assume that $B(\R)\setminus\{b_1,\dots ,b_m\} = U(\R)$.
View the $v_{b_i}$ as morphisms $v_{b_i}:\Spec(\widehat{\sO_{B,b_i}})\to \kX$ by the valuative criterion of properness. Applying Greenberg's theorem as in the proof of Proposition~\ref{implitight} yields an open neighbourhood $W_i$ of $b_i$ in $B(\R)$ and a real-analytic section  $v_i:W_i\to\kX(\R)$ of $f(\R)$ over $W_i$ that coincides with $v_{b_i}$ to order $r$ at $b_i$.

 Since $\kX_b(\R)$ is connected for $b\in U(\R)$, one can use Ehresmann's theorem again to modify $v$ in small neighbourhoods of the $b_i$ so that it can be glued to the~$v_i$ to yield
a $\ci$ section $v':B(\R)\to\kX(\R)$ of $f(\R)$ that coincides with $v_i$ near~$b_i$. This shows that there cannot be any reciprocity obstruction to the validity of the weak approximation property for $f$, as required.
\end{proof}

\section{Birational aspects}

\subsection{Birational invariance}

In Theorem \ref{birinv}, we show the birational invariance of the tight approximation property.
If one restricts to constant fibrations as in \S\ref{constantfibrations} and to $K=B(\R)$ in the notation of Proposition \ref{deftightK}, this is due to Bochnak and Kucharz
\cite[\S 2]{BKrat}.
The corresponding result for the
weak approximation property is due to Kneser \cite[\S 2.1]{Kneser}.

\begin{thm}
\label{birinv}
Let $f: \kX\to B$ and $f': \kX'\to B$ be proper flat morphisms with $\kX$ and $\kX'$ regular.
Let $g: \kX\dashrightarrow \kX'$ be a birational map such that $f'\circ g=f$.  If $f'$ satisfies the
tight approximation property, then so does $f$.
\end{thm}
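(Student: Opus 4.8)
The plan is to reduce to the case where~$g$ is a morphism and then, for any birational $B$\nobreakdash-morphism $h\colon\kZ\to\kZ'$ with~$\kZ$, $\kZ'$ regular and $\kZ\to B$, $\kZ'\to B$ proper and flat, to prove that~$\kZ$ satisfies the tight approximation property if and only if~$\kZ'$ does. For the reduction, apply Hironaka's theorem over~$\R$ to the closure~$\Gamma$ of the graph of~$g$ in $\kX\times_B\kX'$: one obtains a regular variety~$\kY$ whose structure morphism $\kY\to B$ is proper (because $\Gamma\to B$ is) and flat (because~$\kY$ is regular and each of its components dominates the curve~$B$), together with birational $B$\nobreakdash-morphisms $p\colon\kY\to\kX$ and $q\colon\kY\to\kX'$ with $q=g\circ p$; applying the equivalence to~$q$ and then to~$p$ yields the theorem. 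We argue throughout with the reformulation of Proposition~\ref{deftightK}, and we write $V'\subseteq\kZ'$ for the largest open subset above which~$h$ is an isomorphism and $V=h^{-1}(V')\subseteq\kZ$, so that~$h$ restricts to an isomorphism $V\isoto V'$ and $\kZ\setminus V$, $\kZ'\setminus V'$ are nowhere dense closed subsets; we also allow ourselves to enlarge the compact set~$K$ of Proposition~\ref{deftightK} so that it has finitely many components and to shrink the Stein neighbourhood~$\Omega$ of~$K$ so that each of its components meets~$K$.

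The technical core is a perturbation statement: for any proper flat morphism $c\colon\kW\to B$ with~$\kW$ regular, any dense open $W\subseteq\kW$, any datum $(K,\Omega,w,(b_i)_{1\leq i\leq m},r)$ as in Proposition~\ref{deftightK}, and any $\varepsilon>0$, there exist a $G$\nobreakdash-stable Stein neighbourhood $\Omega'\subseteq\Omega$ of~$K$ and a $G$\nobreakdash-equivariant holomorphic section $\tilde w\colon\Omega'\to\kW(\C)$ of $c(\C)$ with the same $r$\nobreakdash-jet as~$w$ at every~$b_i$, with $\sup_K d(\tilde w,w)<\varepsilon$ for a fixed metric on~$\kW(\C)$, and meeting~$W$ on every connected component of~$\Omega'$. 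To build~$\tilde w$ one leaves~$w$ unchanged on components where it already meets~$W$, and on each of the remaining (finitely many) components picks a general point $x_0\in K$ lying over a smooth fibre of~$c$ not contained in $\kW\setminus W$; using the $\Gal(\C/\R)$\nobreakdash-equivariant deformation theory of holomorphic sections over a Stein base from the appendix---near~$w$, the sections of $c(\C)$ over~$\Omega'$ form a family parametrised by $H^0(\Omega',w^*T_{\kW/B})$, by Cartan's Theorem~B---one deforms~$w$ by a global section of $w^*T_{\kW/B}$ that is small on~$K$, vanishes to order~$r$ at the~$b_i$ and at the finitely many points of~$K$ where~$w$ meets a singular fibre of~$c$ (so that the deformation, a priori defined only over the smooth locus of~$c$, extends across these points by Riemann's removable-singularity theorem), and is nonzero at~$x_0$ in a direction pushing $w(x_0)$ into~$W$. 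This perturbation step, where Steinness and the equivariant analytic toolkit are genuinely used, is the one I expect to be the main obstacle; it is exactly what is needed to handle the degenerate case in which the given section lies entirely in the exceptional locus of~$h$.

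Granting this, ``$\kZ$ tight $\Rightarrow\kZ'$ tight'' is the easy direction. Given a datum $(K,\Omega,u')$ for~$\kZ'$ and $\varepsilon>0$, the perturbation statement with $\kW=\kZ'$, $W=V'$ replaces~$u'$ by a holomorphic section $\tilde u'\colon\Omega'\to\kZ'(\C)$ that is $\varepsilon$\nobreakdash-close to~$u'$ on~$K$, has the same $r$\nobreakdash-jet as~$u'$ at the~$b_i$, and meets~$V'$ on each component of~$\Omega'$; its strict transform along~$h$ is a well-defined holomorphic section $\tilde u\colon\Omega'\to\kZ(\C)$ of~$\kZ\to B$. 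Tight approximation for~$\kZ$ applied to~$\tilde u$ produces algebraic sections $s_n\colon B\to\kZ$ with the $r$\nobreakdash-jet of~$\tilde u$ at the~$b_i$ and $s_n(\C)|_K\to\tilde u|_K$ uniformly; then $s'_n:=h\circ s_n$ are algebraic sections of~$\kZ'\to B$ converging to $h\circ\tilde u=\tilde u'$ uniformly on~$K$ by continuity of~$h$, with $r$\nobreakdash-jet at each~$b_i$ equal to the image under~$h$ of that of~$\tilde u$, i.e.\ to that of~$\tilde u'$, hence of~$u'$. As~$\varepsilon$ is arbitrary, $\kZ'$ is tight.

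For ``$\kZ'$ tight $\Rightarrow\kZ$ tight'', start from a datum $(K,\Omega,u)$ for~$\kZ$ and $\varepsilon>0$, perturb~$u$ (with $\kW=\kZ$, $W=V$) to a holomorphic section $\tilde u\colon\Omega'\to\kZ(\C)$ of~$\kZ\to B$ that is $\varepsilon$\nobreakdash-close to~$u$ on~$K$, shares the $r$\nobreakdash-jet of~$u$ at the~$b_i$, and meets~$V$ on each component; put $\tilde u'=h\circ\tilde u$, a holomorphic section of~$\kZ'\to B$ meeting~$V'$. Let~$T$ be the finite set of points of~$K$ where~$\tilde u$ meets $\kZ\setminus V$, and fix $r'\geq r$ large enough---depending on~$h$ and on the germs of~$\tilde u$ at the points of $T\cup\{b_i\}$---that prescribing the $r'$\nobreakdash-jet of~$\tilde u'$ at such a point forces the strict transform to agree with~$\tilde u$ to order~$r$ there and to depend continuously on the germ nearby. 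Tight approximation for~$\kZ'$ applied to~$\tilde u'$, with jet order~$r'$ at the points of $T\cup\{b_i\}$, yields algebraic sections $s'_n\colon B\to\kZ'$ with $s'_n(\C)|_K\to\tilde u'|_K$ uniformly and the prescribed $r'$\nobreakdash-jets. For $n\gg0$ the curve $s'_n(B)$ meets~$V'$, so the algebraic strict transform $s_n\colon B\to\kZ$ of~$s'_n$ along~$h$ is defined, is a section of~$\kZ\to B$ with $h\circ s_n=s'_n$, has the $r$\nobreakdash-jet of~$\tilde u$ (hence of~$u$) at the~$b_i$ by the choice of~$r'$, and satisfies $s_n(\C)|_K\to\tilde u|_K$ uniformly: away from small disks around the points of~$T$ this holds because there $s_n(\C)=h(\C)^{-1}\circ s'_n(\C)$, and near the points of~$T$ because the prescribed $r'$\nobreakdash-jets confine the strict transform to the range where it varies continuously with the germ. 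Since~$\varepsilon$ is arbitrary, $\kZ$ is tight, and the theorem follows.
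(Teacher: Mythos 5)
Your overall architecture mirrors the paper's: perturb the given analytic section so that every component of its image meets the locus where the birational map is an isomorphism (this is exactly Proposition~\ref{tubularR}~(ii), which you re-prove instead of quoting; note also that your worry about singular fibres there is unnecessary, since $f$ is automatically submersive along the image of a holomorphic section), treat the easy direction by composing algebraic sections with the morphism and lifting the analytic section by strict transform, and treat the hard direction by adding the finitely many exceptional intersection points to the jet conditions and prescribing jets to some high order $r'$ before taking strict transforms. The genuine gap is the sentence where you ``fix $r'\geq r$ large enough \dots\ that prescribing the $r'$\nobreakdash-jet of $\tilde u'$ at such a point forces the strict transform to agree with $\tilde u$ to order $r$ there and to depend continuously on the germ nearby''. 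This quantitative jet-and-continuity control of strict transforms is the entire content of the theorem, and you assert it, without any argument, for an \emph{arbitrary} birational morphism $h\colon\kZ\to\kZ'$ of regular proper models. Nothing in your setup makes it accessible: over the non-isomorphism locus the fibres of $h$ can be complicated, and identifying the point through which the strict transform of a nearby germ passes, the agreement of its $r$\nobreakdash-jet with that of $\tilde u$, and the uniform closeness of $s_n$ to $\tilde u$ on a neighbourhood of each point of $T$ all require a concrete local analysis (or a further d\'evissage) that your proposal does not contain.

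This is precisely why the paper organizes the reduction differently. Your graph-plus-Hironaka reduction leaves you needing the hard (``ascending'') direction along the arbitrary birational morphisms $p\colon\kY\to\kX$ and $q\colon\kY\to\kX'$; the paper instead resolves the indeterminacies of $g^{-1}$, so that the ascending direction is only ever needed for a single blow-up along a smooth centre, where the required estimate is an explicit computation in blow-up coordinates (carried out by Bochnak and Kucharz and cited in the proof), while the arbitrary birational morphism occurs only in the easy, descending direction (Lemma~\ref{birfacile}). Your argument would become complete if you either proved your jet-control claim for general birational morphisms (e.g.\ by dominating $h$ by the blow-up of an ideal and doing the estimate there---not a triviality), or rearranged the reduction so that the hard direction involves only smooth blow-ups and then supplied or cited the local computation. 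As written, the central estimate is missing.
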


We start with the particular case of Theorem \ref{birinv} where $h:=g^{-1}$ is a morphism.

\begin{lem}
\label{birfacile}
Let $f: \kX\to B$ and $f': \kX'\to B$ be
proper flat morphisms with $\kX$ and $\kX'$ regular.
Let $h: \kX'\to \kX$ be a birational morphism with $f\circ h=f'$.
If $f'$ satisfies the tight approximation property, then so does $f$.
\end{lem}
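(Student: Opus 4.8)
The plan is to reduce the tight approximation problem for $f$ to that for $f'$ by lifting sections of $f$ to sections of $f'$ using the strict transform construction recalled in \S\ref{conventions}. More precisely, I would work with the criterion of Proposition \ref{deftightK}. So fix a $G$-stable compact subset $B(\R)\subset K\subset B(\C)$, points $b_1,\dots,b_m\in K$, an integer $r\geq 0$, a $G$-stable Stein open neighbourhood $\Omega$ of $K$, and a $G$-equivariant holomorphic section $u:\Omega\to\kX(\C)$ of $f(\C)$. Since $h:\kX'\to\kX$ is a birational morphism of regular varieties over $B$, there is a dense open $V\subset\kX$ over which $h$ is an isomorphism, and its complement has codimension $\geq 2$ in $\kX$ (as $\kX$ is regular). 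The first step is to argue that, after possibly shrinking $\Omega$ around $K$, the section $u$ meets $V(\C)$ along each connected component of $\Omega$: indeed a general section does, and one reduces to this case by a small perturbation — but to keep the prescribed jets one must be slightly careful, perturbing away from the $b_i$ and invoking the fact that $u$ being a section forces its image to have pure dimension $1$, so it cannot be contained in the ``bad'' locus unless it is so identically, which can be ruled out after an arbitrarily small $G$-equivariant deformation fixing the jets at the $b_i$ (using Proposition \ref{realsecprop} or a direct Runge/Stein approximation argument). Once $u(\Omega)$ meets $V(\C)$ on every component, the strict transform of $u$ along $h$ is a well-defined $G$-equivariant holomorphic section $u':\Omega\to\kX'(\C)$ of $f'(\C)$ lifting $u$, with the same $r$-jets at the $b_i$ as the $h$-lift of the original jets of $u$.

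The second step applies the tight approximation property for $f'$ to $u'$: one obtains sections $s'_n:B\to\kX'$ of $f'$ with the same $r$-jets as $u'$ at the $b_i$ and with $s'_n(\C)|_K\to u'|_K$ uniformly. Composing with $h$ gives algebraic sections $s_n:=h\circ s'_n:B\to\kX$ of $f$. By continuity of $h(\C)$, the maps $s_n(\C)|_K$ converge uniformly to $h(\C)\circ u'|_K=u|_K$ (since $u'$ is a lift of $u$), and the $r$-jets of $s_n$ at the $b_i$ agree with those of $h\circ u'=u$, because $h$ respects jets. This verifies the criterion of Proposition \ref{deftightK} for $f$, completing the proof.

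The main obstacle I anticipate is the first step — ensuring the holomorphic section $u$ can be assumed (after an arbitrarily small $G$-equivariant perturbation preserving the $r$-jets at $b_1,\dots,b_m$) to meet the open locus $V$ where $h$ is an isomorphism along each connected component of $\Omega$. The point is that the indeterminacy locus of $h^{-1}$ (equivalently, the locus in $\kX$ not isomorphic to its preimage) is a proper closed subset, of codimension $\geq 2$ since $\kX$ is regular, but $u(\Omega)$ is only one-dimensional, so a generic small deformation of $u$ will avoid it — the subtlety is doing this $G$-equivariantly, keeping $u$ a section of $f(\C)$, and not disturbing the jets. This is exactly the kind of perturbation afforded by the Stein/Runge approximation toolkit (and its $G$-equivariant version developed in the appendix); alternatively, since $h$ is proper and birational, one can note that $u$ automatically meets $V(\C)$ on a component unless the whole component maps into the bad locus, and the latter is then excluded by genericity after a single small equivariant move. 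Everything else is formal: the strict transform is functorial, it respects jets and converges, and $h(\C)$ is continuous.
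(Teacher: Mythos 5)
Your proposal is correct and follows essentially the same route as the paper: reduce via Proposition~\ref{deftightK}, perturb $u$ (keeping the $r$-jets at the $b_i$) so that no component of $u(\Omega)$ lies over the locus where $h$ fails to be an isomorphism, take the strict transform $u'$, apply tight approximation for $f'$, and push the resulting sections down by $h$. The perturbation step you flag as the main obstacle is exactly what Proposition~\ref{tubularR}~(ii) of the appendix provides, which is the tool the paper invokes.
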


\begin{proof}
Let $K$, $\Omega$, $b_i$, $r$ and $u$ be as in Proposition \ref{deftightK}.
By Proposition \ref{tubularR} (ii), one may assume that no connected component of $u(\Omega)$ is included in the locus above which $h$ is not an isomorphism. The strict transform of $u$ in $\kX'(\C)$ is a section $u': \Omega\to\kX'(\C)$ of $f'(\C)$ above $\Omega$ that lifts $u$. Applying the tight approximation property of $f'$ to
$K$, $\Omega$, $b_i$, $r$ and $u'$ yields a sequence $s'_n: B\to\kX'$ of sections of $f$ such that the sequence $s_n=h\circ s'_n$ has the required properties.
\end{proof}

\begin{proof}[Proof of Theorem \ref{birinv}]
By Hironaka's resolution of indeterminacies theorem \cite[\S 5]{Hironaka} and by Lemma \ref{birfacile}, one
may assume that $g$ is the blow-up of a smooth integral subvariety of $\kX'$ with exceptional divisor
$E\subset \kX$.
Let $K$, $\Omega$, $b_i$, $r$ and $u$ be as in Proposition \ref{deftightK}. By Proposition \ref{tubularR} (ii), one may assume, after shrinking~$\Omega$, that no connected component of $u(\Omega)$ is included in $E(\C)$. Since $K$ is compact, shrinking~$\Omega$ further, one may assume that $u(\Omega)$ meets $E(\C)$ at only finitely many points. We add these points to the collection of the $b_i$. A local computation (carried out in \cite[Proof of Lemma~2.1]{BKrat} in a
very similar context, see also \cite[Proof of Lemma~2.1]{BKC}) shows that if $r'\geq 0$ is big enough,
and if $s_n': B\to\kX'$ is a sequence of sections of $f'$ obtained by applying the tight approximation property of $f'$ to $K$, $\Omega$, $b_i$, $r'$ and $g\circ u$, then the sequence $s_n: B\to\kX$ of strict transforms of $s'_n$ has the required properties.
\end{proof}

\subsection{Tight approximation for varieties over \texorpdfstring{$F$}{𝐹}}

We can now give the definition.

\begin{defn}
\label{deftight2}
A smooth variety $X$ over $F$ satisfies the \textit{tight approximation property} if some proper regular model $f: \kX\to B$ of $X$ over $B$ satisfies the tight approximation property in the sense of Definition \ref{deftight}.
\end{defn}

\begin{rmks}
(i) By Theorem \ref{birinv}, this does not depend on the model $f: \kX\to B$ of $X$, and the tight approximation property is a birational invariant of $X$.

(ii) The case when $X$ is defined over $\R$ corresponds to the case of product fibrations $f: X\times B\to B$ and is of particular interest.
\end{rmks}

This property is only relevant for the varieties whose smooth compactifications are rationally connected (in the geometric sense specified in \S\ref{conventions}).

\begin{prop}
\label{tightimpliesRC}
Let $f: \kX\to B$ be a proper flat morphism with $\kX$ regular and generic fiber $X$. If $f(\R)$ has a $\ci$ section and if $f$ satisfies the tight approximation property, then $X$ is rationally connected.
\end{prop}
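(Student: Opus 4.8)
The plan is to produce, from a single holomorphic section and the tight approximation property, enough algebraic sections $B\to\kX$ in "general position" to force rational connectedness of~$X$ via the standard criterion that a smooth proper variety over~$\C$ is rationally connected as soon as it carries a very free rational curve (after base change to $\overline{F}$, or equivalently, as soon as its geometric generic fiber carries a free rational curve through two general points---see \cite[Chapter~IV]{Kollarbook}). Since we are allowed to replace~$\kX$ by any proper regular model, I would first reduce to a convenient situation. Fix a $\ci$ section~$v$ of $f(\R):\kX(\R)\to B(\R)$, which exists by hypothesis; by the discussion in~\S\ref{realsecpar} we may assume~$v$ takes values in the smooth locus~$U$ of~$f$. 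Applying Proposition~\ref{realsecprop}, we obtain a $G$-equivariant holomorphic section $u:\Omega\to\kX(\C)$ of $f(\C)$ over a $G$-stable neighbourhood~$\Omega$ of~$B(\R)$ whose restriction to $B(\R)$ is $\ci$-close to~$v$; in particular the image of~$u$ lies in the smooth locus of~$f$, so~$u$ factors through a section of the smooth morphism $f|_{f^{-1}(\Omega)}$.

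The key step is to use the tight approximation property to produce an algebraic section $s:B\to\kX$ which is $\ci$-close to~$u$ on a large compact $K\subset\Omega$ and which agrees with~$u$ to high order at a chosen point $b_0\in B(\R)$. Because~$u$ lands in the smooth locus of~$f$ and because $\ci$-closeness on~$K$ preserves this (the smooth locus is open), the section~$s$ also lands in the smooth locus of~$f$; thus the image curve $C=s(B)\subset\kX$ meets the generic fiber~$\kX_\eta=X$ (after discarding the finitely many points of~$B$ where~$f$ is not smooth, which $C$ avoids by general position, enlarging the finite set of prescribed jet-points if necessary) and gives an $F$-point, hence a section meeting a dense open of~$X$. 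The point now is to get \emph{deformations} of this section: the normal bundle $N_{s/\kX}$ of the section inside~$\kX$ fits into $0\to T_{s}(\kX/B)\to N_{s/\kX}\to 0$ (since $s^*T_B\hookrightarrow s^*T_\kX$ splits off canonically as~$s$ is a section), so $N_{s/\kX}\simeq s^*T_{\kX/B}$ on the locus where~$f$ is smooth. I would show that by choosing~$s$ sufficiently close to~$u$ and of sufficiently large "degree" (concretely: composing with a well-chosen modification, or rather arranging via the prescribed high-order jet that~$s$ wiggles a lot near~$b_0$), this normal bundle is globally generated with many sections, i.e. $s$ is a free section; standard arguments (attaching free rational curves, or the bend-and-break-free machinery of \cite[Chapter~II]{Kollarbook}) then deform $C$ to a curve through a general point of the generic fiber and eventually produce a very free rational curve on~$X_{\overline{F}}$.

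The main obstacle I expect is the positivity of $N_{s/\kX}\simeq s^*T_{\kX/B}$: a priori there is no reason an arbitrary algebraic section be free, and tight approximation only controls~$s$ topologically on~$\Omega$ plus finitely many algebraic jets. The idea to overcome this is that tight approximation lets us take~$s$ \emph{simultaneously} close to~$u$ on an arbitrarily large compact $K$ and \emph{also} close to an independently chosen holomorphic section~$u'$ on a disjoint compact, or to prescribe a very high-order jet at~$b_0$ agreeing with a jet that bends sharply in many directions; feeding into~$s^*T_{\kX/B}$ a local section that already has a zero of controlled order at~$b_0$ and matching a prescribed germ, one produces by this interpolation an honest global section of $s^*T_{\kX/B}$ (using, e.g., that a line bundle on $B\simeq\mathbf{P}^1$-type situations, or more generally on any curve, becomes globally generated after twisting, and that high-order agreement at~$b_0$ forces the needed twist). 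A cleaner route, which I would in fact follow, is to observe that freeness of~$s$ is itself a stable-birational/fibration-type consequence: one first treats the case $\kX=\mathbf{P}^d_F\times B$ or a projective bundle over~$B$ (where sections are literally tuples of polynomials and can be made free by raising degree, exactly as in \cite[\S2]{BKrat}), and transfers along a dominant rational map $\kX\dashrightarrow\mathbf{P}^d_F\times B$ obtained from a projective embedding of~$X$, using Theorem~\ref{birinv} and the strict-transform construction of~\S\ref{conventions} to push the free family back onto~$\kX$; the resulting family of sections sweeps out a dense subset of~$X$ and contains, by a limiting/specialization argument over~$\overline{F}$, a free---and then very free---rational curve, whence~$X$ is rationally connected.
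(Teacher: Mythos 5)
Your opening moves (pass from the $\ci$ section to a $G$\nobreakdash-equivariant holomorphic section $u$ over a neighbourhood of $B(\R)$ via Proposition~\ref{realsecprop}, then use tight approximation to get an algebraic section, hence $X(F)\neq\emptyset$) are correct and coincide with the first half of the paper's argument. But the heart of your proof --- deducing rational connectedness --- has a genuine gap. Tight approximation applied to the \emph{single} fixed section $u$, with finitely many jet conditions, gives no positivity whatsoever of $s^*T_{\kX/B}$: prescribing a high-order jet of $s$ at $b_0$ does not produce global sections of $s^*T_{\kX/B}$, and freeness of $s$ is a statement about deformations of $s$, which is essentially what one is trying to establish, not something extractable from approximating one target. (A fibration with rationally connected fibers can perfectly well carry sections with very negative normal bundle, and conversely nothing you have written rules out that \emph{every} algebraic section produced by the approximation process is rigid in the fiber direction.) Your proposed ``cleaner route'' does not repair this: a projective embedding of $X$ yields a rational map $\kX\dashrightarrow\P^d_\R\times B$ that is not dominant (unless $X$ is a projective space), sections of $\P^d_\R\times B\to B$ do not lift along such a map, and Theorem~\ref{birinv} only compares proper regular models of the \emph{same} generic fiber, so it cannot be used to ``transfer freeness'' from a projective bundle to $\kX$. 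Finally, the concluding ``limiting/specialization argument over $\overline F$'' producing a very free rational curve from a family of sections is precisely the nontrivial implication (abundance of sections $\Rightarrow$ rational connectedness) and is nowhere proved; its standard proof requires the MRC fibration together with the Graber--Harris--Starr theorem, neither of which appears in your sketch.

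The useful consequence of tight approximation here is not positivity of one section but the ability to prescribe \emph{arbitrary} jets at places with complex residue field: by Corollary~\ref{realsectight} the map $f$ has a section, and by Corollary~\ref{tightweak} the reciprocity obstruction (which only involves the real places of $B$) is the only obstruction to weak approximation, so after passing to $F(\sqrt{-1})$ one gets sections with arbitrarily prescribed jets at finitely many places. The paper then concludes by invoking the known criterion \cite[Corollary~2.16]{Hassett}, whose proof encapsulates exactly the MRC/Graber--Harris--Starr input missing from your argument, to deduce that $X_{F(\sqrt{-1})}$, and hence $X$, is rationally connected. If you want a self-contained proof along your lines, you would have to reprove that criterion, i.e.\ show that a family over a complex curve whose total space admits sections through arbitrarily prescribed first-order jets at a general point cannot have a non-trivial MRC quotient.
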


\begin{proof}
Since $f$ satisfies the tight approximation property and $f(\R)$ has a $\ci$ section,
Corollary~\ref{realsectight}
implies that $f$ has a section. As moreover the reciprocity obstruction is the only obstruction to the validity of the weak approximation property for~$f$ by Corollary~\ref{tightweak}, it follows that $X_{F(\sqrt{-1})}$ satisfies the hypothesis of \cite[Corollary~2.16]{Hassett}, hence is rationally connected.
\end{proof}

Proposition \ref{tightimpliesRC} is in the spirit of \cite[Theorem 1.2]{BKrat} and \cite[Theorem 1.3]{BKC}, with a stronger conclusion.
We are interested in converse statements.

\begin{question}
\label{qmain}
Do smooth proper rationally connected varieties over $F$ satisfy the tight approximation property?
\end{question}

\begin{rmks}
\label{sanspoint}
(i) When $B(\R)=\varnothing$, the weaker question whether smooth proper rationally connected varieties $X$ over $F$ satisfy the weak approximation property (see \S \ref{parapprox}) is open. It is not even known whether $X$, if non-empty, always has an $F$-point, even if $X$ is defined over $\R$.
 Applied with $B$ equal to the anisotropic conic over $\R$, this would show the existence of a rational curve in an arbitrary positive-dimensional rationally connected variety~$X$ over $\R$ when $X(\R)=\emptyset$,
which would answer a question raised in \cite[Remarks 20]{Arako}.

(ii) If $B(\R)=\varnothing$, and if $X$ is a smooth proper rationally connected variety over~$F$ that is known to have an $F$-point, weak approximation holds at places of good reduction, as the arguments of Hassett and Tschinkel \cite[Theorem 3]{HT} adapt to this situation. It is not known if it also holds at places of bad reduction, even if $B$ is a complex curve (see \cite[Conjecture 2]{HT}, \cite{Hassett}).
\end{rmks}

\begin{rmk}
Although the tight approximation property is interesting only for rationally connected varieties, one may ask weaker questions for other classes of varieties, such as $K3$ surfaces, or Enriques surfaces.
For instance, if $X$ is a $K3$ surface over $\R$, can any homologically trivial loop on $X(\R)$ be approximated for the $\ci$ topology by the real locus of a rational curve on $X$? More generally, what about loops whose homology classes belong to the image $H_1^{\alg}(X(\R),\Z/2\Z)$ of the Borel--Haefliger cycle class map? The particular case of the Fermat quartic surface is raised by Bochnak and Kucharz \cite[p.~602]{BKrat}. We do not even know if all real $K3$ surfaces with real points contain rational curves, although complex K3 surfaces always contain rational curves by
Bogomolov and Mumford \cite[Appendix]{MM}.
\end{rmk}

\section{Rational varieties}

\subsection{Projective spaces}
As a first example, we show that projective spaces satisfy the tight approximation property. Variants of this statement are due to Bochnak and Kucharz (\cite[Lemma 2.3]{BKrat}, \cite[Lemma 2.5]{BKC}), and we adapt their arguments.
We need a real analogue of Runge's approximation theorem, which is easy to deduce from classical statements.

\begin{prop}
\label{approx}
Let $K\subset B(\C)$ be a $G$-stable compact subset, let $\Omega$ be a $G$-stable open neighbourhood of $K$ in $B(\C)$, let $f: \Omega\to\C$ be a $G$-equivariant holomorphic function, and choose $b_1,\dots,b_m\in K$ and $r\geq 0$. Then there is a sequence of
rational functions $f_n\in\R(B)$ such that the $f_n(\C)$ have no poles on $K$, have the same $r$-jets as $f$ at the $b_i$, and such that $f_n(\C)|_K$ converges uniformly to $f|_K$.
\end{prop}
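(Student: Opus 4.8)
The plan is to reduce this real statement to the classical Runge approximation theorem on a compact Riemann surface, by a symmetrisation argument, the prescribed jets being handled via the Riemann--Roch theorem.

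I would start by enlarging $\{b_1,\dots,b_m\}$ so that it becomes $G$-stable; this only makes the statement stronger, all the added points still lying in~$K$ and~$f$ being still defined near them. Viewing $B(\C)$ as a compact Riemann surface~$X$, the heart of the matter is then the following purely complex statement: any holomorphic function on a neighbourhood of a compact subset~$K\subset X$ can be approximated uniformly on~$K$ by meromorphic functions on~$X$ having no pole on~$K$ and having the same $r$-jets at finitely many prescribed points of~$K$ as the given function. A meromorphic function on~$X$ is the same as a rational function on $B_\C$, so this produces a sequence of (a priori non-$G$-equivariant) functions $h_n\in\C(B_\C)$ without poles on~$K$, with the same $r$-jets as~$f$ at the~$b_i$, and with $h_n(\C)|_K$ converging uniformly to $f|_K$. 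I would then symmetrise: set $\tilde h_n(x):=\tfrac12\bigl(h_n(x)+\overline{h_n(\sigma(x))}\bigr)$, a meromorphic function on $B(\C)$ (the second term being the complex conjugate of an anti-meromorphic function). By construction $\tilde h_n$ is $G$-equivariant, hence belongs to $\R(B)$; its poles lie among those of~$h_n$ and their images under~$\sigma$, hence avoid~$K$ since $\sigma(K)=K$; and since~$f$ is $G$-equivariant, $\sigma(K)=K$, and $\sigma$ permutes the~$b_i$, one checks that $\tilde h_n(\C)|_K$ still converges uniformly to $f|_K$ and that $\tilde h_n$ still has the same $r$-jets as~$f$ at the~$b_i$. (For the jet condition at a non-real point~$b_i$, it is crucial here that the marked set is $G$-stable and that~$f$ equals the complex conjugate of $x\mapsto f(\sigma(x))$.) These $\tilde h_n$ are the sought-after~$f_n$.

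It remains to prove the complex statement. First I would dispose of the jet conditions using Riemann--Roch: for an effective divisor~$D$ on~$X$ supported away from~$K$ and of large enough degree, the map sending a rational function with poles bounded by~$D$ to its collection of $r$-jets at the~$b_i$ is onto the finite-dimensional space $\bigoplus_i\C^{r+1}$ and admits a linear section~$\lambda$; picking~$p$ with poles bounded by~$D$ and the same $r$-jets as~$f$ at the~$b_i$, and replacing~$f$ by $f-p$, one reduces to the case where~$f$ vanishes to order $>r$ at each~$b_i$ and one must approximate it by meromorphic functions with vanishing $r$-jets there. Next I would fatten~$K$: choose small pairwise disjoint coordinate discs $U_i\ni b_i$ whose closures lie in the domain of~$f$, and set $\hat K:=K\cup\bigcup_i\overline{U_i}$, so that each~$b_i$ lies in the interior of~$\hat K$. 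Applying the classical Runge theorem on the compact Riemann surface~$X$ (prescribing one pole in each connected component of $X\setminus\hat K$), one obtains meromorphic functions $g_n$ on~$X$, holomorphic on a \emph{fixed} open neighbourhood of~$\hat K$, with $g_n\to f$ uniformly on~$\hat K$. Since the~$b_i$ are interior to~$\hat K$, Cauchy's estimates upgrade this to convergence of~$g_n$ to~$f$ together with all derivatives near each~$b_i$, so that the $r$-jets $j^r_{b_i}(g_n)$ tend to $j^r_{b_i}(f)=0$. Setting $h_n:=g_n-\lambda\bigl((j^r_{b_i}(g_n))_i\bigr)$ then does the job: $h_n$ is meromorphic on~$X$ with no pole on~$\hat K$, has vanishing $r$-jets at the~$b_i$, and $h_n-f\to 0$ uniformly on~$\hat K\supseteq K$ because the argument of~$\lambda$ tends to~$0$ and~$\lambda$ takes values in a finite-dimensional space of functions regular on~$\hat K$. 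Adding back~$p$ concludes.

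The main obstacle is the tension between the two approximation requirements: Runge only yields \emph{uniform} approximation on~$K$, which in general does not control derivatives---hence jets---at points where~$K$ is thin (e.g.\ at real points, when $K=B(\R)$). Fattening~$K$ near the marked points before applying Runge, and repairing the jets afterwards by the Riemann--Roch correction~$\lambda$, is what resolves this. A secondary, purely bookkeeping difficulty is to ensure that the symmetrisation $h_n\mapsto\tilde h_n$ preserves the jet conditions at complex marked points; this is why one begins by replacing $\{b_1,\dots,b_m\}$ with its $G$-orbit.
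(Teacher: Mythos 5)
Your proof is essentially correct and rests on the same two external inputs as the paper's --- Runge-type approximation on a compact Riemann surface (the theorem cited as \cite{KT}, \cite{Simha}) and the Riemann--Roch theorem, followed by averaging over $G$ --- but it organizes them in the opposite order. The paper handles the jets \emph{before} applying Runge: it uses Riemann--Roch to produce $g,h\in\R(B)^*$ without poles on $K$, with $g$ matching the $r$\nobreakdash-jets of $f$ at the $b_i$ and $h$ vanishing at the $b_i$, approximates the $G$\nobreakdash-equivariant function $f'=(f-g(\C))/h(\C)$ uniformly on $K$, symmetrizes the approximants by $f'_n\mapsto (f'_n+\sigma(f'_n))/2$, and sets $f_n=hf'_n+g$; since the approximation error gets multiplied by $h$, the jet condition is automatic and no control of derivatives of the Runge approximants is ever needed. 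You instead approximate first and repair the jets afterwards, which is why you must fatten $K$ by discs around the $b_i$ and use Cauchy estimates to see that the $r$\nobreakdash-jets of the approximants converge, and then subtract a small element of a Riemann--Roch space $L(D)$ via a linear section of the jet-evaluation map (surjective for $\deg D\gg 0$ by Riemann--Roch). Your bookkeeping at complex marked points --- enlarging $\{b_1,\dots,b_m\}$ to a $G$\nobreakdash-stable set so that symmetrization at the end preserves the jets --- is correct and plays the role of the paper's averaging of $f'_n$. What each approach buys: the paper's divide-by-$h$ trick is shorter and needs only uniform convergence on $K$; your fatten-and-correct scheme is more robust in spirit (it would apply verbatim to approximation problems where no convenient global multiplier $h$ is available) at the cost of the Cauchy-estimate and linear-section bookkeeping. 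The one case you do not address is $K=B(\C)$: there a divisor $D$ supported away from $K$ of large degree, and a pole in each component of $B(\C)\setminus\hat K$, do not exist; but then $f$ is a global $G$\nobreakdash-equivariant holomorphic, hence locally constant, function and already lies in $\R(B)$, which is how the paper dispatches it in one sentence. Note that since $B$ is connected over $\R$ and $K$ is $G$\nobreakdash-stable, a connected component of $B(\C)$ contained in $K$ forces $K=B(\C)$, so this is the only degeneration your choices of $D$ and of the Runge poles need to avoid.
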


\begin{proof}
The statement is obvious if $K=B(\C)$, as $f$ is then constant. Otherwise, using the Riemann--Roch theorem, we find rational functions $g,h\in \R(B)^*$ such that $g(\C)$ and $h(\C)$ have no poles on $K$, such that $g(\C)$ has the same $r$-jets as $f$ at the $b_i$, and such that $h(\C)$ vanishes to order exactly $r$ at the $b_i$. Set $f'=(f-g(\C))/h(\C)$. By \cite[Satz 1]{KT} (see also \cite[Theorem 1.1]{Simha}), there exists a sequence $f'_n$ of rational functions on $B$ with no poles on $K$ such that $f'_n(\C)|_K$ converges uniformly to $f'|_K$. Replacing $f'_n$ with $(f_n'+\sigma(f_n'))/2$, we may assume that $f'_n\in\R(B)$. The sequence $f_n=hf'_n+g$ has the required properties.
\end{proof}

\begin{thm}
\label{projective}
 The projective space $\P^d_F$ satisfies the tight approximation property.
\end{thm}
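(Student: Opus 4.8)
The plan is to fix a proper regular model of $\P^d_F$ over $B$ and reduce the statement, via Proposition~\ref{deftightK}, to a concrete approximation problem about sections over $G$\nobreakdash-stable compact subsets $K\subset B(\C)$ with $B(\R)\subset K$. The first step is to choose the model cleverly: instead of an arbitrary proper regular model, it suffices (by Theorem~\ref{birinv}) to prove tight approximation for one convenient model, and the natural choice is $\kX=\P(\sE)$ for a suitable rank\nobreakdash-$(d+1)$ vector bundle $\sE$ on $B$, or even simply $\kX=\P^d_B=\P^d_\R\times B$ with $f$ the second projection. With this model, a $G$\nobreakdash-equivariant holomorphic section $u:\Omega\to\kX(\C)=\P^d(\C)\times\Omega$ of $f(\C)$ over a $G$\nobreakdash-stable Stein open $\Omega\supset K$ is the same thing as a $G$\nobreakdash-equivariant holomorphic map $\Omega\to\P^d(\C)$, which I want to approximate uniformly on $K$, with prescribed $r$\nobreakdash-jets at finitely many points $b_i\in K$, by maps of the form $B\to\P^d_\R$, i.e.\ by tuples $[f_0:\cdots:f_d]$ of rational functions in $F=\R(B)$ with no common zero on $K$.

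The heart of the argument is then a lifting-and-approximation step using Proposition~\ref{approx}, which is the real/equivariant Runge theorem proved just above. Concretely: cover $\Omega$ by charts of $\P^d(\C)$; after possibly shrinking $\Omega$ around $K$ and adding to the list of $b_i$ the finitely many points where $u$ meets a chosen hyperplane at infinity (using compactness of $K$), one may assume $u(\Omega)$ lands in the affine chart $\A^d(\C)$, so $u=(u_1,\dots,u_d)$ with each $u_j:\Omega\to\C$ a $G$\nobreakdash-equivariant holomorphic function. Apply Proposition~\ref{approx} to each $u_j$ (with the same $K$, $\Omega$, points $b_i$, and order $r$) to get rational functions $f_{j,n}\in\R(B)$ with no poles on $K$, the same $r$\nobreakdash-jets as $u_j$ at the $b_i$, and $f_{j,n}(\C)|_K\to u_j|_K$ uniformly. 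Setting $s_n=[1:f_{1,n}:\cdots:f_{d,n}]:B\dashrightarrow\P^d_\R$, one has a rational map; to make it a genuine morphism $B\to\P^d_\R$ (and hence a section of $f$) I would use the usual device of clearing denominators by a common multiple, which does not affect the values on $K$ nor the $r$\nobreakdash-jets at the $b_i$ provided one has arranged, again via Proposition~\ref{approx} applied to a suitable auxiliary function, that the chosen denominator is a unit to the relevant order at each $b_i$ and nonvanishing on $K$. Uniform convergence on $K$ in the chart $\A^d(\C)$ gives $\ci$, indeed uniform, convergence $s_n(\C)|_K\to u|_K$ in $\P^d(\C)$ since $u(K)$ is a compact subset of the chart.

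The main obstacle I anticipate is bookkeeping around the hyperplane at infinity and the denominators: ensuring that after passing to a morphism $B\to\P^d_\R$ one still has \emph{exact} agreement of $r$\nobreakdash-jets at the $b_i$ (not merely approximate agreement) and no indeterminacy on $K$. This is handled exactly as in the proof of Proposition~\ref{approx} itself—use Riemann--Roch to produce rational functions on $B$ vanishing to prescribed order at the $b_i$ and nonvanishing on the compact set $K$—so it is a routine though slightly fiddly elaboration rather than a genuine difficulty; the one genuinely nontrivial analytic input, the Runge-type approximation over $K$, has already been isolated in Proposition~\ref{approx}. A secondary point to check is the $G$\nobreakdash-equivariance throughout, but since $K$ and $\Omega$ are $G$\nobreakdash-stable, $u$ is $G$\nobreakdash-equivariant, and Proposition~\ref{approx} delivers functions in $\R(B)$ (not just $\C(B)$), the maps $s_n$ are automatically defined over $\R$, so this causes no trouble.
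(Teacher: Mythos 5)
There is a genuine gap at the very first reduction. You claim that, after shrinking $\Omega$ around $K$ and adding to the $b_i$ the finitely many points where $u$ meets a chosen hyperplane at infinity, "one may assume $u(\Omega)$ lands in the affine chart $\A^d(\C)$". Adding those points to the list of jet points does not make the intersections with the hyperplane disappear, and they cannot be removed by shrinking $\Omega$ either, because they may lie on $B(\R)\subset K$, which must stay inside $\Omega$. Nor can you choose the hyperplane to avoid $u(K)$: it must be $G$\nobreakdash-stable (otherwise the coordinates are not $G$\nobreakdash-equivariant and you cannot hope for $f_{j,n}\in\R(B)$), and already for $d=1$ a section with $u(B(\R))=\P^1(\R)$ meets every real point of $\P^1$. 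At the crossing points the affine coordinates $u_j$ are only meromorphic, so Proposition~\ref{approx}, which approximates holomorphic functions, does not apply to them; moreover the prescribed $r$\nobreakdash-jets of $u$ at such a $b_i$ live in a chart containing $u(b_i)$, not in the chart you are computing in, and your denominator-clearing step does not address how exact jet agreement in $\P^d$ is recovered there. A chart-based argument could conceivably be salvaged by working with meromorphic coordinates, clearing poles with an auxiliary $h\in\R(B)^*$ and matching jets to a higher order $r'\gg r$ before taking strict transforms (in the spirit of the blow-up step in the proof of Theorem~\ref{birinv} and of \cite[Lemma~2.1]{BKrat}), but none of this is in your proposal, and it is precisely the nontrivial point.

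For comparison, the paper sidesteps the chart issue by arguing globally with the same model $\P^d_\R\times B$: it considers the pullback $v^*\sO_{\P^d(\C)}(1)$ of the $G$\nobreakdash-equivariant twisting sheaf under $v=p(\C)\circ u$, uses the equivariant Picard/Borel--Haefliger diagram together with Proposition~\ref{linebundleR} to produce an algebraic line bundle $\sL\in\Pic(B)$ with $\sL^{\an}|_\Omega\simeq v^*\sO_{\P^d(\C)}(1)$, lifts the sections $v^*Y_0,\dots,v^*Y_d$ (which have no common zero, unlike affine coordinates, which have poles) through a surjection $\sO_B^N\surj\sL$ using the Stein property (Lemma~\ref{SteinR}), and only then applies the Runge-type Proposition~\ref{approx} coordinatewise, recovering a morphism $B\to\P^d_\R$ for $n\gg0$ by nonvanishing on $K$ and the valuative criterion. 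Your proposal correctly identifies Proposition~\ref{deftightK} and Proposition~\ref{approx} as the analytic inputs, but the missing line-bundle (or, alternatively, pole-and-jet bookkeeping) step is exactly where the real content of the theorem lies.
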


\begin{proof}
Consider the model $\kX=\P^d_\R\times B$ of $\P^d_F$ with first projection $p: \P^d_\R\times B\to \P^d_\R$.
Let $K$, $\Omega$, $b_i$, $r$ and $u$ be as in Proposition \ref{deftightK}.
Set $v=p(\C)\circ u: \Omega\to\P^d(\C)$.
We only have to construct morphisms $g_n: B\to \P^d_\R$ such that the $g_n(\C)$ have the same $r$-jets as $v$ at the $b_i$, and converge uniformly to $v$ on $K$. Indeed, one can then choose $s_n=(g_n,\Id): B\to \P^d_\R\times B$.

Analytifying $\sO_{\P^d_\R}(1)$ as in \S\ref{Ranal}
endows the invertible sheaf $\sO_{\P^d(\C)}(1)$ on $\P^d(\C)$ with a structure of $G$-equivariant invertible sheaf whose coordinate sections $Y_0,\dots, Y_d$ are $G$-invariant.
 Consider the commutative diagram:
\begin{equation}
\label{clR}
\begin{aligned}
\xymatrix
{
\Pic(B)=\Pic_G(B(\C))\ar[r]\ar^{\cl_\R}[d]&\Pic_G(\Omega)\ar^{\cl_\R}[d]  \\
H^1(B(\R),\Z/2\Z)\ar[r]&H^1(\Omega^G,\Z/2\Z),
}
\end{aligned}
\end{equation}
whose horizontal arrows are restriction maps, whose vertical maps are Borel--Haefliger maps (defined in \S\ref{RPicard}), and where the equality $\Pic(B)=\Pic_G(B(\C))$ follows from GAGA (see \S \ref{Ranal}).
The right vertical map is an isomorphism by Proposition \ref{linebundleR}. The bottom horizontal map is a surjection because $\Omega^G$ is an open subset of the one-dimensional $\ci$ manifold $B(\C)^G=B(\R)$. The left vertical map is surjective because if $\Theta\subset B(\R)$ is a connected component and $x\in \Theta$, the class $\cl_{\R}(\sO(x))$ is non-zero in $H^1(\Theta,\Z/2\Z)$ but vanishes on the other connected components of $B(\R)$, by \cite[Remark 1.3.2]{krasnovcharacteristicclasses} or \cite[Theorem 4.2]{krasnovequivariant}. We then deduce from (\ref{clR}) the existence of a line bundle $\sL\in\Pic(B)$ such that $\sL^{\an}|_\Omega\simeq v^*\sO_{\P^d(\C)}(1)$.

Replacing $\sL$ with $\sL(lx)$ for some closed point $x\in B$ not in $\Omega$ and $l\gg0$, we may assume that there exists a surjection $q: \sO^N_B\surj\sL$ of locally free sheaves on~$B$.
It induces a surjection $q^{\an}|_\Omega: \sO_{\Omega}^N\surj v^*\sO_{\P^d(\C)}(1)$ of $G$-equivariant locally free sheaves on $\Omega$.
 By Lemma \ref{SteinR} (ii), one can lift $v^*Y_0,\dots, v^*Y_d$ to $G$-invariant sections $\zeta_0,\dots,\zeta_d\in H^0(\Omega,\sO_{\Omega}^N)^G$. For $0\leq j\leq d$, view $\zeta_j$ as a collection $(f_{j,k})_{1\leq k\leq N}$ of $G$\nobreakdash-equivariant holomorphic functions $\Omega\to \C$.
 By Proposition \ref{approx}, there is a sequence $f_{j,k,n}\in\R(B)$ such that the $f_{j,k,n}(\C)$ have no poles on $K$, have the same $r$\nobreakdash-jets as~$f_{j,k}$ at the $b_i$ and converge uniformly to $f_{j,k}$ on $K$. Through the morphism~$q$, the $(f_{j,k,n})_{1\leq k\leq N}$ induce a rational section $\xi_{j,n}$ of $\sL$. For $n\gg0$, the~$\xi_{j,n}$ do not vanish simultaneously on the compact $K$, since the $v^*Y_j$ do not vanish simultaneously. It follows that they define rational maps $g_n: B\dashrightarrow \P^d_{\R}$, which extend to morphisms $g_n: B\to \P^d_{\R}$ by the valuative criterion of properness, and have the required properties.
\end{proof}

\subsection{Stable birational invariance}

Recall that two varieties $X$ and $X'$ over $F$ are \textit{stably birational} if there exist $d, d'\geq 0$ such that $\P^d_F\times X$ and $\P^{d'}_F\times X'$ are birational.
A variety is said to be \textit{stably rational} if it is stably birational to the point.

\begin{prop}
\label{stableinv}
Let $X$ and $X'$ be stably birational smooth varieties over $F$. If $X$ satisfies the tight approximation property, then so does $X'$.
\end{prop}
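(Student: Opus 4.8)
The plan is to reduce to the already-established facts that $\P^d_F$ satisfies tight approximation (Theorem~\ref{projective}) and that tight approximation is a birational invariant (Remarks following Definition~\ref{deftight2}, itself a consequence of Theorem~\ref{birinv}). First I would observe that it suffices to treat the case $X'=\P^d_F\times X$ for a single variety $X$ satisfying tight approximation: given stably birational $X$ and $X'$ with $\P^d_F\times X$ birational to $\P^{d'}_F\times X'$, tight approximation for $X$ would give it for $\P^d_F\times X$, hence (birational invariance) for $\P^{d'}_F\times X'$, hence (the product case again, with the roles reversed) for $X'$. So the whole proposition comes down to: \emph{if $X$ satisfies tight approximation, so does $\P^d_F\times X$.}

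For this I would use the fibration structure. The first projection $g\colon \P^d_F\times X\to \P^d_F$ is a dominant morphism of smooth varieties over $F$ whose fibers over every $F$-point of $\P^d_F$ are isomorphic to $X$, which satisfies tight approximation by hypothesis; and the base $\P^d_F$ satisfies tight approximation by Theorem~\ref{projective}. Hence Theorem~\ref{intro:thmB} (Theorem~\ref{up}), applied to $g$ with the dense open subset taken to be all of $\P^d_F$, yields that $\P^d_F\times X$ satisfies tight approximation. Combined with the reduction of the previous paragraph, this proves the proposition.

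The only point requiring a word of care is that Theorem~\ref{up}, as stated in the introduction, is numbered Theorem~\ref{intro:thmB} but appears in the body as Theorem~\ref{up}, which is proved later in the article than this proposition; so strictly speaking one should either forward-reference it or, what amounts to the same, remark that the present proposition is a trivial special case of the general fibration theorem and is recorded here only because its statement is elementary. Alternatively, to keep the argument self-contained at this stage, one can give a direct proof of the product case using the same ingredients as Theorem~\ref{projective}: a holomorphic section $u$ of a regular model $\kY\to B$ of $\P^d_F\times X$ projects, after passing to the model $\P^d_\R\times\kX\to B$ where $\kX$ is a proper regular model of $X$, to a holomorphic section of $\P^d_\R\times\kX(\C)\to\kX(\C)$ over the image of $u$, which one approximates by the argument of Theorem~\ref{projective} while simultaneously applying tight approximation for $\kX$ to the $\kX$-component; the jets are matched at the prescribed points by the usual Riemann--Roch bookkeeping. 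I would present the short reduction-plus-Theorem~\ref{up} argument as the proof, since that is cleanest; the main (very minor) obstacle is simply the bookkeeping of which direction of the stable birational equivalence is being invoked at each step.
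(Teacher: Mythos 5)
Your reduction has a genuine gap at its last step. You reduce the proposition to the one-directional statement ``if $X$ satisfies tight approximation, so does $\P^d_F\times X$'', and you prove exactly that direction via Theorem~\ref{up}. But in your chain $X \Rightarrow \P^d_F\times X \Rightarrow \P^{d'}_F\times X' \Rightarrow X'$, the final arrow is the \emph{converse} implication: descending tight approximation from $\P^{d'}_F\times X'$ to the factor $X'$. That is not ``the product case with the roles reversed'', and it does not follow from Theorem~\ref{up}, which only propagates the property from the base and the fibers to the total space, never from the total space to the base. So, as written, the argument does not prove the proposition. The missing implication is true but needs its own (short) argument: take a proper regular model $f'\colon\kX'\to B$ of $X'$ and the product model $\P^{d'}_\R\times\kX'\to B$ of $\P^{d'}_F\times X'$; given $K$, $\Omega$, the $b_i$, $r$ and a $G$-equivariant holomorphic section $u'$ of $f'(\C)$ as in Proposition~\ref{deftightK}, apply tight approximation of the product model to the $G$-equivariant section $(c,u')$, where $c\in\P^{d'}(\R)$ is a constant point, and compose the resulting algebraic sections with the projection to $\kX'$; equality of $r$-jets at the $b_i$ and uniform convergence on $K$ are preserved under this projection.

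Once this is added your proof is correct, but note that it takes a different and heavier route than the paper's. The paper does not invoke the fibration theorem here at all: by Theorem~\ref{birinv} it reduces to the claim that $X$ satisfies tight approximation \emph{if and only if} $\P^d_F\times X$ does, and proves both directions at once from the product model $\P^d_\R\times\kX\to B$, since a section of this model over $\Omega$ is just a pair consisting of a $G$-equivariant holomorphic map $\Omega\to\P^d(\C)$ and a section of $f(\C)$, so the approximation problem splits into one instance of Theorem~\ref{projective} and one instance of tight approximation for $f$. This keeps the proposition in the elementary part of the paper, before Section~\ref{secfib}. Your appeal to Theorem~\ref{up} is not circular (its proof does not use stable birational invariance), but it imports the toroidalization machinery only to obtain the easy half of a statement whose hard ingredient is already Theorem~\ref{projective}.
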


\begin{proof}
By Theorem \ref{birinv}, it suffices to show that, for $d\geq 0$, a smooth variety $X$ over~$F$ satisfies the tight approximation property if and only if so does $\P^d_F\times X$. Letting $f: \kX\to B$ be a proper regular model of $X$, and choosing $\P^d_\R\times \kX$ as a proper regular model for $\P^d_F\times X$, this is an immediate consequence of Theorem \ref{projective}.
\end{proof}

\begin{cor}
\label{rat}
Smooth stably rational varieties over $F$ satisfy the tight approximation property.
\end{cor}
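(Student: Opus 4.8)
The plan is to deduce this immediately from Proposition~\ref{stableinv} together with the case $d=0$ of Theorem~\ref{projective}. By definition, a smooth stably rational variety $X$ over $F$ is stably birational to the point, that is, to $\Spec(F)=\P^0_F$. Theorem~\ref{projective} (applied with $d=0$) shows that $\P^0_F$ satisfies the tight approximation property, and Proposition~\ref{stableinv} shows that this property is shared by every smooth variety that is stably birational to $\P^0_F$. Applying the latter with the roles ``$X$'' $=\P^0_F$ and ``$X'$'' $=X$ gives the conclusion. (Equivalently, one may observe that $X$ is stably birational to $\P^d_F$ for some $d\geq 0$, and invoke Proposition~\ref{stableinv} directly; the two formulations are the same.)

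I expect no real obstacle here: the statement is a purely formal consequence of the stable birational invariance established just above. The only points to keep in mind are that tight approximation is a property of \emph{smooth} varieties over $F$ (which is exactly the hypothesis), so that Definition~\ref{deftight2} applies and a proper regular model exists by resolution of singularities in characteristic zero, and that ``stably rational'' is being used in the sense recalled before Proposition~\ref{stableinv}, namely stably birational to the point.
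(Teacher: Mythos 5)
Your argument is correct and is exactly the paper's (implicit) proof: a stably rational smooth variety is by definition stably birational to the point (equivalently to some $\P^d_F$), which satisfies tight approximation by Theorem~\ref{projective}, and Proposition~\ref{stableinv} transfers the property. Nothing further is needed.
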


\begin{example}
\label{quadrique}
Smooth quadrics of dimension~$\geq 1$ and Severi--Brauer varieties over~$F$ satisfy the tight approximation property. Indeed, if $f: \kX\to B$ is a proper regular model of a quadric of dimension~$\geq 1$ (resp.\ of a Severi--Brauer variety) $X$ over $F$,  and if $u: \Omega\to\kX(\C)$ is a $G$-equivariant holomorphic section of $f(\C)$ over a $G$-stable open neighbourhood $\Omega$ of $B(\R)$ in $B(\C)$ as in Definition \ref{deftight}, then $u|_{B(\R)}$ is a section of $f(\R)$, so that $X(F)\neq\varnothing$ by a theorem of Witt  \cite[Satz 22]{Witt} (resp.\ by \cite[(1.4)]{DemeyerKnus}). It follows that $X$ is $F$-rational, and Corollary \ref{rat} applies.
We will greatly generalize these examples in Theorem \ref{homogeneous}.
\end{example}

\section{The fibration method}
\label{secfib}

 In Theorem \ref{up}, we explain how to deduce the tight approximation property for the total space of a fibration if it is known for its base and for its general fibers.
In the complex setting, for the weak approximation property, this is due to Colliot-Th\'el\`ene and Gille \cite[Proposition 2.2]{CTG}. In the real setting, for the property that the reciprocity obstruction is the only obstruction to the validity of the weak approximation property, such a fibration theorem was proven by P\'al and Szab\'o \cite[Theorem 1.5]{PalSzabo}
under the additional assumption that the base is rational over~$F$.

\begin{thm}
\label{up}
Let $g: X\to X'$ be a dominant morphism of smooth varieties over~$F$.
If the tight approximation property holds for $X'$ as well as for $X_x=g^{-1}(x)$ for all
$F$\nobreakdash-points~$x$ of a dense open subset of~$X'$, then it holds for $X$.
\end{thm}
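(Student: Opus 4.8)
The plan is to fix proper regular models $f:\kX\to B$ of $X$ and $f':\kX'\to B$ of $X'$ such that $g$ extends to a morphism $\bar g:\kX\to\kX'$; this is possible by resolution of singularities over $\R$ after spreading $g$ out. We will verify the criterion of Proposition~\ref{deftightK}: given a $G$-stable compact $K\supset B(\R)$, points $b_i\in K$, an order $r$, a $G$-stable Stein open $\Omega\supset K$, and a $G$-equivariant holomorphic section $u:\Omega\to\kX(\C)$ of $f(\C)$, we must produce algebraic sections $s_n:B\to\kX$ of $f$ with the prescribed $r$-jets at the $b_i$ approximating $u$ uniformly on $K$. The composite $u':=\bar g(\C)\circ u:\Omega\to\kX'(\C)$ is a $G$-equivariant holomorphic section of $f'(\C)$, and the tight approximation property for $X'$ (applied to the model $f'$, which is legitimate by Definition~\ref{deftight2} together with Theorem~\ref{birinv}) yields algebraic sections $s_n':B\to\kX'$ of $f'$ with the same $r$-jets as $u'$ at the $b_i$ and with $s_n'(\C)|_K$ converging uniformly to $u'|_K$. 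Fix a large $n$.

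The core of the argument is then to lift $s_n'$ through $\bar g$ along a curve whose geometric generic point lands in the smooth locus of $g$. Let $\xi'=s_n'(\eta)\in X'(F)$ be the generic point of the section; after shrinking the dense open $U'\subset X'$ over which $X\to X'$ has the tight approximation property on fibres and over which $g$ is smooth, we may assume $\xi'\in U'(F)$ — this is where we invoke the algebro-geometric input (Proposition~\ref{toroidaltheorem}, the higher-dimensional Néron smoothening), which lets us, after replacing $\kX$ and $\kX'$ by suitable modifications compatible with $\bar g$ and replacing $u$ by its strict transform (using Proposition~\ref{tubularR}~(ii) to ensure no component of $u(\Omega)$ lies in the exceptional or non-smooth loci, and adding to the $b_i$ the finitely many points where $u(\Omega)$ meets the bad locus, at increased jet order $r'$), arrange that the section $s_n'$ itself, for $n$ large, factors through the locus where $\bar g$ is smooth and its generic point lies in $U'$. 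The fibre $X_{\xi'}=g^{-1}(\xi')$ is then a smooth variety over $F$ satisfying the tight approximation property by hypothesis, and $\bar g^{-1}(s_n'(B))\to B$ is a proper model of it (after a regular modification, using that $\bar g$ is smooth along the relevant locus to keep the total space regular near the section being approximated). The holomorphic section $u$ of $\kX(\C)$ over $\Omega$, being a holomorphic lift of $u'=s_n'(\C)\circ(\text{inclusion})$, is a $G$-equivariant holomorphic section of this model of $X_{\xi'}$ over $\Omega$; applying the tight approximation property for $X_{\xi'}$ produces algebraic sections $B\to\bar g^{-1}(s_n'(B))\subset\kX$ of $f$ with the required $r$-jets at the $b_i$ and approximating $u$ uniformly on $K$.

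A diagonal argument in $n$ then finishes: for each $n$ we obtain a section $s_{n,k}$ with $s_{n,k}(\C)|_K\to u|_K$ as $k\to\infty$ while $s_n'(\C)|_K\to u'|_K$ as $n\to\infty$, and extracting a suitable subsequence gives a single sequence $s_n:B\to\kX$ of sections of $f$ with the same $r$-jets as $u$ at the $b_i$ and with $s_n(\C)|_K$ converging uniformly to $u|_K$, as required by Proposition~\ref{deftightK}.

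The main obstacle is the middle step: arranging that the approximating section $s_n'$ of $X'$ can be lifted through $\bar g$ with generic point in the good open $U'$ and with a regular total space along the section. Weak approximation on $X'$ would only place the generic point of $s_n'$ somewhere in $X'(F)$, possibly in the bad locus or over non-smooth fibres of $\bar g$; it is precisely to push it into $U'$ and away from the non-smooth locus of $\bar g$ — uniformly, so that the holomorphic section $u$ remains a valid lift to approximate — that one needs the $\ci$-approximation content of tight approximation together with the toroidal/Néron-smoothening statement Proposition~\ref{toroidaltheorem}. Controlling the jets at the $b_i$ through these modifications, as in the proof of Theorem~\ref{birinv}, requires raising the jet order to a large enough $r'$ and is the technical heart of the verification.
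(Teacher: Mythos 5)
Your overall architecture matches the paper's (extend $g$ to a morphism of models, approximate $u':=g(\C)\circ u$ by algebraic sections $s_n'$ of $f'$, then use the tight approximation property of the fibre over the generic point of $s_n'$, and finish by a diagonal argument), but there is a genuine gap at the step you yourself flag as the core: you assert that $u$, ``being a holomorphic lift of $u'=s_n'(\C)\circ(\text{inclusion})$, is a $G$-equivariant holomorphic section of this model of $X_{\xi'}$ over $\Omega$''. This is false: $u$ lifts $u'$, and $u'$ is only \emph{approximated} by $s_n'(\C)|_\Omega$, not equal to it. Hence $u$ does not take values in $\bar g^{-1}(s_n'(B))(\C)$, and there is no holomorphic section of the fibration $\bar g^{-1}(s_n'(B))\to B$ (equivalently of $\kY_n:=\kX\times_{\kX'}B\to B$) to which the tight approximation property of the fibre $X_{\xi'}$ could be applied. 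Arranging that the generic point of $s_n'$ lies in the good open set $U'$ does not repair this.

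The missing ingredient is the construction of a $G$-equivariant holomorphic retraction: since $g(\C)$ can be made submersive along $u(\Omega)$, one obtains (Proposition~\ref{tubularR}~(iii), a Stein tubular-neighbourhood argument) a $G$-stable open neighbourhood $W$ of $u'(\Omega)$ in $\kX'(\C)$ and a holomorphic map $w:W\to\kX(\C)$ with $g(\C)\circ w=\Id_W$ and $w\circ u'=u$. For $n\gg 0$ one has $s_n'(\C)(\Omega)\subset W$, and $v_n:=w\circ s_n'(\C)|_\Omega$ is then a genuine holomorphic section of $\kY_n(\C)\to\Omega$ (after strict transform to a resolution $\widetilde{\kY}_n$), with the correct $r$-jets at the $b_i$ (because $s_n'$ and $u'$ agree to order $r$ there and $w\circ u'=u$) and close to $u$; it is to $v_n$, not to $u$, that one applies the tight approximation property of the fibre. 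This also clarifies the role of Proposition~\ref{toroidaltheorem}: in the paper it is applied to the germs of the \emph{analytic} section $u$ at the finitely many points where $u$ leaves the toroidal open set, so as to make $g(\C)$ submersive along $u(\Omega)$ after modifications — the hypothesis needed to build $w$ — rather than to force the algebraic sections $s_n'$ into a smooth locus, which is how you use it; regularity of the total space over $s_n'$ is handled separately by taking an arbitrary resolution of $\kY_n$ and invoking the birational invariance (Theorem~\ref{birinv}). Without the retraction $w$ your argument does not go through.
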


Our main tool is Proposition \ref{toroidaltheorem} which ensures that an analytic curve in the total space of a fibration $f:X\to Y$ can be made to avoid the singular locus of $f$ by passing to appropriate modifications of $X$ and $Y$. Its proof, given in \S\ref{paravoiding}, relies on toroidal geometry; we recall basic definitions in \S\ref{partoroidal} and refer to \cite{AKaru, ADK} for more information. Theorem \ref{up} is proven in \S\ref{parfib} and a few applications are given in~\S\ref{applfib}.

\subsection{Toroidal embeddings}
\label{partoroidal}

Let $k$ be a field of characteristic $0$. An open immersion $U\subset X$ of varieties over $k$ is a \textit{toroidal embedding} if for all $x\in X(\overline{k})$, there exist a toric variety $Z$ over $\overline{k}$ with open orbit $T\subset Z$, a point $z\in Z(\overline{k})$ and an isomorphism $\widehat{\sO_{X_{\overline{k}},x}}\isoto\widehat{\sO_{Z,z}}$ of $\overline{k}$-algebras sending the completion of the ideal of $X\setminus U$ to the completion of the ideal of $Z\setminus T$. We call such an isomorphism a \textit{toric chart} of $U\subset X$ at $x$.

 A toroidal embedding $U\subset X$ is said to be \textit{strict normal crossings} if~$X$ is smooth and the irreducible components $(D_i)_{1\leq i\leq n}$ of $X \setminus U$ have the property that for all $I\subset \{1,\dots,n\}$ of cardinality $c$, the subvariety $D_I:=\cap_{i\in I} D_i\subset X$ is smooth of pure codimension $c$ (possibly empty). The non-empty $D_I$ are called the \textit{strata} of $U\subset X$.

A \textit{toroidal morphism} $f:(U\subset X)\to (V\subset Y)$ of toroidal embeddings is a dominant morphism $f:X\to Y$ with $f(U)\subset V$ such that for all $x\in X(\overline{k})$
 the morphism $\hat{f}^{*}:\widehat{\sO_{Y_{\overline{k}},f(x)}}\to \widehat{\sO_{X_{\overline{k}},x}}$, viewed in appropriate toric charts of $U\subset X$ and $V\subset Y$, is induced by the completion of a toric morphism of toric varieties over $\overline{k}$.

\subsection{Avoiding the singular locus of a fibration}
\label{paravoiding}

The following statement will be key to the proof of Theorem \ref{up}.
In practice, it will be applied in combination with the weak toroidalization theorem \cite[Theorem 1.1]{ADK}.

\begin{prop}
\label{toroidaltheorem}
Let $f: (U\subset X)\to (V\subset Y)$ be a toroidal
morphism of snc toroidal embeddings,
with $X$ and $Y$ quasi-projective, over a field $k$ of characteristic~$0$.
Let $(R_j)_{1\leq j\leq m}$ be discrete valuation rings with valuations $v_j$, residue fields $\kappa_j$ and fraction fields~$K_j$, and set $T_j=\Spec(R_j)$, $t_j=\Spec(\kappa_j)$ and $\eta_j=\Spec(K_j)$.
Let $\phi_j:T_j\to X$ be morphisms and define $\psi_j:=f\circ\phi_j$. Assume that $\phi_j(\eta_j)\in U$, that the image of $\psi_j^*:\sO_{Y,\psi_j(t_j)}\to R_j$ contains a uniformizer of $R_j$, and that the induced field extensions $k\subset\kappa_j$ are algebraic.

Then there exist projective birational morphisms $\pi_X:X'\to X$ and $\pi_Y:Y'\to Y$ and a morphism $f':X'\to Y'$ with $f\circ\pi_X=\pi_Y\circ f'$ such that $\pi_X$ (resp.\ $\pi_Y$) is an isomorphism above $U$ (resp.\ above $V$)
 and such that $f'$ is smooth at $\phi_j'(t_j)$, where $\phi_j':T_j\to X'$ is the lift of $\phi_j$.
\end{prop}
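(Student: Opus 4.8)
The plan is to work torus chart by torus chart and reduce the statement to a purely combinatorial question about fans and lattice maps, solvable by toric resolution. First I would analyze the local picture at each point $p_j:=\phi_j(t_j)$ and $q_j:=\psi_j(t_j)$. Choosing toric charts for the toroidal embeddings $U\subset X$ at $p_j$ and $V\subset Y$ at $q_j$, the morphism $\hat f^*$ is (the completion of) a toric morphism between affine toric varieties, corresponding to a lattice map $\lambda_j:N_j\to N_j'$ mapping a cone $\sigma_j$ (whose face structure records the stratification near $p_j$) into a cone $\sigma_j'$. The datum of $\phi_j:T_j\to X$ and the hypotheses that $\phi_j(\eta_j)\in U$ and that $\psi_j^*$ sees a uniformizer of $R_j$ translate into the statement that $\phi_j$ corresponds, after composing with the chart, to a "monomial-valued point" of the toric variety, i.e. a lattice point $w_j\in N_j$ lying in the relative interior of some face $\tau_j\preceq\sigma_j$, whose image $\lambda_j(w_j)\in N_j'$ is a \emph{primitive} point of the ray spanned by it inside $\sigma_j'$ (this primitivity is exactly the uniformizer condition, after possibly absorbing a unit; one should be careful to record this reduction cleanly, as the valuation-theoretic hypotheses are slightly weaker than saying $\phi_j$ is literally toric — one extracts the "toric part" and treats the rest as a unit that does not affect smoothness of $f'$ at the image point).

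Next I would recall the criterion for smoothness of a toric morphism at a torus-orbit point: $f$ is smooth at the orbit corresponding to a face $\tau\preceq\sigma$ precisely when $\tau$ maps isomorphically (as a lattice cone, i.e. $\lambda$ restricted to the sublattice spanned by $\tau\cap N$ is a split injection onto a saturated sublattice, sending $\tau$ onto a face of $\sigma'$) and $f$ is flat there — equivalently, in the language of toroidal geometry, when the morphism is "combinatorially smooth" at that stratum. So the combinatorial problem becomes: given finitely many pairs of cones $\sigma_j\subset (N_j)_\R$, $\sigma_j'\subset(N_j')_\R$ with a map $\lambda_j$ and a distinguished lattice point $w_j$ in the relative interior of a face $\tau_j$, find subdivisions $\Sigma_j$ of $\sigma_j$ and $\Sigma_j'$ of $\sigma_j'$, compatible with $\lambda_j$ (so that $\lambda_j$ induces a map of fans), such that the minimal cone of $\Sigma_j$ containing the ray $\R_{\geq0}w_j$ maps smoothly to the minimal cone of $\Sigma_j'$ containing $\R_{\geq 0}\lambda_j(w_j)$. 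The key geometric input here is the same mechanism as in the Néron smoothening / semistable reduction story: after a toric blow-up of the \emph{base} fan $\sigma_j'$ along the ray through $\lambda_j(w_j)$ (making that ray into a ray of the subdivided fan), and then a further subdivision of $\sigma_j$ refining the pullback, one can arrange the star of the relevant ray upstairs to be a single smooth cone mapping isomorphically onto a smooth cone downstairs — this is a standard toric fact (toric resolution of singularities plus the ability to further subdivide to kill ramification along the distinguished ray). I would then globalize: the subdivisions at the finitely many points $q_j$ of $Y$ can be realized by a single projective birational toroidal morphism $\pi_Y:Y'\to Y$, an isomorphism over $V$ (subdivisions of fans are global operations on toroidal embeddings by the theory of \cite{ADK, AKaru}), and similarly the compatible subdivisions upstairs give $\pi_X:X'\to X$ with a map $f':X'\to Y'$; away from the finitely many points in question one simply does not subdivide, so both $\pi_X,\pi_Y$ are isomorphisms over $U,V$, and one must check that $\phi_j$ lifts to $\phi_j':T_j\to X'$ (automatic by the valuative criterion of properness, since $\phi_j(\eta_j)\in U$ where $\pi_X$ is an isomorphism) and that $\phi_j'(t_j)$ lands in the stratum we arranged to map smoothly.

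The main obstacle I anticipate is the careful bookkeeping in globalizing several \emph{independent} local toric modifications into honest projective birational morphisms of the toroidal embeddings while keeping $f'$ well-defined: one must choose the subdivisions of the various cones of the cone complex of $Y$ (resp.\ $X$) coherently along common faces, and ensure the refinement of the $X$-complex dominates the pullback of the $Y$-refinement so that $f'$ exists as a morphism of toroidal embeddings; this uses that subdivisions of a polyhedral complex extend/restrict compatibly, together with a quasi-projectivity argument (as in \cite{AKaru}) to get \emph{projective} birational maps rather than merely proper ones — the quasi-projectivity hypotheses on $X,Y$ in the statement are there precisely to license this. A secondary technical point is the reduction, mentioned above, from the weak valuative hypotheses (image of $\psi_j^*$ contains \emph{a} uniformizer; $k\subset\kappa_j$ algebraic) to the clean toric picture: one needs the algebraicity of $\kappa_j$ over $k$ to ensure that after base-changing the toric charts to $\overline k$ the point $\phi_j(t_j)$ still lies on a single torus orbit (no "generic point of a subtorus" pathology), so that "smooth at $\phi_j'(t_j)$" can be checked orbit-by-orbit. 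Everything else is the standard dictionary between toroidal geometry and fans, which I would invoke rather than reprove.
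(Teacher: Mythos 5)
There is a genuine gap, and it sits exactly at the point you flagged as a ``secondary technical point'': the hypothesis that the image of $\psi_j^*$ contains a uniformizer is not a combinatorial condition, and your proposed translation of it into primitivity of $\lambda_j(w_j)$ is false. The uniformizer may be witnessed only by a function that is not monomial in any toric chart (a coordinate transverse to the boundary), in which case the combinatorial image $\lambda_j(w_j)$ is imprimitive and no subdivision of the given cone complexes can produce the required smoothness. Concretely, take $X=Y=\A^2$, $U=\{x_1\neq 0\}$, $V=\{y_1\neq 0\}$, $f(x_1,x_2)=(x_1^2,x_2)$, $R_j=k[[t]]$, $\phi_j^*x_1=\phi_j^*x_2=t$. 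All hypotheses of the proposition hold (the function $y_2$ pulls back to the uniformizer $t$, and $\phi_j(\eta_j)\in U$), but both cone complexes are single rays, so every modification induced by a subdivision is an isomorphism, while $f$ itself is not smooth anywhere along $\{x_1=0\}$, in particular not at $\phi_j(t_j)=(0,0)$. Thus ``subdivide the fans compatibly until the relevant cones map smoothly'' cannot succeed as stated: the modifications needed in the conclusion (e.g.\ blowing up the point $(0,0)\in Y$, which is an isomorphism over $V$ but changes the toroidal structure) lie outside the class your plan allows. (The converse of your claimed dictionary also fails: an image with valuation vector $(2,3)$ at a deepest stratum is primitive, yet no element of $\sO_{Y,\psi_j(t_j)}$ pulls back to a uniformizer.)

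The missing idea, which is the crux of the paper's proof, is to \emph{enlarge the boundary} so that the valuative hypothesis becomes combinatorially visible: one chooses a general hypersurface $H\subset Y$ in a sufficiently ample linear system through the points $\psi_j(t_j)$ and adds it to the boundary; by Bertini and \cite[Proposition~3.2]{AKaru} the morphism stays toroidal for the new snc structures, and generality of~$H$ together with the uniformizer hypothesis guarantees that a local equation of~$H$ pulls back to a uniformizer of~$R_j$. Only after this step does the paper run a procedure close to your outline: repeatedly blow up the minimal stratum of the base containing $\psi_j(t_j)$, with termination controlled by the decreasing invariant $\sum_i v_j(\psi_j^*z_i)$, while checking that at each stage some codimension-one stratum through the image still has a local equation pulling back to a uniformizer; compatible modifications upstairs are produced by the polyhedral machinery of \cite{AKaru,ADK}, and the lifts $\phi'_j$ exist by the valuative criterion since $\phi_j(\eta_j)\in U$, as you say. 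Finally, smoothness at $\phi'_j(t_j)$ is not obtained from a combinatorial smoothness criterion but by a direct computation in a smooth toric chart: the uniformizer condition forces a suitable maximal minor of the Jacobian of $f'$ to have valuation zero under $\phi'_j$, hence to be invertible at $\phi'_j(t_j)$. Your globalization and projectivity remarks match the paper, but without the boundary-enlargement step the core of the argument is missing.
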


Our proof has three steps. In the first two steps, we blow up $X$ and $Y$ appropriately, and the third is a verification that our goal has been reached.

\begin{Step}
\label{step1}
We may assume that the $\psi_j(t_j)$ do not belong to any stratum of $V\subset Y$ that has codimension~$\geq 2$ in $Y$.
\end{Step}

\begin{proof}
We claim that there exists a projective birational toroidal morphism of snc toroidal embeddings $\pi_Y:(V'\subset Y')\to (V\subset Y)$ such that the $\psi'_j(t_j)$ do not belong to any stratum of codimension~$\geq 2$ of $V'\subset Y'$, where $\psi'_j:T_j\to Y'$ is the lift of $\psi_j$.

We construct $\pi_Y$ as a composition of blow-ups. For some $j\in\{1,\dots,m\}$, consider the smallest stratum $D_I\subset Y$ containing $y:=\psi_j(t_j)$ (with the notation of \S\ref{partoroidal}). Let $(D_i)_{i\in I}$ be the irreducible components of $Y\setminus V$ containing $D_I$ and let $(z_i)_{i\in I}\in\sO_{Y,y}$ be local equations of the $D_i$. Define $\alpha_j(Y):=\sum_{i\in I}v_j(\psi_j^*z_i)$. Let $i_0\in I$ be such that $v_j(\psi_j^*z_{i_0})$ is minimal. Let $\hat{\pi}:\widehat{Y}\to Y$ be the blow-up of $D_I$ and define $\widehat{V}:=\hat{\pi}^{-1}(V)$ so that $\hat{\pi}:(\widehat{V}\subset\widehat{Y})\to (V\subset Y)$ is a toroidal morphism of snc toroidal embeddings (to verify that $\hat{\pi}$ is toroidal, apply \cite[Definition~3.3.17]{CLS} in toric charts). Consider the lift $\hat{\psi}_j:T_j\to\widehat{Y}$ of $\psi_j$. Local equations at $\hat{y}:=\hat{\psi}_j(t_j)$ of the irreducible components of $\widehat{Y}\setminus\widehat{V}$ through $\hat{y}$ can be chosen to be $z_{i_0}$ and the $z_i/z_{i_0}$ for those $i\in I$ such that $v_j(\psi_j^*z_{i})>v_j(\psi_j^*z_{i_0})$.
This shows that if $D_I$ has codimension $\geq 2$ in~$Y$, one has $\alpha_j(\widehat{Y})=\alpha_j(Y)-(|I|-1)v_j(\psi_j^*z_{i_0})<\alpha_j(Y)$. The claim thus follows by applying this procedure finitely many times for $j=1$,
then again for $j=2$, etc.

Considering the subdivision of the polyhedral complex associated with $U\subset X$ induced by $\pi_Y$ shows the existence of a toroidal embedding $\widetilde{U}\subset \widetilde{X}$ and of toroidal morphisms $\widetilde{f}:(\widetilde{U}\subset \widetilde{X})\to (V'\subset Y')$ and $\widetilde{\pi}:(\widetilde{U}\subset \widetilde{X})\to (U\subset X)$ with $\widetilde{\pi}$ projective and birational. (Apply \cite[Definition 8.3 1, Lemma 1.11]{AKaru} and note that this construction is defined over $k$ as it is canonical.) To conclude, consider a projective birational toroidal morphism $\nu:(U'\subset X')\to (\widetilde{U}\subset \widetilde{X})$
that is an isomorphism above the snc locus of $(\widetilde{U}\subset \widetilde{X})$ and
such that $(U'\subset X')$ is a snc toroidal embedding \cite[\S 2.5.2]{ADK}. Set $f':=\widetilde{f}\circ\nu$ and $\pi_X:=\widetilde{\pi}\circ\nu$, and replace $f$ with $f'$ and $\phi_j$ with its lift $\phi'_j:T_j\to X'$.
\end{proof}

\begin{Step}
\label{step2}
There exist projective birational morphisms $\pi_X:X'\to X$ and $\pi_Y:Y'\to Y$ and a toroidal morphism $f':(U'\subset X')\to (V'\subset Y')$ of snc toroidal embeddings with $f\circ\pi_X=\pi_Y\circ f'$ such that $\pi_X$
(resp.\ $\pi_Y$) is an isomorphism above $U$ (resp.\ above~$V$)
and such that the following holds. For $1\leq j\leq m$, letting $\psi_j':T_j\to Y'$ denote the lift of $\psi_j$, the minimal stratum $D'$ of $V'\subset Y'$ containing $\psi'_j(t_j)$ has codimension~$1$ in~$Y'$, and if $z_j\in \sO_{Y',\psi_j(t_j)}$ is a local equation of $D'$, then $(\psi'_j)^*z_j\in R_j$ is a uniformizer.
\end{Step}

\begin{proof}
The hypothesis that the field extensions $k\subset \kappa_j$ are algebraic implies that the $\psi_j(t_j)$ are closed points of $Y$. Let $H\subset Y$ be a general hypersurface, in a sufficiently ample linear system on $Y$, that contains the $\psi_j(t_j)$. By the Bertini theorem, $V\setminus (H\cap V)\subset Y$ is a snc toroidal embedding. By \cite[Proposition~3.2]{AKaru}, the morphism $f: (U\setminus(f^{-1}(H)\cap U)\subset X)\to(V\setminus (H\cap V)\subset Y)$ is toroidal.
 Let $\nu : (U_H\subset X_H)\to (U\setminus(f^{-1}(H)\cap U)\subset X)$ be a projective birational toroidal morphism that is an isomorphism above the snc locus of $(U\setminus(f^{-1}(H)\cap U)\subset X)$ and such that $(U_H\subset X_H)$ is a snc toroidal embedding \cite[\S 2.5.2]{ADK}. Define $f_H:=f\circ\nu : (U_H\subset X_H)\to(V\setminus (H\cap V)\subset Y)$.

Since $H$ has been chosen general and by our hypothesis on the $\psi_j$, a local equation $w_j$ of $H$ at $\psi_j(t_j)$ has the property that $\psi_j^*w_j\in R_j$ is a uniformizer of $R_j$. Applying the procedure of Step \ref{step1} to the morphism $f_H$ yields a toroidal morphism $f':(U'\subset X')\to (V'\subset Y')$ of snc toroidal embeddings.
Each blow-up of this procedure preserves the property that a local equation of some codimension $1$ stratum through the image of $t_j$ induces a uniformizer of $R_j$ (in the local coordinates chosen in Step \ref{step1}, this local equation can be chosen to be $z_{i_0}$). It follows that $f'$ has the required properties.
\end{proof}

\begin{Step}
The morphism $f'$ is smooth at  $\phi'_j(t_j)$, where $\phi_j':T_j\to X'$ is the lift of $\phi_j$.
\end{Step}

\begin{proof}
Since the question is of geometric nature, we may assume that $k=\overline{k}$.
Since the question is local, we may assume that $f'$ is a toric morphism of smooth affine toric varieties. By the classification of smooth affine toric varieties \cite[Proposition p.~29]{ToricFulton}, one can choose toric coordinates $x_1,\dots ,x_r$ on $X'$ and $y_1,\dots,y_s$ on $Y'$ such that $X'=\Spec(k[x_1,\dots,x_p,x_{p+1}^{\pm 1},\dots,x_{r}^{\pm 1}])$ and $Y'=\Spec(k[y_1,\dots y_q,y_{q+1}^{\pm 1},\dots,y_{s}^{\pm 1}])$, and such that $(f')^*y_b=\prod_a x_a^{m_{a,b}}$ for some matrix $M=(m_{a,b})_{1\leq a\leq r, 1\leq b\leq s}\in M_{r,s}(\Z)$.

After permuting $y_1,\dots,y_q$, we may assume that the stratum $D'$ of Step \ref{step2} has equation $\{y_1=0\}$. It follows that the valuation of $(\psi'_j)^*y_b$ is equal to $1$ if $b=1$ and to $0$ if $b\geq 2$. In view of the formula $(f')^*y_b=\prod_a x_a^{m_{a,b}}$, we deduce the existence of $1\leq a_0\leq r$ such that $m_{a_0,1}\neq 0$ and such that the valuation of $(\phi'_j)^*x_{a_0}$ is non-zero, hence positive.
As $f'$ is dominant by assumption, the matrix $M$ has rank $s$. We can therefore choose a subset $A\subset  \{1,\dots,r\}$ of cardinality $s$ containing $a_0$ such that $\det(m_{a,b})_{a\in A,1\leq b\leq s}\in\Z$ is non-zero.

The Jacobian matrix of $f'$ at $(x_1,\dots,x_r)$ is $(\frac{\partial y_b}{\partial x_a})=(\frac{m_{a,b}\cdot (f')^*y_b}{x_a})$. Its maximal minor associated with $A$ is $\Delta:=\prod_{1\leq b\leq s}(f')^*y_b\cdot\prod_{a \in A}x_a^{-1}\cdot\det(m_{a,b})_{a\in A,1\leq b\leq s}$. By the above choices, the valuation of the non-zero element $(\phi'_j)^*\Delta\in R_j$ is non-positive, hence zero. This shows that $\Delta$ is invertible at $\phi'_j(t_j)$ so that $f'$ is smooth at $\phi'_j(t_j)$.
\end{proof}

\subsection{Proof of the fibration theorem}
\label{parfib}

\begin{proof}[Proof of Theorem \ref{up}]
Choose proper regular models $f: \kX\to B$ and $f': \kX'\to B$ of~$X$ and $X'$ over $B$ such that $g$ extends to a morphism $g: \kX\to\kX'$.
By Theorem \ref{birinv} and \cite[Theorem 1.1]{ADK}, we may assume that $g$ underlies a toroidal morphism $g:(\kU\subset \kX)\to(\kU'\subset\kX')$ of snc toroidal embeddings and that~$\kX$ and~$\kX'$ are projective.

As in Proposition \ref{deftightK}, let $B(\R)\subset K\subset\Omega\subset B(\C)$, with $K$ compact and $G$-stable and $\Omega$ open and $G$-stable, let $b_1,\dots, b_m \in K$, let $r\geq 0$, and let $u: \Omega\to\kX(\C)$ be a $G$-equivariant holomorphic section of $f(\C)$.
Our goal is to construct a sequence $s_n:B\to\kX$ of sections of $f$ satisfying the conditions appearing in Proposition~\ref{deftightK}, \emph{i.e.}\ such
that the $s_n$ have the same $r$-jets as $u$ at the $b_i$ and $s_n(\C)|_K$ converges uniformly to $u|_K$.
 By Proposition \ref{tubularR} (ii), we may assume, after shrinking $\Omega$ and perturbating~$u$, that all the connected components of $u(\Omega)$ meet $\kU(\C)$.
Since $K$ is compact, we may assume, after shrinking~$\Omega$ again, that the set $\Sigma:=\{x\in\Omega\mid u(x)\notin\kU(\C)\}$ is finite. Applying Proposition \ref{toroidaltheorem} to the toroidal morphism $g$ and to the discrete valuation rings $\sO_{\Omega,x}$ for $x\in\Sigma$ shows the existence of regular modifications $\pi:\kZ\to\kX$ and $\pi':\kZ'\to\kX'$ and of a morphism $h:\kZ\to\kZ'$ such that $g\circ\pi=\pi'\circ h$, such that~$\pi$ (resp.~$\pi'$) is an isomorphism above $\kU$ (resp.\ above $\kU'$) and such that the strict transform $v:\Omega\to\kZ(\C)$ of $u$ has the property that $h(\C)$ is submersive at $v(x)$ for all $x\in \Sigma$. Since $g$ was already smooth  along $\kU$, the latter property in fact holds for all $x\in\Omega$.

By Theorem \ref{birinv}, we may replace $g:\kX\to\kX'$ and $u$ with $h:\kZ\to \kZ'$ and $v$ and thus assume that $g(\C)$ is submersive along $u(\Omega)$. We will not consider any toroidal structure on $g$ anymore.
Let $(X')^0 \subset X'$ be a dense open subset such that
the tight approximation property holds for $X_x=g^{-1}(x)$ for all
$x \in (X')^0(F)$.
Extend it to an open subset $(\kX')^0\subset\kX'$. By Proposition \ref{tubularR} (ii), we may assume, after shrinking $\Omega$ and perturbating $u$, that all the connected components of $u(\Omega)$ meet
$g^{-1}((\kX')^0)(\C)$. Define $u':=g(\C)\circ u: \Omega\to \kX'(\C)$. By Proposition \ref{tubularR}~(iii), there exist a $G$\nobreakdash-stable open neighbourhood $W$ of $u'(\Omega)$ in $\kX'(\C)$ and a $G$-equivariant holomorphic map $w:W\to \kX(\C)$ such that $g(\C)\circ w=\Id_W$ and $w\circ u'=u$.
Let $\Omega'\subset \Omega$ be a $G$-stable open neighbourhood of $K$ whose closure $K'$ in $\Omega$ is compact.
Applying the tight approximation property of $f'$ yields a sequence $s'_n: B\to\kX'$ of sections of $f'$ with the same $r$-jets as $u'$ at the $b_i$ such that $s'_n(\C)|_{K'}$ converges uniformly to~$u'|_{K'}$.
Replacing $\Omega$ with $\Omega'$, we can ensure that $s_n'(\C)(\Omega)\subset W$ for $n\gg 0$.
Define $v_n:=w\circ s'_n(\C)|_\Omega:\Omega\to \kX(\C)$.
For $n\gg0$, consider the commutative diagram
\begin{equation*}
\begin{aligned}
\xymatrix
{
\widetilde{\kY}_n\ar^{p_n}@/^01.0pc/[rr]\ar[r]\ar_{g_n}[rd]&\kY_n\ar[r]\ar
[d]&\kX\ar^g[d]  \\
&B\ar^{s_n'}[r]&\kX'\rlap{,}
}
\end{aligned}
\end{equation*}
whose square is cartesian and where $\widetilde{\kY}_n\to \kY_n$ is a resolution of singularities that is an isomorphism above the regular locus of $\kY_n$. The map $v_n: \Omega\to \kX(\C)$ induces a map $\Omega\to\kY_n(\C)$ whose strict transform $\tilde{v}_n: \Omega\to\widetilde{\kY}_n(\C)$ is a $G$-equivariant holomorphic section of $g_n(\C)$ over $\Omega$. Since we have ensured that no connected component of $u(\Omega)$ avoids $g^{-1}((\kX')^0)(\C)$, the morphism $g_n$ satisfies the tight approximation property (for $n\gg 0$) by our hypothesis on $(X')^0$ and by Theorem \ref{birinv}.

Applying the tight approximation property of $g_n$ yields a sequence $t_{n,k}: B\to\widetilde{\kY}_n$ of sections of $g_n$ with the same $r$-jets as $\tilde{v}_n$ at the $b_i$ such that $t_{n,k}(\C)|_K$ converges uniformly to $\tilde{v}_n|_K$. Then, if $k(n)$ is a well chosen sequence of integers, the sections $s_n:=p_n\circ t_{n,k(n)}$ of $f$ have the same $r$-jets as $u$ at the $b_i$, and the sequence $s_n(\C)|_K$ converges uniformly to $u|_K$. The proof is now complete.
\end{proof}

\subsection{Applications}
\label{applfib}

Using Theorem \ref{up}, Example \ref{quadrique} and Corollary \ref{rat}, we get:
\begin{prop}
\label{fibrationenquadriques}
A smooth variety over $F$ that is birational to an iterated fibration into quadrics satisfies the tight approximation property.
\end{prop}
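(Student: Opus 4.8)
The plan is to argue by induction on the length~$n$ of the iterated fibration, using the fibration theorem (Theorem~\ref{up}) for the inductive step and Example~\ref{quadrique} to handle the fibres. Since the tight approximation property is a birational invariant (Theorem~\ref{birinv} and the remarks following Definition~\ref{deftight2}), and since resolution of singularities is available in characteristic~$0$, I may and will assume throughout that the varieties occurring are smooth and that the fibration maps are genuine morphisms: replacing a variety by a smooth birational model, and resolving the indeterminacy locus (and the source) of a rational map, affects neither the property of being birational to an iterated fibration into quadrics nor the fact that the generic fibre of a given fibration step is a quadric.

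So let~$X$ be birational to an iterated fibration into quadrics of length~$n$. The base case is immediate: when $n=0$, the variety~$X$ is birational to a point, hence rational, so Corollary~\ref{rat} applies (and if instead the convention is that the iteration bottoms out at a quadric, this base case is covered directly by Example~\ref{quadrique}). For $n\geq 1$, I may assume that~$X$ is smooth and fits into a dominant morphism $g:X\to X'$ of smooth varieties over~$F$ whose generic fibre is a smooth quadric of dimension~$\geq 1$ over the function field of~$X'$, with~$X'$ birational to an iterated fibration into quadrics of length~$n-1$. By the induction hypothesis, $X'$ satisfies the tight approximation property. To conclude by applying Theorem~\ref{up} to~$g$, it remains to exhibit a dense open subset of~$X'$ all of whose $F$\nobreakdash-rational points~$x$ have $g^{-1}(x)$ satisfying the tight approximation property. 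Spreading out the generic fibre of~$g$ and invoking generic smoothness (valid in characteristic~$0$), there is a dense open subset $V\subset X'$ over which~$g$ is smooth with all fibres smooth quadrics of dimension~$\geq 1$; for $x\in V(F)$, the fibre $g^{-1}(x)$ is then a smooth quadric of dimension~$\geq 1$ over~$F$, which satisfies tight approximation by Example~\ref{quadrique}. Theorem~\ref{up} then yields the tight approximation property for~$X$, completing the induction.

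I do not expect a genuine obstacle here, since all the substance is contained in the three quoted results and the rest is bookkeeping. The two points that call for (routine) care are the reduction to smooth models and genuine morphisms in the first paragraph, and making sure the conventions are set so that every fibration step has positive relative dimension---which is exactly what guarantees that the fibres $g^{-1}(x)$ over $F$\nobreakdash-points are positive-dimensional smooth quadrics over~$F$, and hence that Example~\ref{quadrique} applies to them.
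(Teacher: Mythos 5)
Your proof is correct and follows exactly the paper's route: the paper proves this proposition by combining Theorem~\ref{up}, Example~\ref{quadrique} and Corollary~\ref{rat}, which is precisely your induction on the length of the fibration with the fibration theorem for the inductive step, quadrics of dimension~$\geq 1$ over~$F$ handled by Example~\ref{quadrique}, and the (stably) rational base case by Corollary~\ref{rat}. Your extra care about smooth models, genuine morphisms and positive relative dimension is just the bookkeeping the paper leaves implicit.
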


\begin{example}
\label{cubique}
For $n \geq 3$, a smooth cubic hypersurface $X\subset\P^n_F$ containing a line $L\subset X$ over~$F$ satisfies the tight approximation property. Indeed, projecting from $L$ induces a rational conic bundle structure $X\dashrightarrow\P^{n-2}_F$, and 
Proposition~\ref{fibrationenquadriques} applies.

In particular,
if $X$ is a smooth cubic hypersurface of dimension~$\geq 2$ over $\R$, the variety $X_F$ satisfies the tight approximation property (as it always contains a line defined over $\R$, see \cite[Proof of Theorem 9.23]{BWII}). This yields a positive answer to Question \ref{q:ciapprox} for these varieties.

We do not know whether all smooth
cubic surfaces over~$F$ satisfy the tight approximation property, even if~$B$ is a complex curve.
See Theorem~\ref{BHcubic} for a weaker property that smooth cubic hypersurfaces
of dimension~$\geq 2$ over~$F$ do enjoy.
\end{example}

\begin{example}
\label{cubiquecomplexe}
Arguing exactly as in \cite[\S 1]{HTlowdegree}, but using Theorem \ref{up} instead of \cite[Proposition 2.2]{CTG}, shows that if $B$ is a complex curve and $\phivar:\N\to\N$ is defined as in \cite[p.\ 938]{HTlowdegree}, then smooth hypersurfaces of degree $d$ in $\P^n_F$ with $n\geq\phivar(d)$ satisfy the tight approximation property. This applies in particular to smooth cubic hypersurfaces of dimension $\geq 5$. We note that the weak approximation property has been shown to hold for smooth cubic hypersurfaces of dimension $\geq 2$ over function fields of complex curves by Tian \cite[Theorem 1.2]{tiancubic} (the crucial case being that of cubic surfaces).
\end{example}

\begin{example}
\label{ic2c}
For $n \geq 4$, a smooth complete intersection of two quadrics $X\subset\P^n_F$ that has a rational point $x\in X(F)$ satisfies the tight approximation property. Indeed, the pencil of hyperplane sections of $X$ that contain~$x$ and are singular at $x$ induces a rational quadric bundle structure $X\dashrightarrow \P^1_F$ (see \cite[Theorem~3.2]{CTSSD}), and one can apply Proposition \ref{fibrationenquadriques}.
The hypothesis that $X$ has a rational point is always satisfied if $B(\R)=\varnothing$.
In general this results from \cite[Theorem~0.13]{periodindex} applied to a two-dimensional hyperplane section of $X$;
when $n\geq 6$,
one can also split off three hyperbolic planes from a quadratic form vanishing on~$X$
to find a conic, and hence an $F$\nobreakdash-point, on~$X$
(see \cite[Satz 22]{Witt}).

If $n\geq 4$ and $X\subset\P^n_{\R}$ is a smooth complete intersection of two quadrics over $\R$,
the above argument
applied to $X_F$ yields a positive answer to Question \ref{q:ciapprox} for the variety $X$.
Indeed $X(F)\neq\varnothing$ if $X(\R)\neq\varnothing$ (obvious) or if $B(\R)=\varnothing$ (as there exist morphisms from $B$ to the anisotropic real conic $\Gamma$ by \cite[Satz 22]{Witt} and from $\Gamma$ to~$X$ by \cite[discussion below Teorema VI, p.~60]{comessatti}, see also \cite{colliotsanspoint}).
\end{example}

Theorem \ref{up} also has the following consequence:

\begin{prop}
\label{products}
If two smooth varieties $X$ and $X'$ over $F$ satisfy the tight approximation property, then so does their product $X\times X'$.
\end{prop}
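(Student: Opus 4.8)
The plan is to deduce this directly from the fibration theorem, Theorem~\ref{up}, applied to the first projection $g:X\times X'\to X'$. First I would note that $X\times X'$ is again a smooth variety over~$F$ (as $X$ and~$X'$ are smooth over~$F$), so that the tight approximation property makes sense for it, and that $g$ is a dominant morphism of smooth varieties over~$F$.

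Next I would identify the fibers of~$g$: for any point $x'\in X'(F)$, the scheme-theoretic fiber $g^{-1}(x')$ is canonically isomorphic to $X$ over~$F$. Since~$X$ satisfies the tight approximation property by hypothesis, so do all fibers $g^{-1}(x')$ for $x'\in X'(F)$, and \emph{a fortiori} those above the $F$\nobreakdash-points of any dense open subset of~$X'$. Combined with the hypothesis that~$X'$ satisfies the tight approximation property, Theorem~\ref{up} applies and yields the tight approximation property for~$X\times X'$.

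There is no real obstacle here: the statement is an immediate corollary of Theorem~\ref{up}, the only point to check being the trivial identification of the fibers of the projection with~$X$. (One could alternatively proceed by induction, first observing via the same argument that whenever $X''\to X'$ is a morphism of smooth $F$-varieties all of whose fibers over $F$-points, as well as the base, satisfy tight approximation, then so does $X''$; but for a product this single application of Theorem~\ref{up} suffices.)
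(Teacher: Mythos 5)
Your argument is exactly the one the paper intends: Proposition~\ref{products} is stated there as an immediate consequence of Theorem~\ref{up} applied to the projection $X\times X'\to X'$, whose fibers over $F$\nobreakdash-points are copies of~$X$. Your write-up is correct and matches this approach.
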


\section{The descent method}

The goal of this section is to prove Theorem \ref{descent}, which is a descent theorem for the tight approximation property. In Section \ref{sechomo}, we shall use it in combination with the fibration technique of Section \ref{secfib} to study homogeneous spaces of linear groups.

\begin{thm}
\label{descent}
Let $X$ be a smooth variety over $F$, and let $S$ be a linear algebraic group over $F$. Let $Q\to X$ be a left $S$-torsor over $X$. Assume that any twist of $Q$ by a right $S$-torsor over $F$ satisfies the tight approximation property. Then $X$ satisfies the tight approximation property.
\end{thm}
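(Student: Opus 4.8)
The plan is to reduce the tight approximation property for $X$ to that of twisted torsors by following the classical descent formalism, adapted to the geometric framework established in the earlier sections. First I would choose a proper regular model $f:\kX\to B$ of $X$, and inputs $K\subset\Omega\subset B(\C)$, points $b_1,\dots,b_m\in K$, an order $r\geq 0$, and a $G$-equivariant holomorphic section $u:\Omega\to\kX(\C)$ of $f(\C)$ as in Proposition~\ref{deftightK}. The heart of the argument is that, pulling back the torsor $Q\to X$ along the section $u$ (or rather along a suitable analytic thickening of it over $\Omega$), one obtains an analytic $S$-torsor over $\Omega$; since $\Omega$ may be taken Stein and $G$-stable (and $S$ is linear, hence admits a faithful linear representation so its torsors are locally trivial in a strong sense), this analytic torsor should be trivializable after passing to a finite cover or, more to the point, should be isomorphic to the analytification of an algebraic right $S$-torsor $P$ over $F=\R(B)$ (equivalently, over $B$ after spreading out), using the equivariant Stein-theoretic tools of the appendix. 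This is the step where I expect the real subtlety to lie: identifying the relevant analytic torsor class over $\Omega$ with something algebraic over $B$ requires a $G$-equivariant analytic-to-algebraic comparison for $S$-torsors, presumably via GAGA-type arguments for the classifying data together with Proposition~\ref{approx}-style Runge approximation applied to the cocycles.

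Second, once such a $P$ is produced, I would form the twist ${}^P Q$, which is a left ${}^P S$-torsor over $X$ but, crucially, comes with a lift of the section $u$: the point is that $u$, viewed as a section of $\kX$, lifts to a $G$-equivariant holomorphic section $\tilde u:\Omega\to \widetilde{\kY}(\C)$ of a proper regular model $\tilde f:\widetilde{\kY}\to B$ of (a smooth compactification of) ${}^P Q$, precisely because the obstruction to such a lift is the class of $P$, which we have arranged to kill. Here I would invoke Theorem~\ref{birinv} freely to move between ${}^P Q$ and any convenient proper regular model, and Proposition~\ref{tubularR}-type perturbation results to ensure $\tilde u(\Omega)$ is in sufficiently general position (meeting the open locus where everything is smooth and the torsor structure is defined). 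By hypothesis, ${}^P Q$ satisfies the tight approximation property, so $\tilde u$ can be approximated — matching $r$-jets at the $b_i$ — by algebraic sections $\tilde s_n:B\to\widetilde{\kY}$.

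Third, I would push these sections down: composing $\tilde s_n$ with the structure morphism ${}^P Q\to X$ (extended to models) gives algebraic sections $s_n:B\to\kX$ whose $r$-jets at the $b_i$ agree with those of $u$ and whose analytifications $s_n(\C)|_K$ converge uniformly to $u|_K$, because analytification and composition with a fixed morphism are continuous for the $\ci$ topology, and the approximation/jet-matching properties are preserved under composition. This verifies the criterion of Proposition~\ref{deftightK} for $f$, hence the tight approximation property for $X$.

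The main obstacle, as indicated, is the first step: producing the algebraic twisting torsor $P$ over $F$ from the holomorphic section $u$ in a way that is compatible with the $G$-action and with the $b$-adic/jet constraints. For connected $S$ one could hope to argue via the structure theory (reducing to tori and unipotent groups, the latter contributing nothing and the former handled by Theorem~\ref{projective} and Kummer sequences), but the theorem as stated allows disconnected $S$, so the cleanest route is probably a direct cohomological one: interpret the relevant torsor class in $H^1$ of a Stein equivariant space, use the appendix's equivariant Cartan-type theorems plus Runge approximation (Proposition~\ref{approx}) on a trivializing cover to realize it by an algebraic cocycle on $B$, and then clear denominators. One must also be careful that the twist ${}^P Q$ genuinely inherits the lifted section with the correct jets at the finitely many prescribed points; this forces the approximation of the cocycle defining $P$ to be done with jet-matching, which is exactly why Proposition~\ref{approx} was stated with jets. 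I would expect the write-up to spend most of its length on this torsor-lifting lemma, with the fibration/pushforward bookkeeping being routine given Theorem~\ref{birinv}.
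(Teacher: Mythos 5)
Your global skeleton does match the paper's: pull back the (inverse) torsor class along the section $u$, find a right $S$\nobreakdash-torsor $P$ over $F$ inducing that class, twist $Q$ by $P$ so that $u$ lifts to a model of the twist, apply the hypothesis to approximate the lift, and push down. But the step you flag as ``the main obstacle'' is indeed the whole content of the theorem, and the route you sketch for it would not work. The class to be algebraized is not an analytic cohomology class of an $S$\nobreakdash-torsor over the Stein surface $\Omega$; it is a Galois cohomology class $\alpha\in H^1(\sM(\Omega)^G,S)$ over the (huge) field of $G$\nobreakdash-equivariant meromorphic functions, obtained by restricting $[\kQ^{-1}]$ along $u$. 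No equivariant Cartan/Oka-type theorem, GAGA statement, or Runge approximation of cocycles (Proposition~\ref{approx}) gives a surjection of $H^1(F,S)$ onto such classes: already for finite $S$ the class corresponds, via Proposition~\ref{Gramifiedcover}, to a ramified covering of $\Omega$ which has no reason to extend over $B$, and the paper's Lemma~\ref{finitecase} has to shrink $\Omega$ to a domain with $\ci$ boundary, extend the covering across the boundary, trivialize it there (using that $B(\R)\subset\Omega$, so no boundary component is $G$\nobreakdash-stable) and glue with the trivial torsor before invoking the equivariant Riemann existence theorem. For general, possibly disconnected, $S$ the lifting (Proposition~\ref{liftH1}) is an arithmetic statement: it rests on the homeomorphism $\Sper(\sM(\Omega)^G)\cong\Sper(F)$ (Proposition~\ref{spectrereel}, which uses $B(\R)\subset\Omega$ and the compactness of $B(\R)$), on the fact that $\sM(\Omega)^G$ has virtual cohomological dimension $1$ (Corollary~\ref{cohodimvirt1}), and on Scheiderer's theorems (\cite[Theorem~4.1, Corollary~6.6]{Scheiderer}) comparing $H^1$ of such fields with sections of the sheaf $\sH^1$ on the real spectrum, together with a twisting argument to handle the component group. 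None of this is ``Stein-theoretic''; your structure-theory fallback for connected $S$ also does not address the point, since the issue is lifting a class of the generic fibre field, not classifying $S$ itself.

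Two smaller remarks. First, your worry about matching jets when constructing $P$ is misplaced: no jet condition is needed on $P$. Once $\alpha$ comes from $H^1(F,S)$, the twisted torsor $\kP_{\kX^0}\circ\kQ$ pulled back along $u$ is trivial, so $u$ lifts to a $G$\nobreakdash-equivariant meromorphic map to $(\kP_{\kX^0}\circ\kQ)(\C)$, which extends holomorphically to a proper regular model $\kY$ by the valuative criterion; the $r$\nobreakdash-jets at the $b_i$ are then taken care of by applying the tight approximation property of $\kY\to B$ to this lift, and composing with $\kY\to\kX$ preserves them. Second, before pulling back the class you must shrink $\Omega$ and perturb $u$ (Proposition~\ref{tubularR}~(ii)) so that every connected component of $u(\Omega)$ meets the open locus $\kX^0$ where $Q$ and $\kS$ spread out, and arrange $\Omega$ to be a domain with $\ci$ boundary (Lemma~\ref{lemmeabordlisse}); these normalizations are what make the statement and proof of Proposition~\ref{liftH1} applicable.
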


In Theorem~\ref{descent}, the algebraic group~$S$ is not assumed to be connected.

We refer to \S\ref{torsors} for generalities about torsors. Proposition \ref{liftH1}, proven in \S\ref{torsorsreal}, is the heart of the proof of Theorem \ref{descent} given in  \S\ref{proofdescent}.

\subsection{Torsors}
\label{torsors}

Let $S$ be a smooth affine group scheme over a scheme~$X$. The set of isomorphism classes of right $S$-torsors on $X$ (in the sense of \cite[Definition~2.2.1]{Skorobogatov})
 is in bijection with the \'etale \v{C}ech cohomology pointed set $\check{H}_{\et}^1(X,S)$ (see \cite[Ch.~III, Proposition~4.6 and Remark~4.8~(a)]{Milne}).
When $X=\Spec(k)$ is the spectrum of a field $k$, this pointed set can be identified with the non-abelian Galois cohomology pointed set $H^1(k,S)$ defined in \cite[I~5.1]{CohoGalois}.

If $Y$ is an affine $X$-scheme equipped with a left $S$-action and $P$ is a right $S$-torsor on $X$, we can define the \textit{twist} ${}_{P}Y$ of $Y$ by $P$ to be the quotient of $P\times_X Y$ by the diagonal action $s\cdot(p,y)=(p\cdot s^{-1},s\cdot y)$ of $S$ \cite[Lemma 2.2.3]{Skorobogatov}. Letting $S$ act on itself by conjugation yields the \textit{inner form} ${}_{P}S$ of $S$ \cite[Example 1 p.~10]{Skorobogatov}. The right $S$-torsor $P$ has a natural and compatible structure of left ${}_{P}S$-torsor.

If $Q$ is a left $S$-torsor on $X$, we let $Q^{-1}$ be the right $S$-torsor on $X$ that is isomorphic to $Q$ as an $X$-scheme, with action given by $q\cdot_{Q^{-1}} s:=s^{-1}\cdot_{Q} q$: it is the \textit{inverse torsor} of $Q$. Then $Q^{-1}$ (resp.\ $Q$) is also naturally a left (resp.\ right) ${}_{Q^{-1}}S$-torsor. It follows that the \textit{composition} $P\circ Q:={}_{P}Q$ is a right ${}_{Q^{-1}}S$-torsor
and a left ${}_{P}S$-torsor
and that $Q^{-1}\circ Q$ is a trivial ${}_{Q^{-1}}S$-torsor (see \cite[Example 2 p.~10]{Skorobogatov} for more details).

Let $k$ be a field, let $S\subset H$ be a closed immersion of smooth affine group schemes over $X=\Spec(k)$, and let $S$ act on $H$ by left multiplication. This action admits a quotient scheme $H\to S\backslash H$ endowing $H$ with the structure of a left $S$-torsor over~$S\backslash H$ \cite[Expos\'e~$\textrm{VI}_{\textrm{A}}$, Th\'eor\`eme~3.3.2]{SGA31}.
If $P$ is a right $S$-torsor over $k$, one can consider the twist ${}_{P}H$ of $H$ by $P$. Right multiplication of $H$ on ${}_{P}H$ endows ${}_{P}H$ with the structure of a right $H$-torsor over $k$.

We will make use of topological analogues of the above notions. A \textit{finite group} over a topological space $T$ is a finite covering $q:\sS\to T$ of $T$ equipped with a continuous section and with a continuous map $\sS\times_T\sS\to \sS$ endowing the fibers of $q$ with a group structure. A right $\sS$\nobreakdash-torsor $p:\sP\to T$ is a surjective finite covering of $T$ endowed with a continuous map $\sP\times_T\sS\to \sP$ inducing a simply transitive right action of the fibers of $q$ on the fibers of $p$.
If $T$ is endowed with an action of $G$, we define $G$-equivariant finite groups $\sS$ over $T$ and $G$-equivariant right $\sS$-torsors $\sP$ by requiring that $\sS$ and $\sP$ be endowed with actions of $G$ for which all the above maps are $G$-equivariant.

\subsection{Torsors over real function fields}
\label{torsorsreal}

An open subset $\Omega\subset B(\C)$ is a \textit{domain with} $\ci$ \textit{boundary} if
it is the interior of a closed $\ci$ submanifold with boundary, say~$\overline\Omega$, of~$B(\C)$.
It has finitely many connected components by compactness of $B(\C)$.
We denote its boundary by $\partial\Omega = \overline\Omega \setminus \Omega$.

\begin{lem}
\label{lemmeabordlisse}
Let $K\subset B(\C)$ be a $G$-stable compact subset, and let $\Omega$ be a $G$-stable open neighbourhood of $K$ in $B(\C)$. Then there exists a $G$-stable domain with $\ci$ boundary $\Omega'\subset B(\C)$ such that $K\subset\Omega'$ and $\overline{\Omega'}\subset\Omega$.
\end{lem}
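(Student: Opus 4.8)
The plan is to exhibit $\Omega'$ as a superlevel set $\{\rho>c\}$ of a suitable $G$-invariant $\ci$ function $\rho\colon B(\C)\to[0,1]$ at a regular value $c\in(0,1)$. The $G$-invariance of $\rho$ will directly give the $G$-stability of $\Omega'$; the fact that $c$ is a regular value will give that $\{\rho\geq c\}$ is a closed $\ci$ submanifold with boundary of $B(\C)$ whose interior is $\Omega'$; and the inclusions $K\subset\Omega'$ and $\overline{\Omega'}\subset\Omega$ will be built into the construction of $\rho$.

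To construct $\rho$, I would first apply the standard smooth Urysohn lemma on the $\ci$ manifold $B(\C)$: since $K$ is compact and $K\subset\Omega$, there is $\rho_0\in\ci(B(\C),[0,1])$ that equals $1$ on an open neighbourhood $U$ of $K$ and whose support $\mathrm{supp}(\rho_0)$ is a compact subset of $\Omega$. Then I would set $\rho:=\tfrac12(\rho_0+\sigma^{*}\rho_0)$, which is $G$-invariant. Because $K$ and $\Omega$ are $G$-stable, these properties survive the averaging: $\rho\equiv 1$ on the $G$-stable open neighbourhood $U\cap\sigma(U)$ of $K$, and $\mathrm{supp}(\rho)\subset\mathrm{supp}(\rho_0)\cup\sigma(\mathrm{supp}(\rho_0))$ is again a compact subset of $\Omega$. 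In particular $\{\rho>0\}\subset\mathrm{supp}(\rho)\subset\Omega$.

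Next I would apply Sard's theorem to $\rho\colon B(\C)\to\R$ (a $\ci$ map from a $2$-dimensional manifold) to pick a regular value $c\in(0,1)$, and define $\Omega':=\{x\in B(\C)\mid\rho(x)>c\}$. Then $\Omega'$ is $G$-stable since $\rho$ is $G$-invariant; $K\subset\Omega'$ since $\rho\equiv 1>c$ near $K$; and, using $c>0$, we get $\overline{\Omega'}\subset\{\rho\geq c\}\subset\mathrm{supp}(\rho)\subset\Omega$. Finally, since $c$ is a regular value, $\{\rho=c\}$ is a $\ci$ $1$-submanifold of $B(\C)$ and, by the implicit function theorem, $\{\rho\geq c\}$ is a closed $\ci$ submanifold with boundary of $B(\C)$ with boundary $\{\rho=c\}$; its topological interior in $B(\C)$ equals $\{\rho>c\}=\Omega'$ and its topological closure equals $\{\rho\geq c\}$, both because at a point $q$ with $\rho(q)=c$ one has $d\rho_q\neq 0$, so every neighbourhood of $q$ meets both $\{\rho>c\}$ and $\{\rho<c\}$. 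Hence $\Omega'$ is a domain with $\ci$ boundary satisfying $K\subset\Omega'$ and $\overline{\Omega'}\subset\Omega$, as desired. (The degenerate case $K=\Omega=B(\C)$ forces $\rho\equiv 1$ and $\Omega'=B(\C)$, which is allowed, being a domain with empty boundary.)

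I do not expect a serious obstacle. The two points requiring a little care are the averaging step, which must be checked not to destroy the "equal to $1$ near $K$" and "support inside $\Omega$" properties of the bump function — this is where $G$-stability of $K$ and $\Omega$ enters — and the identification of the topological interior and closure of $\{\rho>c\}$ with the manifold-with-boundary $\{\rho\geq c\}$, which is exactly where the regularity of the value $c$ is used.
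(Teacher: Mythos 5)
Your proof is correct and follows essentially the same route as the paper: build a $G$-invariant $\ci$ cut-off function separating $K$ from $B(\C)\setminus\Omega$ (the paper takes $f=0$ on $K$, $f=1$ off $\Omega$, then averages over $G$ exactly as you do), pick a regular value via Sard, and take the corresponding sub/superlevel set as $\Omega'$. The only cosmetic difference is that you work with a compactly supported bump function and a superlevel set rather than the paper's sublevel set, which changes nothing of substance.
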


\begin{proof}
Using partitions of unity, we find a $\ci$ map $f:B(\C)\to[0,1]$ that is equal to $0$ on $K$ and to $1$ on $B(\C)\setminus\Omega$. Replacing $f$ with $x\mapsto (f(x)+f(\sigma(x)))/2$, we may assume that $f$ is $G$-invariant. By Sard's theorem, the map $f$ has a regular value $\varepsilon\in (0,1)$, and one can choose $\Omega'=f^{-1}([0,\varepsilon))$.
\end{proof}

We refer to \S\S\ref{meropar}--\ref{merodim}
for a study of the ring $\sM(\Omega)^G$ of $G$-equivariant meromorphic functions on a $G$-equivariant complex manifold $\Omega$ of dimension $1$.
Letting $(\Omega_i)_{i\in I}$ be the $G$-orbits of connected components of $\Omega$, one has $\sM(\Omega)^G=\prod_{i\in I}\sM(\Omega_i)^G$. If~$\Omega$ has finitely many connected components and if $S$ is a linear algebraic group over $\sM(\Omega)^G$, the pointed set $H^1(\sM(\Omega)^G,S)$ is the product $\prod_{i\in I} H^1(\sM(\Omega_i)^G,S)$.

The next proposition was inspired by~\cite[Th\'eor\`eme~4.2]{CTG}.

\begin{prop}
\label{liftH1}
Let $\Omega\subset B(\C)$ be a $G$-stable domain with $\ci$ boundary
and let $K\subset \Omega$ be a $G$-stable compact subset containing $B(\R)$.
 Let $S$ be a linear algebraic group over $F$, and let $\alpha\in H^1(\sM(\Omega)^G,S)$.
Then there exists a $G$-stable domain with $\ci$ boundary $\Omega'$
such that  $K\subset\Omega'\subset\Omega$ and such that $\alpha|_{\Omega'}\in
H^1(\sM(\Omega')^G,S)$ is in the image of the restriction map $H^1(F,S)\to  H^1(\sM(\Omega')^G,S)$.
\end{prop}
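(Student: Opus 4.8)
The plan is to reduce the statement to an analogous but easier statement about torsors over $F$ and then deform the cohomology class locally. The key point is that a class $\alpha\in H^1(\sM(\Omega)^G,S)$ is, by definition of $\sM(\Omega)^G = \varinjlim \sO(\Omega'')^G$ over shrinking neighbourhoods, represented by a torsor $\sP$ over some $G$-stable open neighbourhood $\Omega''$ of $\overline\Omega$ in the analytic category. Using Lemma~\ref{lemmeabordlisse} I would first replace $\Omega$ by a slightly smaller $G$-stable domain with $\ci$ boundary (still containing $K$) over which $\alpha$ is represented by an actual analytic $S$-torsor $\sP\to \Omega$ that extends a little beyond $\overline\Omega$; since $S$ is a linear algebraic group, $\sP$ is a finite-type scheme, so after further shrinking one may assume $\sP$ is defined by a finite gluing datum with coefficients in the ring of $G$-equivariant analytic functions on a neighbourhood of $\overline\Omega$.

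Next I would produce an algebraic model. Choose an auxiliary embedding $S\subset \GL_{N,F}$ and let $W = \GL_N/S$ over $F$, so that right $S$-torsors over a base correspond (locally) to sections of $W$; the torsor $\sP$ thus corresponds to a $G$-equivariant analytic section $\Omega\to W(\C)$ of the structure map $W_{\an}\to B(\C)$ (after choosing a proper regular model of $W$ over $B$ and using the valuative criterion as in \S\ref{conventions} to land in $W$ itself generically). Here is where I would invoke tight approximation for $\P^d_F$ in disguise: since $\GL_N$ is an open subvariety of affine space and $W$ is a quotient, $W$ is rational over $F$, so by Theorem~\ref{projective} and Theorem~\ref{birinv} (stable birational invariance, Proposition~\ref{stableinv}) the variety $W$ satisfies tight approximation. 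Wait—more carefully, what I actually need is not a section $B\to W$ but rather to know that the analytic section $\Omega\to W(\C)$ can be approximated, with prescribed jets at finitely many points, by an \emph{algebraic} section over a slightly smaller $G$-stable domain with $\ci$ boundary $\Omega'$. The cleanest route is: apply the tight approximation property of a proper regular model of $W$ to the analytic section over $\Omega$; this yields algebraic sections $B\to \kW$ approximating it uniformly on a $G$-stable compact $K'\supset K$. The generic point of such an algebraic section gives a rational point of $W$ over $F$, hence an element $\beta\in H^1(F,S)$, whose restriction to $\sM(\Omega')^G$ I must identify with $\alpha|_{\Omega'}$.

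The main obstacle—and the step requiring the most care—is precisely this last identification: approximating the \emph{classifying section} into $W$ does not automatically give a torsor \emph{isomorphic} to $\sP$, only one whose classifying section is $\ci$-close. To bridge this I would use that two $S$-torsors over $\Omega'$ whose classifying sections into $W_{\an}$ are $\ci$-close are isomorphic: this is a deformation/rigidity statement for analytic torsors under a linear group over a Stein-type open set, following from the fact that nearby sections of $\GL_N\to W$ differ by a section of $\GL_N$ (lift along the smooth surjection $\GL_N\to W$ using Lemma~\ref{SteinR}-type results on a Stein shrinking, after invoking Lemma~\ref{lemmeabordlisse} again to arrange a $\ci$ boundary), and such a section trivialises the difference. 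So the skeleton is: (i) represent $\alpha$ analytically over a domain with $\ci$ boundary; (ii) turn it into an analytic section of $W_{\an}$; (iii) tight approximation for $W$ (rational!) gives an algebraic section over $B$, hence $\beta\in H^1(F,S)$, whose analytic classifying section is $\ci$-close to that of $\alpha$; (iv) the rigidity step upgrades ``$\ci$-close classifying sections'' to ``isomorphic torsors'' over a further shrinking $\Omega'$ containing $K$, giving $\alpha|_{\Omega'} = \beta|_{\Omega'}$ in the image of $H^1(F,S)$. The $G$-equivariance is preserved throughout because all approximations and liftings are performed equivariantly (averaging by $\sigma$ as in the proof of Proposition~\ref{approx} where needed), and the finitely-many connected components make all the compactness arguments legitimate.
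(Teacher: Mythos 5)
Your reduction to the classifying variety $W=\GL_N/S$ breaks down at its crucial point: the claim that ``$W$ is a quotient of an open subset of affine space, hence rational over $F$'' is false in general --- quotients of rational varieties need not be rational (this is exactly Noether's problem). By the no-name lemma, $\GL_N/S$ is stably birational to a linear quotient $V/S$, and by Saltman's counterexamples such quotients need not be stably (or even retract) rational for suitable finite $S$, even over $\C$; the paper itself recalls this (remark following Proposition~\ref{group}, citing \cite{saltmannoether}). So Corollary~\ref{rat} does not give tight approximation for a proper model of $W$, and step (iii) of your skeleton has no starting point. The only result that would supply tight approximation for the homogeneous space $W=\GL_N/S$ with arbitrary (possibly disconnected) stabilizer $S$ is Theorem~\ref{homogeneous}, but its proof goes through the descent theorem (Theorem~\ref{descent}), which itself relies on Proposition~\ref{liftH1}; invoking it here is circular. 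In effect you assume something at least as strong as what is to be proved: Proposition~\ref{liftH1} is precisely the input allowing one to trade a $G$\nobreakdash-equivariant analytic torsor near $B(\R)$ for an algebraic one over $F$. The paper proves it by a quite different d\'evissage: the finite case by spreading $\alpha$ out to a $G$\nobreakdash-equivariant ramified covering of $\Omega$, trivializing it over the (contractible, non-$G$\nobreakdash-stable) boundary components, gluing with the trivial torsor and applying the $G$\nobreakdash-equivariant Riemann existence theorem; the general case by twisting along $1\to S^0\to S\to S/S^0\to 1$ and using Scheiderer's theorems over fields of virtual cohomological dimension $1$ (bijectivity of $h_{S^0}$, and Corollary~6.6 of \cite{Scheiderer}) together with the homeomorphism $\Sper(\sM(\Omega)^G)\simeq\Sper(F)$ from Proposition~\ref{spectrereel}. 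That real-spectrum input cannot be bypassed by your route unless tight approximation for $W$ is established independently, which you do not do.

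Two further, smaller problems. First, $\sM(\Omega)^G$ is the ring of $G$\nobreakdash-equivariant meromorphic functions on $\Omega$, not a direct limit of $\sO(\Omega'')^G$ over neighbourhoods of $\overline\Omega$; a class $\alpha$ (equivalently, the classifying point of $W(\sM(\Omega)^G)$ you obtain from Hilbert~90, which is the sound part of your proposal) is therefore only represented by a meromorphic map $\Omega\dashrightarrow W$, which after passing to a proper model lands in the boundary of a compactification of $W$ at finitely many points, possibly lying on $B(\R)$. Second, because of this, your rigidity step (iv) only yields an isomorphism of torsors away from those points, i.e.\ an equality of $\alpha$ and $\beta$ after restriction to $\sM(\Omega'\setminus D)^G$ for a finite set $D$, and this does not formally imply equality in $H^1(\sM(\Omega')^G,S)$. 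These defects might be repairable (for instance by prescribing high-order jets at the bad points), but the rationality error and the ensuing circularity are fatal to the proposal as written.
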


\begin{lem}
\label{finitecase}
Proposition \ref{liftH1} holds if $S$ is finite.
\end{lem}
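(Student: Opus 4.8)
The plan is to reduce to a very concrete situation: when $S$ is finite, a class $\alpha\in H^1(\sM(\Omega)^G,S)$ corresponds to an $S$-torsor over $\Spec(\sM(\Omega)^G)$, which (after perhaps shrinking $\Omega$) can be spread out to a finite \'etale $G$-equivariant cover of a $G$-stable open subset $\Omega_1\subset\Omega$; this is the analytic incarnation of $\alpha$. Concretely I would first choose a finite set of generators of $\sM(\Omega)^G$-algebras involved in a cocycle representing $\alpha$, clear denominators, and use the fact that $\Omega$ has only finitely many connected components to find a $G$-stable open neighbourhood $\Omega_1$ of $K$ with $\overline{\Omega_1}\subset\Omega$ (using Lemma \ref{lemmeabordlisse}) over which $\alpha$ is represented by an honest $G$-equivariant finite \'etale cover $\sP\to\Omega_1$, with fibres carrying a simply transitive $S$-action, as in the topological notions introduced at the end of \S\ref{torsors}. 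The key point is that, since $K$ is compact and contains $B(\R)$, the topology of the pair $(\overline{\Omega_1},B(\R))$ is very simple (a compact surface with boundary, equivariantly), so such covers are rigid.

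The heart of the argument is then to show that such a $G$-equivariant finite \'etale cover over a $G$-stable neighbourhood of $K$ comes, by restriction, from a finite \'etale cover of $B$ itself — equivalently, from a class in $H^1(F,S)$ after passing to generic fibres. Here I would argue as follows. A $G$-equivariant finite cover of a $G$-stable open $\Omega'\Subset\Omega_1$ is classified, up to the $S$-action, by a $G$-equivariant continuous map $\pi_1^{G}(\Omega',*)\to S$ (or, more precisely, by a conjugacy class of such, together with the equivariance data), since $S$ is finite and discrete. Because $\Omega'$ can be chosen to be a $G$-stable domain with $\ci$ boundary whose real locus $B(\R)\subset\Omega'$, the inclusion $\Omega'\hookrightarrow B(\C)$ induces a surjection on equivariant fundamental groups once we allow $\Omega'$ small enough that its complement meets every connected component of $B(\C)$ — each connected component of $\Omega'$ deformation retracts onto a $1$-complex, and the $G$-action is tame. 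Thus the homomorphism factors, and the cover extends to a $G$-equivariant finite topological cover of $B(\C)$, which by the equivalence $\Pic(B)=\Pic_G(B(\C))$-type GAGA statements (the Riemann existence theorem in its $G$-equivariant form, cf.\ the appendix) is the analytification of a finite \'etale cover of $B$, hence of a class in $H^1(F,S)$ restricting to $\alpha|_{\Omega'}$.

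The step I expect to be the main obstacle is controlling the \emph{equivariance}: the surjectivity of $\pi_1^G(\Omega')\to\pi_1^G(B(\C))$, or rather the statement that a $G$-equivariant cover of a neighbourhood of $B(\R)$ extends equivariantly over all of $B(\C)$, is not formal — one must handle the real points of $B$ carefully, since the $G$-action there has fixed points and the local model of an equivariant cover near a point of $B(\R)$ must be compatible with complex conjugation. Concretely, over a small $G$-stable disc around a point of $B(\R)$, an equivariant $S$-cover is determined by an element of $S$ of order dividing $2$ together with the monodromy, and one must check these local data glue to a global extension; this is where one uses that $B$ is one-dimensional and that $K$ is a compact set containing the \emph{entire} real locus $B(\R)$, so that no "extra" real components can obstruct the extension. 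Once the equivariant extension to $B(\C)$ is in hand, the remaining descent from $H^1$ of the analytic cover to $H^1(F,S)$ is a direct application of the $G$-equivariant Riemann existence theorem and GAGA as recorded in the appendix.
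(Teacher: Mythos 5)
Your first paragraph (spreading $\alpha$ out to a $G$\nobreakdash-equivariant finite cover of a $G$\nobreakdash-stable neighbourhood of $K$) is essentially the paper's starting point, which is carried out via Proposition~\ref{Gramifiedcover}: the torsor over $\sM(\Omega)^G$ is realized as a $G$\nobreakdash-equivariant ramified covering of $\Omega$, \'etale over $B^0(\C)\cap(\Omega\setminus R)$ after extending $S$ to a finite \'etale group scheme $\kS$ over a dense open $B^0\subset B$ and shrinking $\Omega$ by Lemma~\ref{lemmeabordlisse}. But the step you yourself flag as the heart of the argument is where the proposal breaks down. You claim that the cover extends $G$\nobreakdash-equivariantly to a finite cover of all of $B(\C)$, hence is the analytification of a finite \'etale cover of $B$, and you justify this by a surjectivity of $\pi_1^G(\Omega')\to\pi_1^G(B(\C))$. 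This is wrong on two counts. First, the logic is reversed: surjectivity of the map on fundamental groups would not make the monodromy homomorphism $\pi_1(\Omega')\to S$ factor through $\pi_1(B(\C))$; for that you would need it to kill the kernel, which it has no reason to do (each component of $\Omega'$ retracts onto a $1$\nobreakdash-complex, so $\pi_1(\Omega')$ is free and typically much bigger than its image). Second, the extension statement itself is false: take $B=\P^1_\R$, so that $B(\C)$ is simply connected and admits no nontrivial finite covers, while $H^1(\sM(\Omega)^G,\Z/2\Z)$ (square classes of equivariant meromorphic functions on an annular neighbourhood of $B(\R)$) is very far from trivial. More structurally, a class coming from a finite \'etale cover of the proper curve $B$ is unramified everywhere, whereas the target of Proposition~\ref{liftH1} is all of $H^1(F,S)$, whose classes are in general ramified; so no argument producing only everywhere-\'etale covers of $B$ can prove the lemma.

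The paper's proof avoids extension of the given cover altogether. After arranging (by shrinking $\Omega$ and $B^0$) that the ramification locus is finite and disjoint from $B^0(\C)$, that the torsor extends over $B^0(\C)\cap(\overline\Omega\setminus R)$, and that the connected components of $B^0(\C)\cap\partial\Omega$ are contractible and not $G$\nobreakdash-stable (possible precisely because $\partial\Omega\cap B(\R)=\emptyset$), one trivializes the torsor $G$\nobreakdash-equivariantly over the boundary and \emph{glues it with the trivial torsor} over $B^0(\C)\setminus(B^0(\C)\cap\Omega)$. This produces a $G$\nobreakdash-equivariant $\kS(\C)$\nobreakdash-torsor over $B^0(\C)$ — not over $B(\C)$ — agreeing with the original one over $B^0(\C)\cap\Omega$; the $G$\nobreakdash-equivariant Riemann existence theorem is then applied over the non-proper variety $B^0$, and the generic fibre of the resulting algebraic torsor gives the desired class in $H^1(F,S)$, which may be ramified on $B\setminus B^0$. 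If you want to salvage your approach, the repair is exactly this: do not try to extend the cover over $B(\C)$, but puncture $B$ at finitely many non-real points and glue in the trivial torsor outside a slightly shrunken $\Omega$.
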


\begin{proof}
The algebraic group $S$ extends to a finite \'etale group scheme $\kS$ over a dense open subset $B^0\subset B$. Then $\sS:=\kS(\C)$ is a $G$-equivariant finite group over $B^0(\C)$ in the sense of \S \ref{torsors}. Let $P$ be a right $S$-torsor over $\sM(\Omega)^G$ of class $\alpha$. By Proposition~\ref{Gramifiedcover},
$P$ corresponds to a $G$-equivariant finite ramified covering $p:\sP\to\Omega$ of $\Omega$ such that, denoting by $R\subset \Omega$ the ramification locus of $p$, $\sP|_{B^0(\C)\cap(\Omega\setminus R)}$ is a $G$-equivariant right $\sS|_{B^0(\C)\cap(\Omega\setminus R)}$\nobreakdash-torsor.

 Since $R$ is a discrete subset of~$\Omega$, we may assume, after shrinking $\Omega$ using Lemma \ref{lemmeabordlisse}, that $R$ is finite and that $p|_{B^0(\C)\cap(\Omega\setminus R)}$ extends to a $G$-equivariant right $\sS|_{B^0(\C)\cap(\overline{\Omega}\setminus R)}$\nobreakdash-torsor $\overline{p}:\overline{\sP}\to B^0(\C)\cap(\overline{\Omega}\setminus R)$. Shrinking $B^0$ ensures that  $B^0(\C)\cap R=\varnothing$.
Since $B(\R)\cap\partial{\Omega}=\varnothing$, we may assume after further shrinking $B^0$ that the connected components of $B^0(\C)\cap\partial{\Omega}$ are contractible and not $G$\nobreakdash-stable. It follows that the $G$\nobreakdash-equivariant right $\sS|_{B^0(\C)\cap\partial{\Omega}}$\nobreakdash-torsor $\overline{p}|_{B^0(\C)\cap\partial{\Omega}}$ can be $G$-equivariantly trivialized. Gluing $\overline{p}$ with the trivial $G$-equivariant right $\sS|_{B^0(\C)\setminus (B^0(\C)\cap\Omega)}$-torsor using such trivializations yields a $G$-equivariant right $\sS$-torsor $p':\sP'\to B^0(\C)$ such that $p'|_{B^0(\C)\cap\Omega}$ is isomorphic to $p$.

By the $G$-equivariant analogue of Riemann's existence theorem (see \S\ref{Ranal}), $\sP'$ is the analytification of a right $\kS$-torsor $\kP'$ over $B^0$. The class in $H^1(F,S)$ of the generic fiber of $\kP'$ has image $\alpha$ in $H^1(\sM(\Omega)^G,S)$, proving the lemma.
\end{proof}

To prove Proposition \ref{liftH1} in general, we will use deep results of Scheiderer \cite{Scheiderer}. We first introduce some notation. Let $\Sper(k)$ be the real spectrum of a field $k$ (denoted by $\Omega_k$ in \cite{Scheiderer} and by $X_k$ in \cite{Lam}). It is the set of orderings of the field $k$, endowed with the Harrison topology \cite[VIII, \S 6]{Lam}. With an ordering $\xi\in \Sper(k)$ is associated a real closure $k_{\xi}$ of $k$.
If $S$ is an algebraic group over~$k$, Scheiderer defines a sheaf
of pointed sets $\sH^1(S)$ on $\Sper(k)$ whose stalk at $\xi$ is $H^1(k_{\xi},S)$, and a canonical map $h_S:H^1(k,S)\to H^0(\Sper(k),\sH^1(S))$ whose stalks are the restriction maps in Galois cohomology \cite[Definition 2.8, Proposition~2.9]{Scheiderer}.

These definitions extend at once to finite products of fields, as the real spectrum of such a product is the disjoint union of the real spectra of the factors.

\begin{proof}[Proof of Proposition \ref{liftH1}]
Denote by $S^0$ be the connected component of the identity of $S$ and consider the
commutative diagram of cohomology pointed sets
\begin{equation}
\label{diagH1s}
\begin{aligned}
\xymatrix
@R=0.4cm
{
H^1(F,S)\ar^{}[d] \ar[r]& H^1(F,S/S^0)\ar[d] \\
H^1(\sM(\Omega)^G,S)\ar[r]&H^1(\sM(\Omega)^G,S/S^0),
}
\end{aligned}
\end{equation}
whose vertical arrows are the restriction maps, and where we still denote by $S$ and~$S^0$ the extensions of scalars of $S$ and $S^0$ from $F$ to  $\sM(\Omega)^G$.

Applying Lemma \ref{finitecase} to $S/S^0$, we may assume, after shrinking $\Omega$, that there exists a class $\beta\in H^1(F,S/S^0)$ such that the images of $\alpha$ and $\beta$ in $H^1(\sM(\Omega)^G,S/S^0)$ in the diagram (\ref{diagH1s}) coincide.
Since $B(\R)\subset \Omega$ is compact, Proposition \ref{spectrereel} shows that the natural map $\phi:\Sper(\sM(\Omega)^G)\to\Sper(F)$ is bijective. Since $\phi$ is moreover continuous and closed \cite[Corollary~p.~272]{Lam}, it is a homeomorphism. By \cite[Proposition 1.2]{Scheiderer}, the natural map $\phi^*\sH^1(S)\to \sH^1(S)$ is an isomorphism at the level of stalks, hence an isomorphism of sheaves on $\Sper(\sM(\Omega)^G)$. We deduce an isomorphism of pointed sets
\begin{equation}
\label{H0Sper}
\phi^*:H^0(\Sper(F),\sH^1(S))\isoto H^0(\Sper(\sM(\Omega)^G),\sH^1(S)).
\end{equation}
Let $\gamma\in H^0(\Sper(F),\sH^1(S))$ be the inverse image of $h_S(\alpha)$ by (\ref{H0Sper}). Using \cite[Corollary 6.6]{Scheiderer}, choose a class $\delta\in H^1(F,S)$ inducing $\beta\in H^1(F,S/S^0)$ in (\ref{diagH1s}) and such that $h_S(\delta)=\gamma$.
Let $\varepsilon\in H^1(\sM(\Omega)^G,S)$ be the class induced by $\delta$ in~(\ref{diagH1s}).

Let $P$ be a right $S$-torsor over $F$ of class $\delta$. Letting $S$ act by conjugation on the exact sequence $1\to S^0\to S\to S/S^0\to 1$ yields a short exact sequence $1\to {}_PS^0\to {}_PS\to {}_P(S/S^0)\to 1$.
We consider the compatible bijections $\tau_P: H^1(F,{}_P S)\to H^1(F,S)$ and $\sigma_P: H^1(\sM(\Omega)^G,{}_P S) \to H^1(\sM(\Omega)^G,S)$ with $\tau_P(0)=\delta$ and $\sigma_P(0)=\varepsilon$ described in \cite[I 5.3, Proposition 35]{CohoGalois}.
Since $\alpha$ and~$\varepsilon$ have the same image in $H^1(\sM(\Omega)^G,S/S^0)$ in diagram (\ref{diagH1s}), \cite[I 5.4]{CohoGalois} shows that $\sigma_P^{-1}(\alpha)$ has the same image as $\sigma_P^{-1}(\varepsilon)=0$ in $H^1(\sM(\Omega)^G,{}_P(S/S^0))$.
We deduce from the natural commutative diagram of pointed sets with exact rows
\begin{equation}
\label{autrediagH1s}
\begin{aligned}
\xymatrix
@R=0.4cm
{
H^1(F,{}_PS^0)\ar[r]\ar^{}[d]&
H^1(F,{}_PS)\ar^{}[d] \ar[r]& H^1(F,{}_P(S/S^0))\ar[d] \\
H^1(\sM(\Omega)^G,{}_PS^0)\ar[r]&
H^1(\sM(\Omega)^G,{}_PS)\ar[r]&H^1(\sM(\Omega)^G,{}_P(S/S^0))
}
\end{aligned}
\end{equation}
the existence of $\zeta\in H^1(\sM(\Omega)^G,{}_PS^0)$ lifting $\sigma_P^{-1}(\alpha)$.
In the commutative diagram
\begin{equation}
\label{Sch4.1}
\begin{aligned}
\xymatrix
@R=0.4cm
@C=1.3cm
{
H^1(F,{}_PS^0)\ar^{}[d] \ar^(.4){h_{{}_P\hspace{-.05em}S^0}}[r]& H^0(\Sper(F),\sH^1({}_PS^0))\ar[d] \\
H^1(\sM(\Omega)^G,{}_PS^0)\ar^(.4){h_{{}_P\hspace{-.05em}S^0}}[r]&H^0(\Sper(\sM(\Omega)^G),\sH^1({}_PS^0)),
}
\end{aligned}
\end{equation}
the horizontal maps are bijective by \cite[Theorem 4.1]{Scheiderer} and Corollary \ref{cohodimvirt1},  and the right vertical map is bijective because $\phi$ is a homeomorphism and the natural map $\phi^*\sH^1({}_PS^0)\to \sH^1({}_PS^0)$ is an isomorphism by \cite[Proposition 1.2]{Scheiderer}. The left vertical arrow of (\ref{Sch4.1}), hence of (\ref{autrediagH1s}), is thus bijective. Let $\xi$ be the image in $H^1(F,{}_PS)$ of the inverse image of $\zeta$ by this arrow. The commutativity of (\ref{autrediagH1s}) shows that $\xi\in H^1(F,{}_PS)$ has image
 $\sigma_P^{-1}(\alpha)$ in $H^1(\sM(\Omega)^G,{}_P S)$. Consequently, the class $\tau_P(\xi)\in H^1(F,S)$ has image $\alpha$ in $H^1(\sM(\Omega)^G,S)$, which concludes the proof.
\end{proof}

\subsection{Descent along torsors}
\label{proofdescent}
We now reach the goal of this section.

\begin{proof}[Proof of Theorem \ref{descent}]
Let $f:\kX\to B$ be a proper regular model of $X$ over $B$.
Extend $S$ to a smooth affine group scheme $\kS$ over a dense open subset $B^0\subset B$.
Let $\kX^0\subset f^{-1}(B^0)$ be a dense open subset over which $Q$ extends to a left $\kS_{\kX^0}$-torsor $\kQ$ and let $[\kQ^{-1}]\in \check{H}^1_{\et}(\kX^0,\kS_{\kX^0})$
be the class of  the inverse torsor $\kQ^{-1}$ (see~\S\ref{torsors}).

 Let $K$, $\Omega$, $b_i$, $r$ and $u$ be as in Proposition \ref{deftightK}. Shrinking $\Omega$ and perturbating~$u$, we may assume that all the connected components of $u(\Omega)$ meet $\kX^0$ by Proposition~\ref{tubularR}~(ii) and that~$\Omega$ is a domain with $\ci$ boundary by Lemma \ref{lemmeabordlisse}.
Let $\alpha$ be the image of $[\kQ^{-1}]$ by the restriction map $\check{H}^1_{\et}(\kX^0,\kS_{\kX^0})\to H^1(\sM(\Omega)^G,S)$ induced by~$u$.
Proposition~\ref{liftH1} ensures, after further shrinking $\Omega$, that $\alpha$ is the image of a class $\beta\in H^1(F,S)$ by the restriction map $H^1(F,S)\to H^1(\sM(\Omega)^G,S)$.

Let $P$ be a right $S$-torsor over $F$ of class $\beta$.
After replacing $B^0$ with a dense open $U\subset B^{0}$ and $\kX^0$ with $\kX^0\cap f^{-1}(U)$, we extend $P$ to a right $\kS$-torsor $\kP$ over $B^0$. Consider the twist $\kP_{\kX^0}\circ \kQ$ of $\kQ$ by $\kP_{\kX^0}$ as in \S\ref{torsors}: it is a right ${}_{\kQ^{-1}}(\kS_{\kX^0})$\nobreakdash-torsor over $\kX^0$. By construction, its class is in the kernel of the restriction map $\check{H}^1_{\et}(\kX^0,{}_{\kQ^{-1}}(\kS_{\kX^0}))\to H^1(\sM(\Omega)^G,{}_{\alpha}S)$ induced by~$u$. This exactly means that there exists a $G$-equivariant meromorphic map $v:\Omega\dashrightarrow (\kP_{\kX^0}\circ \kQ)(\C)$ lifting~$u$.

Let $g:\kY\to B$ be a proper flat morphism such that $\kY$ is regular and contains $\kP_{\kX^0}\circ \kQ$ as a dense open subset, and such that there exists $h:\kY\to\kX$ with $g=f\circ h$.
By the valuative criterion of properness applied to the $(\sO_{\Omega,x})_{x\in\Omega}$ as in  \S\ref{conventions}, $v$ extends to a $G$-equivariant holomorphic map $v:\Omega\to\kY(\C)$ lifting $u$.  The tight approximation property of $g$ yields a sequence $t_n:B\to\kY$ of sections of $g$
with the same $r$-jets as $v$ at the $b_i$ and such that $t_n(\C)|_K$ converges uniformly to $v|_K$. The sections $s_n:=h\circ t_n$ of $f$ show that $f$ has the tight approximation property.
\end{proof}

\section{Homogeneous spaces}
\label{sechomo}
We now provide new examples of varieties satisfying the tight approximation property: homogeneous spaces of connected linear algebraic groups over $F$.

\subsection{Tight approximation for algebraic groups}

We start with the case of connected linear algebraic groups over $F$.

\begin{prop}
\label{torus}
Tori over $F$ satisfy the tight approximation property.
\end{prop}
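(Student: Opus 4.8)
The plan is to prove that every torus $T$ over $F$ satisfies the tight approximation property by descent under a flasque (or, more economically, a quasi-trivial) resolution. First I would recall the standard exact sequence of tori $1\to S\to R\to T\to 1$ over $F$ in which $R$ is quasi-trivial, i.e.\ a finite product of Weil restrictions $R_{L/F}\Gm$ for finite separable extensions $L/F$, and $S$ is the kernel; such a resolution exists for any torus (choose any surjection $R\surj T$ with $R$ quasi-trivial and let $S$ be the kernel). The key observation is that $R$ is an $F$\nobreakdash-rational variety: each $R_{L/F}\Gm$ is an open subvariety of the affine space $R_{L/F}\A^1$, hence $R$ is open in an affine space over $F$, so by Corollary~\ref{rat} the quasi-trivial torus $R$ satisfies the tight approximation property. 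The same argument applies to any twist of $R$ by a right $S$\nobreakdash-torsor over $F$: twisting a quasi-trivial torus by an $S$-torsor (with $S$ a subgroup of $R$) is harmless because $R$ is commutative, so the inner form is $R$ itself, but one must be slightly more careful—see below.

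The main step is to set up the descent. View the projection $\pi:R\to T$ as a left $S$\nobreakdash-torsor $Q=R$ over the base $X=T$ (here $S$ is commutative, so left and right torsors coincide and inner twisting is trivial). To apply Theorem~\ref{descent} I need: every twist ${}_{P}Q$ of $Q=R$ by a right $S$-torsor $P$ over $F$ satisfies the tight approximation property. Now ${}_{P}Q$ is a torsor over $T$ under $S$, and as a variety over $F$ it is a principal homogeneous space fibred over $T$; more usefully, ${}_PQ$ is itself a torsor under the quasi-trivial group $R$ over a point? Not quite—rather, I would argue that ${}_PQ$ is an $F$\nobreakdash-torsor under $S$ which, being a closed subscheme pattern inside the twist of $R$, is still $F$\nobreakdash-rational: indeed the twist ${}_PR$ of $R$ by any $S$-torsor is again quasi-trivial (twisting $R_{L/F}\Gm$ by a torsor under a subtorus does not change its isomorphism class since $H^1(F,R_{L/F}\Gm)=0$ by Shapiro and Hilbert 90, so ${}_PR\cong R$), hence rational by the same argument, and ${}_PQ\to T$ is then a torsor whose total space maps to $T$ with fibres torsors under $S$. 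The cleanest route: ${}_PQ$ is an open subscheme of ${}_P R$ once one remembers $Q=R$ sits as the generic fibre structure; more directly, ${}_{P}Q$ is a homogeneous space under ${}_PR\cong R$, hence $F$\nobreakdash-rational, hence satisfies tight approximation by Corollary~\ref{rat}. Feeding this into Theorem~\ref{descent} with $X=T$, $Q=R\to T$ the torsor under $S$, yields the tight approximation property for $T$.

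The hard part will be the bookkeeping around twists: verifying cleanly that every twist ${}_PQ$ of the torsor $Q=R\to T$ by a right $S$-torsor over $F$ is $F$\nobreakdash-rational (equivalently, $F$\nobreakdash-birational to affine space), so that Corollary~\ref{rat} applies. The essential input is that quasi-trivial tori have vanishing $H^1$ over any field, so they absorb twisting by torsors under their subtori, combined with the elementary fact that a torsor under a torus that is itself a homogeneous space of a rational group, with rational fibres, is rational. One should also double-check the degenerate commutative-group simplifications in Theorem~\ref{descent} (left vs.\ right torsors, inner forms) go through trivially since $S$ is abelian. Once that is in place the proof is short; alternatively, if one prefers to avoid the twisting discussion, one can use a flasque resolution and invoke that flasque tori are stably rational, but the quasi-trivial route above is more elementary and self-contained given Corollary~\ref{rat} and Theorem~\ref{descent}.
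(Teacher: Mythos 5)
Your proposal is correct and takes essentially the same route as the paper: a quasi-trivial resolution $1\to S\to R\to T\to 1$, descent via Theorem~\ref{descent} along the left $S$\nobreakdash-torsor $R\to T$, and the observation that any twist of it by a right $S$\nobreakdash-torsor over $F$ is a right $R$\nobreakdash-torsor over $F$, trivial because $H^1(F,R)=0$ by Shapiro and Hilbert~90, hence $F$\nobreakdash-rational and covered by Corollary~\ref{rat}. The only thing to tighten is the final phrasing: "homogeneous space under $R$, hence $F$\nobreakdash-rational" is not a valid inference by itself (a torsor without a rational point need not be rational); the correct statement, which you do in fact justify, is that the twist is a \emph{trivial} right $R$\nobreakdash-torsor, i.e.\ isomorphic to $R$ as an $F$\nobreakdash-variety.
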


\begin{proof}
Let $H$ be a torus over $F$. Writing the cocharacter lattice of $H$ as a quotient of a permutation $\Gal(\overline{F}/F)$-module yields an exact sequence $1\to S\to Q\to H\to 1$ of algebraic groups over $F$, where $Q$ is a quasi-trivial torus \cite[p.~187]{CTS}, that is, a product of Weil restriction of scalars of $\Gm$.

The torus $Q$ is a left $S$-torsor over $H$. Let $Q'$ be a twist of $Q$ by a right $S$\nobreakdash-torsor over~$F$. The variety $Q'$ is naturally a right $Q$-torsor over $F$ (see \S \ref{torsors}).
 Since $H^1(F,Q)=0$ by Shapiro's lemma and Hilbert's Theorem $90$, $Q'$ is isomorphic to~$Q$.
Since $Q$ is $F$-rational, as is any quasi-trivial torus \cite[p.~188]{CTS}, we deduce from Corollary \ref{rat} that $Q'$ satisfies the tight approximation property.

Theorem \ref{descent} now shows that $H$ satisfies the tight approximation property.
\end{proof}

\begin{prop}
\label{group}
Connected linear algebraic groups over $F$ satisfy the tight approximation property.
\end{prop}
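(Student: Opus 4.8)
The plan is to reduce first to connected reductive groups, and then to exhibit such a group, up to birational equivalence, as a family of tori over a stably rational base, so that Theorem~\ref{up} applies together with Proposition~\ref{torus} and Corollary~\ref{rat}. For the reduction, let $G$ be a connected linear algebraic group over $F$. Since $\mathrm{char}(F)=0$, the Levi decomposition provides a connected reductive $F$-subgroup $H\subset G$ such that multiplication is an isomorphism of $F$-varieties $R_u(G)\times H\isoto G$; moreover $R_u(G)$, as a connected unipotent group over a field of characteristic zero, is isomorphic as an $F$-variety to an affine space $\A^d_F$ (each stage of a composition series being a torsor under the additive group over an affine space, hence trivial). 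Hence $G$ is birational to $\A^d_F\times H$, which satisfies the tight approximation property as soon as $H$ does, by Proposition~\ref{products} and Corollary~\ref{rat}. One may therefore assume $G=H$ connected reductive.

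Next I would pick, by Grothendieck's theorem on maximal tori, a maximal torus $T\subset H$ over $F$, set $r=\dim T$, and consider the smooth geometrically integral variety $Y=H/N_H(T)$ of maximal tori of $H$ together with the associated bundle $\mathcal{T}:=H\times^{N_H(T)}T$ for the conjugation action. This $\mathcal{T}$ is a smooth commutative group scheme over $Y$ with torus fibres---a universal maximal torus---and the morphism $[(h,t)]\mapsto(hth^{-1},hN_H(T))$ identifies it with the locally closed subvariety $\{(g,[T'])\mid g\in T'\}$ of $H\times Y$. The first projection $\mathcal{T}\to H$ is bijective over the dense open regular semisimple locus of $H$ (a regular semisimple element lies in a unique maximal torus, the identity component of its centraliser), hence is birational; and the second projection $p\colon\mathcal{T}\to Y$ has, over each $y\in Y(F)$, fibre the torus $T_y$ over $F$ corresponding to $y$, which satisfies the tight approximation property by Proposition~\ref{torus}. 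Granting that $Y$ satisfies the tight approximation property, Theorem~\ref{up} applied to $p$---after restriction to suitable smooth dense open subsets of $\mathcal{T}$ and $Y$ so that its hypotheses apply literally---shows that $\mathcal{T}$, hence $H$ (the property being a birational invariant), satisfies it as well.

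It remains to prove that $Y$ satisfies the tight approximation property, for which, by Corollary~\ref{rat}, it is enough that $Y$ be stably rational. Here I would let $\mathfrak{h}=\mathrm{Lie}(H)$, let $\mathfrak{h}^{\mathrm{rs}}\subset\mathfrak{h}$ be the dense open subvariety of regular semisimple elements, and introduce the universal Cartan subalgebra $\widetilde{\mathfrak{C}}=\{(y,x)\in Y\times\mathfrak{h}\mid x\in\mathrm{Lie}(T_y)\}$, the total space of a rank-$r$ vector bundle over $Y$. The assignment $x\mapsto([Z_H(x)^\circ],x)$ identifies $\mathfrak{h}^{\mathrm{rs}}$ with the dense open subset of $\widetilde{\mathfrak{C}}$ of pairs $(y,x)$ with $x$ regular semisimple (for such $x$, the Cartan subalgebra $\mathrm{Lie}(Z_H(x)^\circ)$ corresponds to a unique point of $Y$ and contains $x$). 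Since a vector bundle is trivial over a dense open subset of its base, $\widetilde{\mathfrak{C}}$ is birational to $Y\times\A^r_F$; as $\mathfrak{h}^{\mathrm{rs}}$ is rational, $Y\times\A^r_F$ is $F$-rational, so $Y$ is stably rational. The main obstacle lies in this last part together with the bookkeeping above: one must check carefully that $p\colon\mathcal{T}\to Y$ really is a family of tori (so its $F$-rational fibres are tori over $F$, not merely torsors under tori), that $\mathcal{T}$ is birational to $H$, and---the crux---that the variety of maximal tori $Y$ is stably rational, via the identification of $\mathfrak{h}^{\mathrm{rs}}$ with an open subset of the Cartan bundle $\widetilde{\mathfrak{C}}\to Y$.
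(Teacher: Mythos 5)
Your proof is correct and follows essentially the same route as the paper: reduce to the reductive case via the unipotent radical, realize the reductive group birationally as a fibration in tori over the (stably) rational variety of maximal tori, and conclude by Theorem~\ref{up}, Proposition~\ref{torus}, Corollary~\ref{rat} and birational invariance. The only difference is that where the paper simply cites SGA3 (Exp.~XIII, Th.~3.1 and Exp.~XIV, Th.~6.1) for the torus-fibration structure and the rationality of the base, you reprove these facts directly via the universal maximal torus and the Cartan-subalgebra bundle.
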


\begin{proof}
By \cite[Corollaries 15.5 (ii) and 15.8]{Borel}, a connected linear algebraic group over $F$ is birational to the product of its unipotent radical, which is $F$-rational, and of a connected reductive group. By Proposition \ref{products} and Corollary \ref{rat}, we may therefore assume our group to be reductive.
By \cite[Expos\'e XIII, Th\'eor\`eme~3.1]{SGA32} and \cite[Expos\'e XIV, Th\'eor\`eme 6.1]{SGA32}, any connected reductive $F$-group is birational to a torus over an $F$-rational variety. By Theorem \ref{up} and Corollary \ref{rat}, we are reduced to the case of a torus, dealt with in Proposition \ref{torus}.
\end{proof}

\begin{rmk}
Proposition \ref{group} is new even for an algebraic group defined over $\R$.
Platonov \cite[p.~169]{Platonov} asks whether all connected linear algebraic groups over $\R$ are $\R$-rational. It is not even known whether they are always stably rational,
which would allow one to deduce Proposition~\ref{group} from Corollary \ref{rat}.
(See however \cite[Theorem~2]{Chernousov} for a partial result in this direction.)

For contrast, let us recall that homogeneous spaces of linear algebraic groups need not be stably rational,
even over~$\C$ (see \cite{saltmannoether}, \cite[Corollary~3.11]{noname}) and that over~$F$,
even tori are not stably rational in general (see, \emph{e.g.}, \cite[\textsection4.10]{voskresenski}).
\end{rmk}

\subsection{Tight approximation for homogeneous spaces}
We now come to the main theorem of this section:

\begin{thm}
\label{homogeneous}
Homogeneous spaces of connected linear algebraic groups over $F$ satisfy the tight approximation property.
\end{thm}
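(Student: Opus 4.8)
The plan is to reduce the case of an arbitrary homogeneous space $X=G/H$ of a connected linear algebraic group to situations already treated in the excerpt: connected linear algebraic groups (Proposition~\ref{group}), the fibration method (Theorem~\ref{up}), the descent method (Theorem~\ref{descent}), and Scheiderer's Hasse principle. First I would reduce to the case where the (geometric) stabilizer may be assumed to have a convenient structure. By standard devissage in the theory of homogeneous spaces—slicing off the unipotent radical and then the derived subgroup and a central torus of a Levi of the stabilizer—one replaces $X$ by homogeneous spaces fibered over one another with fibers that are themselves homogeneous spaces of a simpler nature, or torsors under linear groups, each step of which is handled by Theorem~\ref{up} or Theorem~\ref{descent}. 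The goal of these reductions is to arrive at the case where~$X$ possesses an $F$\nobreakdash-point, equivalently where $X\cong G/H$ with $H$ an honest subgroup of $G$ over~$F$.

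Next, assuming $X=G/H$ with $G$ connected linear and $H\subset G$ a closed subgroup over~$F$, I would apply descent along the torsor $G\to G/H$, which is a left $H$\nobreakdash-torsor (here $S=H$ in the notation of Theorem~\ref{descent}, and $H$ need not be connected—this is exactly why Theorem~\ref{descent} was stated without a connectedness hypothesis on~$S$). Every twist of $G\to X$ by a right $H$\nobreakdash-torsor $P$ over~$F$ is of the form ${}_PG$, which is again a connected linear algebraic group over~$F$ (an inner form of $G$, or more precisely the twist of $G$ by $P$ via the $H$-action by translation, which is still connected and linear). Hence every such twist satisfies the tight approximation property by Proposition~\ref{group}, and Theorem~\ref{descent} gives it for~$X$. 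This disposes of all homogeneous spaces possessing an $F$\nobreakdash-point.

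Finally, for an arbitrary homogeneous space~$X$ of a connected linear algebraic group over~$F$—which may have $X(F)=\emptyset$—I would invoke Scheiderer's Hasse principle \cite{Scheiderer}: such an~$X$ has an $F$\nobreakdash-point as soon as it has an $F_b$\nobreakdash-point for every real place~$b$, i.e.\ an $\R$\nobreakdash-point after base change to each completion at a real point of~$B$; combined with the fact that over the completions there is no obstruction, one arranges, after the devissage, that the relevant pieces do acquire rational points, reducing to the pointed case above. Concretely, one runs the chain of reductions so that the obstruction to a rational point always lands in a cohomology set controlled by Scheiderer's results, and then feeds the pointed case through descent as in the previous paragraph. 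The main obstacle I expect is organizing the devissage cleanly: one must make sure each intermediate fibration has base and fibers to which the inductive hypothesis (or Proposition~\ref{group}, or the pointed case) applies, and that the twists arising in the descent steps genuinely remain within the class of varieties already known to satisfy tight approximation; keeping track of which groups are connected, which torsors are left versus right, and exactly where Scheiderer's Hasse principle is needed to produce the rational point is the delicate bookkeeping at the heart of the argument.
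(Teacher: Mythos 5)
There are two genuine gaps. First, in your descent step you assert that the twist of $G\to G/H$ by a right $H$\nobreakdash-torsor $P$ over~$F$ is ``again a connected linear algebraic group (an inner form of $G$)''. It is not: twisting by the \emph{translation} action of $H$ on $G$ produces ${}_PG$, which is a right $G$\nobreakdash-torsor over~$F$ (this is exactly the construction recalled in \S\ref{torsors}), and a nontrivial $G$\nobreakdash-torsor over $F=\R(B)$ is neither a group nor isomorphic to~$G$ (such torsors exist, e.g.\ for $G=\mathrm{PGL}_2$, coming from quaternion algebras ramified at real points of~$B$). So Proposition~\ref{group} does not apply to the twists as you claim; one must first prove that \emph{torsors} under connected linear groups satisfy tight approximation, and that step itself needs the Hasse-principle mechanism below to trivialize the torsor before Proposition~\ref{group} can be used.

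Second, your route to the pointed case does not work as stated. A homogeneous space over~$F$ can genuinely fail to have $F_b$\nobreakdash-points at real places (and then $X(F)=\emptyset$), so no d\'evissage of the stabilizer and no appeal to ``no obstruction over the completions'' can manufacture a rational point; moreover the d\'evissage you sketch is left entirely unexecuted and is in fact unnecessary, since Scheiderer's Hasse principle and Theorem~\ref{descent} both allow arbitrary (non-connected) stabilizers. The missing idea is the paper's opening reduction: the tight approximation property is vacuous unless the chosen model admits a ($\ci$, equivalently $G$\nobreakdash-equivariant holomorphic) section over a neighbourhood of $B(\R)$, so one may assume such a section exists; it yields $F_b$\nobreakdash-points at all real~$b$, hence, via the identification of the orderings of~$F$ with points of $B(\R)$ (Proposition~\ref{spectrereel} — note Scheiderer's statement concerns real closures, not the completions $F_b$, so this step is not cosmetic), points of~$X$ in every real closure, and then Scheiderer's Hasse principle gives $X(F)\neq\emptyset$. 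With that in place, the argument is exactly: write $X=S\backslash H$ using the rational point, apply Theorem~\ref{descent} to the left $S$\nobreakdash-torsor $H\to S\backslash H$, and handle each twist ${}_PH$ (a right $H$\nobreakdash-torsor) by the same vacuous-or-pointed dichotomy, after which it is isomorphic to~$H$ and Proposition~\ref{group} applies.
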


\begin{proof}
Let $f: \kX\to B$ be a proper regular model of such a (right) homogeneous space~$X$. In order to prove the tight approximation property, we may assume that there exists a $\ci$ section $u: B(\R)\to\kX(\R)$. It follows that $\kX_{\eta}$ has an $F_b$-point for every $b\in B(\R)$, hence that so does $X$,
by the inverse function theorem.
 The description of the orderings of $F$ (apply Proposition \ref{spectrereel} with $Z=B(\C)$)
shows that~$X$ has a point in every real closure of~$F$. By Scheiderer's Hasse principle \cite[Corollary 6.2]{Scheiderer}, one therefore has $X(F)\neq\varnothing$.

If $X$ is a right torsor under a connected linear algebraic group over $F$, then it is isomorphic to this algebraic group since $X(F)\neq\varnothing$, and Proposition \ref{group} applies.

In general, $X=S\backslash H$ for a connected linear algebraic group $H$ over $F$ and an algebraic subgroup $S\subset H$.
We consider the quotient map $g: H\to S\backslash H$ (see~\S\ref{torsors}). The variety $H$ is a left
 $S$-torsor over $S\backslash H$. Let $_{P}H\to S\backslash H$ be a twist of this $S$\nobreakdash-torsor by a
right $S$-torsor $P$ over $F$.
The variety $_{P}H$ is a right $H$-torsor over $F$ (see~\S\ref{torsors}),
 hence satisfies the tight approximation property.
Theorem \ref{descent} now shows that $X$ satisfies the tight approximation property.
\end{proof}

In practice, one may combine Theorem \ref{homogeneous} with Theorem \ref{up} to show the tight approximation property for smooth varieties over $F$ that are, birationally, iterated fibrations into homogeneous spaces of connected linear algebraic groups.
Another consequence of Theorem \ref{homogeneous} is:

\begin{cor}
Let $H$ be a (not necessarily connected) reductive group over $F$ acting linearly on $\A^d_F$ with trivial generic geometric stabilizer. Then the smooth locus of $\A^d_F/H$ satisfies the tight approximation property.
\end{cor}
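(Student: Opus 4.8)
The plan is to deduce the statement directly from the descent theorem (Theorem~\ref{descent}), combined with the fact that affine space satisfies the tight approximation property (Corollary~\ref{rat}); the varieties to which descent is applied will turn out to be twisted forms of open subsets of~$\A^d_F$.

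First I would reduce to a convenient birational model. Since~$H$ is reductive, the formation of invariants commutes with the base change $F\subset\overline F$, so $\A^d_F/H$ is a geometrically integral affine variety, and in characteristic~$0$ its smooth locus is a dense open subset. As~$H$ has trivial generic geometric stabilizer, there is an $H$-stable dense open subset of~$\A^d_F$ on which~$H$ acts freely; by Rosenlicht's theorem the quotient map $\pi\colon\A^d_F\to\A^d_F/H$ therefore restricts, over a suitable dense open subset~$V$ of~$\A^d_F/H$, to an $H$-torsor $Q:=\pi^{-1}(V)\to V$. After shrinking~$V$ I may assume it is smooth (hence a dense open subset of the smooth locus of $\A^d_F/H$), and, replacing the $H$-action by its opposite if necessary, that $Q\to V$ is a left $H$-torsor in the sense of~\S\ref{torsors}. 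Since the tight approximation property is a birational invariant of smooth varieties over~$F$ (Definition~\ref{deftight2}, using Theorem~\ref{birinv}), it suffices to establish it for~$V$.

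Next I would check the hypothesis of Theorem~\ref{descent} for the linear algebraic group $S:=H$ — which need not be connected, as Theorem~\ref{descent} permits — and the left $S$-torsor $Q\to V$. Let~$P$ be any right $S$-torsor over~$F$, and let ${}_PQ$ be the corresponding twist. Because the $H$-action on~$\A^d_F$ is linear, the twist ${}_P(\A^d_F)$ is again isomorphic to~$\A^d_F$: it is the affine space attached to the $F$-vector space obtained by twisting the defining representation, which is still of dimension~$d$ since $F$-vector spaces of a given finite dimension admit no non-trivial forms. As~$Q$ is an $H$-stable dense open subvariety of~$\A^d_F$, the twist ${}_PQ$ is a dense open subvariety of ${}_P(\A^d_F)\cong\A^d_F$; in particular ${}_PQ$ is smooth and $F$-rational, so it satisfies the tight approximation property by Corollary~\ref{rat}. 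Theorem~\ref{descent} then shows that~$V$, and hence the smooth locus of~$\A^d_F/H$, satisfies the tight approximation property.

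The only step requiring genuine care is the identification ${}_P(\A^d_F)\cong\A^d_F$, valid for every right $S$-torsor~$P$, together with the left–right bookkeeping needed to invoke Theorem~\ref{descent} as stated; everything else is formal. Alternatively, one could bypass descent and argue via Theorem~\ref{homogeneous}: embedding $H\hookrightarrow\GL_{n,F}$ and letting~$H$ act on $\mathrm{Mat}_{n,F}\cong\A^{n^2}_F$ by left translation, the no-name lemma shows that $\A^d_F/H$ is stably birational to $\mathrm{Mat}_{n,F}/H$, which is birational to the homogeneous space $H\backslash\GL_{n,F}$ of the connected group $\GL_{n,F}$; the stable birational invariance of the tight approximation property (Proposition~\ref{stableinv}) together with Theorem~\ref{homogeneous} then conclude.
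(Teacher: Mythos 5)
Your closing remark is, in fact, the paper's own proof: one fixes an embedding $H\subset\GL_N$, quotes \cite[Corollary~3.11]{noname} to the effect that the smooth locus of $\A^d_F/H$ is stably birational to $\GL_N/H$, and concludes by Theorem~\ref{homogeneous} and Proposition~\ref{stableinv}. Your main, descent-based route is a genuinely different attempt, and its twisting mechanism is sound: for a right $H$\nobreakdash-torsor $P$ over $F$ the twist ${}_P(\A^d_F)$ of the representation is again $\A^d_F$ (Galois descent for vector spaces), the twist of an $H$\nobreakdash-stable dense open subset is a dense open subset of it, hence smooth and $F$\nobreakdash-rational, so Corollary~\ref{rat} applies and Theorem~\ref{descent} would then give the conclusion for the base of the torsor. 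In effect this reproves the no-name mechanism on the fly, which is attractive.

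The genuine gap is the step ``by Rosenlicht's theorem the quotient map $\pi\colon\A^d_F\to\A^d_F/H$ restricts, over a suitable dense open subset~$V$ of $\A^d_F/H$, to an $H$\nobreakdash-torsor''. With the reading of $\A^d_F/H$ that you yourself adopt (the affine quotient $\Spec(F[x_1,\dots,x_d]^H)$, as your appeal to reductivity and to base change of invariants indicates), this fails in general: trivial generic geometric stabilizer does not force generic orbits to be closed once~$H$ contains a central torus. For $H=\Gm$ acting on $\A^2_F$ by scaling, the generic stabilizer is trivial but the invariant ring is~$F$, so $\A^2_F/\Gm$ is a point and no such~$V$ exists; Rosenlicht's theorem does provide a geometric quotient of a dense $H$\nobreakdash-stable open subset of $\A^d_F$ (here $\A^2_F\setminus\{0\}\to\P^1_F$, which is indeed a $\Gm$\nobreakdash-torsor), but that quotient is not an open subvariety of $\A^d_F/H$ and need not even have the same dimension. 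So what your descent argument establishes is the tight approximation property for a smooth model of the Rosenlicht quotient of the free locus; identifying this, up to stable birational equivalence, with the smooth locus of $\A^d_F/H$ as in the statement is exactly the point the paper outsources to \cite[Corollary~3.11]{noname}, and your main route does not bypass it. (If one knows in addition that the generic orbit is closed --- e.g.\ for~$H$ finite, or~$H$ semisimple with trivial generic stabilizer --- then~$\pi$ is generically an $H$\nobreakdash-torsor over its image and your descent argument is complete as written.)
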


\begin{proof}
Let $H\subset \GL_N$ be a closed embedding of algebraic groups.
By \cite[Corollary 3.11]{noname}, the smooth locus of $\A^d_F/H$ is stably birational to $\GL_N/H$. It thus satisfies the tight approximation property by Theorem \ref{homogeneous} and Proposition \ref{stableinv}.
\end{proof}

\subsection{No reciprocity obstruction}
\label{subsec:noreciprocity}

The reciprocity obstruction (see \S\ref{parapprox}) causes the weak
approximation property to fail for some smooth proper rationally connected
varieties over $F$, even for some with an $F$\nobreakdash-point.  The next
proposition, which we extract from Scheiderer's work \cite{Scheiderer},
shows, in view of Proposition~\ref{weakconnected}, that such a phenomenon
cannot occur for smooth compactifications of homogeneous spaces of
connected linear algebraic groups over~$F$.
For its later use in~\textsection\ref{subsec:wahom}, we state it over an
arbitrary real closed field~$R$; we recall that when $R=\R$, the connectedness of a
semi-algebraic space over~$R$ is equivalent to the connectedness of the
underlying set for the usual topology (see \cite[Theorem~2.4.5]{bcr}).

\begin{prop}
\label{connectedhomo}
Let $X$ be a homogeneous space of a connected linear algebraic group over a real closed field $R$ and let $Y$ be a smooth compactification of $X$. Then the semi-algebraic space $Y(R)$ is connected.
\end{prop}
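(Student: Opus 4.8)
The plan is to reduce to the case where $X$ itself has an $R$\nobreakdash-point, and then to the case of the algebraic group $H$ acting on itself. First I would observe that if $X(R) = \emptyset$ then $Y(R)$ could a priori be non-empty, but in fact one can rule this out: since $X$ is a homogeneous space of a connected linear algebraic group over a real closed field, Scheiderer's Hasse principle (the analogue of \cite[Corollary~6.2]{Scheiderer} over an arbitrary real closed field, as used in the proof of Theorem~\ref{intro:thmE}) shows that $X(R) \neq \emptyset$ whenever $X$ has points in every real closure of the relevant field; but here the ground field is $R$ itself, already real closed, so $X$ has an $R$\nobreakdash-point as soon as it has a point in \emph{some} real closed overfield---and any $R$\nobreakdash-point of the smooth compactification $Y$ that lies in the open locus $X$ provides one. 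If $Y(R) = \emptyset$ there is nothing to prove; otherwise, since $X \subset Y$ is dense open and $Y$ is smooth, $Y(R)$ is a smooth semi-algebraic manifold and $X(R)$ is dense in it for the strong topology (a smooth point of $Y(R)$ has a semi-algebraic neighbourhood homeomorphic to a ball, in which the complement of the proper closed subvariety $Y \setminus X$ is dense). Hence $Y(R)$ is connected if and only if $X(R)$ is connected, and $X(R) \neq \emptyset$.

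Next I would reduce to a group. Write $X = S \backslash H$ for $H$ a connected linear algebraic group over $R$ and $S \subset H$ a closed subgroup, and let $g \colon H \to X$ be the quotient map, which makes $H$ a left $S$\nobreakdash-torsor over $X$. Pick $x_0 \in X(R)$ and let $P = g^{-1}(x_0)$, a right $S$\nobreakdash-torsor over $R$. The twisted variety ${}_P H$ is a right $H$\nobreakdash-torsor over $R$ (as in \S\ref{torsors}), and since $X(R) \neq \emptyset$ we get a surjection on real points: more precisely, the fibres of $g(R) \colon H(R) \to X(R)$ over $R$\nobreakdash-points of $X$ are torsors under forms of $S$, and the key point is that $g(R)$ is \emph{semi-algebraically open with connected image of each connected component}---this is where I would invoke the structure theory of real algebraic groups. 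Concretely: for a connected linear algebraic group $H$ over $R$, the group $H(R)$ has finitely many connected components (in the semi-algebraic, equivalently Euclidean when $R = \R$, topology), and the number of components of $(S\backslash H)(R)$ is controlled by the map of component groups together with $\ker\bigl(H^1(R,S) \to H^1(R,H)\bigr)$. The cleanest route is to show directly that $X(R)$ is the image of the \emph{identity component} $H(R)^\circ$ under $g(R)$: since $X(R) \neq \emptyset$, every $R$\nobreakdash-point of $X$ lifts after a twist, and one checks using Galois cohomology over the real closed field $R$ (where $H^1(R,-)$ behaves exactly as over $\R$) that all such twists become trivial upon restriction along $H(R)^\circ$, because $H(R)^\circ$ is divisible-enough / the relevant obstruction lies in a group killed by the connectedness of $H$. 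Granting this, $X(R) = g(R)(H(R)^\circ)$ is the continuous image of a connected set, hence connected.

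The main obstacle, and the step I expect to require genuine care, is precisely the claim that $X(R)$ is connected given that $X(R) \neq \emptyset$, i.e.\ the control of $\ker\bigl(H^1(R,S)\to H^1(R,H)\bigr)$ and of the component groups. This is exactly the content that Scheiderer's paper supplies over real closed fields, and I would extract it from there rather than reprove it: the sheaf-theoretic formalism of $\sH^1(S)$ on $\Sper(R)$ (a one-point space when the base is a real closed field) together with \cite[Theorem~4.1, Corollary~6.6]{Scheiderer} and the connectedness results for real points of connected groups (compare the use of Ehresmann's theorem and connectedness of fibres in Proposition~\ref{weakconnected}) give that $g(R) \colon H(R) \to X(R)$ restricted to a connected component of $H(R)$ already surjects onto all of $X(R)$. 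Once that is in hand, connectedness of $Y(R)$ follows by the density argument of the first paragraph. Thus the proof is: (1) density reduces $Y(R)$ to $X(R)$; (2) nonemptiness of $X(R)$ via the Hasse principle or directly; (3) $X(R)$ is the image of a connected component of $H(R)$ under the quotient map, using Scheiderer's results on Galois cohomology over real closed fields; hence $X(R)$, and therefore $Y(R)$, is connected.
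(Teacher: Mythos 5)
There is a genuine gap, and it is fatal to the strategy rather than a fixable detail. Your plan is to prove that $X(R)$ itself is connected (as the image of the identity component of $H(R)$ under the quotient map) and then transfer connectedness to $Y(R)$ by density. But $X(R)$ is in general \emph{not} connected, and your asserted equivalence ``$Y(R)$ is connected if and only if $X(R)$ is connected'' is false in the relevant direction: a dense open subset of a connected space can be disconnected. The simplest counterexample already defeats step (3): take $X=H=\Gm$ acting on itself, so $X(R)=R^*$ has two semi-algebraic components, while the smooth compactification $Y=\P^1_R$ has $\P^1(R)$ connected. Here $g(R)\bigl(H(R)^\circ\bigr)=R_{>0}\neq X(R)$, so the claim that every $R$\nobreakdash-point of $X$ is hit by the identity component of $H(R)$ fails, and no amount of control on $\ker\bigl(H^1(R,S)\to H^1(R,H)\bigr)$ can repair it, since the proposition is precisely about $Y(R)$ being connected \emph{despite} $X(R)$ being disconnected. (The appeal to the Hasse principle in your first paragraph is also not needed: if $Y(R)\neq\emptyset$, then $X(R)\neq\emptyset$ simply because the components of $Y(R)$ are Zariski dense in the smooth connected $Y$; and over a general real closed field one cannot invoke Euclidean arguments such as Ehresmann's theorem—everything must be done semi-algebraically.)

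The missing idea is how to join two points $x_1,x_2\in X(R)$ lying in different components of $X(R)$ \emph{inside} $Y(R)$, i.e.\ through the boundary. The paper does this with a rational curve produced by Scheiderer's strong result over $R(t)$: the constant sections at $x_1$ and $x_2$ define elements of $H^0(\Sper(R(t)),\sC_{X_{R(t)}})$, and one glues them along the partition of $\Sper(R(t))$ into the orderings with $t>0$ and those with $t<0$; by \cite[Theorem~8.15]{Scheiderer} this glued section comes from an actual point $x\in X(R(t))$, which by properness of $Y$ extends to a morphism $f:\P^1_R\to Y$ with $f(t)$ in the component of $x_1$ for $t>0$ and in the component of $x_2$ for $t<0$. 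Connectedness of $\P^1(R)$ then forces $x_1$ and $x_2$ into the same component of $Y(R)$. Your proposal never produces such a curve (or any substitute mechanism crossing $Y\setminus X$), so the argument does not establish the proposition.
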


\begin{proof}
We view $X(R)$, $Y(R)$ and $\P^1(R)$ as semi-algebraic spaces over~$R$.
Since $Y$ is smooth and connected, all the connected components of $Y(R)$ are Zariski dense
(by \cite[Proposition~3.3.10 and Corollary~2.9.8]{bcr}),
hence meet $X(R)$. It thus suffices to show that any two points $x_1,x_2\in X(R)$ belong to the same connected component of $Y(R)$.
For $i\in\{1,2\}$, the morphism $\Spec(R(t))\to X_{R(t)}$ with value $x_i$ induces a map $\Sper(R(t))\to (X_{R(t)})_r$ at the level of real spectra. This map induces, in turn, a section $\sigma_i\in H^0(\Sper(R(t)),\sC_{X_{R(t)}})$, where we use the notation of \cite[Theorem and Definition 2.1]{Scheiderer} for $k=R(t)$.
Consider the partition $\Sper(R(t))=Z_1\cup Z_2$ where $Z_1$ (resp.\ $Z_2$) is the open subset of orderings for which $t>0$ (resp.\ $t<0$). Let $\sigma\in H^0(\Sper(R(t)),\sC_{X_{R(t)}})$ be the section that coincides with $\sigma_i$ on $Z_i$. By \cite[Theorem 8.15]{Scheiderer}, the section $\sigma$ is induced by some $x\in X(R(t))\subset Y(R(t))$. The point $x$ gives rise to a morphism $f:\P^1_{R}\to Y$ by the valuative criterion of properness. By construction, $f(t)$ belongs to the same connected component of $Y(R)$ as $x_1$ (resp.\ as $x_2$) if $t>0$ (resp.\ if $t<0$). Since~$\P^1(R)$ is connected, $x_1$ and $x_2$ lie in the same connected component of $Y(R)$.
\end{proof}

Combining
Theorem~\ref{homogeneous},
Corollary~\ref{tightweak},
Proposition~\ref{weakconnected} and Proposition~\ref{connectedhomo}
yields the following result concerning the weak approximation property itself.

\begin{thm}
\label{weakhomoreal}
Any homogeneous space of a connected linear algebraic group over~$F$ satisfies the weak approximation property.
\end{thm}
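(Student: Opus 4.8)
The plan is to derive Theorem~\ref{weakhomoreal} from Theorem~\ref{homogeneous} by feeding it into the three auxiliary statements already available: Corollary~\ref{tightweak}, Proposition~\ref{weakconnected} and Proposition~\ref{connectedhomo}. Let $X$ be a homogeneous space of a connected linear algebraic group~$G$ over~$F$; it is smooth, since we work in characteristic~$0$. Because the weak approximation property is a birational invariant of smooth varieties over~$F$ (as recalled in the introduction), it suffices to prove it for a smooth compactification of~$X$. I would therefore fix a proper regular model $f:\kX\to B$ of~$X$ and set $Y:=\kX_\eta$, a smooth compactification of~$X$; proving the weak approximation property for~$f$, which by definition means for~$Y$, will then give it for~$X$ by birational invariance.

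The first point is that $f$ satisfies the tight approximation property. Its generic fibre~$Y$ is birationally equivalent to~$X$, which satisfies the tight approximation property by Theorem~\ref{homogeneous}, and this property depends only on the birational class of the generic fibre by Theorem~\ref{birinv} and Definition~\ref{deftight2}. Corollary~\ref{tightweak} then shows that the reciprocity obstruction is the only obstruction to the validity of the weak approximation property for~$f$.

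The second point, needed in order to apply Proposition~\ref{weakconnected}, is the existence of a dense open $U\subset B$ such that $\kX_b(\R)$ is connected for every $b\in U(\R)$. I would obtain this by spreading out. Over a suitable dense open $U\subset B$ there exist a smooth affine group scheme $\mathcal{G}\to U$ with geometrically connected fibres and generic fibre~$G$, together with an open subscheme $\mathcal{X}\subset\kX|_U$, smooth over~$U$, carrying a $\mathcal{G}$-action whose geometric fibres are homogeneous spaces, and with generic fibre~$X$; after shrinking~$U$ further I may also assume that~$f$ is smooth over~$U$ and that $\mathcal{X}_b$ is dense in the smooth proper $\R$-variety~$\kX_b$ for all $b\in U$. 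Then, for $b\in U(\R)$, the fibre~$\kX_b$ is a smooth compactification of the homogeneous space~$\mathcal{X}_b$ of the connected linear algebraic group~$\mathcal{G}_b$ over~$\R$, so $\kX_b(\R)$ is connected by Proposition~\ref{connectedhomo} applied with $R=\R$ (using the equivalence, recalled there, between semi-algebraic connectedness and connectedness for the Euclidean topology). Proposition~\ref{weakconnected} now yields the weak approximation property for~$f$, which completes the proof.

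The step I expect to require the most care is the spreading-out argument: one has to ensure that~$\mathcal{G}$ can be chosen with connected fibres, that the action stays fibrewise transitive over a dense open, and that the open subscheme~$\mathcal{X}$ stays fibrewise dense after every shrinking of~$U$. All of these follow from the standard principle that a property holding at the generic fibre and defined by a constructible condition holds on the fibres over a dense open subset of~$B$, applied respectively to geometric connectedness of the fibres of~$\mathcal{G}$, to surjectivity of the map $\mathcal{G}\times_U\mathcal{X}\to\mathcal{X}\times_U\mathcal{X}$ given by $(g,x)\mapsto(gx,x)$, and to density of~$\mathcal{X}$ in the fibres of~$f$. Beyond that, the proof is a purely formal combination of Theorems~\ref{homogeneous} and~\ref{birinv}, Corollary~\ref{tightweak}, and Propositions~\ref{weakconnected} and~\ref{connectedhomo}.
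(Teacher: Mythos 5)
Your proposal is correct and follows essentially the paper's own route: the paper proves Theorem~\ref{weakhomoreal} precisely by combining Theorem~\ref{homogeneous}, Corollary~\ref{tightweak}, Proposition~\ref{weakconnected} and Proposition~\ref{connectedhomo}, exactly as you do, with the spreading-out step (producing a dense open $U\subset B$ over which the fibres are smooth compactifications of homogeneous spaces of connected linear groups over~$\R$) left implicit in the paper and spelled out, correctly, in your write-up.
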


Theorem~\ref{weakhomoreal} solves
a conjecture of Colliot-Th\'el\`ene \cite[p.~151]{CTgroupes},
who had dealt with the case of
connected linear algebraic groups themselves \cite[Th\'eor\`eme~2.1]{CTgroupes}.
This conjecture had previously been shown to hold in the
case where the stabilizers of the homogeneous space are connected,
by Scheiderer \cite[Corollary~6.2 and Theorem~8.9]{Scheiderer},
and in the case where~$B$ is a complex curve, by Colliot-Th\'el\`ene and Gille \cite[Th\'eor\`eme~4.3]{CTG}.

\subsection{Real closed ground fields}
\label{subsec:wahom}

Unlike the tight approximation property, the weak
approximation property makes sense for varieties defined over the
function field of a curve over an arbitrary real closed field.  We now adapt our arguments
to establish Colliot-Thélène's conjecture over such function fields (Theorem~\ref{weakhomo} below).

One of the ingredients in the proof of Theorem~\ref{homogeneous} we gave was
Riemann's existence theorem, a statement about complex algebraic curves,
first used in this context
by Colliot-Th\'el\`ene and
Gille \cite{CTG}.
Here we need a version of Riemann's existence theorem
for algebraic curves defined over
the algebraic closure of an arbitrary real closed field.
Such a version was originally proved by Huber
and was reproved, more recently, by Peterzil and Starchenko.  This is
discussed in~\textsection\ref{subsubsec:retrealclosed}.
In~\textsection\ref{subsubsec:existencetorsors} we establish Proposition~\ref{prop:horrible},
which will serve as a substitute for Proposition~\ref{liftH1}.
In~\textsection\ref{subsubsec:descentwastronghp} we
apply
Proposition~\ref{prop:horrible} to obtain a general descent theorem for the conjunction
of the weak approximation property and of the so-called strong Hasse principle
(Theorem~\ref{th:wadescent}).
Finally, we deduce weak approximation for arbitrary homogeneous spaces
from Scheiderer's work, by descent,
in~\textsection\ref{subsubsec:ctconjrealclosed}.

From now on and until the end of~\textsection\ref{subsec:wahom},
we fix a real closed field~$R$ and a smooth projective connected curve~$B$ over~$R$.
We denote by~$F$ the function field of~$B$ and by~$F_b$ the completion of~$F$ with respect
to a closed point $b \in B$.  Finally, we set $C=R(\sqrt{-1})$ and $G=\Gal(C/R)$.

\subsubsection{Riemann's existence theorem over algebraically closed fields of characteristic~$0$}
\label{subsubsec:retrealclosed}

By definition, a \emph{finite semi-algebraic covering} of a locally complete semi-algebraic space~$T$ over~$R$
is a locally complete
semi-algebraic space~$S$ equipped with a
semi-algebraic map $p:S \to T$ with finite
fibres, subject to the following condition: $T$ can be covered by finitely many open semi-algebraic subsets~$U$
such that $p^{-1}(U)$ is semi-algebraically isomorphic, over~$U$, to $U \times E$ for a finite
set~$E$.

If~$V$, $W$ are (algebraic) varieties over~$C$, we consider~$V(C)$, $W(C)$ as semi-algebraic spaces over~$R$.
We note that any finite \'etale covering $W\to V$ functorially induces
a finite semi-algebraic covering $W(C)\to V(C)$
(see \cite[Example~5.5]{delfsknebuschintrolocallysemialg}).
Riemann's existence theorem over~$C$ can now be stated as follows.

\newcommand{\citehuberknebusch}{\cite[Satz~12.12]{huberthesis}, \cite[Theorem~6.1]{huberknebuschglimpse}}
\begin{thm}[Huber \citehuberknebusch]
\label{th:huberknebuschpeterzilstarchenko}
Let~$V$ be a variety over~$C$.
The functor $(W\to V) \mapsto (W(C)\to V(C))$ from the category of finite \'etale coverings of~$V$
to the category of finite semi-algebraic coverings of~$V(C)$ is an equivalence of categories.
\end{thm}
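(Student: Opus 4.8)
The plan is to reduce the statement to the classical Riemann existence theorem over~$\C$, either by a transfer argument based on the completeness of the theory of real closed fields (a route close in spirit to the o-minimal methods of Peterzil and Starchenko), or, more geometrically, by comparing fundamental groups; the two routes share their only real difficulty, which I point out at the end. In both cases one first reduces to bounded degree: a finite semialgebraic covering has a well-defined degree (the cardinality of a fibre, which is locally, hence on each semialgebraically connected component, constant), a morphism of varieties is finite étale of bounded degree precisely when it is given by an étale $\mathcal{O}_V$-algebra of bounded rank, and every object and every morphism of the two categories lives in the full subcategory of objects of degree $\leq d$ for some~$d$; so it suffices to prove the equivalence on each such subcategory and then let $d\to\infty$.

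For the transfer argument, fix~$d$. Finite étale coverings of~$V$ of degree $\leq d$ are parametrised by a scheme of finite type over~$C$ (multiplication structure constants on the members of an affine cover, with associativity and invertibility of the discriminant imposed, plus gluing data). On the other side, uniform semialgebraic triviality \cite{bcr} lets one realise every finite semialgebraic covering of the fixed space $V(C)$ of degree $\leq d$ inside $V(C)\times R^{N}$ with~$N$ and the complexity of the defining data bounded in terms of~$d$ and of~$V(C)$, so that these coverings too form a semialgebraic family over a semialgebraic parameter space. Consequently ``the comparison functor is faithful / full / essentially surjective'' become, for fixed~$d$, first-order assertions in the language of ordered rings with the coefficients defining~$V$ as parameters; quantifying over those parameters turns each into a sentence (one for each combinatorial type of presentation), and since $\R$ and~$R$ are elementarily equivalent it suffices to prove these for $(R,C)=(\R,\C)$, where they reduce to the classical Riemann existence theorem together with the identification of finite semialgebraic coverings of a complex variety with finite topological coverings of its underlying space.

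The geometric route isolates full faithfulness first: a $V$\nobreakdash-morphism between finite étale coverings $W_1,W_2\to V$ is the same as a section of the finite étale morphism $W_1\times_V W_2\to W_1$, i.e.\ a connected component of $W_1\times_V W_2$ mapping isomorphically onto~$W_1$, and likewise with semialgebraically connected components over~$C$; so full faithfulness follows from the facts that a variety over~$C$ is connected iff its set of $C$\nobreakdash-points is semialgebraically connected, and that a finite étale morphism over~$C$ is an isomorphism iff it is one semialgebraically on $C$\nobreakdash-points (see \cite{delfsknebuschintrolocallysemialg}, or again transfer from~$\C$). The remaining point, essential surjectivity, amounts to identifying the Galois category of finite semialgebraic coverings of $V(C)$ with that of finite continuous sets under the profinite completion of $\pi_1^{\mathrm{sa}}(V(C))$, and matching this group with $\pi_1^{\et}(V)$ compatibly with fibre functors. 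For the comparison one writes~$V$ over a real closed subfield $R_0\subset R$ embeddable into~$\R$: then $\pi_1^{\mathrm{sa}}(V(C))$ is insensitive to real closed base change, so it equals its value over $R_0(\sqrt{-1})$ and hence over~$\C$, where the semialgebraic and topological $\pi_1$ agree; on the algebraic side $\pi_1^{\et}(V)$ is insensitive (in characteristic~$0$) to extension of algebraically closed fields, so it equals $\pi_1^{\et}(V_{\C})$, which by Grothendieck's theorem is the profinite completion of the topological fundamental group of $V(\C)$.

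The main obstacle, common to both approaches, is the semialgebraic input needed to know that finite semialgebraic coverings of a fixed semialgebraic space, of fixed degree, really form a bounded semialgebraic family and that ``covering'', ``étale'' and the comparison functor are all uniformly definable; once this \emph{uniform finiteness} is in hand, the transfer principle and the complex case finish the proof mechanically. Huber's original argument~\cite{huberthesis} sidesteps the model theory altogether, developing semialgebraic covering theory and a semialgebraic GAGA over~$C$ directly, proving the existence theorem for curves first and then fibring in curves to reach arbitrary dimension; there the difficulty is displaced into the semialgebraic analytic continuation estimates that promote a semialgebraic covering to an algebraic one.
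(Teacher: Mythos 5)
This statement is not proved in the paper: it is quoted from Huber's thesis \cite{huberthesis} (see also Huber--Knebusch), and Remark~\ref{rem:semialgpi1}~(ii) notes that it can alternatively be deduced from Peterzil--Starchenko \cite{peterzilstarchenko}. So there is no in-paper argument to compare with, and your proposal must be judged on its own. Of your two routes, the ``geometric'' one comes closest to a complete proof: reducing full faithfulness to a comparison of connected components via sections of $W_1\times_V W_2\to W_1$ is fine, and essential surjectivity does follow from the Delfs--Knebusch Galois correspondence for finite semi-algebraic coverings (the ingredient recalled in Remark~\ref{rem:semialgpi1}~(i)), the invariance of semi-algebraic covering theory under extension of the real closed base field (a published Delfs--Knebusch result, of the same nature as the base-change statement \cite[Theorem~4.1]{delfsknebuschintrolocallysemialg} used in the proof of Proposition~\ref{prop:horrible}), the agreement of semi-algebraic and topological coverings over~$\R$, and \cite{SGA1} (invariance of $\pi_1^{\et}$ under extension of algebraically closed fields of characteristic~$0$, plus the classical Riemann existence theorem over~$\C$). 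One step you should make explicit: an equivalence of categories needs more than abstract isomorphisms of fundamental groups; you must check that the comparison functor $W\mapsto W(C)$ commutes with the base-change equivalences (descent of a given \'etale cover, resp.\ semi-algebraic covering, to the countable real closed subfield~$R_0$, and naturality of analytification under these base changes), which is where the compatibility of fibre functors actually lives.

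The transfer route, by contrast, has a genuine gap which you flag but do not close: to make faithfulness, fullness and above all essential surjectivity first-order, you need uniform bounds --- that degree-$\leq d$ finite \'etale covers of all varieties of bounded presentation complexity form a constructible family, that degree-$\leq d$ semi-algebraic coverings of $V(C)$, and the isomorphisms between them, can be realized with complexity bounded in terms of~$d$ and of the data defining~$V$, and that the comparison functor is uniformly definable. These uniformity statements (in effect, uniform finite presentation of fundamental groups in families) are where all the content sits; without them, elementary equivalence of $\R$ and~$R$ transfers nothing. Finally, your closing claim that the same obstacle underlies both approaches is inaccurate: the geometric route replaces it by the Delfs--Knebusch base-field-extension invariance, which is available in the literature, so that route does not inherit the gap.
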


\begin{rmks}
\label{rem:semialgpi1}
(i) Given a connected locally complete semi-algebraic space~$T$ over~$R$ and $t \in T$,
Delfs and Knebusch have defined the semi-algebraic fundamental group $\pi_1(T,t)$
and have shown that the ``fiber at~$t$'' functor induces an equivalence from the category
of finite semi-algebraic coverings of~$T$ to the category of finite sets endowed with an action
of~$\pi_1(T,t)$
(see \cite[Theorems~5.9, 5.10, 5.11]{delfsknebuschintrolocallysemialg}).
Thus, Theorem~\ref{th:huberknebuschpeterzilstarchenko}
can be reformulated, when~$V$ is connected, by saying that
for any $v \in V(C)$,
the \'etale fundamental group $\pi_1^\et(V,v)$ is naturally isomorphic
to the profinite completion
of the (finitely generated) semi-algebraic fundamental group $\pi_1(V(C),v)$.

(ii) A proof of Theorem~\ref{th:huberknebuschpeterzilstarchenko} (in a more general o-minimal setting) can also be deduced from the work of Peterzil and Starchenko~\cite{peterzilstarchenko}.
\end{rmks}

The precise statements that we shall need are Corollary~\ref{cor:huberknebuschpeterzilstarchenko} and
Remark~\ref{rmk:geqdict} below.  Before stating them we introduce a definition.

\begin{defn}
\label{def:semialgequivtorsor}
Given a variety~$V$ over~$R$,
a finite \'etale group scheme $\kS \to V$
and a semi-algebraic subset $T \subseteq V(C)$ stable under~$G$,
a \emph{semi-algebraic $G$\nobreakdash-equivariant right $\kS(C)$\nobreakdash-torsor
over~$T$} is by definition a finite semi-algebraic covering $p:S\to T$
endowed, on the one hand,
 with a semi-algebraic map
$m:S \times_{V(C)} \kS(C) \to S$ inducing a simply transitive right action of the
set-theoretic fibres of $\kS(C) \to V(C)$ above~$T$ on the set-theoretic fibres of~$p$,
and, on the other hand, with a semi-algebraic action of~$G$ on~$S$ such that~$p$
and~$m$ are $G$\nobreakdash-equivariant.
\end{defn}

\begin{cor}
\label{cor:huberknebuschpeterzilstarchenko}
Let~$\kS$ be a finite \'etale group scheme over
a variety~$V$ over~$R$.
The functor $W \mapsto W(C)$ from the category of right $\kS$\nobreakdash-torsors
over~$V$ to the category of semi-algebraic $G$\nobreakdash-equivariant right $\kS(C)$\nobreakdash-torsors
over~$V(C)$
is an equivalence of categories.
\end{cor}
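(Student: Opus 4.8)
The plan is to deduce the Corollary from Theorem~\ref{th:huberknebuschpeterzilstarchenko} by combining the equivalence of categories it furnishes at the level of finite coverings with Galois descent along the quadratic extension~$C/R$. Write $V_C=V\times_R C$, so that $V_C(C)=V(C)$ as a semi-algebraic space over~$R$, and let~$E$ be the functor $W\mapsto W(C)$ from the category of finite \'etale coverings of~$V_C$ to the category of finite semi-algebraic coverings of~$V(C)$, which is an equivalence by Theorem~\ref{th:huberknebuschpeterzilstarchenko}. As an equivalence, $E$ preserves fibre products over the base and the terminal object (note that fibre products of finite semi-algebraic coverings are finite semi-algebraic coverings), it carries the fibre of~$W$ over a point $v\in V(C)$ to the fibre of~$W(C)$ over~$v$, and it respects surjectivity of structure morphisms, since $W\to V_C$ is surjective if and only if $W(C)\to V(C)$ is, $C$~being algebraically closed.

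First I would upgrade~$E$ to an equivalence between the category of right $\kS_C$\nobreakdash-torsors over~$V_C$ and the category of semi-algebraic right $\kS(C)$\nobreakdash-torsors over~$V(C)$ (in the sense of Definition~\ref{def:semialgequivtorsor}, with $G$\nobreakdash-equivariance dropped): indeed $E(\kS_C)=\kS(C)$ as group objects, and the structure of right $\kS_C$\nobreakdash-torsor on a finite surjective \'etale covering $W\to V_C$ amounts to an action morphism $W\times_{V_C}\kS_C\to W$ making the shear morphism $W\times_{V_C}\kS_C\to W\times_{V_C}W$ an isomorphism, which is a diagrammatic condition transported by~$E$ and by any quasi-inverse. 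Next I would incorporate the action of $G=\Gal(C/R)$. The group~$G$ acts on the source category of~$E$ by the Galois action on finite \'etale coverings of~$V_C$ over~$V$, and on its target category by the conjugation involution of~$V(C)$; since taking $C$\nobreakdash-points commutes with base change along~$\sigma$, the functor~$E$ is $G$\nobreakdash-equivariant. It therefore induces an equivalence between right $\kS_C$\nobreakdash-torsors over~$V_C$ endowed with a $G$\nobreakdash-descent datum (compatible with the canonical one on~$\kS_C$) and semi-algebraic right $\kS(C)$\nobreakdash-torsors over~$V(C)$ endowed with a $G$\nobreakdash-descent datum; here, as~$G$ has order two, a descent datum is the same thing as a $G$\nobreakdash-action lifting the given action on the base.

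To conclude, I would identify both sides of this last equivalence. On the algebraic side, a right $\kS$\nobreakdash-torsor over~$V$ is finite over~$V$, hence $V$\nobreakdash-affine, so Galois descent along~$C/R$ is effective: the category of right $\kS$\nobreakdash-torsors over~$V$ is equivalent to that of right $\kS_C$\nobreakdash-torsors over~$V_C$ equipped with a $G$\nobreakdash-descent datum compatible with that on~$\kS_C$. On the semi-algebraic side, a $G$\nobreakdash-equivariant right $\kS(C)$\nobreakdash-torsor over~$V(C)$ in the sense of Definition~\ref{def:semialgequivtorsor} is, by the last remark of the previous paragraph, precisely a semi-algebraic right $\kS(C)$\nobreakdash-torsor over~$V(C)$ equipped with a $G$\nobreakdash-descent datum. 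Composing the equivalences obtained so far yields the Corollary; unravelling the constructions shows that the composite functor is isomorphic to $W\mapsto W(C)$, since base change to~$C$ carries a right $\kS$\nobreakdash-torsor over~$V$ to the corresponding $\kS_C$\nobreakdash-torsor with its canonical descent datum, which~$E$ then sends to $W(C)$ with the conjugation action.

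The step that I expect to require the most care is the first upgrade: one has to check that ``right $\kS_C$\nobreakdash-torsor'' is cut out among ``finite \'etale coverings with an $\kS_C$\nobreakdash-action'' by conditions that~$E$ respects---namely the shear isomorphism, which is diagrammatic, together with surjectivity of the structure morphism, which is preserved as noted above---and, on the semi-algebraic side, that the torsor axioms are genuinely diagrammatic and that fibre products of finite semi-algebraic coverings behave as expected. The remaining ingredients---$G$\nobreakdash-equivariance of~$E$, effectivity of Galois descent for $V$\nobreakdash-affine schemes, and the bookkeeping identifying a $\Z/2$\nobreakdash-descent datum with a $\Z/2$\nobreakdash-action---are routine.
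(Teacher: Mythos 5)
Your proof is correct and follows essentially the same route as the paper: apply Theorem~\ref{th:huberknebuschpeterzilstarchenko} to~$V_C$, transport the (purely diagrammatic) torsor structure under the fixed group object across the equivalence, and then pass to $G$\nobreakdash-equivariant objects on both sides. You simply make explicit two points the paper's one-sentence proof leaves implicit, namely the effectivity of Galois descent along $C/R$ for these (finite, hence affine) torsors over~$V$ and the identification of $G$\nobreakdash-equivariant semi-algebraic torsors in the sense of Definition~\ref{def:semialgequivtorsor} with torsors equipped with a descent datum.
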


\begin{proof}
Apply Theorem~\ref{th:huberknebuschpeterzilstarchenko} to~$V_C$; note that the two categories
that appear in its statement carry an action of~$G$;
on each side of the equivalence,
pass to the category of torsor objects under a fixed group object, all endowed with a $G$\nobreakdash-equivariant structure (\emph{i.e.}\ with an isomorphism between the object in question and its conjugate, such that the composition of the isomorphism with its conjugate is the identity).
\end{proof}

\begin{rmk}
\label{rmk:geqdict}
By the same argument, the equivalence of categories of Remark~\ref{rem:semialgpi1}~(i)
can be upgraded as follows:
given a variety~$V$ over~$R$,
a finite \'etale group scheme~$\kS$ over~$V$,
a connected semi-algebraic subset $T \subseteq V(C)$ stable under~$G$
and a point $t \in T$ fixed by~$G$, if we denote by~$\kS_t$
the fibre of~$\kS(C) \to V(C)$ above~$t$,
the ``fiber at~$t$'' functor induces an equivalence from the category
of semi-algebraic $G$\nobreakdash-equivariant right $\kS(C)$\nobreakdash-torsors over~$T$
to the category of finite $(\kS_t \rtimes \pi_1(T,t))$\nobreakdash-sets
on which the action of~$\kS_t$ is simply transitive, endowed with
 a $G$\nobreakdash-equivariant
structure; that is, to the category of finite $((\kS_t \rtimes \pi_1(T,t)) \rtimes G)$-sets on
which the action of~$\kS_t$ is simply transitive.
\end{rmk}

\subsubsection{Existence of torsors}
\label{subsubsec:existencetorsors}

We now establish Proposition~\ref{prop:horrible}, the key result in the proof of weak
approximation for homogeneous spaces of connected linear algebraic groups
over~$F$.
The maps $h_S$ that appear below
are those defined by Scheiderer in \cite[Proposition~2.9~(c)]{Scheiderer}.

Proposition~\ref{prop:horrible} plays the same role,
over real closed fields, as Proposition~\ref{liftH1} did over the field of real numbers.
Over real closed fields, we need to use
 semi-algebraic substitutes for the fields of meromorphic functions appearing
in Proposition~\ref{liftH1}.
This forces technicalities both into the proof and into the statement of Proposition~\ref{prop:horrible}.

\begin{prop}
\label{prop:horrible}
Let~$S$ be a linear algebraic group over~$F$.  Let $\Sigma \subset B$ be a finite closed subset.
The natural commutative square
\begin{align}
\begin{aligned}
\label{eq:commsquarelemma}
\xymatrix@R=3ex{
H^1(F,S) \ar[r] \ar[d]^{h_S} & \displaystyle\smash[b]{\prod_{b \in \Sigma}} H^1(F_b,S) \ar[d]^{h_S} \\
H^0(\Sper(F),\sH^1(S)) \ar[r] & \displaystyle\prod_{b \in \Sigma} H^0(\Sper(F_b),\sH^1(S))
}
\end{aligned}
\end{align}
induces a surjection of the set $H^1(F,S)$ onto the fiber product of
$\prod_{b \in \Sigma} H^1(F_b,S)$
and
$H^0(\Sper(F),\sH^1(S))$
over $\prod_{b \in \Sigma} H^0(\Sper(F_b),\sH^1(S))$.
\end{prop}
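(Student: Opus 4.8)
The plan is to follow the three-stage strategy of the proof of Proposition~\ref{liftH1}: dispose first of the case where~$S$ is connected, establish next the case where~$S$ is finite, and deduce the general case by dévissage along $1\to S^0\to S\to S/S^0\to 1$. The connected case is immediate from Scheiderer's bijectivity theorem: since $\mathrm{vcd}(F)\le 1$ (Corollary~\ref{cohodimvirt1}) and, for every $b\in\Sigma$, $\mathrm{vcd}(F_b)\le 1$ as well --- the field $F_b(\sqrt{-1})$ being a complete discretely valued field with algebraically closed residue field of characteristic~$0$, hence of cohomological dimension $\le 1$ --- \cite[Theorem~4.1]{Scheiderer} shows that the map $h_S$ of~\eqref{eq:commsquarelemma} is bijective over~$F$ and over each~$F_b$. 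The vertical maps of~\eqref{eq:commsquarelemma} being then bijective, the lower horizontal maps may be dropped from the fiber product: it is canonically identified with $H^0(\Sper(F),\sH^1(S))$, and the map from $H^1(F,S)$ to it is, through this identification, the bijection~$h_S$.

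The finite case is the heart of the matter, and I would treat it as in Lemma~\ref{finitecase}, with semi-algebraic coverings replacing topological ones and Huber's Riemann existence theorem (Theorem~\ref{th:huberknebuschpeterzilstarchenko}, applied through Corollary~\ref{cor:huberknebuschpeterzilstarchenko} and Remark~\ref{rmk:geqdict}) replacing the classical one. Given $((\gamma_b)_{b\in\Sigma},\delta)$ in the fiber product, extend~$S$ to a finite étale group scheme~$\kS$ over a dense open $B^0\subseteq B$. Using Corollary~\ref{cor:huberknebuschpeterzilstarchenko}, together with Remark~\ref{rmk:geqdict} to translate the prescribed invariants into monodromy, one sees that producing a class in $H^1(F,S)$ with $\beta|_{F_b}=\gamma_b$ for all $b\in\Sigma$ and $h_S(\beta)=\delta$ amounts to producing a $G$\nobreakdash-equivariant semi-algebraic right $\kS(C)$\nobreakdash-torsor over $U(C)$, where~$U$ is a suitable dense open of~$B^0$ disjoint from~$\Sigma$, whose monodromy around the puncture at each point of~$\Sigma(C)$ is the conjugacy-and-$G$-class prescribed by~$\gamma_b$ (meaningful even where~$S$ ramifies, since those points are removed from~$U$) and whose monodromy along the real locus --- equivalently, along the orderings of~$F$ --- realizes the $2$\nobreakdash-torsion datum prescribed by~$\delta$, the monodromies around the remaining punctures and along the handles of~$B(C)$ being chosen so as to satisfy the relation in~$\pi_1$. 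As in the proof of Lemma~\ref{finitecase}, one builds such a torsor by prescribing it on disjoint $G$-stable unions of semi-algebraic disks around the orbits of the relevant points, then gluing over the complement --- which retracts onto a graph --- and trivializing the gluing data $G$-equivariantly over the pieces of overlap with~$B(R)$, which after shrinking are contractible and not $G$-stable. This is the step I expect to be the main obstacle: the construction must achieve the prescribed behavior at~$\Sigma$ --- including at points where~$S$ ramifies, where~$\gamma_b$ records more than a datum on the residue field and the covering is genuinely branched --- and the prescribed behavior along every ordering of~$F$ simultaneously and $G$-equivariantly, which is precisely what forces the passage to a punctured open subset and the appeal to semi-algebraic Riemann existence rather than to a purely algebraic input.

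For a general~$S$ I would imitate the second half of the proof of Proposition~\ref{liftH1}. Applying the finite case to $S/S^0$ realizes the images of $(\gamma_b)_b$ and~$\delta$ by a class $\bar\beta\in H^1(F,S/S^0)$; by \cite[Corollary~6.6]{Scheiderer} one lifts $\bar\beta$ to $\delta'\in H^1(F,S)$ with $h_S(\delta')=\delta$; twisting the exact sequence by a right $S$-torsor~$P$ of class~$\delta'$, and using the base-point-translating bijections of \cite[I~5.3, Proposition~35]{CohoGalois} over~$F$ and over each~$F_b$, reduces one to the same problem for the group~${}_PS$ with~$\delta$ replaced by the neutral section of $\sH^1({}_PS)$; since the twisted local classes then come, by \cite[I~5.4]{CohoGalois}, from $H^1(F_b,{}_PS^0)$ and map to the neutral section over $\Sper(F_b)$, the connected case for the connected group~${}_PS^0$ --- for which $h_{{}_PS^0}$ is bijective over~$F$ and over each~$F_b$ --- produces the required class after gluing the local data over the various~$\Sper(F_b)$ into a global section of $\sH^1({}_PS^0)$ over $\Sper(F)$ mapping to the neutral section of $\sH^1({}_PS)$, and untwisting yields the sought-after element of $H^1(F,S)$. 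This gluing, together with the chase through the exact sequences of pointed sets of \emph{loc.\ cit.}, is delicate but essentially formal; by contrast with the finite case it introduces no new geometric difficulty.
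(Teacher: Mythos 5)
Your overall skeleton is the paper's own: the connected case is exactly the remark following the statement (both maps $h_S$ are bijective by \cite[Theorem~4.1]{Scheiderer}), and your reduction of the general case --- finite case for $S/S^0$, lifting by \cite[Corollary~6.6]{Scheiderer}, twisting by a torsor~$P$ of that class, then using bijectivity of $h_{{}_PS^0}$ over~$F$ and over each~$F_b$ --- is the paper's dévissage. The ``gluing over $\Sper(F)$'' that you call essentially formal is indeed carried out in the paper by a short chase through the exact sequence of sheaves of pointed sets $\sH^0(S/S^0)\to\sH^1(S^0)\to\sH^1(S)$, using that restriction of sections of $\sH^0(S/S^0)$ to the finite subset $\coprod_{b\in\Sigma}\Sper(F_b)$ of the boolean space $\Sper(F)$ is surjective; so that part of your plan is fine, if unproved.

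The genuine gap is in the finite case, which is the heart of the proposition and which you only sketch. Your translation --- ``producing $\beta\in H^1(F,S)$ with $\beta|_{F_b}=\gamma_b$ and $h_S(\beta)=\delta$ amounts to producing a $G$\nobreakdash-equivariant semi-algebraic $\kS(C)$\nobreakdash-torsor over $U(C)$ with prescribed monodromy at the punctures of $\Sigma$ and along the real locus'' --- presupposes a local dictionary at the completions~$F_b$ that Corollary~\ref{cor:huberknebuschpeterzilstarchenko} and Remark~\ref{rmk:geqdict} do not give: those results concern varieties over~$R$ and semi-algebraic subsets of their $C$\nobreakdash-points, not the local field~$F_b$, and over a non-archimedean real closed field there is no off-the-shelf comparison between $H^1(F_b,S)$ and equivariant semi-algebraic monodromy around a puncture. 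The paper's proof is structured precisely to avoid needing such a comparison: the local building blocks at $b\in\Sigma$ are \emph{algebraic} torsors over punctured \'etale curves $B'_b\setminus\{\sigma_b\}$ produced from~$\alpha_b$ through~$F_b^h$ (henselization does not change the Galois group, plus a limit argument), and the fact that the algebraized global torsor really restricts to~$\alpha_b$ over~$F_b$ is not automatic but is checked at the very end by the ramification argument with $\kR'\to B''_b$ (totally ramified over~$\sigma_b''$ yet admitting a $G$\nobreakdash-equivariant semi-algebraic section over a punctured neighbourhood, hence \'etale, hence with an $F_b^h$\nobreakdash-point). Neither substitute appears in your proposal, and you yourself flag this step as the unresolved obstacle.

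Two further points in the same step. First, realizing~$\delta$ ``along the orderings of~$F$'' is not merely a condition on~$B(R)$: over a non-archimedean~$R$ the space $\Sper(F)$ contains many orderings not centred at points of~$B(R)$, and one needs the identification of $\sH^1(S)$ with (the restriction to $\Sper(F)$ of) the sheaf of isomorphism classes of semi-algebraic $G$\nobreakdash-equivariant torsors on~$B^0(R)$ --- via local constancy on the finitely many components of~$B^0(R)$ after shrinking~$B^0$ --- to convert~$\delta$ into a torsor near the real locus; relatedly, the compatibility of~$\gamma_b$ with~$\delta$ at the orderings of~$F_b$ for real $b\in\Sigma$ is exactly what allows the disc at~$b$ to be glued to the neighbourhoods of the adjacent real arcs, and your sketch never invokes it. Second, your trivialization step is misstated: one cannot trivialize ``over the pieces of overlap with~$B(R)$'', since any subset of~$B(R)$ is pointwise fixed by~$G$ and the equivariant torsor there carries precisely the data prescribed by~$\delta$; the gluing with the trivial torsor must happen along boundary pieces disjoint from~$B(R)$, cut by auxiliary non-rational punctures~$\Sigma'$ so as to become semi-algebraically contractible and non-$G$\nobreakdash-stable, as in Lemma~\ref{finitecase}.
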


When~$S$ is connected,
Proposition~\ref{prop:horrible} contains nothing new.
Indeed, in this case, the two maps~$h_S$
are bijections
according to Scheiderer~\cite[Theorem~4.1]{Scheiderer}.

\begin{proof}
Let us fix $\alpha_0 \in H^0(\Sper(F),\sH^1(S))$ and $(\alpha_b)_{b \in \Sigma} \in \prod_{b \in \Sigma} H^1(F_b,S)$
having the same image in $\prod_{b \in \Sigma} H^0(\Sper(F_b),\sH^1(S))$ and prove that there exists
an element of $H^1(F,S)$ which induces both~$\alpha_0$ and $(\alpha_b)_{b \in \Sigma}$.

Let us first reduce ourselves to the case of a finite algebraic group~$S$.

To this end, let us denote by $S^0 \subseteq S$ the connected component of the identity and
by $\beta_0 \in H^0(\Sper(F),\sH^1(S/S^0))$
and $(\beta_b)_{b \in \Sigma} \in \prod_{b \in \Sigma} H^1(F_b,S/S^0)$ the images of~$\alpha_0$ and $(\alpha_b)_{b \in \Sigma}$.  Assuming that the proposition holds for finite algebraic groups,
there exists an element of $H^1(F,S/S^0)$ which induces both $\beta_0$ and $(\beta_b)_{b \in \Sigma}$.
According to \cite[Corollary~6.6]{Scheiderer}, this element can be lifted to an element
$\delta \in H^1(F,S)$ such that $h_S(\delta)=\alpha_0$.  Thus~$\delta$ simultaneously lifts~$\alpha_0$ and
$(\beta_b)_{b \in \Sigma}$.

Let~$P$ be a right $S$\nobreakdash-torsor over~$F$ whose isomorphism class is~$\delta$.
For any field~$L$ containing~$F$ and any $S' \in \{S, S/S^0\}$,
if ${}_PS'$ denotes the algebraic group over~$F$ obtained by twisting~$S'$ by~$P$ through
the action of~$S$ on~$S'$ by conjugation,
twisting right torsors by~$P$ produces bijections $\tau_P:H^1(L,{}_PS') \isoto H^1(L,S')$
and $\tau'_P:H^0(\Sper(L),\sH^1({}_PS')) \isoto H^0(\Sper(L),\sH^1(S'))$
such that $\tau_P(0)$ and $\tau'_P(0)$ coincide with the images of~$\delta$
and of $h_S(\delta)$, respectively
(see \cite[Ch.~I, \textsection5.3, Proposition~35]{CohoGalois}).
Thus, after replacing~$S$ with~${}_PS$, the classes $\alpha_0$, $\beta_0$
with $\tau_P'^{-1}(\alpha_0)$, $\tau_P'^{-1}(\beta_0)$,
and the classes
$\alpha_b$, $\beta_b$, $\delta$ with $\tau_P^{-1}(\alpha_b)$, $\tau_P^{-1}(\beta_b)$, $\tau_P^{-1}(\delta)$,
respectively,
we may assume that $\delta=0$, and hence that $\alpha_0=\beta_0=\beta_b=0$ for all $b \in \Sigma$.

For each $b \in \Sigma$, as $\beta_b=0$, there exists $\alpha_b^0 \in H^1(F_b,S^0)$
which induces~$\alpha_b$.  Let us fix such $\alpha_b^0$.
As $\alpha_0=0$, the hypothesis that~$\alpha_b$ and~$\alpha_0$ are compatible
implies that $h_S(\alpha_b)=0$.
In view of the commutative diagram of pointed sets with exact columns
\begin{align}
\label{cd:h0ss0h1s}
\begin{aligned}
\xymatrix@R=3ex{
H^0(\Sper(F), \sH^0(S/S^0)) \ar[d] \ar[r] & \displaystyle\smash[b]{\prod_{b \in \Sigma}} H^0(\Sper(F_b), \sH^0(S/S^0)) \ar[d] \\
\ar[d] H^0(\Sper(F), \sH^1(S^0)) \ar[r] & \displaystyle\smash[b]{\prod_{b \in \Sigma}} H^0(\Sper(F_b), \sH^1(S^0)) \ar[d] \\
H^0(\Sper(F), \sH^1(S)) \ar[r] &
\displaystyle\smash[b]{\prod_{b \in \Sigma}} H^0(\Sper(F_b), \sH^1(S))
\rlap{,}
}
\end{aligned}
\end{align}
whose vertical arrows are induced by the exact sequence of sheaves of pointed sets
$\sH^0(S/S^0) \to \sH^1(S^0) \to \sH^1(S)$
(\emph{loc.\ cit.}, Proposition~36) on the boolean spaces~$\Sper(F)$ and~$\Sper(F_b)$
(see \cite[(C.2)]{Scheiderer}),
it follows that $(h_{S^0}(\alpha_b^0))_{b\in \Sigma}$ belongs to the image of
the top right-hand side vertical arrow of~\eqref{cd:h0ss0h1s}.
On the other hand, the top horizontal arrow of~\eqref{cd:h0ss0h1s} is surjective
since $\coprod_{b \in \Sigma} \Sper(F_b)$ is a finite subset of the boolean space~$\Sper(F)$
(\emph{loc.\ cit.}).
Therefore $(h_{S^0}(\alpha_b^0))_{b\in\Sigma}$ comes from an element of the kernel of the
bottom left-hand side vertical arrow of~\eqref{cd:h0ss0h1s}.
By a theorem of Scheiderer \cite[Theorem~4.1]{Scheiderer},
the maps $h_{S^0}:H^1(F,S^0) \to H^0(\Sper(F),\sH^1(S^0))$
and $h_{S^0}:H^1(F_b,S^0) \to H^0(\Sper(F_b),\sH^1(S^0))$
for $b \in \Sigma$
are bijective.
We deduce that $(\alpha_b^0)_{b\in\Sigma}$ comes from an element of $H^1(F,S^0)$
whose image~$\xi$ in $H^1(F,S)$ satisfies $h_S(\xi)=0$.
Thus~$\xi$ induces both~$\alpha_0$ and~$(\alpha_b)_{b\in\Sigma}$, as desired.

We can therefore assume that the algebraic group~$S$ is finite.
Let us extend~$S$ to a finite \'etale group scheme~$\kS$ over a dense
open subset $B^0 \subset B$.

As the sheaf $\sH^1(S)$ on the compact space $\Sper(F)$ is
locally constant (see \cite[Theorem~2.13]{Scheiderer}),
we may cover~$\Sper(F)$ with finitely many constructible open subsets on which the sheaf~$\sH^1(S)$
and the global section~$\alpha_0$ are constant.
After shrinking~$B^0$, we may therefore assume that~$\sH^1(S)$ and~$\alpha_0$ are constant on the open subsets
of~$\Sper(F)$ given by the connected components of the semi-algebraic space $B^0(R)$.
After further shrinking~$B^0$, we may assume that~$B^0$ misses at least
two points of each
connected component of the semi-algebraic space~$B(R)$.
Finally, in order to prove the proposition,
we may enlarge~$\Sigma$, as \cite[Theorem~5.1]{Scheiderer} guarantees the existence of
suitable~$\alpha_b$ for the new points~$b$.
In particular, by enlarging~$\Sigma$ and shrinking~$B^0$, we may assume that $B^0=B\setminus \Sigma$.

Let $F_b^h$ denote the fraction field of the henselization of the local ring $\sO_{B,b}$.
As the inclusion $F_b^h \subset F_b$ induces an isomorphism between the absolute Galois groups of these
fields, the pull-back map $H^1(F_b^h,S) \to H^1(F_b,S)$ is a bijection.
Writing the algebraic extension $F_b^h/F$ as the union of its finite subextensions,
we thus find, for each $b \in \Sigma$, a smooth connected curve~$B'_b$,
an \'etale map $\pi_b:B'_b \to B$, an $F$\nobreakdash-linear embedding $R(B'_b)\hookrightarrow F_b^h$,
a rational point~$\sigma_b$ of the fibre $\pi_b^{-1}(b)$
and a class $\alpha'_b \in \check{H}^1_{\et}(B'_b \setminus \{\sigma_b\},\kS)$
whose image by the composition
of the pull-back maps
$\check{H}^1_{\et}(B'_b \setminus \{\sigma_b\},\kS) \to H^1(R(B'_b),S)\to H^1(F_b^h,S) \to H^1(F_b,S)$
coincides with~$\alpha_b$.

For $b \in \Sigma$, let us view $b$ and~$\sigma_b$ as subsets of~$B(C)$ and of~$B'_b(C)$
of cardinality~$1$ or~$2$ (depending on whether~$b$ is a rational point of~$B$ or a closed point of degree~$2$).
The semi-algebraic map $\pi_b(C): B'_b(C) \to B(C)$
induces an isomorphism from a semi-algebraic open neighbourhood of~$\sigma_b$ in $B'_b(C)$
to a semi-algebraic open neighbourhood~$\Omega_b$ of~$b$ in $B(C)$
(see \cite[Theorem~6.9]{DK2}, \cite[Example~5.1]{delfsknebuschintrolocallysemialg}).
After shrinking the~$\Omega_b$,
we may assume
that $\Omega_b$ is stable under~$G$
for each $b \in \Sigma$ and that the~$\Omega_b$ for $b \in \Sigma$ are pairwise disjoint.

Applying the triangulation theorem of Delfs and Knebusch~\cite[Theorem~2.1]{delfsknebuschonthehomology}
to the quotient
semi-algebraic space $B(C)/G$
(see \cite[Corollary~1.6]{brumfielquotient}),
we find a $G$\nobreakdash-equivariant triangulation of the semi-algebraic space~$B(C)$
in which $B^0(C)$, the points of~$\Sigma$, the $\Omega_b$ for $b \in \Sigma$
and the connected components of $B^0(R)$
are unions of simplices.
For $b \in \Sigma$, the star neighbourhood of~$b$ (see
\cite[p.~22]{delfsknebuschbook} for the definition we use) is then a semi-algebraic
open subset of~$B(C)$ contained in~$\Omega_b$ and stable under~$G$.  After shrinking~$\Omega_b$,
we may thus assume that~$\Omega_b$ coincides with the star neighbourhood of~$b$ for each $b \in \Sigma$;
in particular, the connected components of~$\Omega_b$ are now semi-algebraically contractible.
For each connected component~$e$ of~$B^0(R)$,
let $\Omega_e$ denote the star neighbourhood of~$e$.
This is a semi-algebraic open subset
of~$B(C)$ that is stable under~$G$.
After replacing the triangulation with its barycentric subdivision (and shrinking~$\Omega_b$
accordingly, so that it still denotes the star neighbourhood of~$b$), we may assume
that the triangulation is ``good on~$e$'', in the sense of
\cite[Definition~8]{delfsknebuschonthehomology}, for each
$e\in\pi_0(B^0(R))$ (\emph{loc.\ cit.}, Proposition~2.4),
that the~$\Omega_e$ for $e \in \pi_0(B^0(R))$ are pairwise disjoint and are
semi-algebraically contractible
(see \cite[Chapter~III, Proposition~1.6]{delfsknebuschbook}),
that the~$\Omega_b$ for $b \in \Sigma \cap (B(C)\setminus B(R))$ have two connected
components and
that for any $e \in \pi_0(B^0(R))$ and any $b \in \Sigma$,
the intersection $\Omega_b \cap \Omega_e$
is semi-algebraically contractible
if~$b$ is one of the two points of~$\Sigma$
that belong to the closure of~$e$ and is empty otherwise
(see \cite[Chapter~II, Lemma~9.7]{delfsknebuschbook}).
Let $\Omega_\Sigma = \bigcup_{b\in\Sigma}\Omega_b$
and $\Omega_0= \bigcup_{e \in \pi_0(B^0(R))} \Omega_e$.

The connected components of $\Omega = \Omega_\Sigma \cup \Omega_0$ can now be described as follows:
the pairs of components that are permuted by~$G$
are the star neighbourhoods of the non-rational points of~$\Sigma$
and the components that are fixed by~$G$
are the star neighbourhoods of the connected components of~$B(R)$.
After replacing once more the triangulation with its barycentric subdivision
(which in effect makes these star neighbourhoods smaller),
we may assume that the semi-algebraic closures in~$B(C)$ of the connected components of~$\Omega$
are pairwise disjoint.
The semi-algebraic boundary $\partial \Omega = \overline\Omega\setminus\Omega$ of~$\Omega$
then coincides with the semi-algebraic boundary of $B(C) \setminus \overline\Omega$.

For each $b \in \Sigma$, the element $\alpha'_b \in \check{H}^1_{\et}(B'_b \setminus \{\sigma_b\},\kS)$
is the isomorphism class of a right $\kS$\nobreakdash-torsor, say $\mathfrak q_b:\kQ_b \to B'_b \setminus \{\sigma_b\}$.
By \cite[Example~5.5]{delfsknebuschintrolocallysemialg},
this torsor induces a
semi-algebraic $G$\nobreakdash-equivariant right $\kS(C)$\nobreakdash-torsor $p_b:\sP_b\to \Omega_b \cap B^0(C)$.
Let $p_\Sigma:\sP_\Sigma \to \Omega_\Sigma \cap B^0(C)$ denote the disjoint union of the~$p_b$.

For a semi-algebraic subset~$U$ of~$B^0(R)$, let $\sF(U)$ denote the set of isomorphism classes
of semi-algebraic $G$\nobreakdash-equivariant right $\kS(C)$\nobreakdash-torsors over~$U$.
(We recall
from Definition~\ref{def:semialgequivtorsor}
that these are really torsors under the semi-algebraic group $U \times_{B^0(C)} \kS(C) \to U$.)
With the obvious restriction maps, this defines a presheaf~$\sF$ on the semi-algebraic space~$B^0(R)$.
If~$U$ is semi-algebraically contractible, the restriction map $\sF(U) \to \sF(\{u\})$
is a bijection for any $u \in U$,
by Remark~\ref{rmk:geqdict},
since~$U$ is then connected and simply connected
(in the sense that the semi-algebraic fundamental group $\pi_1(U,u)$ is trivial, see \cite[\textsection4]{delfsknebuschintrolocallysemialg}).
As the connected components of any
semi-algebraic open subset of~$B^0(R)$ are semi-algebraically contractible,
it follows that the presheaf~$\sF$ is in fact a sheaf and that its restriction to any
connected component of~$B^0(R)$ is constant.
Let $\sF|_{\Sper(F)}$ denote the sheaf obtained by restricting
to~$\Sper(F)$
the sheaf
corresponding to~$\sF$ on the ``abstraction'' of~$B^0(R)$
(in the sense of \cite[Appendix~A to Chapter~I]{delfsknebuschbook}).
As the natural map $\sH^1(S) \to \sF|_{\Sper(F)}$
(see \cite[\textsection2.3]{Scheiderer})
induces bijections between the stalks of these two sheaves on~$\Sper(F)$,
it is an isomorphism.  On the other hand, we recall that~$\alpha_0$ is constant on each connected component of~$B^0(R)$.
Thus~$\alpha_0$ belongs to the image of the resulting map
$H^0(B^0(R),\sF) \to H^0(\Sper(F),\sH^1(S))$.  In other words~$\alpha_0$ is induced by
a
semi-algebraic $G$\nobreakdash-equivariant right $\kS(C)$\nobreakdash-torsor $p_{B^0(R)}:\sP_{B^0(R)} \to B^0(R)$.

For each connected component~$e$ of~$B^0(R)$, as
the two semi-algebraic spaces~$e$ and~$\Omega_e$ are semi-algebraically contractible, the torsor $p_{B^0(R)}^{-1}(e) \to e$ can be extended (uniquely up to isomorphism) to a
semi-algebraic $G$\nobreakdash-equivariant right $\kS(C)$\nobreakdash-torsor $p_e:\sP_e \to \Omega_e$,
by Remark~\ref{rmk:geqdict}.
Let $p_0:\sP_0 \to \Omega_0$
be the disjoint union of the~$p_e$.

Let $e \in B^0(R)$ and $b \in \Sigma$ such that $\Omega_b \cap \Omega_e \neq\emptyset$.
Viewing~$\Sper(F_b)$ as a subset (of cardinality~$2$) of~$\Sper(F)$,
let~$b'$ be
the unique point of~$\Sper(F_b)$ that belongs to the constructible open
subset of~$\Sper(F)$ corresponding to~$e$.
Let~$R'$ denote the real closure of $F_b$ at~$b'$, let $C'=R'(\sqrt{-1})$,
and let $p_b':\sP_b' \to \Omega_b' \cap B^0(C')$ and
$p_e':\sP_e' \to \Omega_e'$
denote the morphisms of semi-algebraic spaces over~$R'$ obtained from~$p_b$ and~$p_e$ by an extension
of scalars (see \cite[\textsection4]{delfsknebuschonthehomology}).
Our assumption that~$\alpha_0$ and~$\alpha_b$ have the same image
in $H^0(\Sper(F_b),\sH^1(S))$ implies that $p_b'^{-1}(b')$
and $p_e'^{-1}(b')$ are isomorphic as $G$\nobreakdash-equivariant $\kS(C')$\nobreakdash-torsors
over $b'=\Sper(R')$.
As $\Omega_b \cap \Omega_e$ is semi-algebraically contractible, it follows,
by Remark~\ref{rmk:geqdict},
that $p_b'^{-1}(\Omega_b' \cap \Omega_e')$
and~$p_e'^{-1}(\Omega_b' \cap \Omega_e')$ are themselves isomorphic.
By \cite[Theorem~4.1]{delfsknebuschintrolocallysemialg}, we deduce
that $p_b^{-1}(\Omega_b \cap \Omega_e)$
and~$p_e^{-1}(\Omega_b \cap \Omega_e)$ are isomorphic.
Letting~$b$ and~$e$ vary, we conclude that
the restrictions of~$p_\Sigma$ and~$p_0$ to
$\Omega_\Sigma \cap B^0(C) \cap \Omega_0=\Omega_\Sigma \cap \Omega_0$
are isomorphic as $G$\nobreakdash-equivariant $\kS(C)$\nobreakdash-torsors.

Choosing an isomorphism between them and using it to glue~$p_\Sigma$ and~$p_0$,
we obtain a
semi-algebraic $G$\nobreakdash-equivariant right $\kS(C)$\nobreakdash-torsor $p:\sP \to \Omega \cap B^0(C)$.

After shrinking the~$\Omega_b$ and the~$\Omega_e$ by replacing a third time the given triangulation with
its barycentric subdivision, we may assume that~$p$ extends to
a semi-algebraic $G$\nobreakdash-equivariant right $\kS(C)$\nobreakdash-torsor $\overline p:\overline{\sP} \to \overline\Omega \cap B^0(C)$.

Let us fix a finite subset $\Sigma' \subset \partial\Omega$ that is stable under~$G$ and contains
at least one point in each connected component of~$\partial\Omega$.  We identify~$\Sigma'$
with a finite subset of (non-rational) closed points of~$B^0$ and let $B^1 = B^0 \setminus \Sigma'$.
As the connected components of $\partial\Omega \cap B^1(C)$ are semi-algebraically contractible
and as none of them is stable under~$G$,
we can trivialize the restriction of~$\overline p$ to $\overline p^{-1}(\partial\Omega \cap B^1(C))$
and thus glue $\overline p^{-1}(\overline\Omega \cap B^1(C))$,
along $\overline p^{-1}(\partial\Omega \cap B^1(C))$,
with the trivial torsor over $B^1(C) \setminus \Omega$,
using \cite[Chapter~II, Theorem~1.3]{delfsknebuschbook},
to finally obtain a
semi-algebraic $G$\nobreakdash-equivariant right $\kS(C)$\nobreakdash-torsor $p':\sP' \to B^1(C)$.

By Corollary~\ref{cor:huberknebuschpeterzilstarchenko}, there exists
a right $\kS$\nobreakdash-torsor $\mathfrak p':\kP'\to B^1$
such that~$\mathfrak p'(C)$ and~$p'$ are semi-algebraically isomorphic
as $G$\nobreakdash-equivariant $\kS(C)$\nobreakdash-torsors.

To conclude the proof, it is enough to check that the class in $H^1(F,S)$ of the generic fibre of~$\mathfrak p'$
induces~$\alpha_0$ and~$(\alpha_b)_{b \in \Sigma}$ by the maps from~\eqref{eq:commsquarelemma}.
By construction~$\alpha_0$ is induced by~$p_{B^0(R)}$ and hence indeed by~$\mathfrak p'(C)$.
Let us now fix $b \in \Sigma$.

We recall that~$\alpha_b$ is induced by a right $\kS$\nobreakdash-torsor $\mathfrak q_b:\kQ_b \to B'_b \setminus \{\sigma_b\}$
and that~$B'_b$ is a smooth connected curve
endowed with an \'etale map $\pi_b:B'_b \to B$
and with an $F$\nobreakdash-linear embedding $R(B'_b) \hookrightarrow F_b^h$.
Let~$B''_b$ denote a smooth connected curve endowed with
an \'etale map $\pi'_b:B''_b \to B'_b$,
an $R(B'_b)$\nobreakdash-linear
embedding $R(B''_b) \hookrightarrow F_b^h$ and
a rational point $\sigma''_b$ of the fibre $\pi_b'^{-1}(\sigma_b)$,
such that $\pi_b(\pi_b'(B''_b \setminus \{\sigma_b''\}))\subseteq B^1$.
Let $\Bbsecondezero = B''_b \setminus \{\sigma_b''\}$.
Let
$\kQ''=\kQ_b \times_{B'_b \setminus \{\sigma_b\}} \Bbsecondezero$
and
$\kP''=\kP' \times_{B^1} \Bbsecondezero$.
The quotient~$\kR$ of $\kQ'' \times_{\Bbsecondezero} \kP''$ by the diagonal right
action of~$\kS$ is an \'etale covering of~$\Bbsecondezero$.
Let $\kR' \to B''_b$ denote the normalisation of~$B''_b$ in $R(\kR)$,
so that $\kR \subset \kR'$.
By an appropriate choice of~$B''_b$, we may assume that every connected component of~$\kR'$
contains a unique closed point above~$\sigma_b''$.

By construction, there exists a semi-algebraic neighbourhood~$\Omega''_b$ of~$\sigma_b''$
in~$B''_b(C)$, stable under~$G$, connected if $b \in B(R)$ and with two connected components
permuted by~$G$ otherwise, such that
the
semi-algebraic $G$\nobreakdash-equivariant right $\kS(C)$\nobreakdash-torsors over
$\Omega''_b \cap \Bbsecondezero$
induced
by $\kQ''$ and by $\kP''$ are isomorphic.  Thus, the map $\kR'(C) \to B''_b(C)$
admits a $G$\nobreakdash-equivariant semi-algebraic section over $\Omega''_b \cap \Bbsecondezero$.
Viewing the connected component of~$\kR'$ that contains this section as a covering of~$B''_b$,
we have now found
a finite map between smooth connected curves over~$R$ that is totally ramified over
a closed point of the base and that at the level of $C$\nobreakdash-points admits a
$G$\nobreakdash-equivariant semi-algebraic section over a punctured $G$\nobreakdash-equivariant semi-algebraic neighbourhood of the point in question.
Such a map has to be \'etale.  It follows that the generic fibre of $\kR' \to B''_b$
admits an $F_b^h$\nobreakdash-point; hence the two right $S$\nobreakdash-torsors over~$F_b^h$
obtained by base change from $\mathfrak q_b$ and from $\mathfrak p'$ are isomorphic,
as desired.
\end{proof}

\subsubsection{Descent for weak approximation and the strong Hasse principle}
\label{subsubsec:descentwastronghp}

The weak approximation property for varieties over~$F$ is defined in this context
in the same way as when $R=\R$ (see Definition~\ref{weakdef}).  For the formulation of
our descent theorem, we shall need to consider this property in combination with the strong Hasse principle.

\begin{defn}
A variety~$X$ over~$F$ \textit{satisfies the strong Hasse principle}
if either
 $X(F_b)=\emptyset$ for infinitely many closed points $b \in B$
or $X(F)\neq\emptyset$.  In other words, the strong Hasse principle holds if the existence of $F_b$\nobreakdash-points for all but finitely many~$b$
implies the existence of a rational point.
\end{defn}

\begin{thm}
\label{th:wadescent}
Let $X$ be a smooth variety over $F$. Let $S$ be a linear algebraic group over $F$. Let $Q\to X$ be a left $S$-torsor over $X$. Assume that any twist of $Q$ by a right $S$-torsor over $F$ satisfies
weak approximation and the strong Hasse principle.
Then $X$ satisfies
weak approximation and the strong Hasse principle.
\end{thm}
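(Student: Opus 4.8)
The plan is to adapt the proof of Theorem~\ref{descent}, substituting Proposition~\ref{prop:horrible} for Proposition~\ref{liftH1} and the conjunction of weak approximation and the strong Hasse principle for the tight approximation property used there. I would begin by fixing a model $f:\kX\to B$ of $X$, a dense open $B^0\subset B$ over which $S$ extends to a smooth affine group scheme $\kS$, and a dense open $\kX^0\subset f^{-1}(B^0)$ over which $Q$ extends to a left $\kS_{\kX^0}$-torsor $\kQ$, with inverse-torsor class $[\kQ^{-1}]\in\check{H}^1_{\et}(\kX^0,\kS_{\kX^0})$. Since both properties hold trivially unless $X(F_b)\neq\varnothing$ for all but finitely many closed points $b$, it suffices to prove the following statement (the case $\Sigma=\varnothing$ yielding the strong Hasse principle, the general case yielding weak approximation): for every finite set $\Sigma$ of closed points of $B$ and every family $(x_b)_{b\in\Sigma}$ with $x_b\in X(F_b)$, and under the assumption that $X(F_b)\neq\varnothing$ for all but finitely many $b$, there exists $x\in X(F)$ arbitrarily close to $x_b$ for each $b\in\Sigma$. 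In the genuinely non-trivial situation one has $X(F_b)\neq\varnothing$ for every $b$, so, after enlarging $\Sigma$, I may assume that $\Sigma$ contains the finitely many places of bad reduction of the model; and, using smoothness of $X$ together with the fact that a real closure of $F_b$ at an ordering is a real closure of $F$, one records along the way that $X$ has a point over every real closure of $F$.

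After perturbing each $x_b$ in the $b$-adic topology into $\kX^0(F_b)$ — possible by smoothness of $X$ — I would set $\alpha_b:=x_b^*[\kQ^{-1}]\in H^1(F_b,S)$. The crucial step is to construct a section $\alpha_0\in H^0(\Sper(F),\sH^1(S))$ that is compatible with the family $(\alpha_b)_{b\in\Sigma}$ under the restriction maps $\Sper(F_b)\hookrightarrow\Sper(F)$. For this I would use the hypothesis $X(F_b)\neq\varnothing$ for almost all $b$ to pull back $[\kQ^{-1}]$ along suitable $F_b$-points of $\kX^0$ at the places of good reduction, and glue the resulting local sections of the locally constant sheaf $\sH^1(S)$ on $\Sper(F)$ by means of Scheiderer's sheaf formalism and of the description of $\Sper(F)$ as a two-sheeted covering of $B(R)$, the finitely many remaining fibres being handled using the points of $X$ over the real closures of $F$. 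Proposition~\ref{prop:horrible} then produces a class $\beta\in H^1(F,S)$ inducing both $\alpha_0$ and $(\alpha_b)_{b\in\Sigma}$.

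Finally, I would let $P$ be a right $S$-torsor over $F$ of class $\beta$ and form the twist $Q':={}_PQ$, which is an $X$-scheme that is a twist of $Q$ by a right $S$-torsor over $F$ and hence, by hypothesis, satisfies weak approximation and the strong Hasse principle. The standard twisting identities (see \cite[I~5.3, Proposition~35]{CohoGalois}) show that the structure morphism $Q'\to X$ admits an $F_b$-point over $x_b$ for every $b\in\Sigma$, because $\beta|_{F_b}=\alpha_b$; and, since $\beta$ induces $\alpha_0$ over $\Sper(F)$ while $X(F_b)\neq\varnothing$ for almost all $b$, a local analysis at the places of good reduction (again via \cite[Theorem~4.1]{Scheiderer} applied over the completions $F_b$) shows that $Q'(F_b)\neq\varnothing$ for all closed points $b$. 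The strong Hasse principle for $Q'$ then gives $Q'(F)\neq\varnothing$, and weak approximation for $Q'$ yields an $F$-point of $Q'$ close to the chosen $F_b$-lifts of the $x_b$ at the places of $\Sigma$; its image under $Q'\to X$ is the desired point $x\in X(F)$.

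The main obstacle I expect is the construction of the section $\alpha_0$ over $\Sper(F)$ and the verification of its compatibility with the prescribed local classes $(\alpha_b)_{b\in\Sigma}$: this requires careful use of Scheiderer's sheaf-theoretic machinery on the real spectrum, of the local structure of the orderings of the function field of a curve over a real closed field, and of semi-algebraic gluing — precisely the phenomena that Proposition~\ref{prop:horrible} serves to package — together with the bookkeeping at the finitely many places of bad reduction needed to ensure that the twisted torsor $Q'$ has $F_b$-points at enough places for its strong Hasse principle to take effect.
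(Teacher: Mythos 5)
Your overall strategy is the paper's: evaluate $[\kQ^{-1}]$ at the prescribed local points, use Proposition~\ref{prop:horrible} to find a right $S$\nobreakdash-torsor~$P$ over~$F$ inducing the local classes and a compatible global section over $\Sper(F)$, twist, and then invoke the strong Hasse principle and weak approximation for the twist. But the bridge between the real spectrum of~$F$ and the completions~$F_b$ is missing, and the two places where you improvise it are wrong. First, over a general real closed field~$R$ the orderings of~$F$ are \emph{not} all induced by orderings of the completions~$F_b$ (there are ``free'' orderings, e.g.\ corresponding to Dedekind cuts of~$B(R)$ not realized by closed points), so your remark that ``$X$ has a point over every real closure of~$F$'' because real closures of the~$F_b$ are real closures of~$F$ does not follow from $X(F_b)\neq\emptyset$ for almost all~$b$. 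Second, at the end you claim $Q'(F_b)\neq\emptyset$ for \emph{all} closed points~$b$ via Scheiderer's Theorem~4.1 applied over~$F_b$; that theorem concerns connected groups, and for the non-connected~$S$ allowed here the map $h_S$ over $F_b\simeq\kappa(b)((t))$ is neither injective nor surjective in general (already for a finite constant group of odd order at a real place, or any nontrivial finite group at a complex place, where $H^1(F_b,S)\neq 0$ although the real-spectrum data is vacuous). So knowing that the twisted class is trivial over all real closures does not give $F_b$\nobreakdash-points of~$Q'$ at the places outside~$\Sigma$. What the paper proves instead is the equivalence (Lemma~\ref{lem:almosteverywhere}): for a non-empty $F$\nobreakdash-variety, having points over all real closed extensions of~$F$ is equivalent to having $F_b$\nobreakdash-points for all but finitely many~$b$; this is exactly what feeds the strong Hasse principle for the twist (one never needs $Q'(F_b)\neq\emptyset$ at every~$b$), and its proof requires a genuine constructibility/spreading-out argument on the real spectrum, not just smoothness of~$X$.

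Two further points. The gluing you flag as the main obstacle is handled in the paper not at the level of $\sH^1(S)$ but at the level of the sheaf $\sH^0(X)$ on $\Sper(F)$ whose stalks are $X(F_\xi)$: Lemma~\ref{lem:almosteverywhere} makes all stalks non-empty, so on the boolean space $\Sper(F)$ the restriction map to $\prod_{b\in\Sigma}H^0(\Sper(F_b),\sH^0(X))$ is surjective, and one takes $\alpha_0$ to be the image under evaluation of a section $x_0$ of $\sH^0(X)$ compatible with the~$x_b$. This is not a cosmetic choice: after twisting, it is precisely the fact that $\alpha_0$ is the evaluation at~$x_0$ that lets you lift points of~$X$ over every real closure to points of the twisted torsor, which is what the strong Hasse principle needs as input. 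Finally, your reduction ``in the non-trivial situation $X(F_b)\neq\emptyset$ for every~$b$, so enlarge~$\Sigma$ to contain the bad-reduction places'' is not available for the strong Hasse principle, whose hypothesis only provides points at all but finitely many places; the paper's argument never assumes points at every place nor enlarges~$\Sigma$ in this way.
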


\begin{proof}
To prove the strong Hasse principle and the weak approximation property for~$X$ in one go, we fix a finite
closed subset $\Sigma \subset B$, a family $(x_b)_{b\in \Sigma} \in \prod_{b \in \Sigma} X(F_b)$,
we assume that $X(F_b)\neq\emptyset$ for all but finitely many closed points~$b$ of~$B$, and we
seek a rational point of~$X$ arbitrarily close to $(x_b)_{b \in \Sigma}$.

We start with a lemma.

\begin{lem}
\label{lem:almosteverywhere}
For any non-empty variety~$Y$ over~$F$, the following are equivalent:
\begin{enumerate}[(i)]
\item $Y(F_b)\neq\emptyset$ for all but finitely many closed points~$b$ of~$B$;
\item $Y(F')\neq\emptyset$ for all real closed field extensions~$F'/F$.
\end{enumerate}
\end{lem}

\begin{proof}
After replacing~$Y$ with the disjoint union of finitely many locally closed subsets
which set-theoretically cover~$Y$, we may assume that~$Y$ is smooth.
After replacing~$Y$ with a smooth compactification, we may then assume that~$Y$ is
smooth and proper.  Let us fix a dense open subset $B^0 \subset B$ and a variety~$\kY$
endowed with a smooth and proper morphism $g:\kY \to B^0$ whose generic fibre is isomorphic
to~$Y$.
For any closed point $b \in B^0$,
the existence of an $F_b$\nobreakdash-point of~$Y$ is now equivalent to
that of a rational point of $g^{-1}(b)$.
As the latter condition is satisfied whenever~$b$ has degree~$2$ over~$R$, it follows that~(i) holds
if and only if
the image of the map $g(R):\kY(R)\to B^0(R)$
is the complement of a finite subset of~$B^0(R)$.
On the other hand, letting $g_r:\kY_r \to B^0_r$ denote the map induced by~$g$ at the level of real spectra
(\emph{i.e.}\ of abstract semi-algebraic spaces
in the sense of \cite[Appendix~A to Chapter~I]{delfsknebuschbook}),
condition~(ii) holds if and only if $g_r(\kY_r)$ contains the points of~$B_r$ that
lie over the generic point of~$B$.
Let us view~$B^0(R)$ as a subset of $B^0_r$.
As $g(R)(\kY(R))=g_r(\kY_r)\cap B^0(R)$
(\emph{loc.\ cit.}, Example~A.3~(ii) and \cite[Theorem~4.1.2]{bcr}),
as $B_r^0 \setminus g_r(\kY_r)$ is a constructible subset
of $B_r^0$
(see \cite[Proposition~2.3]{costeroyspectrereel})
and as any constructible subset of~$B_r^0$ either
contains both a point lying over the generic point of~$B$
and an infinite subset of~$B^0(R)$ or is itself a
finite subset of~$B^0(R)$
(see \cite[Theorem~7.2.3]{bcr}, \cite[Lemma~9.3]{DK2}),
we conclude that (i)$\Leftrightarrow$(ii).
\end{proof}

Let $\pi: X_r \to \Sper(F)$ denote the map induced
at the level of real spectra by the structural morphism of the variety~$X$
(see \cite[Appendix~A to Chapter~I]{delfsknebuschbook}).
For an open subset $U \subset \Sper(F)$, let~$J_U$ denote the category
introduced by Scheiderer \cite[(2.3)]{Scheiderer}, \cite[(1.13)]{scheidererbook};
its objects are the pairs $(L,s)$ where~$L$ is an \'etale $F$\nobreakdash-algebra
and $s:U \to \Sper(L)$ is a continuous section over~$U$ of the natural map $\Sper(L) \to \Sper(F)$.
The formula
\begin{align*}
H^0(U,\sH^0(X))=\varinjlim_{(L,s)\in J_U} X(L)
\end{align*}
defines a sheaf of sets $\sH^0(X)$ on~$\Sper(F)$.
For $\xi \in \Sper(F)$, the stalk
of~$\sH^0(X)$ at~$\xi$ is $X(F_\xi)$,
if~$F_\xi$ denotes the real closure of~$F$ with respect to the ordering~$\xi$.

Evaluation of the class $[Q^{-1}] \in \check{H}_{\et}^1(X,S)$ of the right $S$\nobreakdash-torsor $Q^{-1} \to X$
(see~\S\ref{torsors}) provides functorial maps $X(L) \to H^1(L,S)$ when~$L$ ranges over all $F$\nobreakdash-algebras,
hence a morphism
$\sH^0(X)\to \sH^1(S)$ of sheaves on~$\Sper(F)$.  These evaluation maps induce a morphism from the
commutative square
\begin{align}
\label{eq:commsquarebis}
\begin{aligned}
\xymatrix@R=3ex{
X(F) \ar[r] \ar[d] & \displaystyle\smash[b]{\prod_{b \in \Sigma}} X(F_b) \ar[d] \\
H^0(\Sper(F),\sH^0(X)) \ar[r] & \displaystyle\prod_{b \in \Sigma} H^0(\Sper(F_b),\sH^0(X))
}
\end{aligned}
\end{align}
to the commutative square~\eqref{eq:commsquarelemma}.

By Lemma~\ref{lem:almosteverywhere} applied to~$X$, all of the stalks of the sheaf~$\sH^0(X)$ are non-empty.
The bottom horizontal map of~\eqref{eq:commsquarebis}
is therefore surjective (see \cite[(C.2)]{Scheiderer}).
Let us fix $x_0 \in H^0(\Sper(F),\sH^0(X))$
having the same image as $(x_b)_{b\in \Sigma}$ in
$\prod_{b \in \Sigma} H^0(\Sper(F_b),\sH^0(X))$
and denote by $\alpha_0 \in H^0(\Sper(F),\sH^1(S))$ and $(\alpha_b)_{b \in \Sigma} \in \prod_{b \in \Sigma} H^1(F_b,S)$ the images of~$x_0$ and of~$(x_b)_{b \in \Sigma}$ in~\eqref{eq:commsquarelemma}.

By Proposition~\ref{prop:horrible}, there exists a right $S$\nobreakdash-torsor~$P$ over~$F$
whose isomorphism class induces~$\alpha_0$ and $(\alpha_b)_{b \in \Sigma}$.
After replacing the algebraic group~$S$ with its inner form~$_PS$
and the left $S$\nobreakdash-torsor $Q \to X$ with its twist $_PQ\to X$, which is a left $_PS$\nobreakdash-torsor
(see~\textsection\ref{torsors}), we may assume that $\alpha_0$ and the~$\alpha_b$ for $b \in \Sigma$ are
all trivial.

For any
real closed field
extension~$F'/F$, the $F'$\nobreakdash-point of~$X$ induced by~$x_0$
can be lifted to an $F'$\nobreakdash-point of~$Q$ since~$\alpha_0$ is trivial.
Thus~$Q(F')\neq\emptyset$ for all real closed field extensions~$F'/F$.
Since~$Q$ satisfies the strong Hasse principle,
Lemma~\ref{lem:almosteverywhere} now implies that~$Q(F)\neq\emptyset$.
As the~$\alpha_b$ for $b\in\Sigma$ are trivial, each $x_b \in X(F_b)$ can be lifted
to some $q_b \in Q(F_b)$, which we fix.  As~$Q$ satisfies the weak approximation property and as $Q(F)\neq\emptyset$,
we can find $q \in Q(F)$ arbitrarily close to $(q_b)_{b \in \Sigma}$.
The image of~$q$ in~$X(F)$ is then
arbitrarily close to $(x_b)_{b \in \Sigma}$, as desired.
\end{proof}

\subsubsection{Colliot-Thélène's conjecture over real closed fields}
\label{subsubsec:ctconjrealclosed}

We can at last prove the main theorem of~\textsection\ref{subsec:wahom},
Theorem~\ref{weakhomo} below.
Under the additional assumption that the stabilizers are connected,
Theorem~\ref{weakhomo}
is due to Scheiderer \cite[Corollary~6.2 and Theorem~8.9]{Scheiderer},
on whose work our proof of Theorem~\ref{weakhomo} depends.

\begin{thm}
\label{weakhomo}
Let~$R$ be a real closed field.  Let~$B$ be a smooth projective connected curve over~$R$, with function field~$F$.
Any homogeneous space of a connected linear algebraic group over~$F$ satisfies the weak approximation property.
\end{thm}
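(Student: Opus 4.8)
The plan is to transpose the proof of Theorem~\ref{homogeneous} (equivalently, of Theorem~\ref{intro:thmD}) to the real closed setting, using the following dictionary: the tight approximation property is replaced throughout by the conjunction of the weak approximation property and the strong Hasse principle; Theorem~\ref{projective} is replaced by the classical weak approximation theorem for the one-dimensional function field~$F$, which is valid over an arbitrary base field ($F$ is dense in $\prod_{b\in\Sigma}F_b$ for every finite set~$\Sigma$ of closed points, by Riemann--Roch) and which, combined with Kneser's birational invariance of weak approximation \cite[\S2.1]{Kneser}, yields weak approximation for every stably rational variety over~$F$; Theorem~\ref{descent} is replaced by Theorem~\ref{th:wadescent}; and Scheiderer's Hasse principle is invoked in the strengthened form valid over real closed fields \cite[Corollary~6.2]{Scheiderer}, whose conclusion is exactly the strong Hasse principle in the sense of~\textsection\ref{subsubsec:descentwastronghp}.

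Concretely, I would argue in the following steps. First, stably rational varieties over~$F$ satisfy weak approximation (by the above) and the strong Hasse principle (trivially, since they have an $F$\nobreakdash-point), and a product of two varieties satisfying ``weak approximation and the strong Hasse principle'' satisfies it again (the weak approximation part being immediate, and the strong Hasse principle part following from the fact that a product has local points at all but finitely many~$b$ if and only if both factors do). Second, every torus~$H$ over~$F$ satisfies ``weak approximation and the strong Hasse principle'': writing $1\to S\to Q\to H\to 1$ with~$Q$ quasi-trivial as in the proof of Proposition~\ref{torus}, the torus~$Q$ is a left $S$\nobreakdash-torsor over~$H$, and since $H^1(F,Q)=0$ every twist of this torsor by a right $S$\nobreakdash-torsor over~$F$ is, as a variety, isomorphic to~$Q$, hence $F$\nobreakdash-rational; Theorem~\ref{th:wadescent} then applies. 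The same dichotomy shows that \emph{torsors} under tori over~$F$ satisfy ``weak approximation and the strong Hasse principle'' as well: such a torsor is a homogeneous space of a torus, so satisfies the strong Hasse principle by Scheiderer's theorem; and it satisfies weak approximation because it is isomorphic to a torus when it has an $F$\nobreakdash-point, while otherwise it has no $F_b$\nobreakdash-point for infinitely many~$b$ (Scheiderer's Hasse principle together with Lemma~\ref{lem:almosteverywhere}) and therefore satisfies weak approximation vacuously.

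Third, I would deduce that connected linear algebraic groups over~$F$ satisfy ``weak approximation and the strong Hasse principle'': by Borel's structure theorem such a group is $F$\nobreakdash-birational to the product of its $F$\nobreakdash-rational unipotent radical and a connected reductive group, which reduces us to the reductive case; by the structure theory of reductive groups (\cite[Exposé~XIII, Théorème~3.1]{SGA32}, \cite[Exposé~XIV, Théorème~6.1]{SGA32}) a connected reductive $F$\nobreakdash-group is $F$\nobreakdash-birational to a torus over an $F$\nobreakdash-rational variety, whose fibers over the $F$\nobreakdash-points of a dense open of the base are torsors under $F$\nobreakdash-tori; a fibration argument---the analogue for ``weak approximation and the strong Hasse principle'' of Theorem~\ref{up}, obtained along the lines of the fibration method of Colliot-Thélène and Gille \cite{CTG}---then concludes, using the previous step. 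Finally, for an arbitrary homogeneous space~$X$ of a connected linear algebraic group over~$F$: the strong Hasse principle for~$X$ is Scheiderer's Hasse principle; if $X(F)=\emptyset$, then~$X$ has no $F_b$\nobreakdash-point for infinitely many~$b$ (again by Scheiderer's Hasse principle and Lemma~\ref{lem:almosteverywhere}), so weak approximation holds vacuously; and if $X(F)\neq\emptyset$ one writes $X=S\backslash H$ with~$H$ connected linear and~$S\subseteq H$ and applies Theorem~\ref{th:wadescent} to the left $S$\nobreakdash-torsor $H\to X$, noting that every twist~${}_PH$ is a right $H$\nobreakdash-torsor over~$F$, which is a homogeneous space of~$H$, hence satisfies the strong Hasse principle by Scheiderer, and which satisfies weak approximation by the same dichotomy as above (it is isomorphic to~$H$, and so covered by the connected linear group case, when it has an $F$\nobreakdash-point, and satisfies weak approximation vacuously otherwise).

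The genuinely hard content of the argument is already packaged into Proposition~\ref{prop:horrible} and Theorem~\ref{th:wadescent}, which rest on Scheiderer's cohomological results and on the semi-algebraic topology of $B(C)$; granting those, the present proof is an assembly, with Lemma~\ref{lem:almosteverywhere} providing the bridge between ``points at all but finitely many~$b$'' and ``points in every real closure of~$F$'' that makes the dévissage and Theorem~\ref{th:wadescent} interlock, and Scheiderer's Hasse principle supplying the final descent step. The main remaining obstacle is the passage from tori to connected reductive groups: one must check that the fibration method still works when ``tight approximation'' is weakened to ``weak approximation and the strong Hasse principle'', i.e.\ one must prove the analogue of Theorem~\ref{up} in this setting, the point being that this analogue (unlike Theorem~\ref{intro:thmB}) concerns a fibration over an $F$\nobreakdash-\emph{rational} base, for which the classical fibration arguments for weak approximation and for the Hasse principle apply. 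Once that statement is in hand, the remaining reductions are formal, exactly as in the proof of Theorem~\ref{homogeneous}.
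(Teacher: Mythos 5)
Your final step is exactly the paper's proof of Theorem~\ref{weakhomo}: dispose of the case where some $X(F_b)=\emptyset$, use Scheiderer's Hasse principle (with Lemma~\ref{lem:almosteverywhere}) to get $X(F)\neq\emptyset$, write $X=S\backslash H$, and apply Theorem~\ref{th:wadescent} to the left $S$\nobreakdash-torsor $H\to X$, the twists ${}_PH$ being right $H$\nobreakdash-torsors. The divergence, and the gap, lies in how you verify the hypothesis of Theorem~\ref{th:wadescent} for these twists, i.e.\ weak approximation for (torsors under) the connected group~$H$. The paper does not reprove this: it quotes Scheiderer directly (\cite[Corollary~4.2, Remark~4.3, Proposition~8.8]{Scheiderer}). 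You instead redo the d\'evissage of Propositions~\ref{torus} and~\ref{group}, and the passage from tori to reductive groups rests on an ``analogue of Theorem~\ref{up} for weak approximation plus the strong Hasse principle'' which you do not prove, asserting that the classical fibration arguments of \cite{CTG} apply because the base is $F$\nobreakdash-rational. This is precisely where the real places bite. In the classical argument one approximates the images of the given local points by a rational point $x'$ of the base and then needs a rational point of the fibre $X_{x'}$ close to prescribed local points at the finite set $\Sigma$; to extract it from ``weak approximation $+$ strong Hasse principle'' of the fibre one must know that $X_{x'}(F_b)\neq\emptyset$ for all but finitely many~$b$. Over a complex curve this is automatic ($\mathrm{cd}(F_b)\leq 1$), but here $B$ has infinitely many real closed points, $H^1(F_b,T)$ is nontrivial there, and the choice of $x'$ only controls the places in~$\Sigma$: a fibre that is merely a torsor under a torus may have empty $F_b$\nobreakdash-locus at infinitely many real places, in which case both properties of the fibre are vacuous and produce nothing. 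This is the reciprocity phenomenon recalled at the beginning of \textsection\ref{subsec:noreciprocity} (weak approximation does fail for some smooth proper rationally connected varieties over~$F$), it is the reason why plain weak approximation does not behave well in fibrations over these fields, and it is why Proposition~\ref{prop:horrible} and Theorem~\ref{th:wadescent} are proved by real-spectrum and semi-algebraic methods rather than by the complex-curve fibration method; so the fibration principle you invoke cannot be taken for granted.

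The gap can be closed in two ways. Either quote Scheiderer for torsors under connected linear groups, as the paper does, which makes your steps 1--4 unnecessary; or observe that in the structure theorem used in Proposition~\ref{group} the reductive group is birational to the total space of a \emph{group scheme} of tori over an $F$\nobreakdash-rational base, so that the fibres over $F$\nobreakdash-points of a dense open are genuine tori, with identity sections, hence with $F_b$\nobreakdash-points for every~$b$; for such a fibration (base satisfying weak approximation, fibres everywhere locally soluble and satisfying weak approximation) the classical argument does go through, and combined with your descent proof of weak approximation for tori via Theorem~\ref{th:wadescent} this repairs the reductive-group step without Scheiderer's Proposition~8.8. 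As written, however, the proposal leaves the crucial connected-group case resting on an unproved fibration principle whose classical proof breaks down at exactly the real places.
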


\begin{proof}
Let~$X$ be a homogeneous space of a connected linear algebraic group~$H$ over~$F$.
If $X(F_b)=\emptyset$ for some closed point $b\in B$, weak approximation trivially holds for~$X$.
Otherwise, as~$X$ satisfies the Hasse principle (see \cite[Corollary~6.2]{Scheiderer}), one has $X(F)\neq\emptyset$,
so that one can write $X=S \backslash H$ for some algebraic subgroup~$S$ of~$H$.
The quotient map $H\to S\backslash H$
is a left $S$\nobreakdash-torsor.
For any right $S$\nobreakdash-torsor $P$ over~$F$,
the variety $_PH$ is a right $H$\nobreakdash-torsor over~$F$;
in particular, it satisfies the strong Hasse principle and weak approximation
(see \cite[Corollary~4.2, Remark~4.3, Proposition~8.8]{Scheiderer}).
Applying Theorem~\ref{th:wadescent} now concludes the proof.
\end{proof}

\section{The image of the Borel--Haefliger map}

In this section, we consider a property that we think of as a cohomological variant of the tight approximation property and that we can verify in a greater generality.

\subsection{The Borel--Haefliger map}

For all smooth varieties $X$ over $\R$ and all $i\geq 0$, Borel and Haefliger have constructed a map $\cl_\R: \CH^i(X)\to H^i(X(\R),\Z/2\Z)$ compatible with pull-backs, proper push-forwards and cup products: the Borel--Haefliger cycle class map \cite{BH} (see also \cite[\S 1.6.2]{BWI}).
We restrict to the case where $X$ has pure dimension $d$ and $i=d-1$. The Borel--Haefliger map then reads $\cl_\R: \CH_1(X)\to H_1(X(\R),\Z/2\Z)$ and we let $H_1^{\alg}(X(\R),\Z/2\Z)$ be its image.

Recall from~\textsection\ref{conventions}
that~$B$ denotes a fixed smooth projective connected curve over~$\R$ (with function field~$F$).
One has $H_1(B(\R),\Z/2\Z)\simeq(\Z/2\Z)^{\pi_0(B(\R))}$, and the group $H_1^{\alg}(B(\R),\Z/2\Z)$, generated by $[B(\R)]$, is isomorphic to $\Z/2\Z$ unless $B(\R)=\varnothing$. If $f: \kX\to B$ is a proper flat morphism with $\kX$ regular, then
 \begin{equation}
\label{obviousincl}
H_1^{\alg}(\kX(\R),\Z/2\Z)\subset\{\alpha\in H_1(\kX(\R),\Z/2\Z)\mid f(\R)_*\alpha\in \langle[B(\R)]\rangle\}
\end{equation}
by the compatibility of $\cl_{\R}$ with proper push-forwards.

\begin{defn}
\label{defn:falg}
Let  $f: \kX\to B$ be a proper flat morphism with $\kX$ regular. We say that $H_1(\kX(\R),\Z/2\Z)$ is $f$-\textit{algebraic} if the inclusion (\ref{obviousincl}) is an equality.
\end{defn}

\begin{rmks}
\label{rmkBH}
(i) Suppose that $\kX=X\times B$ for a smooth proper variety $X$ over~$\R$, that $f$ is the second projection, that $B(\R)\neq\varnothing$, and that $H_1(\kX(\R),\Z/2\Z)$ is $f$\nobreakdash-algebraic. Then $H_1^{\alg}(X(\R),\Z/2\Z)=H_1(X(\R),\Z/2\Z)$.

(ii) If $H_1(\kX(\R),\Z/2\Z)$ is $f$-algebraic and $g: \kX\times \P^d_\R\to B$ is the composition of the first projection and of the morphism $f$, the K\"unneth formula shows that  $H_1(\kX(\R)\times\P^d(\R),\Z/2\Z)$ is $g$-algebraic.
\end{rmks}

\subsection{Birational invariance}
This property is a birational invariant.

\begin{prop}
\label{birinvBH}
Let $f: \kX\to B$ and $f': \kX'\to B$ be proper flat morphisms with $\kX$ and $\kX'$ regular.
Let $g: \kX'\dashrightarrow \kX$ be a birational map such that $f\circ g=f'$. If
$H_1(\kX(\R),\Z/2\Z)$ is $f$-algebraic, then $H_1(\kX'(\R),\Z/2\Z)$ is $f'$-algebraic.
\end{prop}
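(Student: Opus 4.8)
The strategy is to follow the proof of Theorem~\ref{birinv}: reduce to the case of a blow-up along a smooth centre, and treat that case by an explicit argument. Since the property under consideration is phrased through algebraic cycles rather than through analytic sections, the reduction rests on pushforwards of $1$-cycles. Recall that $H_1^\alg(-,\Z/2\Z)$ is a subgroup of $H_1(-,\Z/2\Z)$, being the image of the homomorphism $\cl_\R$.

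The heart of the matter is two assertions for a blow-up $b\colon\kY\to\kZ$ along a smooth centre $Z\subsetneq\kZ$, where $\kY,\kZ$ are regular and proper over $B$, $f_\kZ\colon\kZ\to B$, and $f_\kY=f_\kZ\circ b$. \emph{Lemma A:} if $H_1(\kZ(\R),\Z/2\Z)$ is $f_\kZ$-algebraic, then $H_1(\kY(\R),\Z/2\Z)$ is $f_\kY$-algebraic. Indeed, given $\gamma\in H_1(\kY(\R),\Z/2\Z)$ with $f_\kY(\R)_*\gamma\in\langle[B(\R)]\rangle$, one has $f_\kZ(\R)_*(b(\R)_*\gamma)\in\langle[B(\R)]\rangle$, so $b(\R)_*\gamma=\cl_\R(z)$ for some $z\in\CH_1(\kZ)$; using the surjectivity of $b_*$ on $\CH_1$ (blow-up formula) write $z=b_*z'$, so that $\gamma-\cl_\R(z')$ lies in $\ker\big(b(\R)_*\colon H_1(\kY(\R),\Z/2\Z)\to H_1(\kZ(\R),\Z/2\Z)\big)$; thus Lemma A reduces to the inclusion of that kernel in $H_1^\alg(\kY(\R),\Z/2\Z)$. \emph{Lemma B:} if $H_1(\kY(\R),\Z/2\Z)$ is $f_\kY$-algebraic, then $H_1(\kZ(\R),\Z/2\Z)$ is $f_\kZ$-algebraic. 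Using that $b(\R)_*$ is surjective on $H_1(-,\Z/2\Z)$, lift $\gamma\in H_1(\kZ(\R),\Z/2\Z)$ (with $f_\kZ(\R)_*\gamma\in\langle[B(\R)]\rangle$) to $\gamma'\in H_1(\kY(\R),\Z/2\Z)$; then $f_\kY(\R)_*\gamma'=f_\kZ(\R)_*\gamma\in\langle[B(\R)]\rangle$, so $\gamma'=\cl_\R(z')$ and $\gamma=\cl_\R(b_*z')\in H_1^\alg(\kZ(\R),\Z/2\Z)$.

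If $Z(\R)=\emptyset$, then $b(\R)$ is a homeomorphism and the two inputs to Lemmas A and B are trivial; so assume $Z(\R)\neq\emptyset$, so that $b(\R)\colon\kY(\R)\to\kZ(\R)$ is the differentiable blow-up along the submanifold $Z(\R)$. Surjectivity of $b(\R)_*$ on $\Z/2\Z$-homology holds because the strict transform of a $\Z/2\Z$ $1$-cycle maps onto it with degree one. A local computation near the exceptional divisor $E=\P(N_{Z/\kZ})$ — of the same nature as the one carried out in \cite[Proof of Lemma~2.1]{BKrat} — shows that $\ker(b(\R)_*)$ on $H_1$ is spanned by the classes of the real loci $\P^1(\R)$ of lines $\P^1\subset E_z$ contained in the fibres of $E\to Z$ above points $z\in Z(\R)$. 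Each such line is an algebraic curve on $\kY$, so its class lies in $H_1^\alg(\kY(\R),\Z/2\Z)$; hence $\ker(b(\R)_*)\subseteq H_1^\alg(\kY(\R),\Z/2\Z)$, which completes Lemma A.

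Both Lemmas A and B propagate along compositions of blow-ups along smooth centres over $B$. To finish, given $g\colon\kX'\dashrightarrow\kX$ with $f\circ g=f'$ as in the statement, I would use Hironaka \cite{Hironaka} to dominate $g$ by such compositions: a regular $\kZ$ proper over $B$ together with compositions of blow-ups along smooth centres $p\colon\kZ\to\kX'$ and $q\colon\kZ\to\kX$ with $q=g\circ p$, so that $f'\circ p=f\circ q$. Then Lemma A applied to $q$ shows $H_1(\kZ(\R),\Z/2\Z)$ is $(f\circ q)$-algebraic, and Lemma B applied to $p$ shows $H_1(\kX'(\R),\Z/2\Z)$ is $f'$-algebraic, which is the assertion. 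The step I expect to be the main obstacle is the blow-up computation, namely the identification of $\ker(b(\R)_*)$ on $\Z/2\Z$-homology with the span of the exceptional fibre-line classes (equivalently, checking directly that this kernel consists of algebraic classes); everything else is formal manipulation with pushforwards of $1$-cycles together with the compatibilities of $\cl_\R$ recorded in the text.
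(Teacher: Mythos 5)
Your skeleton is the paper's own: your Lemma~A is parts (ii)--(iii) of the paper's Lemma~\ref{blowup}, your Lemma~B is its part~(i), and the reduction to blow-ups goes through Hironaka as in Theorem~\ref{birinv}. But two points need attention. The final reduction, as you state it, asks for a single $\kZ$ with \emph{both} $p\colon\kZ\to\kX'$ and $q\colon\kZ\to\kX$ compositions of blow-ups along smooth centres and $q=g\circ p$. Hironaka's elimination of indeterminacies does not provide this; it is the strong factorization problem, which is open. What you do get (applying Hironaka to $g^{-1}$, as the paper does) is a tower of smooth blow-ups $\pi\colon\kX''\to\kX$ over the variety where $f$\nobreakdash-algebraicity is \emph{known}, together with a mere birational morphism $h\colon\kX''\to\kX'$ satisfying $g\circ h=\pi$. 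This suffices for your scheme, because Lemma~A (which genuinely uses the blow-up structure through the kernel computation) is only ever applied on the $\kX$\nobreakdash-side, whereas Lemma~B needs nothing beyond surjectivity of the pushforward on $H_1(\cdot(\R),\Z/2\Z)$, and that surjectivity holds for an arbitrary birational morphism between smooth proper varieties over $\R$ (the paper gets it from $h(\R)_*\circ h(\R)^*=\Id$ on mod~$2$ cohomology plus Poincar\'e duality; your strict-transform argument also works after perturbing the cycle off the image of the exceptional locus, which has codimension $\geq 2$). So the orientation matters: the blow-up tower must sit over the side where the hypothesis holds.

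The second point is the step you yourself flag as the main obstacle: the identification of $\ker\bigl(b(\R)_*\colon H_1(\kY(\R),\Z/2\Z)\to H_1(\kZ(\R),\Z/2\Z)\bigr)$ with the span of the classes of real lines in the exceptional fibres over points of $Z(\R)$. This is the mathematical heart of the proposition and is left unproved; it is also not of the same nature as the jet computation in \cite[proof of Lemma~2.1]{BKrat}, which concerns strict transforms of sections rather than homology. The claim is true, and the paper establishes it by computing the mod~$2$ cohomology of a blow-up of closed real manifolds (Gitler's results combined with the Thom isomorphism and the cohomology of a real projective bundle, see \cite{gitler} and \cite{husemoller}): this yields an isomorphism $H^{d-1}(\kZ(\R),\Z/2\Z)\oplus H^{d-c}(Z(\R),\Z/2\Z)\simeq H^{d-1}(\kY(\R),\Z/2\Z)$ which, dualized, gives a short exact sequence $0\to H_0(Z(\R),\Z/2\Z)\to H_1(\kY(\R),\Z/2\Z)\to H_1(\kZ(\R),\Z/2\Z)\to 0$ whose first map sends a point class to the Borel--Haefliger class of a line in the corresponding real projective fibre -- precisely the algebraicity of the kernel. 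Your remaining manipulations (the $\CH_1$\nobreakdash-surjectivity for a smooth blow-up playing the role of the paper's Lemma~\ref{blowup}~(ii), the compatibility of $\cl_\R$ with proper pushforward, and the propagation along compositions) are correct.
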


\begin{proof}
We first examine the case where~$g$ is a morphism.

\begin{lem}
\label{blowup}
Let $g: \kX'\to\kX$ be a birational morphism between smooth and proper varieties over $\R$.
\begin{enumerate}[(i)]
\item The map $g(\R)_*: H_1(\kX'(\R),\Z/2\Z)\to H_1(\kX(\R),\Z/2\Z)$ is onto.
\item One has $g(\R)_*(H_1^{\alg}(\kX'(\R),\Z/2\Z))=H_1^{\alg}(\kX(\R),\Z/2\Z)$.
\item If~$g$ is the blow-up of a smooth
subvariety of $\kX$, the kernel of $g(\R)_*$ is algebraic.
\end{enumerate}
\end{lem}

\begin{proof}
The equality $g(\R)_*\circ g(\R)^*=\Id$ in cohomology with $\Z/2\Z$ coefficients implies
the injectivity
of $g(\R)^*:H^1(\kX(\R),\Z/2\Z) \to H^1(\kX'(\R),\Z/2\Z)$,
and hence~(i) by Poincar\'e duality.
To prove~(ii), note that $\CH_1(\kX)$ is generated by classes of irreducible curves
that meet a dense open subset of~$\kX$ over which~$g$ is an isomorphism,
and consider their strict transforms in $\kX'$.
Let us now assume that~$g$ is the blow-up of a smooth subvariety $Z \subset \kX$ of codimension~$c\geq 2$,
with exceptional divisor~$E$,
and verify~(iii).
Let $d=\dim(\kX)$.
Let $i:E \hookrightarrow \kX$ and $\tau:E\to Z$ denote the natural morphisms.
Let $\xi \in H^1(E(\R),\Z/2\Z)$ be the first Stiefel--Whitney class
of the tautological line bundle $\sO_E(1)$ of the projective bundle~$\tau$.
The map
\begin{align}
\label{eq:realblowup}
H^{d-1}(\kX(\R),\Z/2\Z) \oplus H^{d-c}(Z(\R),\Z/2\Z) \isoto H^{d-1}(\kX'(\R),\Z/2\Z)
\end{align}
given by $x\oplus y \mapsto g(\R)^*x+  i(\R)_*(\xi^{c-2} \smile \tau(\R)^*y) $
is an isomorphism, as follows from the computation
of the cohomology with $\Z/2\Z$ coefficients of a blow-up of real manifolds
(combine \cite[Theorem~3.7, Theorem~3.10 and~\textsection5]{gitler} with the Thom isomorphism
and with the computation of the cohomology with $\Z/2\Z$ coefficients of a real projective bundle
\cite[Chapter~17, Theorem~2.5 and Theorem~10.3]{husemoller}).
Identifying the cohomology of the compact manifolds~$\kX(\R)$, $Z(\R)$, $\kX'(\R)$ with their homology
and using the equality $g(\R)_*\circ g(\R)^\star=\Id$ and the projection formula, we deduce from the
isomorphism~\eqref{eq:realblowup} the exactness of the sequence
\begin{align}
0 \to H_0(Z(\R),\Z/2\Z) \xrightarrow{\;\alpha\;} H_1(\kX'(\R),\Z/2\Z) \xrightarrow{g(\R)_*} H_1(\kX(\R),\Z/2\Z)\to 0\rlap,
\end{align}
where~$\alpha$ is defined by $\alpha(y)= i(\R)_*(\xi^{c-2} \smile \tau(\R)^*y)$.
As~$\alpha$ sends the class
of any point $z \in Z(\R)$ to the Borel--Haefliger cycle class in~$\kX'(\R)$
of a line of the projective space~$\tau^{-1}(z)$, assertion~(iii) follows.
\end{proof}

Let us deduce Proposition~\ref{birinvBH}, whose notation we take up, from Lemma~\ref{blowup}.
By Hironaka's theorem on the resolution of indeterminacies \cite[\S 5]{Hironaka} applied to~$g^{-1}$, there exist
a composition of blow-ups with smooth centres $\pi:\kX'' \to \kX$ and a morphism $h:\kX''\to \kX'$
such that $g \circ h = \pi$.
If $H_1(\kX(\R),\Z/2\Z)$ is $f$\nobreakdash-algebraic, then $H_1(\kX''(\R),\Z/2\Z)$ is
$(f\circ \pi)$\nobreakdash-algebraic
by Lemma~\ref{blowup}~(ii)--(iii) applied to the individual blow-ups that make up~$\pi$.
This, in turn, implies that $H_1(\kX'(\R),\Z/2\Z)$ is $f'$\nobreakdash-algebraic,
by Lemma~\ref{blowup}~(i) applied to~$h$.
\end{proof}

\subsection{Fibrations}

We now prove the counterpart of Theorem \ref{up}.

\begin{thm}
\label{fibBH}
Let $f: \kX\to B$ and $f': \kX'\to B$ be proper flat morphisms with $\kX$ and $\kX'$ regular, let $g: \kX\to\kX'$ be a dominant morphism with $f'\circ g=f$ and let $(\kX')^0\subset \kX'$ be a dense open subset.
Suppose that $H_1(\kX'(\R),\Z/2\Z)$ is $f'$-algebraic. Suppose moreover that for all  connected smooth projective curves $\widetilde{B}$ over $\R$, all morphisms $\nu: \widetilde{B}\to \kX'$ with $f'\circ\nu$ dominant and $\nu(\widetilde{B})\cap(\kX')^0\neq\varnothing$, all regular modifications $\kY\to\kX\times_{\kX'}\widetilde{B}$ with projection $h: \kY\to \widetilde{B}$ and all $\ci$ sections $v: \widetilde{B}(\R)\to \kY(\R)$ of $h(\R)$, one has $v_*[\widetilde{B}(\R)]\in H_1^{\alg}(\kY(\R),\Z/2\Z)$.
Then $H_1(\kX(\R),\Z/2\Z)$ is $f$-algebraic.
\end{thm}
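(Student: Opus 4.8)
The plan is to follow the architecture of the proof of Theorem~\ref{up}, replacing the holomorphic section there by a $\ci$ loop representing the homology class under consideration, the tight approximation property of~$X'$ by the first hypothesis (that $H_1(\kX'(\R),\Z/2\Z)$ is $f'$\nobreakdash-algebraic), and the tight approximation property of the fibres of~$g$ by the second hypothesis. We may assume $\kX(\R)\neq\varnothing$. By~(\ref{obviousincl}) it suffices to fix $\alpha\in H_1(\kX(\R),\Z/2\Z)$ with $f(\R)_*\alpha\in\langle[B(\R)]\rangle$ and to prove $\alpha\in H_1^{\alg}(\kX(\R),\Z/2\Z)$.

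First I would use Proposition~\ref{birinvBH} and the weak toroidalization theorem \cite[Theorem~1.1]{ADK}, exactly as in the proof of Theorem~\ref{up}, to reduce to the case where~$g$ underlies a toroidal morphism $g\colon(\kU\subset\kX)\to(\kU'\subset\kX')$ of snc toroidal embeddings with~$\kX$ and~$\kX'$ projective; then~$g$ is smooth along~$\kU$. Since~$g$ has smooth generic fibre, the image in~$\kX'$ of its non-smooth locus is a proper closed subset; as the second hypothesis for a dense open subset of~$\kX'$ implies it for any smaller dense open subset, we may shrink~$(\kX')^0$ and assume it is contained in the dense open locus of~$\kX'$ over which~$g$ is smooth. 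Next, represent~$\alpha$ by an embedded closed $1$\nobreakdash-submanifold $N\subset\kX(\R)$, which by Whitney's theorem (Lemma~\ref{Whitney}) and a small generic perturbation we may take to be real analytic, with no component contained in $\kX\setminus\kU$ or in a fibre of~$g$, with $g|_N$ an immersion at the finitely many points $x$ where~$N$ meets $\kX\setminus\kU$, and with every component meeting $g^{-1}((\kX')^0)(\R)$. Applying Proposition~\ref{toroidaltheorem} to~$g$ and to the discrete valuation rings $\sO^{\an}_{N,x}$ at those points~$x$, and then replacing $\kX$, $\kX'$, $g$ by the resulting regular modifications and~$N$ by its strict transform (using Proposition~\ref{birinvBH} to see that the hypotheses and the conclusion are preserved), one arranges that~$g$ is smooth along~$N$, so that $g(\R)$ is submersive along~$N$. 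A tubular neighbourhood argument then yields a $\ci$ section $w_0\colon W_0\to\kX(\R)$ of~$g(\R)$ over a neighbourhood~$W_0$ of $g(\R)(N)$ in~$\kX'(\R)$ which restricts to the inclusion of~$N$ over $g(\R)(N)$.

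For the homological core, set $\alpha':=g(\R)_*\alpha$; then $f'(\R)_*\alpha'=f(\R)_*\alpha\in\langle[B(\R)]\rangle$, so $\alpha'\in H_1^{\alg}(\kX'(\R),\Z/2\Z)$ by the first hypothesis. Standard Bertini and moving-lemma arguments---supplemented, if necessary, by adding pairs of small deformations of a general complete intersection curve through a real point of~$(\kX')^0$ in order to guarantee the presence of real points---produce finitely many smooth connected projective curves~$\widetilde B^{(j)}$ over~$\R$ with morphisms $\nu^{(j)}\colon\widetilde B^{(j)}\to\kX'$ such that $f'\circ\nu^{(j)}$ is dominant, $\nu^{(j)}(\widetilde B^{(j)})\cap(\kX')^0\neq\varnothing$, $\sum_j\nu^{(j)}(\R)_*[\widetilde B^{(j)}(\R)]=\alpha'$, and $\widetilde B^{(j)}(\R)\neq\varnothing$ for some~$j$. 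Since $[g(\R)(N)]+\sum_j[\nu^{(j)}(\R)(\widetilde B^{(j)}(\R))]=0$ in $H_1(\kX'(\R),\Z/2\Z)$, there are a compact surface with boundary~$W$ and a $\ci$ map $\rho\colon W\to\kX'(\R)$ together with an identification $\partial W\cong N\sqcup\bigsqcup_j\widetilde B^{(j)}(\R)$ under which~$\rho$ restricts to $g(\R)|_N$ on~$N$ and to~$\nu^{(j)}(\R)$ on~$\widetilde B^{(j)}(\R)$. I would then lift~$\rho$ to a $\ci$ map $\widetilde\rho\colon W\to\kX(\R)$ with $g(\R)\circ\widetilde\rho=\rho$ restricting to the inclusion of~$N$ on $N\subset\partial W$: lift over $\rho^{-1}(W_0)$ by means of~$w_0$, then extend over~$W$; after attaching $1$-handles to~$W$ so that every component meets some~$\widetilde B^{(j)}(\R)$, the only obstruction lies in $H^2(W,N;\pi_1)\cong H_0(W,\bigsqcup_j\widetilde B^{(j)}(\R);\pi_1)$ (coefficients in $\pi_1$ of a fibre of $g(\R)$) and vanishes. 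Setting $\varepsilon^{(j)}:=\widetilde\rho|_{\widetilde B^{(j)}(\R)}$, each $\varepsilon^{(j)}$ is a $\ci$ section over $\widetilde B^{(j)}(\R)$ of $(\kX\times_{\kX'}\widetilde B^{(j)})(\R)=\kX(\R)\times_{\kX'(\R)}\widetilde B^{(j)}(\R)\to\widetilde B^{(j)}(\R)$; because $\nu^{(j)}(\widetilde B^{(j)})$ meets~$(\kX')^0$ and hence the smooth locus of~$g$, the fibre product $\kX\times_{\kX'}\widetilde B^{(j)}$ has at most finitely many singular fibres over~$\widetilde B^{(j)}$, so choosing a regular modification $\kY^{(j)}\to\kX\times_{\kX'}\widetilde B^{(j)}$ that is an isomorphism over the regular locus and perturbing $\varepsilon^{(j)}$ by general position, one lifts $\varepsilon^{(j)}$ to a $\ci$ section $v^{(j)}$ of $h^{(j)}\colon\kY^{(j)}\to\widetilde B^{(j)}$. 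The second hypothesis gives $v^{(j)}_*[\widetilde B^{(j)}(\R)]\in H_1^{\alg}(\kY^{(j)}(\R),\Z/2\Z)$, hence $q^{(j)}(\R)_*v^{(j)}_*[\widetilde B^{(j)}(\R)]\in H_1^{\alg}(\kX(\R),\Z/2\Z)$ for the proper morphism $q^{(j)}\colon\kY^{(j)}\to\kX$; and since $q^{(j)}(\R)\circ v^{(j)}=\widetilde\rho|_{\widetilde B^{(j)}(\R)}$ and $[\partial W]=0$ in $H_1(W,\Z/2\Z)$, pushing the relation $\sum_j[\widetilde B^{(j)}(\R)]=[N]$ forward by~$\widetilde\rho$ yields $\sum_j q^{(j)}(\R)_*v^{(j)}_*[\widetilde B^{(j)}(\R)]=\alpha$, so $\alpha\in H_1^{\alg}(\kX(\R),\Z/2\Z)$.

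The step I expect to be the main obstacle is the existence of the lift~$\widetilde\rho$ of~$\rho$. Unlike in Theorem~\ref{up}, where the approximating algebraic sections automatically lie in a tubular neighbourhood of the given holomorphic section (so that~$g$ is automatically submersive there), here the curves~$\widetilde B^{(j)}$ provided by $f'$\nobreakdash-algebraicity meet the loop~$N$ only up to homology, so the surface~$W$ must traverse regions of~$\kX'(\R)$ far from $g(\R)(N)$; the obstruction computation above is valid only once $\rho(W)$ is kept inside the locus over which~$g$ is smooth with non-empty real fibres. As in the proof of Theorem~\ref{up}, this is to be achieved by general position together with further applications of Proposition~\ref{toroidaltheorem} controlling the non-smooth locus of~$g$ near~$N$; in particular one must ensure at the outset (by still more modifications) that no component of~$N$ is carried by the preimage of that non-smooth locus, and this bookkeeping is the delicate part of the argument.
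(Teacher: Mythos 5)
There is a genuine gap, and it is exactly at the step you flag yourself: the lift of the membrane $\rho\colon W\to\kX'(\R)$ to $\kX(\R)$. Your curves $\widetilde B^{(j)}$ are only constrained by a homology relation in all of $\kX'(\R)$, so the surface $W$ must in general pass through regions of $\kX'(\R)$ where the real fibres of $g(\R)$ are empty (this already happens for a conic bundle: the locus of $\kX'(\R)$ over which the conic has real points is a proper semi-algebraic subset, and the $\Z/2\Z$-homology between $g(\R)(N)$ and $\sum_j\widetilde B^{(j)}(\R)$ may be forced to leave it); over such regions no lift exists at all, and this has nothing to do with smoothness of $g$. Even where $g$ is smooth with non-empty real fibres, those fibres need not be connected or simply connected, so your obstruction-theoretic claim that the only obstruction lives in $H^2(W,N;\pi_1)$ and vanishes is unjustified (one faces $\pi_0$-obstructions and non-abelian local systems). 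Finally, Proposition~\ref{toroidaltheorem} cannot rescue this: it only modifies $\kX$ and $\kX'$ so as to make $g$ smooth along finitely many \emph{germs of curves} (discrete valuation rings), and gives no control over a two-dimensional membrane, nor over emptiness of real fibres.

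The missing idea in the paper is the theorem of Akbulut and King (\cite{AkbulutKing}, in the form of \cite[Theorem~6.8]{BWII}): once $\iota'_*[\bS]$ is known to be algebraic (which is what $f'$-algebraicity gives, since $f'(\R)_*\iota'_*[\bS]\in\langle[B(\R)]\rangle$), one can choose the algebraic curve $\widetilde B\subset\kX'$ so that $\widetilde B(\R)$ lies \emph{inside} the tubular neighbourhood $W$ of $\iota'(\bS)$ and satisfies $[\widetilde B(\R)]=\iota'_*[\bS]$ already in $H_1(W,\Z/2\Z)$, with no component contracted by $f'$. The $\ci$ section $w\colon W\to\kX(\R)$ of $g(\R)$ (obtained from \cite[Lemma~6.11]{BWII} after using the embedding into $\kX\times\P^2_\R$ of Remark~\ref{rmkBH}~(ii) to make both $\iota$ and $\iota'=g(\R)\circ\iota$ embeddings --- another point your sketch glosses over, since $g(\R)|_N$ need not be injective) then transports everything: $w\circ(\widetilde B_i(\R)\to W)$ furnishes the sections $v_i$ of the modified fibre products to which the second hypothesis applies, and $\alpha=w_*\iota'_*[\bS]=\sum_i w_*\mu_{i,*}[\widetilde B_i(\R)]$ is algebraic. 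In other words, the localization of the algebraic representative near the loop replaces, and entirely avoids, the cobordism-lifting problem on which your argument founders.
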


\begin{proof}
By Proposition \ref{birinvBH} and \cite[Theorem 1.1]{ADK}, we may assume that $g$ underlies a toroidal morphism $g:(\kU\subset \kX)\to(\kU'\subset\kX')$ of snc toroidal embeddings and that $\kX$ and $\kX'$ are projective.
Choose $\alpha\in H_1(\kX(\R),\Z/2\Z)$ such that $f(\R)_*\alpha\in \langle[B(\R)]\rangle$. We will show that $\alpha$ is algebraic. Representing $\alpha$ by a topological cycle, perturbing it and applying Whitney's approximation theorem \cite[Theorem 2]{Whitney} shows that $\alpha=\iota_*[\bS]$ for some  compact one-dimensional real-analytic manifold $\bS$ and some real-analytic map $\iota: \bS\to \kX(\R)$ such that the image by $\iota$ of any connected component of $\bS$ meets $g^{-1}((\kX')^0)(\R)\cap\kU(\R)$ and is not included in a fiber of $f(\R)$.

Let $\lambda: \bS\to\P^2(\R)$ be a real-analytic embedding. By Remark \ref{rmkBH} (ii), in order to show the algebraicity of $\iota_*[\bS]$, we may replace $\kU\subset\kX$, $\kU'\subset\kX'$, $(\kX')^0$ and $\iota$ with $\kU\times\P^2_\R\subset\kX\times\P^2_\R$, $\kU'\times\P^2_\R\subset\kX'\times\P^2_\R$, $(\kX')^0\times\P^2_\R$ and $(\iota,\lambda): \bS\to\kX(\R)\times\P^2(\R)$. We may thus assume that $\iota$ and $\iota':=g(\R)\circ\iota$ are embeddings.

Define $\Sigma\subset\bS$ to be the finite set of $x\in\bS$ such that
$\iota(x) \notin \kU(\R)$.
Applying Proposition \ref{toroidaltheorem} to the toroidal morphism $g$ and to the discrete valuation rings $\sO_{\bS,x}^{\an}$ for $x\in\Sigma$ shows the existence of regular modifications $\pi:\kZ\to\kX$ and $\pi':\kZ'\to\kX'$ and of a morphism $h:\kZ\to\kZ'$ such that $g\circ\pi=\pi'\circ h$, such that $\pi$ (resp.\ $\pi'$) is an isomorphism above $\kU$ (resp.\ above $\kU'$) and such that the strict transform $j: \bS\to \kZ(\R)$ of $\iota$ has the property that $h(\R)$ is submersive at $j(x)$ for all $x\in \Sigma$. Since $g$ was already smooth along $\kU$, the latter property in fact holds for all $x\in\bS$.
It is legitimate,
in order to show the algebraicity of $\iota_*[\bS]$, to replace $\kX$, $\kX'$, $g$ and $\iota$ with $\kZ$, $\kZ'$, $h$ and $j$. We may thus assume that $g(\R)$ is submersive along $\iota(\bS)$.

It then follows from \cite[Lemma 6.11]{BWII} that there exists an open neighbourhood $W$ of $\iota'(\bS)$ in $\kX'(\R)$ and a $\ci$ section $w: W\to \kX(\R)$ of $g(\R)$ above $W$ such that $\iota=w \circ \iota'$. Note that the existence of the section $w$ shows that $g(\R)$ is submersive along $w(W)$.
Since $H_1(\kX'(\R),\Z/2\Z)$ is $f'$-algebraic, a theorem of Akbulut--King (\cite{AkbulutKing}, see \cite[Theorem 6.8]{BWII}) shows the existence of a curve $\widetilde{B}\subset \kX'$ smooth along $\widetilde{B}(\R)$ such that $\widetilde{B}(\R)\subset W$, such that $[\widetilde{B}(\R)]=\iota'_*[\bS]\in H_1(W,\Z/2\Z)$ (where we still denote by $\iota':\bS\to W$ the natural map), and such that $f'$ does not contract any component of $\widetilde{B}$.

Let $(\nu_i:\widetilde{B}_i\to \kX')_{i\in I}$ be the connected components of the normalization of~$\widetilde{B}$, and denote by $\mu_i:\widetilde{B}_i(\R)\to W$ the induced maps.
Consider a regular modification $\kY_i\to\kX\times_{\kX'}\widetilde{B}_i$ that is an isomorphism above the regular locus, where the fiber product is taken with respect to $g$ and $\nu_i$, and let $h_i:\kY_i\to \widetilde{B}_i$ and $p_i:\kY_i\to\kX$ be the natural morphisms. The strict transform $v_i: \widetilde{B}_i(\R)\to \kY_i(\R)$ of $(w\circ \mu_i,\Id): \widetilde{B}_i(\R)\to(\kX\times_{\kX'}\widetilde{B}_i)(\R)$ is a $\ci$ section of $h_i(\R)$.
By hypothesis,  $v_{i,*}[\widetilde{B}_i(\R)]\in H^{\alg}_1(\kY_i(\R),\Z/2\Z)$.
It follows that $\alpha\in H_1^{\alg}(\kX(\R),\Z/2\Z)$ since
\begin{equation*}
\alpha=\iota_*[\bS]=w_*\iota'_*[\bS]=\sum_{i\in I}w_*\mu_{i,*}[\widetilde{B}_i(\R)]=\sum_{i\in I}p_{i}(\R)_*v_{i,*}[\widetilde{B}_i(\R)].\qedhere
\end{equation*}
\end{proof}

The hypothesis $v_*[\widetilde{B}(\R)]\in H_1^{\alg}(\kY(\R),\Z/2\Z)$
appearing in Theorem~\ref{fibBH}
 is of course verified if $H_1(\kY(\R),\Z/2\Z)$ is $h$-algebraic.
 It is also verified if $h$ satisfies the tight approximation property, by Corollary \ref{realsectight}. Thus, applying Theorem \ref{fibBH} with $\kX'=B$ and $f'=\Id$ yields:

\begin{cor}
\label{BHtight}
Let $f: \kX\to B$ be a proper flat morphism with $\kX$ regular. Let~$X$ be the generic fiber of~$f$.
If for every finite extension $\widetilde{F}$ of $F$, the $\widetilde{F}$-variety $X_{\widetilde{F}}$ satisfies the tight approximation property, then $H_1(\kX(\R),\Z/2\Z)$ is $f$-algebraic.
\end{cor}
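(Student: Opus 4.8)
The plan is to deduce Corollary~\ref{BHtight} from Theorem~\ref{fibBH} applied to the trivial fibration $\kX'=B$, $f'=\Id_B$. With this choice the hypothesis ``$H_1(\kX'(\R),\Z/2\Z)$ is $f'$\nobreakdash-algebraic'' is the statement that $H_1(B(\R),\Z/2\Z)$ is $\Id_B$\nobreakdash-algebraic; I would check this directly, since $H_1(B(\R),\Z/2\Z)\simeq(\Z/2\Z)^{\pi_0(B(\R))}$ while the right-hand side of~\eqref{obviousincl}, with $f=\Id_B$, is exactly $\langle[B(\R)]\rangle$, and $[B(\R)]$ is the algebraic class $\cl_\R([B])$; so equality does hold. (One must be slightly careful: if $B(\R)=\varnothing$ the statement is trivially true, and if $B(\R)\neq\varnothing$ both sides equal $\langle[B(\R)]\rangle$.)

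Next I would take $(\kX')^0=B$, so that the conclusion $\nu(\widetilde B)\cap(\kX')^0\neq\varnothing$ is automatic, and unwind the remaining hypothesis of Theorem~\ref{fibBH} in this special case. A morphism $\nu\colon\widetilde B\to B$ with $f'\circ\nu=\nu$ dominant is simply a finite morphism of smooth projective connected curves over~$\R$; it corresponds to a finite field extension $\widetilde F:=\R(\widetilde B)$ of $F$. The fiber product $\kX\times_{\kX'}\widetilde B=\kX\times_B\widetilde B$ has generic fiber $X_{\widetilde F}$, and a regular modification $\kY\to\kX\times_B\widetilde B$ with projection $h\colon\kY\to\widetilde B$ is precisely a proper regular model of $X_{\widetilde F}$ over $\widetilde B$ (after discarding the vertical components introduced by base change and resolving; here one uses that $\widetilde B\to B$ is generically étale in characteristic~$0$, so $\kX\times_B\widetilde B$ is generically smooth over $\widetilde B$ and $X_{\widetilde F}$ is its generic fiber). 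Thus the hypothesis of Theorem~\ref{fibBH} reads: for every such $\widetilde B$ (equivalently, for every finite extension $\widetilde F/F$) and every $\ci$ section $v$ of $h(\R)$, one has $v_*[\widetilde B(\R)]\in H_1^{\alg}(\kY(\R),\Z/2\Z)$.

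Finally I would invoke the remark made just before the statement of Corollary~\ref{BHtight}: if $X_{\widetilde F}$ satisfies the tight approximation property, then by Corollary~\ref{realsectight}, applied to the proper regular model $h\colon\kY\to\widetilde B$ and to the given $\ci$ section $v$, there is a sequence of algebraic sections $s_n\colon\widetilde B\to\kY$ of $h$ with $s_n(\R)$ converging to~$v$ in $\ci(\widetilde B(\R),\kY(\R))$. For $n\gg0$ the map $s_n(\R)$ is $\ci$\nobreakdash-close to~$v$, hence homotopic to it, so $s_n(\R)_*[\widetilde B(\R)]=v_*[\widetilde B(\R)]$ in $H_1(\kY(\R),\Z/2\Z)$; but $s_n(\R)_*[\widetilde B(\R)]=\cl_\R(s_{n,*}[\widetilde B])$ is algebraic by compatibility of the Borel--Haefliger map with proper push-forward. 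Therefore $v_*[\widetilde B(\R)]\in H_1^{\alg}(\kY(\R),\Z/2\Z)$, which is exactly the hypothesis of Theorem~\ref{fibBH}. Applying Theorem~\ref{fibBH} then gives that $H_1(\kX(\R),\Z/2\Z)$ is $f$\nobreakdash-algebraic, as desired. I do not anticipate a serious obstacle here: the only minor points to get right are the identification of the fibered situation over the trivial base with models over finite extensions of~$F$, and the passage from $\ci$\nobreakdash-closeness of sections to equality of their push-forward homology classes, both of which are routine.
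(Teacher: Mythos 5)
Your proof is correct and follows essentially the same route as the paper, which likewise deduces the corollary by applying Theorem~\ref{fibBH} with $\kX'=B$ and $f'=\Id$, and verifies the hypothesis $v_*[\widetilde{B}(\R)]\in H_1^{\alg}(\kY(\R),\Z/2\Z)$ via Corollary~\ref{realsectight}, using that the tight approximation property of $X_{\widetilde{F}}$ transfers to the model $h\colon\kY\to\widetilde{B}$ by Theorem~\ref{birinv}. The only superfluous point is your parenthetical about discarding vertical components: since $f$ is flat, $\kX\times_B\widetilde{B}$ is already flat over $\widetilde{B}$, so any regular modification of it is automatically a proper flat regular model whose generic fiber is birational to $X_{\widetilde{F}}$.
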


We note that the assumptions of Corollary~\ref{BHtight} imply that~$X$ is rationally connected
(see Proposition~\ref{tightimpliesRC}).  If one views the tight approximation property
as the analogue, for varieties over~$F$,
of the property,
for rationally connected varieties over a number field,
that rational points are dense in the Brauer--Manin set,
then Corollary~\ref{BHtight} is the analogue of Liang's theorem \cite[Theorem~B]{liang}
on the relationship between rational points
and zero-cycles in the latter context.
Correspondingly, Theorem~\ref{fibBH} is the analogue of \cite[Corollary~8.4~(2)]{harpazwittenberg}.

\begin{example}
\label{ex:falgiteratedfibhomspace}
Let $f: \kX\to B$ be a proper flat morphism with $\kX$ regular.
If the generic fiber of~$f$ is, birationally, an iterated fibration into homogeneous spaces
of connected linear algebraic groups,
then $H_1(\kX(\R),\Z/2\Z)$ is $f$-algebraic.
Indeed, by Theorem~\ref{up} and Theorem~\ref{homogeneous}, the hypotheses of Corollary \ref{BHtight} are satisfied.
\end{example}

\begin{rmk}
Let~$X$ be a smooth compactification of a homogeneous
space of a connected linear algebraic group over~$\R$.
By Example~\ref{ex:falgiteratedfibhomspace} and Remark~\ref{rmkBH}~(i),
one has $H_1^\alg(X(\R),\Z/2\Z)=H_1(X(\R),\Z/2\Z)$.
We do not know whether this remains true if~$\R$ is replaced with
an arbitrary real closed field (also replacing $H_1(X(\R),\Z/2\Z)$ with
semi-algebraic homology and~$\cl_\R$ with the map \cite[(1.56)]{BWI}).
\end{rmk}

\subsection{Cubics}

We now give applications of the above results to cubic hypersurfaces.

\begin{thm}
\label{cubicfib}
Let $f:\kX\to B$ be a proper flat morphism with $\kX$ regular whose generic fiber is a cubic hypersurface of dimension $d\geq 2$. Then $H_1(\kX(\R),\Z/2\Z)$ is $f$-algebraic.
\end{thm}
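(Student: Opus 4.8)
The plan is to argue by induction on the dimension $d\geq 2$ of the generic fiber of $f$. Note first that, being an open subscheme of the regular scheme $\kX$, the generic fiber $X$ is a \emph{smooth} cubic hypersurface of dimension $d$ over $F$; and that, by Proposition~\ref{birinvBH}, it is enough to prove the conclusion for some proper flat regular model of $X$ over $B$. The base case is $d=2$: there $X$ is a smooth cubic surface over $F$, and the assertion that $H_1(\kX(\R),\Z/2\Z)$ is $f$-algebraic is the content of the results of \cite{BWII} on cubic surfaces over $F$. This is the deep, non-formal input; the inductive step merely transports it through a fibration.

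Assume now $d\geq 3$, the statement being known in dimension $d-1$ over every smooth projective connected curve over $\R$. Write $X\subset\P^{d+1}_F$. The dual variety $X^\vee$ is a proper closed subvariety of $(\P^{d+1}_F)^\vee$ and $F$ is infinite, so I would pick linear forms $L_0,L_1$ over $F$ whose common zero locus $\Lambda$ has codimension $2$ in $\P^{d+1}_F$ and such that the line of $(\P^{d+1}_F)^\vee$ spanned by $L_0$ and $L_1$ is not contained in $X^\vee$; then the generic member of the pencil of hyperplane sections of $X$ through $\Lambda$ is a smooth cubic hypersurface of dimension $d-1$ over the function field of $\P^1_F$. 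The rational map $x\mapsto[L_0(x):L_1(x)]$ gives $X\dashrightarrow\P^1_F$; after replacing $\kX$ by a regular modification over $B$ (harmless by Proposition~\ref{birinvBH}) it spreads out to a dominant morphism $g:\kX\to\kX'$, where $\kX':=\P^1_\R\times B$ with the projection $f':\kX'\to B$, so that $f'\circ g=f$. I would then apply Theorem~\ref{fibBH} to $(f,f',g)$, taking for $(\kX')^0\subset\kX'$ the dense open locus over which $g$ is smooth and proper with fibers smooth cubic hypersurfaces of dimension $d-1$. The hypothesis that $H_1(\kX'(\R),\Z/2\Z)$ is $f'$-algebraic holds because $H_1^{\alg}(B(\R),\Z/2\Z)=\langle[B(\R)]\rangle$ makes $\Id_B$ trivially $\Id_B$-algebraic, whence the claim by Remark~\ref{rmkBH}~(ii). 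For the remaining hypothesis, let $\widetilde B$ be a smooth projective connected curve over $\R$, $\nu:\widetilde B\to\kX'$ a morphism with $f'\circ\nu$ dominant and $\nu(\widetilde B)\cap(\kX')^0\neq\varnothing$, $\kY\to\kX\times_{\kX'}\widetilde B$ a regular modification with projection $h:\kY\to\widetilde B$, and $v:\widetilde B(\R)\to\kY(\R)$ a $\ci$ section of $h(\R)$: since $(\kX')^0$ meets the irreducible curve $\nu(\widetilde B)$ in a nonempty open subset, the generic point of $\widetilde B$ maps into $(\kX')^0$, so the generic fiber of $h$ is a smooth cubic hypersurface of dimension $d-1$ over the function field of $\widetilde B$. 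Applying the induction hypothesis to the (main component of the) morphism $h:\kY\to\widetilde B$, and using that $h(\R)_*v_*[\widetilde B(\R)]=[\widetilde B(\R)]\in\langle[\widetilde B(\R)]\rangle$, we get $v_*[\widetilde B(\R)]\in H_1^{\alg}(\kY(\R),\Z/2\Z)$, as Theorem~\ref{fibBH} requires. That theorem then yields that $H_1(\kX(\R),\Z/2\Z)$ is $f$-algebraic, completing the induction.

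The hard part is exactly the base case $d=2$: cubic surfaces over function fields of curves over $\R$, which is the only step involving genuine geometry (elliptic pencils on cubic surfaces, the Akbulut--King theorem, the analysis of the real locus) and which I would simply import from \cite{BWII}. Everything else is routine: that a general pencil of hyperplane sections of the smooth cubic $X$ has smooth generic member (the dual variety is proper, $F$ is infinite); that $\kX$ can be modified so $g$ becomes a morphism while remaining regular and proper flat over $B$ with unchanged generic fiber; and that in Theorem~\ref{fibBH} one may restrict to the unique component of $\kX\times_{\kX'}\widetilde B$ dominating $\widetilde B$, onto which a $\ci$ section of $h(\R)$ necessarily takes values, obtaining a proper flat regular model over $\widetilde B$ of the cubic $(d-1)$-fold over its function field. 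One point deserves emphasis, however: it is essential that $\widetilde B$ range over \emph{all} real curves dominating $B$, so that the cubic $(d-1)$-folds to which induction is applied live over finite extensions of $F$ and not merely over $F$ — exactly the phenomenon highlighted in the discussion after Theorem~\ref{intro:thmF}, and the reason the induction is formulated over arbitrary curves over $\R$ rather than over $B$ alone.
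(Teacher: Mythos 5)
Your proposal is correct and follows essentially the same route as the paper: induction on the fiber dimension with the base curve allowed to vary, the case $d=2$ imported from \cite{BWII}, and the inductive step carried out by mapping a modification of $\kX$ to $\P^1_\R\times B$ via a pencil of hyperplane sections and applying Theorem~\ref{fibBH}, with the $f'$\nobreakdash-algebraicity of the base supplied by Remark~\ref{rmkBH}~(ii) and the fiber hypothesis by the induction hypothesis together with Proposition~\ref{birinvBH}. The only cosmetic difference is that the paper performs the blow-up of the (smooth) base locus on the generic fiber itself, so that the fibers of $g$ over a dense open of $\P^1\times B$ are literally smooth cubics, whereas your arbitrary regular modification over $B$ only guarantees fibers birational to cubics — which, as you note, is all that Proposition~\ref{birinvBH} requires.
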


\begin{proof}
We argue by induction on $d$, which we fix, letting all other data, including~$B$, vary. We let $X$ be the generic fiber of $f$.

If $d=2$, this is \cite[Theorem 8.1 (iv), Proposition 8.4 (i)]{BWII}. More directly, one can use the following variant of the argument of \emph{loc.\ cit.}: after replacing $B$ with a finite covering $\widetilde{B}$ of odd degree and $\kX$ with a regular modification of $\kX\times_B\widetilde{B}$, we may assume that $X$ contains a line $\ell$.
Apply Example~\ref{cubique} and Corollary~\ref{BHtight} to conclude.

If $d>2$, choosing a generic pencil of hyperplane sections on~$X$ yields a morphism $g:\widetilde{X}\to\P^1_F$ where $\widetilde{X}$ is the blow-up of a smooth subvariety of $X$, and where the smooth fibers of $g$ are cubic hypersurfaces of dimension $d-1$. Choose a proper regular model $\widetilde{f}:\widetilde{\kX}\to B$ of $\widetilde{X}$ for which $g$ extends to a morphism $g:\widetilde{\kX}\to B \times \P^1_\R$.
Let $\widetilde{\kX}'=B\times \P^1_\R$, let $\widetilde{f}':\widetilde{\kX}'\to B$ be the first projection and let
$(\widetilde{\kX}')^0\subset \widetilde{\kX}'$ be a dense open subset over which the fibers of $g$ are smooth cubic hypersurfaces.
By Remark~\ref{rmkBH}~(ii),
$H_1(\widetilde{\kX}'(\R),\Z/2\Z)$ is $\widetilde{f}'$-algebraic.
By the induction hypothesis and by Proposition~\ref{birinvBH},
we may apply Theorem \ref{fibBH} to $\widetilde{f}$, $\widetilde{f}'$
and $g$, thus showing that $H_1(\widetilde{\kX}(\R),\Z/2\Z)$ is $\widetilde{f}$-algebraic. One concludes by Proposition~\ref{birinvBH}.
\end{proof}

As a sample application, we deduce:

\begin{thm}
\label{BHcubic}
Let $f:\kX\to B$ be a proper flat morphism with $\kX$ regular.
Assume that the generic fiber of~$f$ is, birationally, an iterated fibration
into varieties of the following types (allowed to appear both, and to be interleaved):
\begin{enumerate}
\item smooth cubic hypersurfaces of dimension $\geq 2$;
\item homogeneous spaces of connected linear algebraic groups.
\end{enumerate}
Then $H_1(\kX(\R),\Z/2\Z)$ is $f$-algebraic.
\end{thm}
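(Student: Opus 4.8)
The plan is to prove the statement by induction on the number~$n$ of steps in the iterated fibration structure carried by the generic fibre~$X$ of~$f$. Throughout, one uses freely that $f$-algebraicity depends only on the birational class of the generic fibre (Proposition~\ref{birinvBH}), so that convenient proper regular models may be chosen at every stage. The base case $n=0$, in which~$X$ is birational to~$\Spec F$, is immediate: then~$\kX$ is birational to~$B$ and~$f$ to the identity, and the inclusion~(\ref{obviousincl}) reads $\langle[B(\R)]\rangle\subseteq\langle[B(\R)]\rangle$. For $n=1$ the statement is exactly Theorem~\ref{cubicfib} in case~(1) and Example~\ref{ex:falgiteratedfibhomspace} (\emph{i.e.}\ Theorem~\ref{homogeneous} combined with Corollary~\ref{BHtight}) in case~(2); but this case is also subsumed in the inductive step below, taking $X'=\Spec F$.

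For the inductive step one writes~$X$ as, birationally, the total space of a dominant morphism $g\colon X\to X'$ whose generic fibre is of type~(1) or~(2), with~$X'$ birationally an iterated fibration with $n-1$ steps. Spreading out the morphism~$g$ (and, in case~(2), the acting connected linear group together with its action), one fixes a dense open $(X')^0\subseteq X'$ over which~$g$ is, fibrewise, of the same one of the two types over the residue field of every point; in particular the fibre of~$g$ over any $L$-point of~$(X')^0$, for an arbitrary field extension~$L$ of~$F$, is of that type over~$L$. Next one chooses a proper regular model $f'\colon\kX'\to B$ of~$X'$, a proper regular model $f\colon\kX\to B$ of~$X$ through which~$g$ extends to a morphism $g\colon\kX\to\kX'$ with $f'\circ g=f$ (by resolving the indeterminacy locus of~$g$ and applying resolution of singularities), and a dense open $(\kX')^0\subseteq\kX'$ whose trace on the generic fibre is~$(X')^0$. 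By the induction hypothesis applied to~$X'$, together with Proposition~\ref{birinvBH}, the group $H_1(\kX'(\R),\Z/2\Z)$ is $f'$-algebraic; this is the first hypothesis of Theorem~\ref{fibBH}.

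To verify the second hypothesis of Theorem~\ref{fibBH}, fix a smooth projective connected curve~$\widetilde B$ over~$\R$, a morphism $\nu\colon\widetilde B\to\kX'$ with $f'\circ\nu$ dominant and $\nu(\widetilde B)\cap(\kX')^0\neq\varnothing$, a regular modification $\kY\to\kX\times_{\kX'}\widetilde B$ with projection $h\colon\kY\to\widetilde B$, and a $\ci$ section $v$ of $h(\R)$. Since $f'\circ\nu$ is dominant and $\nu(\widetilde B)$ is irreducible and meets the open set~$(\kX')^0$, the generic point of~$\nu(\widetilde B)$ lies in~$(\kX')^0$ and over the generic point of~$B$; hence it is an $\widetilde F$-point of~$(X')^0$, where $\widetilde F=\R(\widetilde B)$ is a finite field extension of~$F$, and the generic fibre of~$h$ is, birationally, the fibre of~$g$ over this point, which is of type~(1) or~(2) over~$\widetilde F$. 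In case~(1), Theorem~\ref{cubicfib} over the curve~$\widetilde B$ (applied to a proper regular model of this cubic hypersurface) together with Proposition~\ref{birinvBH} shows that $H_1(\kY(\R),\Z/2\Z)$ is $h$-algebraic; as $h(\R)_*v_*[\widetilde B(\R)]=[\widetilde B(\R)]\in\langle[\widetilde B(\R)]\rangle$, Definition~\ref{defn:falg} gives $v_*[\widetilde B(\R)]\in H_1^{\alg}(\kY(\R),\Z/2\Z)$. In case~(2), $\widetilde F$ being the function field of a smooth projective connected curve over~$\R$, Theorem~\ref{homogeneous} shows that this fibre satisfies the tight approximation property, hence so does the model~$h$; Corollary~\ref{realsectight} then produces algebraic sections $s_k\colon\widetilde B\to\kY$ of~$h$ converging to~$v$ in $\ci(\widetilde B(\R),\kY(\R))$, which for $k\gg0$ are homotopic to~$v$, so $v_*[\widetilde B(\R)]=s_k(\R)_*[\widetilde B(\R)]=\cl_\R((s_k)_*[\widetilde B])\in H_1^{\alg}(\kY(\R),\Z/2\Z)$. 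In either case the second hypothesis of Theorem~\ref{fibBH} is satisfied, and Theorem~\ref{fibBH} yields that $H_1(\kX(\R),\Z/2\Z)$ is $f$-algebraic, completing the induction.

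The step I expect to require the most care is the spreading-out assertion in the inductive step: namely, that each of the two classes of fibres persists over a dense open of~$X'$, and does so even after base change to finite field extensions of~$F$. For cubic hypersurfaces this amounts to the openness of the smooth locus and the local constancy of the Hilbert polynomial in a flat projective family; for homogeneous spaces one spreads out the acting connected linear group and its action over a dense open of~$X'$ and uses that transitivity of the action on geometric points is an open condition on the base. The only other point needing attention is model bookkeeping — arranging that the hypotheses of Theorem~\ref{fibBH} hold literally, and, crucially, allowing the auxiliary curves~$\widetilde B$ to be non-geometrically-connected so that the complex case is automatically covered. Beyond this, the argument runs exactly parallel to the proof of Theorem~\ref{cubicfib}.
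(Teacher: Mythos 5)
Your proposal is, in substance, the paper's own proof written out: the paper establishes Theorem~\ref{BHcubic} by exactly the induction you describe, combining Proposition~\ref{birinvBH} and Theorem~\ref{fibBH}, with the two kinds of fibres handled by Theorem~\ref{cubicfib} and by Example~\ref{ex:falgiteratedfibhomspace} (your direct appeal to Theorem~\ref{homogeneous} and Corollary~\ref{realsectight} in case~(2) is that example inlined).

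There is, however, one step that does not hold as written: the place where you choose the dense open set. You fix $(X')^0\subseteq X'$ before choosing models and then allow $(\kX')^0\subseteq\kX'$ to be \emph{any} dense open with trace $(X')^0$ on the generic fibre; later you assert that the generic fibre of $h:\kY\to\widetilde B$ is birationally the fibre of $g$ over the $\widetilde F$\nobreakdash-point $p\in(X')^0$, hence of type~(1) or~(2). But the generic fibre of $h$ is birational to the fibre over~$p$ of the \emph{extended} morphism $g:\kX\to\kX'$, not of $g:X\to X'$, and these can differ: components of $\kX_\eta\setminus X$ (boundary or exceptional divisors created when extending~$g$ to the models) may be contracted by $g:\kX\to\kX'$ onto subvarieties meeting $(X')^0$, producing components of $g^{-1}(p)$ of dimension at least the relative dimension of~$g$ and disjoint from~$X$. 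For such a~$p$ the generic fibre of~$h$ need not be birational to a cubic hypersurface or to a homogeneous-space fibration, and a section~$v$ of $h(\R)$ could lie entirely in these superfluous components, so neither Theorem~\ref{cubicfib} nor the tight-approximation argument applies to~$h$. The repair is the choice the paper actually makes in the proof of Theorem~\ref{cubicfib}: take $(\kX')^0$ to be a dense open of the model~$\kX'$ over which the fibres of $g:\kX\to\kX'$ themselves are of the required type. Such an open exists because, over the generic point of~$\kX'$, every component of the fibre of $g:\kX\to\kX'$ has dimension at least the relative dimension while the fibre of $\kX\setminus X$ has strictly smaller dimension, so the generic fibre of the model morphism contains that of $g:X\to X'$ as a dense open, and this spreads out over a dense open of~$\kX'$ (this is where your spreading-out step should take place, not over~$X'$). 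With that modification, your induction coincides with the paper's proof; the remaining technical points you gloss (flatness of~$h$, possibly non-geometrically-connected~$\widetilde B$) are glossed at the same level in the paper.
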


\begin{proof}
This follows from
Proposition~\ref{birinvBH} and
Theorem~\ref{fibBH},
in view of
Theorem~\ref{cubicfib}
and of Example~\ref{ex:falgiteratedfibhomspace}.
\end{proof}

\begin{cor}
\label{corBHcubic}
Let~$X$ be a smooth and proper variety over~$\R$.
Assume that~$X$ is, birationally, an iterated fibration into smooth cubic hypersurfaces
of dimension~$\geq 2$.
Then $H_1^\alg(X(\R),\Z/2\Z)=H_1(X(\R),\Z/2\Z)$.
\end{cor}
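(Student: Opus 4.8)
The plan is to reduce the assertion to Theorem~\ref{BHcubic} via a product fibration over a curve with a real point, and then to invoke Remark~\ref{rmkBH}~(i). Concretely, I would fix a smooth projective connected curve $B$ over~$\R$ with $B(\R)\neq\varnothing$ (for instance $B=\P^1_\R$), set $\kX=X\times B$, and let $f:\kX\to B$ be the second projection. Since $X$ is smooth and proper over~$\R$ and~$B$ is smooth over~$\R$, the total space $\kX$ is smooth over~$\R$, hence regular, and $f$ is proper and flat; so $f$ is a morphism of the type considered in Theorem~\ref{BHcubic}, with generic fiber $X_F$ where $F=\R(B)$.

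Next I would check that $X_F$ is, birationally, an iterated fibration into smooth cubic hypersurfaces of dimension~$\geq 2$. By hypothesis $X$ has this property over~$\R$, \emph{i.e.}\ there is a chain of dominant morphisms between birational models, $X\dashrightarrow X_1\dashrightarrow\cdots\dashrightarrow X_n=\Spec(\R)$, whose successive generic fibers are smooth cubic hypersurfaces of dimension~$\geq 2$. Base-changing the entire chain along $\Spec(F)\to\Spec(\R)$ preserves the fibration structure, and a smooth cubic hypersurface over a field remains one over any field extension; hence $X_F$ is birationally an iterated fibration into varieties of type~(1) in Theorem~\ref{BHcubic}. Applying Theorem~\ref{BHcubic} to $f:\kX\to B$ then shows that $H_1(\kX(\R),\Z/2\Z)$ is $f$-algebraic. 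As $\kX=X\times B$, as $f$ is the second projection, and as $B(\R)\neq\varnothing$, Remark~\ref{rmkBH}~(i) gives $H_1^\alg(X(\R),\Z/2\Z)=H_1(X(\R),\Z/2\Z)$, as desired.

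I do not anticipate a real obstacle: the only two points requiring a word of justification are that forming the product with a curve possessing a real point yields a fibration to which Remark~\ref{rmkBH}~(i) applies, and that the iterated-fibration-into-cubics structure is stable under the base change $\R\hookrightarrow F$; both are immediate. The mathematical substance lies upstream, in Theorem~\ref{cubicfib} (the case of a single $d$-dimensional cubic) and in the fibration theorem Theorem~\ref{fibBH}, which together underlie Theorem~\ref{BHcubic}; here one is merely specializing those results to a product situation over~$\R$.
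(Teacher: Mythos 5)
Your proposal is correct and is essentially the paper's own proof: apply Theorem~\ref{BHcubic} to $B=\P^1_\R$ and $\kX=X\times B$, then conclude via Remark~\ref{rmkBH}~(i). The extra details you supply (flatness and regularity of the product, stability of the iterated-fibration-into-cubics structure under the base change $\R\hookrightarrow F$) are exactly the routine verifications the paper leaves implicit.
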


\begin{proof}
Apply Theorem~\ref{BHcubic}
to $B=\P^1_\R$ and $\kX=X \times B$
and see Remark~\ref{rmkBH}~(i).
\end{proof}

\begin{rmk}
When~$X$ is itself a smooth cubic hypersurface of dimension~$\geq 2$,
we have already seen that the Borel--Haefliger classes of rational curves
span the group $H_1(X(\R),\Z/2\Z)$,
since~$X_{\R(t)}$ satisfies the tight approximation property
(see Example~\ref{cubique}
and apply Corollary~\ref{realsectight}).
A theorem of
Shen \cite[Theorem~1.7]{Shen}
allows one to deduce
(at least in dimension~$\geq 3$) that the Borel--Haefliger classes of the lines contained in~$X$ suffice to span this group, a
fact first proved in \cite[Theorem~9.23~(ii), Remark~9.24~(i)]{BWII} and in~\cite{slavasergei}
and which holds over
arbitrary real closed fields (see \cite[Theorem~9.23~(ii)]{BWII}).
For more general~$X$ as in Corollary~\ref{corBHcubic},
however, we do not know whether
the Borel--Haefliger classes of rational curves
span the group $H_1(X(\R),\Z/2\Z)$
and we do not know whether Corollary~\ref{corBHcubic} can be generalized
to arbitrary real closed fields.
\end{rmk}

\subsection{Del Pezzo surfaces}
\label{dPpar}

Consider the following question:

\begin{question}
\label{qrat}
Let $X$ be a smooth proper rationally connected variety over $\R$.
Is $H_1(X(\R),\Z/2\Z)$ generated by Borel--Haefliger classes of rational curves?
\end{question}

Over a non-archimedean real closed field $R$, Question \ref{qrat} has a negative answer in general, as $H_1(X(R),\Z/2\Z)$ may not even be generated by Borel--Haefliger classes of curves \cite[Example~9.7]{BWII}. Over $\R$, the techniques of \cite{Kollocal, Kolfertile} shed no light on Question \ref{qrat} as the real locus of the rational curves on $X$ that are constructed in \emph{op.\ cit.} is always homologically trivial (even homotopically trivial) in $X(\R)$.

If $X_{\R(t)}$ satisfies the tight approximation property, then
Question \ref{qrat} has a positive answer for~$X$,
as a consequence of Corollary~\ref{realsectight};
by Theorems \ref{up} and~\ref{homogeneous}, such is the case if~$X$ is, birationally, an iterated fibration into homogeneous spaces of connected linear algebraic groups.
Our goal in \S\ref{dPpar} is to give a positive answer for arbitrary rationally connected surfaces.

\begin{prop}
\label{dP}
Let $X$ be a rationally connected surface over $\R$.  Then the Borel--Haefliger classes of rational curves on $X$ generate $H_1(X(\R),\Z/2\Z)$.
\end{prop}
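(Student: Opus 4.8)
The plan is to reduce, by birational invariance, to the case of a $G$-minimal model, to dispose of all such models except the del Pezzo surfaces of degree~$1$ and~$2$ by invoking the tight approximation results of the previous sections, and to treat those two remaining cases by hand.

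First I would record that the property ``$H_1(X(\R),\Z/2\Z)$ is spanned by the Borel--Haefliger classes of rational curves on~$X$'' is a birational invariant of smooth proper surfaces over~$\R$. Any birational map between two such surfaces factors into blow-ups and blow-downs of closed points, so it suffices to compare the property for~$\kX$ and for the blow-up~$\kX'$ of~$\kX$ at a closed point~$Z$. The strict transform of a rational curve is a rational curve, and the image on~$\kX$ of a rational curve on~$\kX'$ is either a point or a rational curve; combined with Lemma~\ref{blowup}~(i) (surjectivity of $g(\R)_*$) and with the description of $\ker g(\R)_*$ in Lemma~\ref{blowup}~(iii) --- it is spanned by $[E(\R)]$ for $E\cong\P^1_\R$ the exceptional curve when~$Z$ is real, and is~$0$ when~$Z$ is a conjugate pair of complex points, in which case $g(\R)$ is moreover a homeomorphism --- this yields the equivalence of the property for~$\kX$ and for~$\kX'$. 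By the classification of $G$\nobreakdash-minimal geometrically rational surfaces over~$\R$ (Comessatti, Iskovskikh), one is thus reduced to the following models: $\P^2_\R$; a form of $\P^1_\R\times\P^1_\R$; a conic bundle over~$\P^1_\R$ or over the anisotropic conic; and a del Pezzo surface with $\rho=1$ of degree $d\in\{1,2,3,4\}$.

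Most of these are handled by the results already established. If $X(\R)=\varnothing$ the statement is vacuous; this covers the anisotropic-conic case and part of the quadric case. If~$X$ is $\R$\nobreakdash-rational --- which holds for~$\P^2_\R$, for forms of $\P^1_\R\times\P^1_\R$ with a real point, and for del Pezzo surfaces of degree $\geq 5$ with a real point --- then~$X$ is stably rational, so $X_{\R(t)}$ satisfies the tight approximation property by Corollary~\ref{rat}, and the conclusion follows from Corollary~\ref{realsectight} applied to the constant model $X\times\P^1_\R\to\P^1_\R$: a $\ci$ map $\P^1(\R)\to X(\R)$ is then approximated by real loci of rational curves $\P^1_\R\to X$, so every class in $H_1(X(\R),\Z/2\Z)$ is carried by such a curve. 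A conic bundle over~$\P^1_\R$ is, birationally, a fibration into conics --- homogeneous spaces, hence tightly approximating by Example~\ref{quadrique} --- over a projective line, so $X_{\R(t)}$ satisfies tight approximation by Theorem~\ref{up} and Theorem~\ref{projective}, and one concludes as above. A del Pezzo surface of degree~$4$ is a smooth complete intersection of two quadrics in~$\P^4_\R$, and if $X(\R)\neq\varnothing$ it has a rational point, so $X_{\R(t)}$ satisfies tight approximation by Example~\ref{ic2c}; a del Pezzo surface of degree~$3$ is a smooth cubic surface in~$\P^3_\R$, so $X_{\R(t)}$ satisfies tight approximation by Example~\ref{cubique}. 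In each of these cases Corollary~\ref{realsectight} again gives the conclusion.

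There remains the case of a del Pezzo surface~$X$ of degree~$1$ or~$2$ over~$\R$ with $\rho(X)=1$; here $X(\R)\neq\varnothing$ (it contains the base point of $|{-K_X}|$). Blowing up a general real point of~$X$ replaces a degree~$2$ surface by a degree~$1$ one, so by birational invariance we may assume $\deg X=1$. Then $|{-K_X}|$ is a pencil defined over~$\R$ with a single, necessarily real, base point~$q$; blowing up~$q$ yields a relatively minimal rational elliptic surface $\phi\colon\widetilde X\to\P^1_\R$ carrying a section $E\cong\P^1_\R$, with fibre class $-K_{\widetilde X}$. One is reduced to showing that $H_1(\widetilde X(\R),\Z/2\Z)$ is spanned by Borel--Haefliger classes of rational curves over~$\R$. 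The section $E(\R)\cong\P^1(\R)$ supplies one class; the $\P^1$\nobreakdash-components of the reducible real fibres of~$\phi$, as well as the irreducible singular (nodal or cuspidal) real fibres, are rational curves over~$\R$, and since the fibre class is $-K_{\widetilde X}$ one recovers from them the ``vertical'' class $\cl_\R(-K_{\widetilde X})$ as soon as a real singular fibre exists; the remaining classes, and the case where~$\phi$ has no real singular fibre, would be controlled by combining the Comessatti-type constraints on the topology of real del Pezzo surfaces --- which force $X(\R)$ to be very small once $\rho(X)=1$ --- with the existence of enough rational curves over~$\R$ in the classes $E+m(-K_{\widetilde X})$, whose general member degenerates, over~$\R$, to an irreducible nodal rational curve. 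I expect this last step --- the explicit treatment of del Pezzo surfaces of degree~$1$ and~$2$ --- to be the main obstacle: it is exactly the case that lies outside the scope of the tight approximation machinery, and it requires a concrete analysis of real rational elliptic surfaces and of which $\Z/2\Z$\nobreakdash-homology classes of their real loci are carried by rational curves, rather than the soft birational-invariance-and-fibration arguments that suffice elsewhere.
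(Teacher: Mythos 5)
Your reduction to minimal models is sound, and your treatment of the easy cases is correct, though it takes a different route from the paper: where you invoke the tight approximation machinery (Corollary~\ref{rat}, Theorem~\ref{up}, Examples~\ref{quadrique}, \ref{cubique}, \ref{ic2c}, then Corollary~\ref{realsectight}) for the rational, conic bundle, cubic and degree-$4$ cases, the paper simply quotes the topological classification of minimal rationally connected real surfaces: a minimal conic bundle has real locus a union of spheres, so $H_1(X(\R),\Z/2\Z)=0$ and there is nothing to prove. Both routes work for those cases.

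The genuine gap is exactly where you say it is: the minimal del Pezzo surfaces of degree~$1$ and~$2$ are left as an admitted sketch, and this is the only part of the proposition that requires real work. Two things go wrong in your plan. First, by blowing up a general real point to reduce degree~$2$ to degree~$1$ you discard the topological control coming from the classification, which is the crucial input: a minimal degree~$2$ del Pezzo surface has real locus a disjoint union of four spheres (so its case is trivial, $H_1=0$), and a minimal degree~$1$ del Pezzo surface has real locus four spheres plus one real projective plane, so that after blowing up the anticanonical base point only one component $\Pi$ (a Klein bottle, of $\Z/2\Z$-homological dimension~$2$ in degree~$1$) carries any homology. Second, your proposed production of curve classes on the degree-$1$ surface (components of reducible real fibres, multisections in $E+m(-K)$, a separate discussion when ``$\phi$ has no real singular fibre'') is not carried out, and the difficulty you flag is resolved by an argument you did not find: each sphere component $\Pi'$ of $\widetilde X(\R)$ cannot dominate $\P^1(\R)$ (smooth fibres are elliptic curves with at most two real components, and $\Pi$ already surjects), so its image is a closed interval $[a,b]$, and the fibres $\widetilde X_a$, $\widetilde X_b$ are singular along $\Pi'$; the unique geometric components $C_a$, $C_b$ of these fibres meeting the section $E$ are rational curves defined over $\R$ whose normalizations have connected real loci, hence $\cl_\R(E)$, $\cl_\R(C_a)$, $\cl_\R(C_b)$ lie in $H_1(\Pi,\Z/2\Z)$; the intersection numbers $\cl_\R(C_a)\cdot\cl_\R(E)=\cl_\R(C_b)\cdot\cl_\R(E)=1$ and $\cl_\R(C_a)\cdot\cl_\R(C_b)=0$ force these classes to span a subspace of dimension at least~$2$, hence all of $H_1(\Pi,\Z/2\Z)$, and the remaining components are spheres. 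Without this (or some substitute) your argument does not establish the degree-$1$ and degree-$2$ cases, so the proposal as written does not prove Proposition~\ref{dP}.
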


\begin{proof}
We may assume that $X(\R)\neq\varnothing$.
Since the conclusion of Proposition~\ref{dP} is invariant under blow-ups at closed points, we may replace $X$ by a surface birational to it, and thus assume that $X$ is minimal and not birational to $\P^2_{\R}$. One can then use the classification of minimal rationally connected surfaces over $\R$ going back to Comessatti \cite[Teorema VI]{comessatti}.

 If $X$ has a conic bundle structure, then $X(\R)$ is a union of spheres (see \cite[VI, Corollary 3.1, Proposition~3.2, Proposition~6.1]{Silhol}), and the proposition holds since in this case $H_1(X(\R),\Z/2\Z)=0$.

 If $X$ is a del Pezzo surface, then $X$ is isomorphic to a degree~$2$ del Pezzo surface with~$4$ spheres as real connected components, or to a degree~$1$ del Pezzo surface with~$4$ spheres and~$1$ real projective plane as real connected components (see \cite[VI, Proofs of Theorem~4.6 and Proposition 6.3]{Silhol}). The proposition is trivial in the first case since $H_1(X(\R),\Z/2\Z)=0$.

In the second case, we let $\widetilde{X}\to X$ be the blow-up of the unique base point of the anticanonical linear system of $X$, with exceptional divisor $E$. The surface $\widetilde{X}$ carries an elliptic fibration $\widetilde{X}\to \P^1_{\R}$ of which $E$ is a section. We denote by $\Pi\subset \widetilde{X}(\R)$ the connected component containing $E(\R)$, and by $\Pi'\subset\widetilde{X}(\R)$ any other connected component. The smooth fibers of $f$ are elliptic curves, whose real loci have at most $2$  connected components. Since $\Pi$ and $\Pi'$ are not the only components of $\widetilde{X}(\R)$ (there are $3$ others), and since $\Pi\to \P^1(\R)$ is surjective, we deduce that $\Pi'\to \P^1(\R)$ is not surjective. Its image is a closed interval with distinct endpoints $a,b\in\P^1(\R)$.
The fibers $\widetilde{X}_a$ and $\widetilde{X}_b$ are singular (along their intersection with $\Pi'$), so that their geometric irreducible components are rational curves. Since $f$ is smooth along the section $E$, $\widetilde{X}_a$ and $\widetilde{X}_b$ contain a unique geometric irreducible component meeting $E$, denoted by $C_a$ and $C_b$. They are defined over $\R$ and meet $E$ transversally.

Note that the real loci of the normalizations of $E$, $C_a$, $C_b$ are connected. Consequently, the Borel--Haefliger classes of $E$, $C_a$ and $C_b$ belong to $H_1(\Pi,\Z/2\Z)$. Obviously, $\cl_{\R}(C_a)\cdot\cl_{\R}(E)=1$, $\cl_{\R}(C_b)\cdot \cl_{\R}(E)=1$ and $\cl_{\R}(C_a)\cdot\cl_{\R}(C_b)=0$. From these equalities, it follows that these classes generate a sub-vector space of dimension at least $2$ in $H_1(\Pi,\Z/2\Z)$. But this is only possible if the connected component of $X(\R)$ that was blown-up was the one isomorphic to a projective plane, and shows that $H_1(\Pi,\Z/2\Z)$ is in fact generated by $\cl_{\R}(E)$, $\cl_{\R}(C_a)$ and $\cl_{\R}(C_b)$. Since the other~$4$ connected components of $\widetilde{X}(\R)$ are isomorphic to spheres, we deduce that $H_1(\widetilde{X}(\R),\Z/2\Z)$ is generated by rational curves, which concludes the proof.
\end{proof}

\begin{rmk}
Proposition \ref{dP} remains true over an arbitrary real closed field $R$. Indeed,
in the proof of Proposition \ref{dP},
one may replace without any loss
the word ``sphere'' (resp.\ ``real projective plane'') with ``space~$S$
such that $H_1(S,\Z/2\Z)$ has dimension~$0$'' (resp.\ ``dimension~$1$'');
the proofs given in \cite{Silhol} of the assertions used in the above argument
then work over an arbitrary real closed field.
\end{rmk}

\appendix
\section{\texorpdfstring{$G$-equivariant}{𝐺-equivariant} complex analytic spaces}

In this appendix, we develop the basics of $G$-equivariant complex geometry, and collect the results that we need.

\subsection{Definition}
\label{Ranalytic}

A \textit{$G$-equivariant complex analytic space} is a complex analytic space $(Z,\sO_Z)$ in the sense of \cite[p.~16]{Stein} whose underlying locally ringed space is endowed with an action of $G$ such that the complex conjugation $\sigma$ acts $\C$-antilinearly on $\sO_Z$.
It is said to be a manifold (resp.\ Stein, projective...)\ if so is the underlying complex analytic space.
A $G$-equivariant complex manifold is nothing but a complex analytic manifold endowed with an action of $G$ such that $\sigma$ acts antiholomorphically.

If $Z$ is a $G$-equivariant complex analytic space, a $G$-equivariant coherent sheaf on $Z$ is a coherent sheaf on the underlying complex analytic space that is endowed with an action of $G$ compatible with its $\sO_Z$-module structure.

\vspace{1em}
There are two equivalent approaches to $G$-equivariant complex geometry. One can consider the $G$-equivariant spaces defined above, as in \cite[\S II.4]{GMT} where they are called complex analytic spaces with an antiinvolution, as in \cite[p.~250]{Beilinson} where they are called analytic spaces over $\R$, or as in \cite[\S 2.1]{CiliPe} where they are called real structures on complex spaces.
 One can also consider their quotients by the action of~$G$ in the category of locally ringed spaces: those are the Berkovich $\R$-analytic spaces hinted at in \cite[Examples 1.5.4]{Berkovich}, which were also considered by Huisman \cite{Huismanexp} under the name of real analytic spaces.
The reason for our choice is that classical results of complex geometry apply more directly in the former context.

\subsection{Analytification}
\label{Ranal}
Let $(Z,\sO_Z)$ be a complex analytic space with structural morphism $\mu: \C\to\sO_Z$. We define
its conjugate $(Z^{\sigma},\sO_{Z^{\sigma}})$ to be equal to $(Z,\sO_Z)$ as a locally ringed space, but with structural morphism $\mu\circ \sigma$. With a coherent sheaf $\sF$ on $Z$, one associates a coherent sheaf $\sF^\sigma$ on $Z^{\sigma}$: it is equal to $\sF$ as a sheaf and its $\sO_{Z^{\sigma}}$-module structure is induced by the equality $\sO_{Z^{\sigma}}=\sO_Z$.
One verifies that a $G$-equivariant complex analytic space is nothing but a complex analytic space $Z$ endowed with an isomorphism $\alpha: Z^\sigma\to Z$ of complex analytic spaces such that $\alpha\circ\alpha^{\sigma}=\Id_Z$, and that a $G$-equivariant coherent sheaf on it is a coherent sheaf $\sF$ on the underlying complex space endowed with an isomorphism $\beta: \alpha^*\sF\to\sF^{\sigma}$ such that $\beta^{\sigma}\circ \alpha^{\sigma *}\beta=\Id_{\sF}$.

If $X$ is a variety over $\R$, the analytification $X_{\C}^{\an}$ of $X_{\C}$ has a natural structure of $G$-equivariant complex analytic space, denoted by $X^{\an}$. Similarly, if $\sF$ is a coherent sheaf on~$X$, the analytification $\sF^{\an}_{\C}$ of $\sF_{\C}$ has a natural structure of a $G$-equivariant coherent sheaf, denoted by $\sF^{\an}$.

When~$X$ is proper,
the analytification functor $Y\mapsto Y^{\an}$ induces an equivalence between the
categories of closed subvarieties of~$X$
and of closed $G$\nobreakdash-equivariant analytic subspaces of~$X^{\an}$,
and the functor
$\sF\mapsto \sF^{\an}$ is an equivalence between the categories of coherent sheaves on~$X$ and of $G$\nobreakdash-equivariant coherent sheaves on $X^{\an}$.
These facts follow from the above description of $G$\nobreakdash-equivariant complex analytic spaces and $G$-equivariant coherent sheaves, together with
Serre's GAGA theorem for proper varieties
\cite[Expos\'e~XII, Th\'eor\`eme~4.4]{SGA1} (see also \emph{loc.\ cit.}, Corollaire~4.6)
and Galois descent (see for instance \cite[I \S 1]{Silhol}).

Arguing in the same way, we obtain a $G$-equivariant version of Riemann's existence theorem from the non-equivariant statement \cite[Expos\'e~XII, Th\'eor\`eme~5.1]{SGA1}: for any variety~$X$ over~$\R$, the analytification functor induces an equivalence between the categories of finite \'etale coverings of $X$ and of $G$-equivariant finite topological coverings of~$X(\C)$.

\subsection{The Stein property}
\label{parStein}
Recall that a complex analytic space $Z$ is \emph{Stein} if $H^q(Z,\sF)=0$ for all coherent sheaves $\sF$ on $Z$ and all $q>0$.
We collect here for later use $G$-equivariant analogues of well-known consequences of the Stein property.

For a $G$-equivariant coherent sheaf $\sF$ on a $G$-equivariant complex analytic space~$Z$,
the group $G$ acts on $H^q(Z,\sF)$, the action of
$\sigma$ being $\C$-antilinear.
Since the $H^q(Z,\sF)$ are $\C$-vector spaces, one has $H^p(G, H^q(Z,\sF))=0$ for $p>0$, and the second spectral sequence of equivariant cohomology
\cite[Th\'eor\`eme 5.2.1]{Tohoku}
 shows that $H^q_G(Z,\sF)=H^q(Z,\sF)^G$. Thus, if $Z$ is Stein, then $H^q_G(Z,\sF)=0$ for all $q>0$.

The real vector space $H^q(Z,\sF)^G$ satisfies
 $H^q(Z,\sF)=H^q(Z,\sF)^G\otimes_\R \C$, giving rise to a real structure on $H^q(Z,\sF)$.
The cohomology long exact sequence induced by a short exact sequence of $G$-equivariant coherent sheaves is $G$-equivariant, hence is defined over $\R$ for these real structures.

\begin{lem}
\label{SteinR}
Let $Z$ be a Stein $G$-equivariant complex analytic space.
\begin{enumerate}[(i)]
\item If $Z$ has finite dimension, a $G$-equivariant coherent sheaf $\sF$ on $Z$ whose fibers have bounded dimensions
 is a quotient of a trivial $G$-equivariant coherent sheaf.
\item A short exact sequence $0\to\sF_1\to\sF_2\to\sF_3\to 0$ of $G$-equivariant coherent sheaves on $Z$ splits $G$-equivariantly if $\sF_3$ is locally free.
\end{enumerate}
\end{lem}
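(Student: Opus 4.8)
The plan is to deduce both statements from their classical non-equivariant counterparts, exploiting the real structures on the cohomology of $G$-equivariant coherent sheaves on a Stein space recorded in \S\ref{parStein}, together with one elementary observation: if $W$ is any $G$-stable $\C$-subspace of a $\C$-vector space on which $\sigma$ acts $\C$-antilinearly, then, writing $W^G=\{w\in W\mid \sigma(w)=w\}$, one has $W=W^G\otimes_\R\C$, because $w=\tfrac12(w+\sigma(w))+i\cdot\tfrac1{2i}(w-\sigma(w))$ realizes every $w\in W$ from elements of $W^G$. Combined with the fact that $(-)^G$ is an exact functor on $\C$-vector spaces with $\C$-antilinear $G$-action (the vanishing $H^p(G,-)=0$ for $p>0$ used in \S\ref{parStein}), this is all the extra input needed.

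For part~(i), I would first invoke the finiteness theorem of Forster and Ramspott: on a finite-dimensional Stein space, a coherent sheaf whose fibers have bounded dimension is generated by finitely many global sections. Thus there is a finite-dimensional $\C$-subspace $W_0\subset H^0(Z,\sF)$ for which the evaluation map $\sO_Z\otimes_\C W_0\to\sF$ is surjective. Replacing $W_0$ by the $G$-stable subspace $W:=W_0+\sigma(W_0)$, which is still finite-dimensional and still generates $\sF$, and using the observation above to write $W=W^G\otimes_\R\C$, I pick an $\R$-basis $e_1,\dots,e_N$ of $W^G\subseteq H^0(Z,\sF)^G$. The $\sO_Z$-linear map $\sO_Z^{\oplus N}\to\sF$ sending $(f_j)_j$ to $\sum_j f_je_j$ is then surjective; and it is $G$-equivariant for the trivial $G$-structure on $\sO_Z^{\oplus N}$ precisely because each $e_j$ is $G$-invariant, so that $\sigma(\sum_j f_je_j)=\sum_j\sigma(f_j)e_j$ by $\sigma$-semilinearity of the action on $\sF$. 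This gives the required quotient presentation.

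For part~(ii), since $\sF_3$ is locally free, applying $\mathscr{H}om(\sF_3,-)$ to the given short exact sequence produces a short exact sequence of $G$-equivariant coherent sheaves $0\to\mathscr{H}om(\sF_3,\sF_1)\to\mathscr{H}om(\sF_3,\sF_2)\to\mathscr{H}om(\sF_3,\sF_3)\to 0$ on $Z$ (exactness being local, where $\sF_3\cong\sO_Z^{\oplus r}$). The associated long exact cohomology sequence is $G$-equivariant, by the functoriality discussion of \S\ref{parStein} which shows that such sequences induce long exact sequences defined over $\R$; and $H^1(Z,\mathscr{H}om(\sF_3,\sF_1))=0$ because $Z$ is Stein. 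Taking global sections and then $G$-invariants (exact, as noted above) yields a surjection $\mathrm{Hom}_{\sO_Z}(\sF_3,\sF_2)^G\surj\mathrm{Hom}_{\sO_Z}(\sF_3,\sF_3)^G$. The identity $\Id_{\sF_3}$ lies in the target, being a $G$-equivariant endomorphism; any lift to $\mathrm{Hom}_{\sO_Z}(\sF_3,\sF_2)^G$ is a $G$-equivariant section $\sF_3\to\sF_2$ of the original sequence, hence a $G$-equivariant splitting. None of this presents a real obstacle; the only points demanding a little care are supplying the correct finiteness input in~(i) — the Forster–Ramspott bound is exactly what forces the hypotheses on $\dim Z$ and on the fiber dimensions into the statement — and checking that the connecting maps in the long exact sequence of~(ii) are $G$-equivariant, which is immediate from \S\ref{parStein}.
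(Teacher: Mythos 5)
Your proof is correct and takes essentially the same route as the paper: for (i), finitely many global generators from the non-equivariant finiteness theorem (the paper cites Kripke's Theorem~1 where you cite Forster--Ramspott, an equivalent input), symmetrized into a finite-dimensional $G$-stable subspace whose $\R$-basis of invariants yields the equivariant surjection $\sO_Z^N\surj\sF$. For (ii), your use of $\mathscr{H}om(\sF_3,-)$ is the same as the paper's $-\otimes\sF_3^\vee$, and the lifting of $\Id_{\sF_3}$ via Steinness and exactness of $G$-invariants is exactly the paper's argument.
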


\begin{proof}
(i) Combining Cartan's Theorem A and \cite[Theorem 1]{Kripke}, we see that $\sF$ is generated by finitely many global sections. The smallest $G$-stable sub-$\C$-vector space $V\subset H^0(Z,\sF)$ containing these sections is defined over $\R$: an $\R$-basis $\zeta_1,\dots,\zeta_N$ of $V\cap H^0(Z,\sF)^G$ is also a $\C$-basis of $V$. The $\zeta_i$ induce a $G$-equivariant surjection $\sO_Z^N\to \sF$ of coherent sheaves, as required.

(ii)  Consider the exact sequence $0\to \sF_1\otimes \sF_3^\vee\to \sF_2\otimes \sF_3^\vee\to \sF_3\otimes \sF_3^\vee\to 0$. Since $Z$ is Stein, there is an induced short exact sequence of global sections on $Z$, hence of the underlying real vector spaces:
$$0\to H^0(Z,\sF_1\otimes \sF_3^\vee)^G\to H^0(Z,\sF_2\otimes \sF_3^\vee)^G\to H^0(Z,\sF_3\otimes \sF_3^\vee)^G\to 0.$$
A lift of $\Id_{\sF_3}\in H^0(Z,\sF_3\otimes \sF_3^\vee)^G$ in $H^0(Z,\sF_2\otimes \sF_3^\vee)^G$ corresponds to a $G$-equivariant morphism $\sF_3\to \sF_2$ of coherent sheaves inducing the required splitting.
\end{proof}

\begin{prop}
\label{Siu}
 Let $Z$ be a $G$-equivariant complex analytic space. Any  $G$-stable locally closed Stein subspace $Y\subset Z$ has a $G$-stable Stein open neighbourhood in $Z$.
\end{prop}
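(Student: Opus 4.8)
The plan is to deduce the statement from the non-equivariant theorem of Siu, according to which any closed Stein subspace of a complex analytic space admits a Stein open neighbourhood, by symmetrising under the action of~$G$ the neighbourhood that Siu's theorem produces. The symmetrisation step rests on two facts: the conjugate of a Stein space is Stein, and the intersection of two Stein open subsets of a complex analytic space is again Stein.

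First I would reduce to the case where~$Y$ is closed in~$Z$. Since~$Y$ is locally closed in~$Z$, it is closed in some open subset $W\subseteq Z$. Replacing~$W$ by $W\cap\sigma(W)$, which is open and $G$-stable (because $\sigma^2=\Id$), still contains~$Y$ (since $Y=\sigma(Y)\subseteq\sigma(W)$), and in which~$Y$ is still closed, and then replacing~$Z$ by this open $G$-equivariant complex analytic subspace, we may assume that~$Y$ is a $G$-stable closed Stein subspace of~$Z$.

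Next I would apply Siu's theorem to the closed Stein subspace $Y\subseteq Z$ (forgetting the $G$-action), obtaining a Stein open subset $U\subseteq Z$ with $Y\subseteq U$, and then set $U':=U\cap\sigma(U)$. By construction $U'$ is open, contains~$Y$, and is $G$-stable since $\sigma(U')=\sigma(U)\cap U=U'$. It remains to check that~$U'$ is Stein. On the one hand, $\sigma(U)\subseteq Z$ is biholomorphic, via~$\sigma$, to the conjugate complex space~$U^\sigma$ of~$U$ (see \S\ref{Ranal}), and $U^\sigma$ is Stein because~$U$ is (coherent sheaves and their cohomology transfer), so $\sigma(U)$ is Stein. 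On the other hand, the intersection of two Stein open subsets of a complex analytic space is Stein: if $\varphi_1\colon U\to\R$ and $\varphi_2\colon\sigma(U)\to\R$ are continuous strictly plurisubharmonic exhaustion functions, as provided by Narasimhan's characterisation of Stein spaces, then a regularised maximum of~$\varphi_1$ and~$\varphi_2$ is a strictly plurisubharmonic exhaustion function of $U\cap\sigma(U)$, because for every $c\in\R$ the set $\{\varphi_1\le c\}\cap\{\varphi_2\le c\}$ is a closed subset of the compact set $\{\varphi_1\le c\}$, hence compact, and is contained in $U\cap\sigma(U)$. Therefore~$U'$ is Stein, and it is the required $G$-stable Stein open neighbourhood of~$Y$ in~$Z$.

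The main obstacle I anticipate is the assertion that the intersection of two Stein open subsets is Stein, which is what allows one to upgrade Siu's neighbourhood~$U$ to the $G$-stable neighbourhood~$U'$; the regularised-maximum argument sketched above settles it, once the requisite smoothing of the maximum is carried out. A secondary point is to ensure that Siu's theorem is available in the generality of possibly non-reduced complex analytic spaces; if one only has it for reduced spaces, one reduces to that case, using that a complex analytic space is Stein if and only if its reduction is.
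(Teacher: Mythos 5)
Your proposal is correct and follows essentially the same route as the paper: apply Siu's theorem to get a Stein open neighbourhood $U$ of $Y$ and observe that the $G$-stable neighbourhood $U\cap\sigma(U)$ is again Stein. The paper simply cites the fact that an intersection of two Stein open subsets is Stein (to \cite[p.~127]{Stein}) instead of reproving it via plurisubharmonic exhaustions, and it absorbs the locally-closed-to-closed reduction into the statement of Siu's theorem; these are only presentational differences.
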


\begin{proof}
By Siu's theorem \cite{Siu}, $Y$ has a Stein open neighbourhood $Y'$ in $Z$. The $G$-stable open neighbourhood $Y'\cap\sigma(Y')$ is then Stein by \cite[p.~127]{Stein}.
\end{proof}

\subsection{The Picard group}
\label{RPicard}
Let $Z$ be a $G$-equivariant complex analytic space.
The isomorphism classes of $G$\nobreakdash-equivariant invertible sheaves on $Z$ form a group for the tensor product: the Picard group $\Pic_G(Z)$ of $Z$.
Letting $\sO^*_Z\subset\sO_Z$ be the $G$-equivariant subsheaf of invertible analytic functions, one has an isomorphism $\Pic_G(Z)\simeq H^1_G(Z,\sO_Z^*)$. This follows from the \v{C}ech description of $G$-equivariant cohomology \cite[Th\'eor\`eme~5.5.6]{Tohoku} (for details
in the topological setting see \cite[p.~698]{kahnchern} or \cite[Proposition~1.1.1]{krasnovcharacteristicclasses}).

Let $\Z(1)\subset \C$ be the sub-$G$-module generated by $\sqrt{-1}$.
Viewing the exponential exact sequence  \cite[Lemma p.~142]{Stein}
\begin{equation}
\label{expex}
0\to \Z(1)\to\sO_Z\xrightarrow{f\mapsto \exp(2\pi f)}\sO_Z^*\to 0
\end{equation}
as a short exact sequence of $G$-equivariant sheaves on $Z$ yields a boundary map
$\cl:\Pic_G(Z)\to H^2_G(Z,\Z(1))$, the so-called equivariant cycle class map.
Composing it with the restriction map $H^2_G(Z,\Z(1))\to H^2_G(Z^G,\Z(1))$ and with the canonical isomorphism $H^2_G(Z^G,\Z(1))\simeq H^1(Z^G,\Z/2\Z)$ described in \cite[Theorem 1.3]{krasnovequivariant} induces the Borel--Haefliger cycle class map $\cl_\R: \Pic_G(Z)\to H^1(Z^G,\Z/2\Z)$.

\begin{prop}
\label{linebundleR}
The map $\cl: \Pic_G(Z)\to H^2_G(Z,\Z(1))$ is an isomorphism if $Z$ is a Stein $G$-equivariant complex analytic space. So is $\cl_\R: \Pic_G(Z)\to H^1(Z^G,\Z/2\Z)$ if $Z$ is a Stein $G$-equivariant complex manifold of pure dimension $1$.
\end{prop}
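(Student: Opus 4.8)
The first assertion should follow at once from the exponential exact sequence $(\ref{expex})$: the associated long exact sequence of $G$\nobreakdash-equivariant cohomology contains the segment
\[
H^1_G(Z,\sO_Z)\longrightarrow H^1_G(Z,\sO_Z^*)\xrightarrow{\ \cl\ }H^2_G(Z,\Z(1))\longrightarrow H^2_G(Z,\sO_Z),
\]
whose two outer terms vanish when $Z$ is Stein by the discussion in~\S\ref{parStein} (applied to $\sF=\sO_Z$); since $\Pic_G(Z)=H^1_G(Z,\sO_Z^*)$, this shows that $\cl$ is bijective.

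For the second assertion, I would start from the observation that $\cl_\R$ is, by construction, the composition of $\cl$ with the restriction map $r\colon H^2_G(Z,\Z(1))\to H^2_G(Z^G,\Z(1))$ and with Krasnov's isomorphism $H^2_G(Z^G,\Z(1))\simeq H^1(Z^G,\Z/2\Z)$ of \cite[Theorem~1.3]{krasnovequivariant}. As $\cl$ is an isomorphism by the first part, it is enough to prove that $r$ is an isomorphism; and by the long exact sequence of the pair $(Z,Z^G)$, it is enough to check that $H^2_G(Z,Z^G;\Z(1))=0$ and $H^3_G(Z,Z^G;\Z(1))=0$.

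To establish this vanishing I would exploit that $Z^G$ is a closed submanifold of $Z$ of real codimension~$1$, carrying the trivial $G$\nobreakdash-action, whose normal bundle is canonically isomorphic to $TZ^G$ — hence trivial, $Z^G$ being a $1$\nobreakdash-manifold — with $\sigma$ acting on the fibres by $-1$. Fixing a $G$\nobreakdash-invariant tubular neighbourhood $N\cong Z^G\times\R$ of $Z^G$ on which $\sigma\cdot(x,t)=(x,-t)$, excision along the closed set $Z^G$ together with homotopy invariance would identify $H^k_G(Z,Z^G;\Z(1))$ with $H^k_G(Z\setminus Z^G,N\setminus Z^G;\Z(1))$. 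Since $G$ acts freely on $Z\setminus Z^G$ and on $N\setminus Z^G$, and since $\Z(1)$ is the sign $G$\nobreakdash-module, this last group is identified with $H^k(\bar M,\partial\bar M;\mathcal L)$, where $\bar M$ is the $2$\nobreakdash-manifold with boundary obtained from the quotient surface $(Z\setminus Z^G)/G$ by filling in a boundary copy of $Z^G$ along the collar $(N\setminus Z^G)/G\cong Z^G\times\R_{>0}$, and $\mathcal L$ is the rank-one $\Z$\nobreakdash-local system on $\bar M$ associated with the double covering $Z\setminus Z^G\to(Z\setminus Z^G)/G$. The key geometric input here is that every connected component of $Z$ is non-compact (a compact complex manifold of positive dimension not being Stein); hence so is every connected component of $(Z\setminus Z^G)/G$, and of $\bar M$. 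Poincaré–Lefschetz duality should then rewrite $H^k(\bar M,\partial\bar M;\mathcal L)$ as $H^{\BM}_{2-k}(\bar M;\mathcal L^\vee\otimes\omega_{\bar M})$, with $\omega_{\bar M}$ the orientation local system; this vanishes for $k\geq 2$, trivially if $k\geq 3$ and, for $k=2$, because $H^{\BM}_0$ of a manifold all of whose connected components are non-compact vanishes with arbitrary local coefficients — a point, with any coefficient, being the boundary of a properly embedded half-line along which the coefficient is transported. This would give $H^2_G(Z,Z^G;\Z(1))=H^3_G(Z,Z^G;\Z(1))=0$ and complete the proof.

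The step I expect to require the most care is this last one, namely keeping track, through both the free-quotient identification and the use of Poincaré–Lefschetz duality, of the $\Z(1)$\nobreakdash-twist and of the orientation local system $\omega_{\bar M}$: one should bear in mind that $\sigma$ is orientation-reversing on $Z$, so that the quotient surfaces $(Z\setminus Z^G)/G$ need not be orientable, and that $Z^G$ may have compact components. (A variant would replace this analysis by the fact that an open Riemann surface equipped with a real structure is $G$\nobreakdash-equivariantly homotopy equivalent to a $G$\nobreakdash-CW complex of dimension~$1$, after which the relative groups $H^2_G(Z,Z^G;\Z(1))$ and $H^3_G(Z,Z^G;\Z(1))$ can be computed cellularly; either way, the argument rests on the non-compactness of the components of $Z$.)
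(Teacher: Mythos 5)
Your plan is correct and follows essentially the same route as the paper: the first assertion via the exponential sequence and the vanishing of $H^q_G(Z,\sO_Z)$ for $q>0$ on a Stein space, and the second by reducing to the bijectivity of the restriction $H^2_G(Z,\Z(1))\to H^2_G(Z^G,\Z(1))$, passing to the quotient surface with boundary $Z/G$, and killing the relevant groups by Poincar\'e--Lefschetz duality together with the non-compactness of the components of $Z/G$ coming from Steinness. The only differences are presentational: the paper works directly with $i_!\Z(1)$ and the first spectral sequence of equivariant cohomology (identifying $j_!\widetilde{\Z}$ with the orientation sheaf of $Z/G$, so that duality lands in $H^{\BM}_{2-q}(Z/G,\Z)$), where you use an equivariant tubular neighbourhood, excision, and duality with an unspecified local system, which works just as well since $H^{\BM}_0$ of a manifold with non-compact components vanishes for arbitrary local coefficients.
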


\begin{proof}
If $Z$ is Stein, $H^q_G(Z,\sO_Z)=0$ for $q>0$ (see \S\ref{parStein}).
The long exact sequence of $G$-equivariant cohomology induced by the exponential exact sequence then shows that $\cl$ is an isomorphism.

It remains to prove that the restriction map $H^2_G(Z,\Z(1))\to H^2_G(Z^G,\Z(1))$ is an isomorphism if $Z$ is a Stein $G$-equivariant complex manifold of pure dimension $1$.
To do so, we let $i: Z\setminus Z^G\inj Z$ be the inclusion and we let $i_!$ denote the extension by zero.
Since $G$ acts antiholomorphically on $Z$, the fixed point set $Z^G\subset Z$ is a one-dimensional $\ci$ closed submanifold, and the quotient $Z/G$ is a $\ci$ manifold with boundary $Z^G$ and interior $(Z\setminus Z^G)/G$. We denote by $j: (Z\setminus Z^G)/G\inj Z/G$ the inclusion, and by $\widetilde{\Z}$ the sheaf on $(Z\setminus Z^G)/G$ induced by the $G$-equivariant sheaf $\Z(1)$ on $Z\setminus Z^G$.
Since $G$ acts antiholomorphically on $Z$, it reverses its orientation, so that
$j_!\widetilde{\Z}$ is the orientation sheaf of $Z/G$ in the sense of \cite[V, Definition 9.1]{Bredon}.

 In view of the  long exact sequence of relative equivariant cohomology
$$H^2_G(Z,i_!\Z(1))\to H^2_G(Z,\Z(1))\to H^2_G(Z^G,\Z(1))\to H^3_G(Z,i_!\Z(1))\rlap{,}$$
it suffices to show that $H^q_G(Z,i_!\Z(1))=0$ for $q\geq 2$.
 By the first spectral sequence of equivariant cohomology \cite[Th\'eor\`eme 5.2.1]{Tohoku}, $H^q_G(Z, i_!\Z(1))\simeq H^q(Z/G, j_!\widetilde{\Z})$.
By Poincar\'e duality,
$H^q(Z/G, j_!\widetilde{\Z})\simeq H_{2-q}^{\BM}(Z/G,\Z)$, where $H_*^{\BM}$ denotes Borel--Moore homology (apply \cite[V, Theorem 9.3]{Bredon} to $X=Z/G$, $\sM=\Z$, and with $\Phi$ the family of closed subsets of $X$).
This group obviously vanishes if $q\geq 3$. It also vanishes if $q=2$ as $Z/G$ has no compact component since $Z$ is Stein.
\end{proof}

\subsection{Meromorphic functions and ramified coverings}
\label{meropar}

If $Z$ is a complex manifold, we let $\sM(Z)$ be the ring of meromorphic functions on $Z$. It is the product of the fields of meromorphic functions of the connected components of $Z$.

\begin{prop}
\label{ramifiedcover}
Let $Z$ be a complex manifold of pure dimension $1$ with finitely many connected components. Associating with $\pi:Z'\to Z$ the $\sM(Z)$-algebra $\sM(Z')$ induces an equivalence between the categories of finite morphisms $\pi:Z'\to Z$ of complex manifolds of pure dimension $1$ and of finite \'etale $\sM(Z)$-algebras.
\end{prop}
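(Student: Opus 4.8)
The plan is to reduce at once to the case where $Z$ is connected, so that $Z$ is a connected Riemann surface and $\sM(Z)$ a field, and then to check that the contravariant functor $(\pi\colon Z'\to Z)\mapsto\sM(Z')$ (a morphism $Z'_1\to Z'_2$ over $Z$ going to the pull-back $\sM(Z'_2)\to\sM(Z'_1)$) is an anti-equivalence, i.e.\ fully faithful and essentially surjective. This reduction is harmless since both sides split as finite products over the connected components of~$Z$. Once $Z$ is connected, a finite \'etale $\sM(Z)$-algebra is a finite product of finite field extensions (automatically separable, as $\mathrm{char}=0$), while a finite morphism $\pi\colon Z'\to Z$ is a finite disjoint union of finite morphisms with connected source: $\pi$ is nonconstant on each component of $Z'$, hence an open branched covering with discrete branch locus $R\subset Z$, and $Z'\setminus\pi^{-1}(R)\to Z\setminus R$ is a proper local homeomorphism over a connected surface, so a covering of finite degree $n$, whence $Z'$ has at most $n$ components. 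So it suffices to match finite morphisms $Z'\to Z$ with $Z'$ connected (or empty) against finite field extensions of $\sM(Z)$ (or the zero algebra), and pass to finite products.

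First I would check the functor is well defined, i.e.\ that $\sM(Z')$ is finite \'etale over $\sM(Z)$. For $\pi\colon Z'\to Z$ finite of degree $n$ with $Z'$ connected, $\sM(Z')$ is a field, and for $g\in\sM(Z')$ the characteristic polynomial $\prod_{i=1}^{n}(T-g\circ\tau_i)$ formed from the $n$ local sections $\tau_i$ of $\pi$ over $Z\setminus R$ has symmetric coefficients, which descend to $Z\setminus R$ and extend meromorphically across the discrete set $R$ (being locally bounded roots of monic equations); thus $g$ satisfies a monic degree-$n$ polynomial over $\sM(Z)$. Since a separable algebraic extension all of whose elements have degree $\le n$ is finite of degree $\le n$ (primitive element theorem), $\sM(Z')/\sM(Z)$ is finite separable; taking products over components, $\sM(Z')$ is finite \'etale for arbitrary finite $Z'\to Z$.

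For full faithfulness, faithfulness reduces to $Z'_1$ connected: if $f,g\colon Z'_1\to Z'_2$ over $Z$ induce the same pull-back they agree on idempotents, hence map $Z'_1$ into one component $W$ of $Z'_2$, and since $W$ is a projective algebraic curve (if compact, by Riemann's existence theorem) or a Stein surface (otherwise, by the Behnke--Stein theorem), $\sM(W)$ separates the points of $W$, so $f^{*}=g^{*}$ forces $f=g$. For fullness, reduce to $Z'_1,Z'_2$ connected; writing $\sM(Z'_2)=\sM(Z)[\theta]$ with minimal polynomial $P$ of degree $n=\deg\pi_2$, an $\sM(Z)$-embedding $\phi\colon\sM(Z'_2)\to\sM(Z'_1)$ satisfies $P(\phi(\theta))=0$; over a small disc $D\subset Z$ avoiding all branch and pole loci the $n$ sheets of $Z'_2$ carry the $n$ distinct roots of $P$, and $\phi(\theta)$ restricted to each sheet of $Z'_1$ over $D$ is one of them, defining a holomorphic lift $f$ of $\pi_1$ along $\pi_2$ away from the preimage of a discrete set; $f$ extends holomorphically across that set by the Riemann extension theorem (near such a point $f$ maps a punctured disc, by connectedness, into one small branched chart of $Z'_2$), giving $f\colon Z'_1\to Z'_2$ over $Z$ with $f^{*}\theta=\phi(\theta)$, hence $f^{*}=\phi$.

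The essential surjectivity is the main step and the principal obstacle. Given a finite extension $L=\sM(Z)[\theta]$ with minimal polynomial $P=T^{n}+a_{1}T^{n-1}+\dots+a_{n}$, let $S\subset Z$ be the discrete set of poles of the $a_{i}$ together with the zeros of $\mathrm{disc}(P)$. Over $U=Z\setminus S$ the set $C^{\circ}=\{(z,w)\in U\times\C:w^{n}+a_{1}(z)w^{n-1}+\dots+a_{n}(z)=0\}$ is, by the implicit function theorem, a complex $1$-manifold, and the first projection $C^{\circ}\to U$ is a finite unramified covering of degree $n$ (properness from the standard bound on the roots of a monic polynomial). By the standard theorem that a finite unramified covering over the complement of a discrete subset of a Riemann surface extends uniquely to a branched covering, $C^{\circ}\to U$ extends to a finite morphism $\pi\colon Z'\to Z$ with $Z'$ a Riemann surface and $\pi^{-1}(U)=C^{\circ}$. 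The coordinate $w$ extends to $\tilde w\in\sM(Z')$ (locally bounded over points of $S$ where no $a_{i}$ has a pole; elsewhere, clearing denominators by a local function $d$ makes $dw$ a root of a local monic polynomial, so $\tilde w=(dw)/d$ is meromorphic), and $P(\tilde w)=0$ in $\sM(Z')$ since this holds on the dense open $C^{\circ}$, which meets every component of $Z'$. Thus on each component $Z'_{j}$ one has $\sM(Z)[\tilde w|_{Z'_{j}}]\cong\sM(Z)[T]/(P)=L$, of degree $n$; comparing with $[\sM(Z'_{j}):\sM(Z)]\le\deg(\pi|_{Z'_{j}})$ from the second paragraph and using $\sum_j\deg(\pi|_{Z'_{j}})=\deg\pi=n$ forces $Z'$ connected, and then $\sM(Z')=\sM(Z)[\tilde w]\cong L$. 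Passing to finite disjoint unions realizes every finite \'etale $\sM(Z)$-algebra. The delicate points are precisely this construction of $Z'$ as a bona fide complex manifold and the degree bookkeeping forcing connectedness; the removable-singularity extensions (of $f$ and of $\tilde w$) are routine.
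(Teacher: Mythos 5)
Your argument is a genuinely different route from the paper's: the paper reduces to connected $Z$ and then simply cites \cite[Chapter~1, \textsection4.14, Corollary~4]{ShokuRiemann} for the correspondence between connected finite branched coverings and finite field extensions of $\sM(Z)$, whereas you reprove that correspondence from scratch. Most of your proof is sound: the reduction to connected $Z$, the bound $[\sM(Z'):\sM(Z)]\le\deg\pi$ via symmetric functions of the sheet values (though the parenthetical ``locally bounded'' is not quite right when $g$ has poles over the branch locus -- one needs a growth bound or the same clearing-of-denominators trick you use later for $\tilde w$), the faithfulness argument via separation of points (Behnke--Stein in the non-compact case), and the essential surjectivity step, including the degree bookkeeping that forces $Z'$ to be connected, which is correctly based only on the inequality $[\sM(Z'_j):\sM(Z)]\le\deg(\pi|_{Z'_j})$ that you actually proved.

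There is, however, one unjustified step, and it is load-bearing: in the fullness argument you take the minimal polynomial $P$ of a primitive element $\theta$ of $\sM(Z'_2)$ to have degree $n=\deg\pi_2$ and assert that ``the $n$ sheets of $Z'_2$ carry the $n$ distinct roots of $P$''. At that point you have only established $[\sM(Z'_2):\sM(Z)]\le\deg\pi_2$; the reverse inequality -- equivalently, that some meromorphic function on $Z'_2$ separates the sheets over a generic point -- is precisely the nontrivial analytic content of the proposition (for non-compact $Z$ it rests on Behnke--Stein), and your construction of the lift $f$ really needs it: if two sheets carried the same root of $P$, the prescription ``send $x$ to the point of the fibre where $\theta$ equals $\phi(\theta)(x)$'' would be ambiguous, and the argument that the sheet choice is locally constant (disjoint closed conditions covering a connected set) would collapse. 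The gap is fillable with the tools you already invoke: choose $h\in\sM(Z'_2)$ holomorphic near, and injective on, an unramified fibre $\pi_2^{-1}(z_0)$ (Riemann--Roch in the compact case, Behnke--Stein otherwise); evaluating its minimal polynomial over $\sM(Z)$ at a nearby point where its coefficients are holomorphic and $h$ still separates the fibre produces $\deg\pi_2$ distinct roots, whence $[\sM(Z'_2):\sM(Z)]\ge\deg\pi_2$; equality then forces the characteristic polynomial of $\theta$ to coincide with its minimal polynomial, whose discriminant is a nonzero meromorphic function on $Z$, giving the generic sheet separation your construction uses. As written, though, this step is assumed rather than proved.
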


\begin{proof}
When $Z$ is connected, and if one restricts to the subcategories of finite morphisms $\pi:Z'\to Z$ with $Z'$ non-empty and connected, and of finite field extensions of $\sM(Z)$, this is \cite[Chapter~1, \textsection4.14, Corollary~4]{ShokuRiemann}.
The general case follows at once.
\end{proof}

 If $Z$ is a $G$-equivariant complex manifold, let $\sM(Z)^G$ be the ring of $G$-equivariant meromorphic functions on $Z$. If $Z$ is connected, one has $\sM(Z)^G(\sqrt{-1})=\sM(Z)$. If $Z$ has two connected components $Z'$ and $\sigma(Z')$, one has $\sM(Z)^G\simeq\sM(Z')$. In general, $\sM(Z)^G$ is the product of the fields of $G$-equivariant meromorphic functions on the $G$-orbits of connected components of $Z$.

\begin{prop}
\label{Gramifiedcover}
Let $Z$ be a $G$-equivariant complex manifold of pure dimension~$1$ with finitely many connected components. Associating with $\pi:Z'\to Z$ the $\sM(Z)^G$\nobreakdash-algebra $\sM(Z')^G$ induces an equivalence between the categories of finite morphisms $\pi:Z'\to Z$ of $G$-equivariant complex manifolds of pure dimension~$1$ and of finite \'etale $\sM(Z)^G$-algebras.
\end{prop}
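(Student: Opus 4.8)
The plan is to deduce Proposition~\ref{Gramifiedcover} from its non-equivariant counterpart, Proposition~\ref{ramifiedcover}, by Galois descent along the $G$\nobreakdash-action. First I would observe that both sides of the claimed equivalence decompose compatibly as products indexed by the (finitely many) $G$\nobreakdash-orbits of connected components of~$Z$: if $(Z_i)_{i\in I}$ denote these orbits, viewed as $G$\nobreakdash-equivariant complex manifolds of pure dimension~$1$ each having one or two connected components, then $\sM(Z)^G=\prod_{i\in I}\sM(Z_i)^G$, a finite \'etale $\sM(Z)^G$\nobreakdash-algebra is a finite product of finite \'etale $\sM(Z_i)^G$\nobreakdash-algebras, and a $G$\nobreakdash-equivariant finite morphism $\pi\colon Z'\to Z$ is, $G$\nobreakdash-equivariantly, the disjoint union of the morphisms $\pi^{-1}(Z_i)\to Z_i$. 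This reduces the statement to the case where the connected components of~$Z$ form a single $G$\nobreakdash-orbit.

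There are then two cases. If $Z=Z_1\sqcup\sigma(Z_1)$ with~$G$ exchanging the two connected components, then restriction to $Z_1$ induces $\sM(Z)^G\isoto\sM(Z_1)$ and identifies the category of $G$\nobreakdash-equivariant finite morphisms $Z'\to Z$ with that of finite morphisms $Z'_1\to Z_1$ (the $G$\nobreakdash-equivariant structure on~$Z'$ being recovered as the disjoint union of $Z'_1\to Z_1$ with its conjugate over $\sigma(Z_1)$), compatibly with the isomorphism $\sM(Z')^G\isoto\sM(Z'_1)$; so in this case the assertion is exactly Proposition~\ref{ramifiedcover} applied to~$Z_1$. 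The substantive case is that of a connected~$Z$, on which~$\sigma$ restricts to an antiholomorphic involution. Since~$\sigma$ acts $\C$\nobreakdash-antilinearly one has $\sqrt{-1}\notin\sM(Z)^G$, and as $\sM(Z)^G(\sqrt{-1})=\sM(Z)$ the extension $\sM(Z)/\sM(Z)^G$ is Galois with group~$G$. Here I would argue that the equivalence of Proposition~\ref{ramifiedcover}, being functorial and compatible with the formation of conjugate complex manifolds and of conjugate $\C$\nobreakdash-algebras, upgrades to an equivalence between the category of $G$\nobreakdash-equivariant finite morphisms $Z'\to Z$ and the category of finite \'etale $\sM(Z)$\nobreakdash-algebras equipped with a $\C$\nobreakdash-antilinear action of~$G$ lifting the one on~$\sM(Z)$; then ordinary Galois descent along $\sM(Z)^G\to\sM(Z)$ (see \emph{e.g.}\ \cite[I \S 1]{Silhol}) identifies this latter category, via $A\mapsto A^G$, with the category of finite \'etale $\sM(Z)^G$\nobreakdash-algebras. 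Composing the two equivalences yields the proposition, the resulting functor being $\pi\mapsto\sM(Z')^G$; that $\sM(Z')^G$ is indeed a finite \'etale $\sM(Z)^G$\nobreakdash-algebra follows from the isomorphism $\sM(Z')^G\otimes_{\sM(Z)^G}\sM(Z)\cong\sM(Z')$ and faithfully flat descent of finite \'etaleness along $\sM(Z)^G\to\sM(Z)$.

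The only point in this plan that is not purely formal is the claim, in the connected case, that the dictionary of Proposition~\ref{ramifiedcover} is $G$\nobreakdash-equivariant in the sense used above: concretely, that a compatible antiholomorphic involution of~$Z'$ lying over~$\sigma$ corresponds, under $\pi\mapsto\sM(Z')$, to a $\C$\nobreakdash-antilinear ring automorphism of~$\sM(Z')$ lying over the given automorphism of~$\sM(Z)$, and that this correspondence is bijective on such structures. I expect this compatibility check to be the main obstacle, though it is routine: it is obtained by applying the fullness and faithfulness of the functor of Proposition~\ref{ramifiedcover} to the conjugate manifolds~$Z'^{\sigma}$ and~$Z^{\sigma}$, using the description of $G$\nobreakdash-equivariant complex analytic spaces in terms of isomorphisms $Z^{\sigma}\isoto Z$ squaring to the identity recalled in~\S\ref{Ranal}. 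Everything else is bookkeeping with finite products of fields and standard Galois descent.
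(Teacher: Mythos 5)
Your proposal is correct and follows essentially the same route as the paper: the paper's (one-line) proof likewise deduces the statement from Proposition~\ref{ramifiedcover} together with the description of $G$\nobreakdash-equivariant complex analytic spaces via an isomorphism $\alpha\colon Z^\sigma\to Z$ with $\alpha\circ\alpha^\sigma=\Id$, i.e.\ by transporting this descent datum through the non-equivariant equivalence. Your orbit decomposition and the explicit Galois descent along $\sM(Z)^G\subset\sM(Z)$ in the connected case are just a careful spelling-out of that same argument.
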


\begin{proof}
This follows from Proposition \ref{ramifiedcover} and from the description of $G$-equivariant complex analytic spaces $Z$ given in \S\ref{Ranal}, as complex analytic spaces $Z$ endowed with the datum of an isomorphism $\alpha: Z^\sigma\to Z$ such that $\alpha\circ\alpha^{\sigma}=\Id_Z$.
\end{proof}

\subsection{Meromorphic functions and cohomological dimension}
\label{merodim}

The next proposition is attributed to Artin by Guralnick \cite{Guralnick}.

\begin{prop}
\label{cohodim1}
Let $Z$ be a connected complex manifold of dimension $1$. Then the field $\sM(Z)$ has cohomological dimension $1$.
\end{prop}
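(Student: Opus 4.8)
The plan is to establish the nontrivial inequality $\mathrm{cd}(\sM(Z))\le 1$; the reverse inequality is easy, since $\sM(Z)$ is not separably closed (choose a nonconstant $f\in\sM(Z)$, which exists by the open mapping theorem when $Z$ is noncompact and trivially when $Z$ is compact, and a regular value $c\in\C$ of $f$ lying in its image: then $f-c$ has a zero of order exactly $1$ somewhere, hence is not a square in $\sM(Z)$, so $X^2-(f-c)$ is irreducible). By the standard criterion relating cohomological dimension to the vanishing of $H^2$ (see \cite[I, \S3.1]{CohoGalois}), it suffices to prove that $H^2(G,M)=0$ for every finite discrete module $M$ over the absolute Galois group $G=\Gal(\overline{\sM(Z)}/\sM(Z))$.

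The structural input is Proposition~\ref{ramifiedcover}: every finite separable extension of $\sM(Z)$ is $\sM(W)$ for a connected finite morphism $\pi\colon W\to Z$ of Riemann surfaces. Such a $\pi$ is a branched covering, étale over the complement of the closed discrete subset $S_\pi\subset Z$ of its critical values; conversely, every finite covering space of $Z\setminus S$, for $S\subset Z$ closed and discrete, extends to a finite branched covering of $Z$ by filling in the punctures. Hence, for each closed discrete $S\subset Z$, the finite branched coverings of $Z$ that are étale over $Z\setminus S$ are classified by the finite quotients of $\widehat{\pi_1(Z\setminus S)}$, and $G$ surjects onto $\widehat{\pi_1(Z\setminus S)}$. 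Writing $K_S$ for the kernel, the subgroups $K_S$ (as $S$ ranges over closed discrete subsets, ordered by inclusion) are cofinal among the open normal subgroups of $G$ — because any finite Galois extension of $\sM(Z)$ has a closed discrete branch locus — and $\bigcap_SK_S=1$, so that $G=\varprojlim_S\widehat{\pi_1(Z\setminus S)}$, the transition maps being surjective (for $S\subseteq S'$, a connected cover of $Z\setminus S$ pulls back to a connected cover of $Z\setminus S'$, since the latter is obtained from a connected surface by deleting a discrete set).

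The crucial point is now that, for $S$ nonempty, $Z\setminus S$ is a noncompact surface, hence homotopy equivalent to a graph, so $\pi_1(Z\setminus S)$ is a free group and $\widehat{\pi_1(Z\setminus S)}$ a free profinite group; in particular $\mathrm{cd}(\widehat{\pi_1(Z\setminus S)})\le 1$ (see \cite[I, \S3.4]{CohoGalois}). Given a finite $G$-module $M$, its action factors through $\widehat{\pi_1(Z\setminus S_0)}$ for some $S_0$ by cofinality of the $K_S$; then, using that continuous cohomology commutes with the relevant filtered colimit and that the $K_S$ with $S\supseteq S_0$ are cofinal among the open normal subgroups acting trivially on $M$, one obtains
\begin{equation*}
H^2(G,M)=\varinjlim_{S\supseteq S_0}H^2\bigl(\widehat{\pi_1(Z\setminus S)},M\bigr)=0.
\end{equation*}
This proves $\mathrm{cd}(\sM(Z))\le 1$, hence $\mathrm{cd}(\sM(Z))=1$. (When $Z$ is compact, $\sM(Z)$ is the function field of a smooth projective curve over $\C$, and one recovers Tsen's theorem.)

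I expect the main obstacle to be the bookkeeping of the second paragraph: one must check carefully that $G\cong\varprojlim_S\widehat{\pi_1(Z\setminus S)}$ with surjective transition maps and with the $K_S$ cofinal among open normal subgroups — that is, that finite branched coverings of $Z$, their critical loci, and the fundamental groups of the punctured surfaces $Z\setminus S$ fit together as asserted. This is elementary Riemann surface theory, but it is where the argument really lives; everything else (the cohomological-dimension criterion, the freeness of the fundamental group of a noncompact surface, the vanishing of $H^2$ for free profinite groups, and the continuity of profinite cohomology) is standard.
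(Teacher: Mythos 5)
Your proof is correct in substance, but it takes a genuinely different route from the paper. The paper disposes of the compact case by Tsen's theorem and, in the noncompact case, uses Proposition~\ref{ramifiedcover} only to identify each finite extension $L$ of $\sM(Z)$ with the field of meromorphic functions of a Riemann surface, then quotes the vanishing of the Brauer group of such a field (\cite[Proposition 3.7]{Guralnick}, a result attributed to Artin) and concludes by Serre's criterion \cite[II 3.1, Proposition 5]{CohoGalois}; all the analytic content is thus outsourced to the Brauer-group vanishing. You instead compute the absolute Galois group directly as $\varprojlim_S\widehat{\pi_1(Z\setminus S)}$ over closed discrete $S$, using Proposition~\ref{ramifiedcover} together with the extension of finite covers of $Z\setminus S$ across the punctures, and deduce $\mathrm{cd}\leq 1$ from projectivity of these quotients plus continuity of profinite cohomology; here the analytic/topological content is the freeness of $\pi_1$ of a noncompact surface and the Galois-category bookkeeping you flag, which does check out (branch loci of finite morphisms are closed and discrete, the poset of closed discrete subsets is directed, and the nonempty $S$ are cofinal, which also handles compact $Z$ uniformly). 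Two small imprecisions, neither fatal: the $K_S$ are closed but not open, so what you need and use is that every open normal subgroup contains some $K_S$ (true, since a finite Galois extension is unramified outside a closed discrete set), not that they are "cofinal among the open normal subgroups"; and for infinite $S$ the group $\pi_1(Z\setminus S)$ is free of infinite rank, and its profinite completion need not be a free profinite group in the technical sense---but it is projective, hence of cohomological dimension $\leq 1$, by the embedding-problem characterization (lift a homomorphism on the free generators and extend by the universal property of the completion; the lift agrees with the given map on a dense subgroup), which is all your argument requires. Your approach is more self-contained topologically but longer; the paper's is shorter at the cost of invoking the Artin--Guralnick Brauer-group computation.
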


\begin{proof}
If $Z$ is compact, this is Tsen's theorem.
If $Z$ is not compact, let $L$ be a finite extension of $\sM(Z)$. By Proposition \ref{ramifiedcover},
it is the field of meromorphic functions of some connected complex manifold of dimension $1$. As a consequence,
the Brauer group of $L$ vanishes by \cite[Proposition 3.7]{Guralnick}.
The proposition then follows from \cite[II 3.1, Proposition 5]{CohoGalois}.
\end{proof}

Recalling that a field $k$ is said to have virtual cohomological dimension $1$ if $k(\sqrt{-1})$ has cohomological dimension $1$, we deduce from Proposition \ref{cohodim1}:

\begin{cor}
\label{cohodimvirt1}
Let $Z$ be a $G$-equivariant complex manifold of dimension $1$ such that $Z/G$ is connected. Then the field $\sM(Z)^G$ has virtual cohomological dimension~$1$.
\end{cor}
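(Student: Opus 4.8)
The plan is to unwind the definition of virtual cohomological dimension and reduce to Proposition~\ref{cohodim1}. By definition, we must show that the field $\sM(Z)^G(\sqrt{-1})$ has cohomological dimension~$1$. First I would observe that the hypothesis that $Z/G$ is connected forces $Z$ to have either exactly one connected component, or exactly two connected components exchanged by~$\sigma$ (otherwise $Z/G$ would be disconnected), and I would treat these two cases separately using the description of $\sM(Z)^G$ recalled in~\S\ref{meropar}.

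If $Z$ has two connected components $Z'$ and $\sigma(Z')$, then $\sM(Z)^G\simeq\sM(Z')$; since $\sqrt{-1}\in\sM(Z')$ as a constant function, one has $\sM(Z)^G(\sqrt{-1})\simeq\sM(Z')$, which has cohomological dimension~$1$ by Proposition~\ref{cohodim1} applied to the connected complex manifold~$Z'$. If instead $Z$ is connected, then $\sigma$ acts antiholomorphically, hence non-trivially, on $\sM(Z)$, so that $\sM(Z)/\sM(Z)^G$ is a Galois extension of degree~$2$ and $\sM(Z)^G(\sqrt{-1})=\sM(Z)$, as recorded in~\S\ref{meropar}; Proposition~\ref{cohodim1} applies once more.

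There is no serious obstacle here: the argument is pure bookkeeping, the only points requiring a moment's thought being the determination of the possible component structure of~$Z$ under the connectedness hypothesis, and the (standard) fact that an antiholomorphic involution acts faithfully on the field of meromorphic functions, which is what guarantees that $\sqrt{-1}$ generates $\sM(Z)$ over $\sM(Z)^G$ in the connected case.
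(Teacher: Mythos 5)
Your argument is correct and is exactly the paper's (implicit) proof: the paper deduces the corollary directly from Proposition~\ref{cohodim1} using the description of $\sM(Z)^G$ recalled in~\S\ref{meropar}, namely $\sM(Z)^G(\sqrt{-1})=\sM(Z)$ when $Z$ is connected and $\sM(Z)^G\simeq\sM(Z')$ when $Z$ has two components $Z'$, $\sigma(Z')$. Your additional justifications (component count forced by connectedness of $Z/G$, and $\sigma(\sqrt{-1})=-\sqrt{-1}$ ensuring the degree-two extension is generated by $\sqrt{-1}$) are the right ones.
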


Corollary \ref{cohodimvirt1} can be refined if $Z^G=\varnothing$.

\begin{prop}
\label{cohodim1sanspoint}
Let $Z$ be a $G$-equivariant complex manifold of dimension $1$ such that $Z/G$ is connected. If $Z^G=\varnothing$,  the field $\sM(Z)^G$ has cohomological dimension $1$.
\end{prop}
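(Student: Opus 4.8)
The plan is to deduce the statement from Corollary~\ref{cohodimvirt1} --- which already yields that $\sM(Z)^G$ has virtual cohomological dimension~$1$, \emph{i.e.}\ that $\sM(Z)^G(\sqrt{-1})=\sM(Z)$ has cohomological dimension~$1$ --- together with the fact that $\sM(Z)^G$ carries no ordering when $Z^G=\varnothing$, by way of Serre's theorem \cite[II 4.1, Proposition~10$'$]{CohoGalois}, according to which a field $k$ with $\mathrm{cd}_2(k(\sqrt{-1}))\le 1$ satisfies $\mathrm{cd}_2(k)\le 1$ as soon as $k$ is not formally real. First I would observe that if $Z$ is disconnected then, $Z/G$ being connected, $Z=W\sqcup\sigma(W)$ with $W$ connected and $\sM(Z)^G\simeq\sM(W)$, so that Proposition~\ref{cohodim1} applies directly; I may therefore assume $Z$ connected, in which case $\sigma$ acts freely, $\sqrt{-1}\notin\sM(Z)^G$, and $[\sM(Z):\sM(Z)^G]=2$. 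For every odd prime $p$, restriction--corestriction along this quadratic extension gives $\mathrm{cd}_p(\sM(Z)^G)=\mathrm{cd}_p(\sM(Z))\le 1$ by Proposition~\ref{cohodim1}. As $\sM(Z)^G$ is not separably closed, it then remains to show that $\sM(Z)^G$ admits no ordering, equivalently that $-1$ is a sum of squares in $\sM(Z)^G$, in order to conclude $\mathrm{cd}(\sM(Z)^G)=1$.

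To rule out orderings I would treat two cases. If $Z$ is compact, then by GAGA (\textsection\ref{Ranal}) one has $Z=X^{\an}$ for a smooth projective connected curve $X$ over $\R$ with $X(\R)=Z^G=\varnothing$, and $\sM(Z)^G=\R(X)$; since $X$ is proper, the centre on $X$ of any ordering of $\R(X)$ would be a real point of $X$, so $\R(X)$ has no ordering. If $Z$ is non-compact, it is a Stein complex manifold of dimension~$1$, hence Proposition~\ref{linebundleR} gives $\Pic_G(Z)\simeq H^1(Z^G,\Z/2\Z)=0$, and ($Z$ being an open Riemann surface, so that $H^1(Z,\sO_Z)=0$ and $H^2(Z,\Z)=0$) one has $\Pic(Z)=0$ as well. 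I would then show that $-1$ is a sum of two squares in $\sM(Z)^G$, \emph{i.e.}\ that $N_{\sM(Z)/\sM(Z)^G}\colon\sM(Z)^*\to\sM(Z)^{*G}$ hits $-1$; concretely, picking $p\in Z$ and using $\Pic(Z)=0$ one finds $f\in\sM(Z)^*$ with divisor $[\sigma(p)]-[p]$, so that $f\cdot\overline{f\circ\sigma}$ is a $G$-invariant holomorphic unit, and after correcting $f$ by a unit one arranges $f\cdot\overline{f\circ\sigma}=-1$ (equivalently, this defines a $G$-equivariant meromorphic map from $Z$ to the analytification of the anisotropic conic over $\R$). The possibility of this correction follows from a Tate-cohomology computation: since $\sigma$ is fixed-point-free, $\mathrm{Div}(Z)$ is an induced $G$-module, so that from $1\to\sO(Z)^*\to\sM(Z)^*\to\mathrm{Div}(Z)\to 1$ one gets $\widehat H^0(G,\sM(Z)^*)\simeq\widehat H^0(G,\sO(Z)^*)$; and the exponential sequence $0\to\Z(1)\to\sO(Z)\to\sO(Z)^*\to H^1(Z,\Z(1))\to 0$ (valid because $H^1(Z,\sO_Z)=0$), together with $\widehat H^{\,\mathrm{even}}(G,\Z(1))=0$, reduces the vanishing of $\widehat H^0(G,\sO(Z)^*)$ to a computation with $H^1(Z,\Z(1))$, for which one exploits that $\sigma$ reverses the orientation of $Z$.

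The main obstacle is precisely this last point in the non-compact case: controlling the $G$-module $\sO(Z)^*$ --- equivalently, showing that $H^1(Z,\Z(1))$ contributes nothing to $\widehat H^0(G,\sO(Z)^*)$ --- so as to produce the desired $G$-equivariant meromorphic map and exhibit $-1$ as a sum of two squares. The remaining ingredients (the disconnected and compact cases, the odd-primary part, and the final appeal to Serre's theorem) are routine once this is settled.
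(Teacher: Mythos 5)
Your overall strategy (reduce to showing $\sM(Z)^G$ is not formally real, then combine with Corollary~\ref{cohodimvirt1}, handling odd primes by restriction--corestriction or, as the paper does, all primes at once via Artin--Schreier and Serre's theorem on torsion-free profinite groups) is sound and matches the paper's outline, as do your treatments of the disconnected and compact cases. But the step you yourself flag as ``the main obstacle'' is precisely the mathematical content of the proposition in the non-compact case, and as written it is not closed. Producing $f$ with $\mathrm{div}(f)=[\sigma(p)]-[p]$ buys you nothing beyond what you already need: the whole issue is whether $-1$ lies in the image of the norm map $f\mapsto f\cdot\sigma(f)$ on $\sO(Z)^*$, i.e.\ whether the class of $-1$ vanishes in $\widehat H^0(G,\sO(Z)^*)$. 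Your proposed bookkeeping does not obviously settle this: from $0\to\Z(1)\to\sO(Z)\to\exp(\sO(Z))\to 0$ and the cohomological triviality of $\sO(Z)$ one gets $\widehat H^0(G,\exp(\sO(Z)))\simeq\widehat H^1(G,\Z(1))\simeq\Z/2\Z$, which is \emph{not} zero, and $-1$ represents the nontrivial class there; so even granting that ``$H^1(Z,\Z(1))$ contributes nothing'' in your sense, you still must show that this $\Z/2\Z$ dies in $\widehat H^0(G,\sO(Z)^*)$, which amounts to a nontrivial statement about the boundary map from $\widehat H^{-1}(G,H^1(Z,\Z(1)))$ and hence about the $G$-module $H^1(Z,\Z(1))$. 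That is exactly where genuine topological input (noncompactness of $Z$, freeness of the action, $Z/G$ being a surface) must enter, and it is missing from the proposal.

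For comparison, the paper closes this step by working with equivariant cohomology rather than Tate cohomology of global sections: from the $G$-equivariant exponential sequence one gets $H^2_G(Z,\sO_Z^*)=0$, because $H^2_G(Z,\sO_Z)=0$ ($Z$ is Stein, see \S\ref{parStein}) and $H^3_G(Z,\Z(1))\simeq H^3(Z/G,\widetilde{\Z})=0$ (the action is free since $Z^G=\varnothing$, and $Z/G$ is a surface); separately, $H^1(Z,\sO_Z^*)=0$ because $H^1(Z,\sO_Z)=0$ and $H^2(Z,\Z)\simeq H_0^{\BM}(Z,\Z)=0$ ($Z$ has no compact component). The vanishing of $H^1(Z,\sO_Z^*)$ makes the edge map $H^2(G,\sO(Z)^*)\to H^2_G(Z,\sO_Z^*)$ of the second spectral sequence of \cite{Tohoku} injective, whence $\widehat H^0(G,\sO(Z)^*)=H^2(G,\sO(Z)^*)=0$ and $-1=f\cdot\sigma(f)$ for some $f\in\sO(Z)^*$, giving $-1=\bigl(\tfrac{f+\sigma(f)}{2}\bigr)^2+\bigl(\tfrac{f-\sigma(f)}{2\sqrt{-1}}\bigr)^2$ in $\sM(Z)^G$. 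If you want to salvage your Tate-cohomological route, you would in effect have to reprove these vanishings by hand for the $G$-module $H^1(Z,\Z(1))$ of an open surface with a free orientation-reversing involution; the equivariant spectral-sequence argument is the clean way to package that input.
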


\begin{proof}
We claim that $-1$ is a sum of two squares in $\sM(Z)^G$.
It follows that $\sM(Z)^G$ cannot be ordered, so that its absolute Galois group contains no element of finite order by the Artin--Schreier theorem \cite{ArtinSchreier}. The main theorem of \cite{Serreprofini} and Corollary \ref{cohodimvirt1} then imply that $\sM(Z)^G$ has cohomological dimension $1$.

It remains to prove the claim. If $Z$ is compact, $\sM(Z)^G$ is the function field of a smooth projective connected curve over $\R$ with no $\R$-point by GAGA (see \S\ref{Ranal}), and the claim is due to Witt \cite[Satz 22]{Witt}.
 We assume from now on that $Z$ is not compact, hence Stein \cite[p.~134]{Stein}.

Since $H^1(Z,\sO_Z)=0$ because $Z$ is Stein, and since $H^2(Z,\Z)\simeq H_{0}^{\BM}(Z,\Z)=0$ by Poincar\'e duality \cite[V, Theorem 9.3]{Bredon}
and because $Z$ has no compact component, the exponential exact sequence (\ref{expex}) yields $H^1(Z,\sO_Z^*)=0$. Since $H^2_G(Z,\sO_Z)=0$ because $Z$ is Stein (see \S\ref{parStein}), and since $H^3_G(Z,\Z(1))=H^3(Z/G,\widetilde{\Z})=0$ (the first equality stems from the first spectral sequence of equivariant cohomology \cite[Th\'eor\`eme 5.2.1]{Tohoku} and the second from the fact that $Z/G$ is a surface), the exponential exact sequence (\ref{expex})
shows that $H^2_G(Z,\sO_Z^*)=0$.

The second spectral sequence of equivariant cohomology \cite[Th\'eor\`eme 5.2.1]{Tohoku}  $E_2^{p,q}=H^p(G,H^q(Z,\sO_Z^*))\Rightarrow H^{p+q}_G(Z,\sO_Z^*)$ now shows that $H^2(G,\sO(Z)^*)=0$. By \cite[VIII \S 4]{Corpslocaux}, $H^2(G,\sO(Z)^*)=(\sO(Z)^*)^G/\{f\cdot\sigma(f)\ , \ f\in\sO(Z)^*\}$. We deduce from this vanishing that there exists $f\in \sO(Z)^*$ such that $-1=f\cdot\sigma(f)$. It follows that $-1=\big(\frac{f+\sigma(f)}{2}\big)^2+\big(\frac{f-\sigma(f)}{2\sqrt{-1}}\big)^2$ is a sum of two squares in $\sM(Z)^G$, as desired.
\end{proof}

 Let $Z$ be a $G$-equivariant complex manifold of dimension $1$ such that $Z/G$ is connected. If $x\in Z^G$ and if $t\in\sM(Z)^G$ is a uniformizer at $x$, expanding in power series at $x$ yields an inclusion $\sM(Z)^G\subset \R((t))$.
Restricting to $\sM(Z)^G$ the unique ordering $<$  of the field $\R((t))$ for which $t>0$ gives rise to an ordering $\prec_{x,t}$ of the field $\sM(Z)^G$.
It is easily verified, using the fact that $Z$ is either projective or Stein, that $\prec_{x,t}$ and $\prec_{x',t'}$ coincide if and only if $x=x'$ and $(t/t')(x)\in\R_{>0}$. We show that if $Z^G$ is compact, there are no other orderings of $\sM(Z)^G$.

\begin{prop}
\label{spectrereel}
Let $Z$ be a $G$-equivariant complex manifold of dimension $1$ such that $Z/G$ is connected and $Z^G$ is compact. Then all the orderings of $\sM(Z)^G$ are of the form $\prec_{x,t}$ described above.
\end{prop}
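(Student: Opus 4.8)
The plan is to take an arbitrary ordering $\preceq$ of the field $\sM(Z)^G$ and to produce a point $x\in Z^G$ and a uniformizer $t$ at $x$ such that $\preceq\,=\,\prec_{x,t}$. The key idea is that an ordering of a field of functions should be ``located'' at a point, and the compactness of $Z^G$ is precisely what guarantees that such a point exists (rather than the ordering escaping to infinity). First I would treat the two cases $Z$ compact and $Z$ non-compact (hence Stein, by \cite[p.~134]{Stein}) in parallel, as was done in Proposition~\ref{cohodim1sanspoint}. In the compact case, $\sM(Z)^G$ is by GAGA the function field of a smooth projective connected curve $B'$ over $\R$ with $B'(\R)=Z^G$ (see \S\ref{Ranal}), and the statement is the classical description of the orderings of the function field of a real curve: an ordering corresponds to a real point together with a choice of ``side'' (a half-branch), \emph{i.e.}\ to a pair $(x,t)$ modulo the equivalence $(t/t')(x)>0$. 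This is standard (it underlies, for instance, \cite[Satz~22]{Witt} and the theory in \cite{bcr}), so the real work is in the non-compact case.

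So suppose $Z$ is non-compact and Stein, and fix an ordering $\preceq$ of $k:=\sM(Z)^G$. The strategy is: (1) show that the subring $A$ of $k$ consisting of functions that are ``regular and bounded near the support of $\preceq$'' is a valuation ring dominating the ordering, with residue field $\R$; (2) identify the centre of this valuation with a point $x\in Z^G$; (3) check that $\preceq$ is then determined by the sign of a uniformizer at $x$, \emph{i.e.}\ $\preceq\,=\,\prec_{x,t}$ for an appropriate $t$. For step (1), the natural candidate is to consider, for each point $y\in Z^G$ and each local analytic coordinate, whether the ordering ``sees'' $y$; concretely, one looks at the convex hull of $\Z$ in $(k,\preceq)$, which is a valuation ring $A_0\subset k$, and one must show its residue field is $\R$ and that it has a centre on $Z^G$. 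Here I would exploit that $Z/G$ is a $1$-dimensional object so that any such valuation is discrete of rank $1$, and that the holomorphic functions $\sO(Z)^G\subset k$ separate points of $Z^G$; the compactness of $Z^G$ is what forces the centre to be an actual point of $Z^G$ rather than lying ``at infinity'' (an unbounded end of $Z$). The key point is that a $G$-equivariant holomorphic function on $Z$ restricts to a real-valued function on $Z^G$, and if $\preceq$ had no centre on the compact set $Z^G$ one could, using Cartan's Theorem~A / the Stein property as in Lemma~\ref{SteinR}, build a function in $\sO(Z)^G$ that is bounded and everywhere positive on $Z^G$ yet not bounded below away from $0$ by any positive rational in the ordering — contradicting the definition of $A_0$.

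Once we have the centre $x\in Z^G$, choose a $G$-equivariant uniformizer $t\in\sM(Z)^G$ at $x$; expanding in a power series at $x$ gives the inclusion $k\subset\R((t))$ recalled just before the statement, and under this inclusion the ordering of $\R((t))$ with $t>0$ restricts to $\prec_{x,t}$. It remains to see that $\preceq$ coincides with $\prec_{x,t}$: both are orderings of $k$ with the same associated valuation ring $A_0$ (namely the one with centre $x$), and the residue field at $x$ is $\R$, which carries a unique ordering; an ordering of $k$ compatible with a discrete valuation with real residue field and with a chosen uniformizer is unique, so $\preceq\,=\,\prec_{x,t}$ or $\preceq\,=\,\prec_{x,-t}$, and replacing $t$ by $-t$ if necessary finishes the proof. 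The uniqueness statement ``$\prec_{x,t}=\prec_{x',t'}$ iff $x=x'$ and $(t/t')(x)>0$'' is already granted in the text, so no further bookkeeping is needed there.

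The main obstacle I anticipate is step~(1)–(2): proving that the ordering actually has a centre on $Z^G$, \emph{i.e.}\ that it cannot ``run off to infinity'' on the non-compact Stein surface $Z$. This is where compactness of $Z^G$ must be used in an essential way, and the argument has to be arranged so that the analytic (rather than algebraic) nature of $Z$ is not an impediment — the substitute for properness being the existence, via Cartan's Theorem~A and the Stein property, of enough global $G$-equivariant holomorphic functions to ``pin down'' the ordering. Everything else (the compact case, the final uniqueness argument) is either classical or already recorded above in the excerpt.
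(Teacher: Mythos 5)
The compact case and the endgame of your argument (once a centre $x\in Z^G$ is found) are essentially fine, but the step you yourself single out as the main obstacle --- the existence of a centre of the ordering on $Z^G$ --- is where the genuine gap lies, and your proposed resolution does not work. You want to assume there is no centre, produce a $G$\nobreakdash-equivariant function that is positive and bounded below on $Z^G$ yet infinitesimal for $\preceq$, and declare this to ``contradict the definition of $A_0$''. There is no contradiction: the convex hull $A_0$ is defined purely in terms of the abstract ordering, and nothing in its definition ties the sign or size of $f$ in $(\sM(Z)^G,\preceq)$ to the values of $f$ on $Z^G$. An ordering could a priori be ``transcendental'', making a function that is $\geq\varepsilon$ everywhere on $Z^G$ infinitesimal; ruling this out is precisely the content of the proposition, so the argument as sketched is circular. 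The paper's proof supplies the missing mechanism: if for every $x\in Z^G$ some $f_x\in\mathfrak{m}$ has $f_x(x)\neq 0$, compactness of $Z^G$ yields $f=\sum f_x^2\in\mathfrak{m}$ with $f>\varepsilon$ on $Z^G$; then $\sM(Z)^G(\sqrt{\varepsilon-f})$ is the field of equivariant meromorphic functions of a double cover $Y\to Z$ with $Y^G=\varnothing$, hence by Proposition~\ref{cohodim1sanspoint} (Artin--Schreier plus the cohomological dimension computation) it admits no ordering, contradicting the fact that $\varepsilon-f\succ 0$ forces $\preceq$ to extend to this quadratic extension (\cite[VIII Basic Lemma 1.4]{Lam}). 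Cartan's Theorem~A and point separation on $Z^G$ cannot substitute for this: they only produce functions, not constraints on an abstract ordering.

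Two secondary points. Your claim that the valuation attached to $\preceq$ is discrete of rank~$1$ ``because $Z/G$ is one-dimensional'' is unjustified: for non-compact $Z$ the field $\sM(Z)^G$ is not finitely generated over $\R$ and carries valuations of arbitrary rank. The paper never needs discreteness a priori; its residue field is $\R$ by Lang's theorem, and once the centre $x$ exists one shows directly (using that $g$ or $1/g$ lies in $A$ for every $g$) that $A$ is the ring of functions regular at $x$, after which the comparison with $\prec_{x,t}$ is immediate. Finally, the paper does not split the proof of this proposition into compact and non-compact cases; the single argument above covers both, the case distinction occurring only inside Proposition~\ref{cohodim1sanspoint}.
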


\begin{proof}
Fix an ordering $\prec$ of $\sM(Z)^G$. With $\prec$ is associated  a valuation ring $$A:=\{f\in\sM(Z)^G\mid -r\prec f\prec r\textrm{ for some }r\in\R\}$$ with maximal ideal $\mathfrak{m}:=\{f\in\sM(Z)^G\mid -r\prec f\prec r\textrm{ for all }r\in\R_{>0}\}$ and residue field isomorphic to $\R$ \cite[Theorems 2 and 3]{langrealplaces}.

Assume for contradiction that for all $x\in Z^G$, there exists $f_x\in\mathfrak{m}$ such that $f_x$ does not vanish at $x$. Since $Z^G$ is compact, there exist a finite subset $\Sigma\subset Z^G$ and $\varepsilon\in\R_{>0}$ such that $f:=\sum_{x\in\Sigma}f_x^2\in\mathfrak{m}$ does not take any finite value that is $\leq\varepsilon$ on~$Z^G$.
The field $\sM(Z)^G(\sqrt{\varepsilon-f})$ is the field of $G$-equivariant meromorphic functions of a $G$-equivariant complex manifold $Y$ of dimension $1$, which is a ramified covering of~$Z$ (see Proposition \ref{ramifiedcover}). Since $f>\varepsilon$ on $Z^G$, we see that $Y^G$ lies above the poles of $\varepsilon-f$, hence is discrete. But $Y^G$ is a one-dimensional differentiable manifold as $G$ acts antiholomorphically on $Y$. It follows that $Y^G=\varnothing$.
Proposition \ref{cohodim1sanspoint} implies that $\sM(Z)^G(\sqrt{\varepsilon-f})$ cannot be ordered. Since $\varepsilon-f\succ 0$, this contradicts \cite[VIII Basic Lemma 1.4]{Lam}.

We have shown the existence of a point $x\in Z^G$ such that all $f\in\mathfrak{m}$ vanish at $x$. All $g\in A$ can be written in a unique way as $g=r+f$ with $r\in \R$ and $f\in \mathfrak{m}$, hence have no poles at $x$. Since for all $g\in (\sM(Z)^G)^*$, one of $g$ and $1/g$ must belong to $A$, and since $\R\subset A$, we deduce that $A\subset \sM(Z)^G$ is the set of functions with no poles at $x$. It follows that $\mathfrak{m}=\{f\in\sM(Z)^G\mid f(x)=0\}$.

 Let $t\in\sM(Z)^G$ be a uniformizer at $x$. After replacing $t$ with $-t$, assume that $t\succ 0$. For $f\in (\sM(Z)^G)^*$, there are unique $n\in\Z$, $r\in\R^*$ and $g\in\mathfrak{m}$ such that $f=t^n(r+g)$, and $f\succ 0$ if and only if $r>0$.  It follows that $\prec$ and $\prec_{x,t}$ coincide.
\end{proof}

\subsection{Sections of submersions}
Let $Z$ be a $G$-equivariant complex manifold.  The total space $E$ of the holomorphic vector bundle $E\to Z$ associated with a $G$\nobreakdash-equivariant locally free coherent sheaf $\sE$ on $Z$ has a natural structure of $G$\nobreakdash-equivariant complex manifold.

The first part of the following proposition is a $G$-equivariant variant of a particular case of \cite[Proposition 3.2]{Forstrat}. We explain how to make the proof work $G$\nobreakdash-equivariantly.

\begin{prop}
\label{tubularR}
Let $f: Z\to Y$ be a $G$-equivariant holomorphic map of $G$\nobreakdash-equivariant complex
manifolds, and let $u: Y\to Z$ be a $G$-equivariant holomorphic section of $f$. Suppose that $Y$ is Stein.
\begin{enumerate}[(i)]
\item There exist a $G$-stable open neighbourhood $U$ of $u(Y)$ in $Z$,  a $G$-stable open neighbourhood $U'$ of the zero section $u(Y)$ in  $N_{u(Y)/Z}$ and a $G$-equivariant biholomorphism $U'\isoto U$ respecting the projections to $Y$ that is the identity on $u(Y)$ and induces the identity
$N_{u(Y)/Z}=N_{u(Y)/N_{u(Y)/Z}}\isoto N_{u(Y)/Z}$ between normal bundles.
\item Suppose that $Y$ has no isolated point. Let $K\subset Y$ be a $G$-stable compact subset and $S\subset Z$ be a nowhere dense analytic subset.
Choose $b_1,\dots,b_m\in K$ and $r\geq 0$.
Then there exist a $G$-stable open neighbourhood $Y'$ of $K$ in $Y$ and a sequence $u_n: Y'\to Z$ of $G$-equivariant holomorphic sections of $f$ above $Y'$ with the same $r$-jets as $u$ at the $b_i$, converging uniformly to $u$ on $K$, and such that no connected component of $u_n(Y')$ is included in $S$.
\item If $Z'$ is a $G$-equivariant complex manifold and $f':Z'\to Y$ and $g:Z\to Z'$ are $G$-equivariant holomorphic maps with $f=f'\circ g$ such that $g$ is submersive along $u(Y)$, there exist a $G$-stable open neighbourhood $W$ of $g\circ u(Y)$ in $Z'$ and a $G$-equivariant holomorphic map $w:W\to Z$ with $g\circ w=\Id_{W}$ and $w\circ g\circ u=u$.
\end{enumerate}
\end{prop}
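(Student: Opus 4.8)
The plan is to establish~(i) first and deduce~(iii) and~(ii) from it.

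For~(i): since $u$ is a section of $f$, its image $u(Y)$ is a closed complex submanifold of $Z$, and $df\circ du=\Id$ shows that $f$ is a submersion near $u(Y)$; after shrinking $Z$ we may assume $f$ is everywhere submersive, so that $T_{Z/Y}=\Ker(df)$ is locally free and the splitting afforded by $du$ identifies $T_{Z/Y}|_{u(Y)}$ with $N_{u(Y)/Z}$. Forgetting~$G$, the sought-for relative tubular neighbourhood $\Phi_0$ is the special case of \cite[Proposition~3.2]{Forstrat} with base~$Y$: a biholomorphism over~$Y$ from a neighbourhood of the zero section of $N_{u(Y)/Z}$ onto a neighbourhood of $u(Y)$ in~$Z$, restricting to the identity on $u(Y)$ and inducing the identity between normal bundles. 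To make it $G$-equivariant I would proceed in one of two ways. The first is to run the construction of \cite[Proposition~3.2]{Forstrat}---which amounts to solving a finite chain of linear problems over the Stein base~$Y$---with all choices made $G$-equivariantly, which is legitimate because the coherent cohomology of the Stein $G$-equivariant space~$Y$ vanishes $G$-equivariantly in positive degrees (\textsection\ref{parStein}). The second is an averaging argument: after shrinking the source and target of $\Phi_0$ to be $G$-stable, $\sigma\circ\Phi_0\circ\sigma$ is a second such map, the germ $A:=\Phi_0^{-1}\circ(\sigma\circ\Phi_0\circ\sigma)$ along the zero section is an automorphism of $N_{u(Y)/Z}$ over~$Y$ fixing the zero section and its first-order neighbourhood, and it satisfies $A\circ\sigma(A)=\Id$ (with $\sigma(A):=\sigma\circ A\circ\sigma$); the group of such germs being pro-unipotent, its $H^1(G,-)$ is trivial, so $A=B^{-1}\circ\sigma(B)$ for some~$B$, and $\Phi:=\Phi_0\circ B^{-1}$ is $G$-equivariant with the required normalisations.

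For~(iii): since $g\circ u$ is a section of $f'$, I would apply~(i) to $(f,u)$ and to $(f',g\circ u)$ to reduce to the situation where $Z$ and $Z'$ are $G$-stable neighbourhoods of the zero sections of $G$-equivariant holomorphic vector bundles $N$ and $N'$ over~$Y$, with $u$ and $g\circ u$ the zero sections. Then the fibrewise derivative of~$g$ along the zero section of~$N$ is a surjective $G$-equivariant morphism $\bar g\colon N\to N'$ of vector bundles over~$Y$, which I would split $G$-equivariantly using Lemma~\ref{SteinR}~(ii) applied with the Stein space there taken to be~$Y$; let $\iota\colon N'\inj N$ be a $G$-equivariant section of~$\bar g$. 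The composite $g\circ\iota$ is then a $G$-equivariant self-map of~$N'$ over~$Y$ fixing the zero section with fibrewise derivative the identity there, hence a biholomorphism between $G$-stable neighbourhoods of the zero section by the holomorphic inverse function theorem with parameters; setting $w:=\iota\circ(g\circ\iota)^{-1}$ on its domain~$W$ gives $g\circ w=\Id_W$ and $w\circ g\circ u=u$.

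For~(ii): replacing~$S$ by $S\cup\sigma(S)$ (still a closed nowhere dense analytic subset, as $\sigma$ is an anti-holomorphic involution) I may assume~$S$ is $G$-stable, and by~(i) I may assume~$Z$ is a $G$-stable neighbourhood~$U'$ of the zero section of a $G$-equivariant holomorphic vector bundle $\pi\colon N\to Y$ with $u$ the zero section. The locus of $y\in Y$ with $\pi^{-1}(y)\cap U'\subset S$ is nowhere dense---it is where $\pi|_S$ has fibre dimension $\geq\operatorname{rk}(N)$, an analytic condition, and it is proper since $\dim S<\dim N$---so for each connected component $Y_0'$ of a fixed relatively compact $G$-stable open neighbourhood~$Y'$ of~$K$ I can choose, $G$-compatibly, a point $y_0\in Y_0'$ distinct from the~$b_i$ with $\pi^{-1}(y_0)\cap U'\not\subset S$. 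Fixing a $G$-stable compact neighbourhood $K_1\supset\overline{Y'}$ in~$Y$ and a $G$-invariant metric on~$N$, the $G$-equivariant jet-evaluation map $J$ from $H^0(Y,N)^G$ to the finite-dimensional space~$E^G$ recording the $r$-jet at each~$b_i$ and the value at each~$y_0$ (and at each $\sigma(y_0)$) is surjective by Cartan's Theorem~B over the Stein manifold~$Y$, applied $G$-equivariantly (\textsection\ref{parStein}), hence open; therefore $J(\{s:\|s\|_{K_1}<\delta\})$ is a neighbourhood of~$0$ in~$E^G$ for every $\delta>0$. For each~$n$, I would prescribe the zero $r$-jet at each~$b_i$ and values $w_{0,n}\notin S$ with $\|w_{0,n}\|<1/n$ at the~$y_0$ (real when $y_0$ is $G$-fixed, which is possible because a proper complex-analytic subset of a fibre cannot contain a full-dimensional totally real subspace, hence misses a dense subset of the real form), obtain a $G$-equivariant section $s_n$ with $\|s_n\|_{K_1}<1/n$, and for~$n$ large set $u_n:=s_n\colon Y'\to U'\subset Z$; these have the $r$-jets of~$u$ at the~$b_i$, converge to~$u$ uniformly on~$K$, and no component of their graph lies in~$S$ since $(y_0,w_{0,n})$ does not.

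The heart of the matter is~(i): once the $G$-equivariant relative tubular neighbourhood is available, parts~(iii) and~(ii) become essentially formal, so the real obstacle is the $G$-equivariant refinement of \cite[Proposition~3.2]{Forstrat}---either checking that its proof goes through $G$-equivariantly over the Stein base, or justifying the pro-unipotent cohomology vanishing used in the averaging. The only other delicate point, in~(ii), is the bookkeeping reconciling the bound $\|s_n\|_{K_1}<1/n$ with the inclusion $s_n(\overline{Y'})\subset U'$, which is why one shrinks to~$Y'$ while controlling sections on the slightly larger compact~$K_1$.
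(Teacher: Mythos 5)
Your overall architecture --- prove (i) first, then deduce (iii) and (ii) after linearising via (i) --- is exactly the paper's, and your treatment of (iii) and your first route to (i) are essentially the paper's arguments. Two points need repair, though. For (i), the construction of \cite[Proposition~3.2]{Forstrat} does not reduce to linear problems over the base $Y$ alone: one must generate the vertical tangent bundle $E=\Ker(df)\subset T_Z$ by finitely many global holomorphic vector fields on a neighbourhood of $u(Y)$ in $Z$, and to do this $G$-equivariantly the paper first replaces $Z$ by a $G$-stable Stein neighbourhood of $u(Y)$ (the equivariant Siu theorem, Proposition~\ref{Siu}) and only then applies Lemma~\ref{SteinR}~(i) on $Z$ and Lemma~\ref{SteinR}~(ii) on $u(Y)\simeq Y$; with this extra ingredient your first route is the paper's proof. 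Your alternative averaging argument is not justified as it stands: the relevant group is an infinite-dimensional group of germs of automorphisms of $N_{u(Y)/Z}$ along the zero section tangent to the identity, and while the coboundary equation $A=B^{-1}\sigma(B)$ can be solved order by order (each jet level is a uniquely $2$-divisible $G$-module), you give no argument that a convergent germ $B$ exists; ``pro-unipotent, hence $H^1(G,-)$ trivial'' does not apply off the shelf in this setting, so this route should be dropped or substantiated.

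For (ii) you take a genuinely different, and viable, route: the paper fixes a single $G$-invariant section $\zeta$ with prescribed jets and with $\zeta(y_j)=z_j$ where $tz_j\notin S$ for small $t>0$, then rescales it as $\zeta/n$, and it chooses the auxiliary points $y_j$ outside $Y^G$ (this is where the hypothesis that $Y$ has no isolated point is used) precisely to sidestep the real-form issue that you instead handle directly, and correctly, via the identity principle on a totally real form. However, your criterion for choosing the auxiliary points is too weak: requiring only $\pi^{-1}(y_0)\cap U'\not\subset S$ does not prevent $S$ from containing the connected component of the origin in the fibre $\pi^{-1}(y_0)\cap U'$, in which case no small $w_{0,n}\notin S$ exists and your prescription step fails. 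Replace it by the paper's condition (the locus of $x$ such that $S$ contains a neighbourhood of $u(x)$ in $f^{-1}(x)$ is nowhere dense, and one picks $y_0$ outside it), or shrink $U'$ so that its fibres are connected; with that change, and after discarding the components of $Y'$ not meeting $K$ so that only finitely many constraints are imposed, your jet-evaluation/open-mapping argument goes through and yields the same conclusion as the paper's scaling argument.
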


\begin{proof}
(i)
Since $u$ is a holomorphic section of $f$, the map $f$ is submersive along~$u(Y)$, and we may assume that $f$ is submersive. We may moreover assume that $Z$ is Stein by Proposition \ref{Siu}.
The $G$\nobreakdash-stable sub-vector bundle $E\subset T_Z$ consisting of those vectors tangent to the fibers of $f$ satisfies $E|_{u(Y)}\simeq N_{u(Y)/Z}$. By Lemma~\ref{SteinR}~(i), there exists a $G$-equivariant surjection $p:  Z\times\C^N\surj E$ of holomorphic vector bundles on $Z$, induced by vector fields $V_1,\dots,V_N$ on $Z$. By Lemma \ref{SteinR} (ii), there is a $G$-equivariant morphism $q: N_{u(Y)/Z}\to u(Y)\times\C^N$ of holomorphic vector bundles on $u(Y)$ that is a section of~$p|_{u(Y)\times \C^N}$. Let $\phi^i_t$ be the holomorphic flow of~$V_i$. For $(x,t_1,\dots,t_N)\in u(Y)\times\C^N$ in an appropriate neighbourhood of $u(Y)\times\{0\}$ in $u(Y)\times\C^N$, define $F(x,t_1,\dots,t_N)=\phi^1_{t_1}\circ\dots\circ\phi^N_{t_N}(x)\in Z$. The map $x\mapsto F(q(x))$ induces the required biholomorphism between neighbourhoods of $u(Y)$  in $N_{u(Y)/Z}$ and~$Z$ by the inverse function theorem, as explained in \cite[Proof of Proposition~3.2]{Forstrat}. Its $G$-equivariance follows from our choices.

(ii)
By (i), we may assume that $Z$ is a neighbourhood of the zero section of a $G$\nobreakdash-equivariant vector bundle $E$ on $Y$ and that $u$ is the zero section. Let $K'$ be a compact $G$-stable neighbourhood of $K$ in $Y$, and let $Y'$ be the union of the connected components of the interior of $K'$ that meet $K$. The compactness of $K$ implies that~$Y'$ has finitely many connected components $Y'_1,\dots,Y'_l$.
The set $\Sigma$ of $x\in Y$ such that $S$ contains a neighbourhood of $u(x)$ in $f^{-1}(x)$ is nowhere dense in $Y$ because $S\subset Z$ is a nowhere dense analytic subset. Since $G$ acts antiholomorphically on $Y$ and $Y$ has no isolated point, the subset $Y^G\subset Y$ is nowhere dense.
It follows that we can choose $y_j\in Y'_j$ for $1\leq j\leq l$ such that $y_j\notin \Sigma\cup Y^G$ and such that $y_j$ and $\sigma(y_j)$ are distinct from the $b_i$.
Since $y_j\notin\Sigma$, there exists $z_j\in f^{-1}(y_j)\subset E_{y_j}$ such that $tz_j\notin S$ for all $0<t\ll 1$. Since $Y$ is Stein, one can find a section $\zeta\in H^0(Y,E)$ vanishing to order $r$ at the $b_i$ and such that $\zeta(y_j)=z_j$ and $\zeta(\sigma(y_j))=\sigma(z_j)$. Replacing $\zeta$ with $(\zeta+\sigma(\zeta))/2$ ensures that $\zeta\in H^0(Y,E)^G$.  Since $K'$ is compact, $\zeta/n\in H^0(Y,E)^G$ induces a section $u_n$ of $f$ over $Y'$ as soon as $n\gg 0$. The sequence $u_n$ has the required properties.

(iii)
By (i), we may assume that $Z$ is a neighbourhood of the zero section $u(Y)$ of $N_{u(Y)/Z}$. Since
$g$ is submersive along $u(Y)$, the map $g$ induces a surjection of $G$-equivariant vector bundles $p:N_{u(Y)/Z}\surj N_{g\circ u(Y)/Z'}$.
By Lemma \ref{SteinR} (ii), we can find a $G$-equivariant splitting $s:N_{g\circ u(Y)/Z'}\to N_{u(Y)/Z}$ of $p$.
The composition $g\circ (s|_{s^{-1}(Z)}):s^{-1}(Z)\to Z'$ is the identity on $g\circ u (Y)$ and a local diffeomorphism along $g\circ u (Y)$, hence induces a $G$-equivariant biholomorphism $\psi:W'\isoto W$ between some $G$-stable open neighbourhoods $W'$ and $W$ of $g\circ u (Y)$ in $s^{-1}(Z)$ and $Z'$. To conclude, define $w:=s\circ\psi^{-1}:W\to Z$.
\end{proof}

\bibliographystyle{myamsalpha}
\bibliography{tight}

\providecommand{\bysame}{\leavevmode\hbox to3em{\hrulefill}\thinspace}
\providecommand{\MR}{\relax\ifhmode\unskip\space\fi MR }
\providecommand{\MRhref}[2]{%
  \href{http://www.ams.org/mathscinet-getitem?mr=#1}{#2}
}
\providecommand{\href}[2]{#2}
\begin{thebibliography}{SGA3$_{\textrm{II}}$}

\bibitem[ADK13]{ADK}
D.~Abramovich, J.~Denef and K.~Karu, \emph{Weak toroidalization over non-closed
  fields}, Manuscripta Math. \textbf{142} (2013), no.~1-2, 257--271.

\bibitem[AK88]{AkbulutKing}
S.~Akbulut and H.~King, \emph{Polynomial equations of immersed surfaces},
  Pacific J. Math. \textbf{131} (1988), no.~2, 209--217.

\bibitem[AK00]{AKaru}
D.~Abramovich and K.~Karu, \emph{Weak semistable reduction in characteristic
  0}, Invent. math. \textbf{139} (2000), no.~2, 241--273.

\bibitem[AK03]{Arako}
C.~Araujo and J.~Koll\'ar, \emph{Rational curves on varieties}, Higher
  dimensional varieties and rational points ({B}udapest, 2001), Bolyai Soc.
  Math. Stud., vol.~12, Springer, Berlin, 2003, pp.~13--68.

\bibitem[AS27]{ArtinSchreier}
E.~Artin and O.~Schreier, \emph{Eine {K}ennzeichnung der reell abgeschlossenen
  {K}\"{o}rper}, Abh. Math. Sem. Univ. Hamburg \textbf{5} (1927), no.~1,
  225--231.

\bibitem[BCR98]{bcr}
J.~Bochnak, M.~Coste and M.-F. Roy, \emph{Real algebraic geometry}, Ergeb.
  Math. Grenzgeb. (3), vol.~36, Springer-Verlag, Berlin, 1998, translated from
  the 1987 French original, revised by the authors.

\bibitem[Ben18]{periodindex}
O.~Benoist, \emph{The period-index problem for real surfaces}, preprint 2018,
  arXiv:1804.03642.

\bibitem[Ber90]{Berkovich}
V.~G. Berkovich, \emph{Spectral theory and analytic geometry over
  non-{A}rchimedean fields}, Mathematical Surveys and Monographs, vol.~33,
  American Mathematical Society, Providence, RI, 1990.

\bibitem[BH61]{BH}
A.~Borel and A.~Haefliger, \emph{La classe d'homologie fondamentale d'un espace
  analytique}, Bull. Soc. Math. France \textbf{89} (1961), 461--513.

\bibitem[BK99]{BKrat}
J.~Bochnak and W.~Kucharz, \emph{The {W}eierstrass approximation theorem for
  maps between real algebraic varieties}, Math. Ann. \textbf{314} (1999),
  no.~4, 601--612.

\bibitem[BK03]{BKC}
\bysame, \emph{Approximation of holomorphic maps by algebraic morphisms},
  Proceedings of {C}onference on {C}omplex {A}nalysis ({B}ielsko-{B}ia\l a,
  2001), vol.~80, 2003, pp.~85--92.

\bibitem[BK10]{BKalgebraicapprox}
\bysame, \emph{Algebraic approximation of smooth maps}, Univ. Iagel. Acta Math.
  (2010), no.~48, 9--40.

\bibitem[BLR90]{neronmodels}
S.~Bosch, W.~L{\"u}tkebohmert and M.~Raynaud, \emph{N\'eron models}, Ergeb.
  Math. Grenzgeb. (3), vol.~21, Springer-Verlag, Berlin, 1990.

\bibitem[Bor91]{Borel}
A.~Borel, \emph{Linear algebraic groups}, second ed., Graduate Texts in
  Mathematics, vol. 126, Springer-Verlag, New York, 1991.

\bibitem[Bre97]{Bredon}
G.~E. Bredon, \emph{Sheaf theory}, second ed., Graduate Texts in Mathematics,
  vol. 170, Springer-Verlag, New York, 1997.

\bibitem[Bru87]{brumfielquotient}
G.~W. Brumfiel, \emph{Quotient spaces for semialgebraic equivalence relations},
  Math. Z. \textbf{195} (1987), no.~1, 69--78.

\bibitem[BW18a]{BWI}
O.~Benoist and O.~Wittenberg, \emph{On the integral {H}odge conjecture for real
  varieties, {I}}, preprint 2018, arXiv:1801.00872.

\bibitem[BW18b]{BWII}
\bysame, \emph{On the integral {H}odge conjecture for real varieties, {II}},
  preprint 2018, arXiv:1801.00873.

\bibitem[Che94]{Chernousov}
V.~I. Chernousov, \emph{The group of similarity ratios of a canonical quadratic
  form, and the stable rationality of the variety {PSO}}, Mat. Zametki
  \textbf{55} (1994), no.~4, 114--119, 144.

\bibitem[CLS11]{CLS}
D.~A. Cox, J.~B. Little and H.~K. Schenck, \emph{Toric varieties}, Graduate
  Studies in Mathematics, vol. 124, American Mathematical Society, Providence,
  RI, 2011.

\bibitem[Com12]{comessatti}
A.~Comessatti, \emph{Fondamenti per la geometria sopra le suerficie razionali
  dal punto di vista reale}, Math. Ann. \textbf{73} (1912), no.~1, 1--72.

\bibitem[CP96]{CiliPe}
C.~Ciliberto and C.~Pedrini, \emph{Real abelian varieties and real algebraic
  curves}, Lectures in real geometry ({M}adrid, 1994), De Gruyter Exp. Math.,
  vol.~23, de Gruyter, Berlin, 1996, pp.~167--256.

\bibitem[CR82]{costeroyspectrereel}
M.~Coste and M.-F. Roy, \emph{La topologie du spectre r\'eel}, Ordered fields
  and real algebraic geometry ({S}an {F}rancisco, {C}alif., 1981), Contemp.
  Math., vol.~8, Amer. Math. Soc., Providence, R.I., 1982, pp.~27--59.

\bibitem[CT92]{colliotsanspoint}
J.-L. Colliot-Th{\'e}l{\`e}ne, \emph{Real rational surfaces without a real
  point}, Arch. Math. \textbf{58} (1992), no.~4, 392--396.

\bibitem[CT96]{CTgroupes}
J.-L. Colliot-Th\'el\`ene, \emph{Groupes lin\'eaires sur les corps de fonctions
  de courbes r\'eelles}, J. reine angew. Math. \textbf{474} (1996), 139--167.

\bibitem[CTG04]{CTG}
J.-L. Colliot-Th\'el\`ene and P.~Gille, \emph{Remarques sur l'approximation
  faible sur un corps de fonctions d'une variable}, Arithmetic of
  higher-dimensional algebraic varieties ({P}alo {A}lto, {CA}, 2002), Progr.
  Math., vol. 226, Birkh\"auser Boston, Boston, MA, 2004, pp.~121--134.

\bibitem[CTS77]{CTS}
J.-L. Colliot-Th\'el\`ene and J.-J. Sansuc, \emph{La {$R$}-\'equivalence sur
  les tores}, Ann. Sci. \'Ecole Norm. Sup. (4) \textbf{10} (1977), no.~2,
  175--229.

\bibitem[CTS07]{noname}
\bysame, \emph{The rationality problem for fields of invariants under linear
  algebraic groups (with special regards to the {B}rauer group)}, Algebraic
  groups and homogeneous spaces, Tata Inst. Fund. Res. Stud. Math., vol.~19,
  Tata Inst. Fund. Res., Mumbai, 2007, pp.~113--186.

\bibitem[CTSSD87]{CTSSD}
J.-L. Colliot-Th\'{e}l\`ene, J.-J. Sansuc and P.~Swinnerton-Dyer,
  \emph{Intersections of two quadrics and {C}h\^{a}telet surfaces. {I}}, J.
  reine angew. Math. \textbf{373} (1987), 37--107.

\bibitem[DK76]{DemeyerKnus}
F.~R. Demeyer and M.~A. Knus, \emph{The {B}rauer group of a real curve}, Proc.
  Amer. Math. Soc. \textbf{57} (1976), no.~2, 227--232.

\bibitem[DK81]{DK2}
H.~Delfs and M.~Knebusch, \emph{Semialgebraic topology over a real closed field
  {II}: {B}asic theory of semialgebraic spaces}, Math. Z. \textbf{178} (1981),
  no.~2, 175--213.

\bibitem[DK82]{delfsknebuschonthehomology}
\bysame, \emph{On the homology of algebraic varieties over real closed fields},
  J. reine angew. Math. \textbf{335} (1982), 122--163.

\bibitem[DK84]{delfsknebuschintrolocallysemialg}
\bysame, \emph{An introduction to locally semialgebraic spaces}, Rocky Mountain
  J. Math. \textbf{14} (1984), no.~4, 945--963, Ordered fields and real
  algebraic geometry (Boulder, Colo., 1983).

\bibitem[DK85]{delfsknebuschbook}
\bysame, \emph{Locally semialgebraic spaces}, Lecture Notes in Mathematics,
  vol. 1173, Springer-Verlag, Berlin, 1985.

\bibitem[DLS94]{DLS}
J.-P. Demailly, L.~Lempert and B.~Shiffman, \emph{Algebraic approximations of
  holomorphic maps from {S}tein domains to projective manifolds}, Duke Math. J.
  \textbf{76} (1994), no.~2, 333--363.

\bibitem[Duc96]{ducrosclassiques}
A.~Ducros, \emph{Principe de {H}asse pour les espaces principaux homog\`enes
  sous les groupes classiques sur un corps de dimension cohomologique virtuelle
  au plus 1}, Manuscripta math. \textbf{89} (1996), no.~3, 335--354.

\bibitem[Duc98a]{DucrosCRAS}
\bysame, \emph{Fibrations en vari\'{e}t\'{e}s de {S}everi-{B}rauer au-dessus de
  la droite projective sur le corps des fonctions d'une courbe r\'{e}elle}, C.
  R. Acad. Sci. Paris S\'{e}r. I Math. \textbf{327} (1998), no.~1, 71--75.

\bibitem[Duc98b]{Ducros}
\bysame, \emph{L'obstruction de r\'eciprocit\'e \`a l'existence de points
  rationnels pour certaines vari\'et\'es sur le corps des fonctions d'une
  courbe r\'eelle}, J. reine angew. Math. \textbf{504} (1998),
  73\nobreakdash--114.

\bibitem[FK19]{slavasergei}
S.~Finashin and V.~Kharlamov, \emph{First homology of a real cubic is generated
  by lines}, in preparation, 2019.

\bibitem[For10]{Forstrat}
F.~Forstneri\v{c}, \emph{The {O}ka principle for sections of stratified fiber
  bundles}, Pure Appl. Math. Q. \textbf{6} (2010), no.~3, Special Issue: In
  honor of Joseph J. Kohn. Part 1, 843--874.

\bibitem[Ful93]{ToricFulton}
W.~Fulton, \emph{Introduction to toric varieties}, Annals of Mathematics
  Studies, vol. 131, Princeton University Press, Princeton, NJ, 1993.

\bibitem[GHS03]{ghs}
T.~Graber, J.~Harris and J.~Starr, \emph{Families of rationally connected
  varieties}, J. Amer. Math. Soc. \textbf{16} (2003), no.~1, 57--67.

\bibitem[Git92]{gitler}
S.~Gitler, \emph{The cohomology of blow ups}, Bol. Soc. Mat. Mexicana (2)
  \textbf{37} (1992), no.~1-2, 167--175.

\bibitem[GMT86]{GMT}
F.~Guaraldo, P.~Macr{\`\i} and A.~Tancredi, \emph{Topics on real analytic
  spaces}, Advanced Lectures in Mathematics, Friedr. Vieweg \& Sohn,
  Braunschweig, 1986.

\bibitem[GR04]{Stein}
H.~Grauert and R.~Remmert, \emph{Theory of {S}tein spaces}, Classics in
  Mathematics, Springer-Verlag, Berlin, 2004, Translated from the German by
  Alan Huckleberry, Reprint of the 1979 translation.

\bibitem[Gra58]{Grauert}
H.~Grauert, \emph{On {L}evi's problem and the imbedding of real-analytic
  manifolds}, Ann. of Math. (2) \textbf{68} (1958), 460--472.

\bibitem[Gre66]{Greenberg}
M.~J. Greenberg, \emph{Rational points in {H}enselian discrete valuation
  rings}, Publ. Math. IHES (1966), no.~31, 59--64.

\bibitem[Gro57]{Tohoku}
A.~Grothendieck, \emph{Sur quelques points d'alg\`ebre homologique}, T\^{o}hoku
  Math. J. (2) \textbf{9} (1957), 119--221.

\bibitem[Gur88]{Guralnick}
R.~M. Guralnick, \emph{Matrices and representations over rings of analytic
  functions and other one-dimensional rings}, Visiting scholars'
  lectures---1987 ({L}ubbock, {TX}), Texas Tech Univ. Math. Ser., vol.~15,
  Texas Tech Univ., Lubbock, TX, 1988, pp.~15--35.

\bibitem[Has10]{Hassett}
B.~Hassett, \emph{Weak approximation and rationally connected varieties over
  function fields of curves}, Vari\'et\'es rationnellement connexes: aspects
  g\'eom\'etriques et arithm\'etiques, Panor. Synth\`eses, vol.~31, Soc. Math.
  France, Paris, 2010, pp.~115--153.

\bibitem[Hir64]{Hironaka}
H.~Hironaka, \emph{Resolution of singularities of an algebraic variety over a
  field of characteristic zero. {I}}, Ann. of Math. (2) \textbf{79} (1964),
  109--203.

\bibitem[Hir94]{Hirsch}
M.~W. Hirsch, \emph{Differential topology}, Graduate Texts in Mathematics,
  vol.~33, Springer-Verlag, New York, 1994, Corrected reprint of the 1976
  original.

\bibitem[HK90]{huberknebuschglimpse}
R.~Huber and M.~Knebusch, \emph{A glimpse at isoalgebraic spaces}, Note Mat.
  \textbf{10} (1990), no.~suppl. 2, 315--336.

\bibitem[H{\"o}r90]{Hormander}
L.~H{\"o}rmander, \emph{An introduction to complex analysis in several
  variables}, third ed., North-Holland Mathematical Library, vol.~7,
  North-Holland Publishing Co., Amsterdam, 1990.

\bibitem[HT06]{HT}
B.~Hassett and Y.~Tschinkel, \emph{Weak approximation over function fields},
  Invent. math. \textbf{163} (2006), no.~1, 171--190.

\bibitem[HT09]{HTlowdegree}
\bysame, \emph{Weak approximation for hypersurfaces of low degree}, Algebraic
  geometry---{S}eattle 2005. {P}art 2, Proc. Sympos. Pure Math., vol.~80, Amer.
  Math. Soc., Providence, RI, 2009, pp.~937--955.

\bibitem[Hub84]{huberthesis}
R.~Huber, \emph{Isoalgebraische {R}\"aume}, Ph.D. thesis, Regensburg, 1984.

\bibitem[Hui02]{Huismanexp}
J.~Huisman, \emph{The exponential sequence in real algebraic geometry and
  {H}arnack's inequality for proper reduced real schemes}, Comm. Algebra
  \textbf{30} (2002), no.~10, 4711--4730.

\bibitem[Hus94]{husemoller}
D.~Husemoller, \emph{Fibre bundles}, third ed., Graduate Texts in Mathematics,
  vol.~20, Springer-Verlag, New York, 1994.

\bibitem[HW16]{harpazwittenberg}
Y.~Harpaz and O.~Wittenberg, \emph{On the fibration method for zero-cycles and
  rational points}, Ann. of Math. (2) \textbf{183} (2016), no.~1, 229--295.

\bibitem[Kah87]{kahnchern}
B.~Kahn, \emph{Construction de classes de {C}hern \'equivariantes pour un
  fibr\'e vectoriel r\'eel}, Comm. Algebra \textbf{15} (1987), no.~4, 695--711.

\bibitem[KM16]{kollarmangolte}
J.~Koll{\'a}r and F.~Mangolte, \emph{Approximating curves on real rational
  surfaces}, J. Algebraic Geom. \textbf{25} (2016), no.~3, 549--570.

\bibitem[Kne62]{Kneser}
M.~Kneser, \emph{Schwache {A}pproximation in algebraischen {G}ruppen}, Colloque
  sur la th\'eorie des groupes alg\'ebriques ({B}ruxelles, 1962), Librairie
  Universitaire, Louvain, 1962, pp.~41--52.

\bibitem[Kol96]{Kollarbook}
J.~Koll\'ar, \emph{Rational curves on algebraic varieties}, Ergeb. Math.
  Grenzgeb. (3), vol.~32, Springer-Verlag, Berlin, 1996.

\bibitem[Kol99]{Kollocal}
\bysame, \emph{Rationally connected varieties over local fields}, Ann. of Math.
  (2) \textbf{150} (1999), no.~1, 357--367.

\bibitem[Kol04]{Kolfertile}
\bysame, \emph{Specialization of zero cycles}, Publ. Res. Inst. Math. Sci.
  \textbf{40} (2004), no.~3, 689--708.

\bibitem[Kra91]{krasnovcharacteristicclasses}
V.~A. Krasnov, \emph{Characteristic classes of vector bundles on a real
  algebraic variety}, Izv. Akad. Nauk SSSR Ser. Mat. \textbf{55} (1991), no.~4,
  716--746.

\bibitem[Kra94]{krasnovequivariant}
\bysame, \emph{On the equivariant {G}rothendieck cohomology of a real algebraic
  variety and its application}, Izv. Ross. Akad. Nauk Ser. Mat. \textbf{58}
  (1994), no.~3, 36--52.

\bibitem[Kri69]{Kripke}
B.~Kripke, \emph{Finitely generated coherent analytic sheaves}, Proc. Amer.
  Math. Soc. \textbf{21} (1969), 530--534.

\bibitem[KT75]{KT}
Helmut K\"{o}ditz and Steffen Timmann, \emph{Randschlichte meromorphe
  {F}unktionen auf endlichen {R}iemannschen {F}l\"{a}chen}, Math. Ann.
  \textbf{217} (1975), no.~2, 157--159.

\bibitem[Lam05]{Lam}
T.~Y. Lam, \emph{Introduction to quadratic forms over fields}, Graduate Studies
  in Mathematics, vol.~67, American Mathematical Society, Providence, RI, 2005.

\bibitem[Lan53]{langrealplaces}
S.~Lang, \emph{The theory of real places}, Ann. of Math. (2) \textbf{57}
  (1953), no.~2, 378\nobreakdash--391.

\bibitem[Lia13]{liang}
Y.~Liang, \emph{Arithmetic of 0-cycles on varieties defined over number
  fields}, Ann. Sci. \'{E}c. Norm. Sup\'{e}r. (4) \textbf{46} (2013), no.~1,
  35--56.

\bibitem[Mic80]{Michor}
P.~W. Michor, \emph{Manifolds of differentiable mappings}, Shiva Mathematics
  Series, vol.~3, Shiva Publishing Ltd., Nantwich, 1980.

\bibitem[Mil80]{Milne}
J.~S. Milne, \emph{\'{E}tale cohomology}, Princeton Mathematical Series,
  vol.~33, Princeton University Press, Princeton, N.J., 1980.

\bibitem[MM83]{MM}
S.~Mori and S.~Mukai, \emph{The uniruledness of the moduli space of curves of
  genus {$11$}}, Algebraic geometry ({T}okyo/{K}yoto, 1982), Lecture Notes in
  Math., vol. 1016, Springer, Berlin, 1983, pp.~334--353.

\bibitem[Nag62]{Nagata}
M.~Nagata, \emph{Local rings}, Interscience Tracts in Pure and Applied
  Mathematics, No. 13, Interscience Publishers a division of John Wiley \&
  Sons\, New York-London, 1962.

\bibitem[Pla81]{Platonov}
V.~P. Platonov, \emph{Birational properties of spinor varieties}, Trudy Mat.
  Inst. Steklov. \textbf{157} (1981), 161--169, Number theory, mathematical
  analysis and their applications.

\bibitem[PS09]{peterzilstarchenko}
Y.~Peterzil and S.~Starchenko, \emph{Mild manifolds and a non-standard
  {R}iemann existence theorem}, Selecta Math. (N.S.) \textbf{14} (2009), no.~2,
  275--298.

\bibitem[PS18]{PalSzabo}
A.~P\'al and E.~Szab\'o, \emph{The fibration method over real function fields},
  preprint 2018, arXiv:1811.06192.

\bibitem[RSS88]{Beilinson}
M.~Rapoport, N.~Schappacher and P.~Schneider (eds.), \emph{Beilinson's
  conjectures on special values of {$L$}-functions}, Perspectives in
  Mathematics, vol.~4, Academic Press, Inc., Boston, MA, 1988.

\bibitem[Sal84]{saltmannoether}
D.~J. Saltman, \emph{Noether's problem over an algebraically closed field},
  Invent. math. \textbf{77} (1984), no.~1, 71--84.

\bibitem[Sch94]{scheidererbook}
C.~Scheiderer, \emph{Real and \'etale cohomology}, Lecture Notes in
  Mathematics, vol. 1588, Springer-Verlag, Berlin, 1994.

\bibitem[Sch96]{Scheiderer}
\bysame, \emph{Hasse principles and approximation theorems for homogeneous
  spaces over fields of virtual cohomological dimension one}, Invent. math.
  \textbf{125} (1996), no.~2, 307--365.

\bibitem[Ser65]{Serreprofini}
J.-P. Serre, \emph{Sur la dimension cohomologique des groupes profinis},
  Topology \textbf{3} (1965), 413--420.

\bibitem[Ser68]{Corpslocaux}
\bysame, \emph{Corps locaux}, Hermann, Paris, 1968, Troisi\`eme \'{e}dition,
  Publications de l'Universit\'{e} de Nancago, No. VIII.

\bibitem[Ser94]{CohoGalois}
\bysame, \emph{Cohomologie galoisienne}, fifth ed., Lecture Notes in
  Mathematics, vol.~5, Springer-Verlag, Berlin, 1994.

\bibitem[SGA1]{SGA1}
\emph{Rev\^{e}tements \'{e}tales et groupe fondamental}, S\'{e}minaire de
  G\'{e}om\'{e}trie Alg\'{e}brique du Bois Marie 1960/61 (SGA 1). Dirig\'{e}
  par A. Grothendieck. Lecture Notes in Mathematics, Vol. 224, Springer-Verlag,
  Berlin-New York, 1971.

\bibitem[SGA3$_{\textrm{I}}$]{SGA31}
\emph{Sch\'{e}mas en groupes. {I}: {P}ropri\'{e}t\'{e}s g\'{e}n\'{e}rales des
  sch\'{e}mas en groupes}, S\'{e}minaire de G\'{e}om\'{e}trie Alg\'{e}brique du
  Bois Marie 1962/64 (SGA 3). Dirig\'{e} par M. Demazure et A. Grothendieck.
  Lecture Notes in Mathematics, Vol. 151, Springer-Verlag, Berlin-New York,
  1970.

\bibitem[SGA3$_{\textrm{II}}$]{SGA32}
\emph{Sch\'emas en groupes. {II}: {G}roupes de type multiplicatif, et structure
  des sch\'emas en groupes g\'en\'eraux}, S\'eminaire de G\'eom\'etrie
  Alg\'ebrique du Bois Marie 1962/64 (SGA 3). Dirig\'e par M. Demazure et A.
  Grothendieck. Lecture Notes in Mathematics, Vol. 152, Springer-Verlag,
  Berlin-New York, 1970.

\bibitem[She16]{Shen}
M.~Shen, \emph{Rationality, universal generation and the integral {H}odge
  conjecture}, preprint 2016, arXiv:1602.07331v2.

\bibitem[Sho94]{ShokuRiemann}
V.~V. Shokurov, \emph{Riemann surfaces and algebraic curves}, Algebraic
  geometry, {I}, Encyclopaedia Math. Sci., vol.~23, Springer, Berlin, 1994,
  pp.~1--166.

\bibitem[Sil89]{Silhol}
R.~Silhol, \emph{Real algebraic surfaces}, Lecture Notes in Mathematics, vol.
  1392, Springer-Verlag, Berlin, 1989.

\bibitem[Sim89]{Simha}
R.~R. Simha, \emph{The {B}ehnke-{S}tein theorem for open {R}iemann surfaces},
  Proc. Amer. Math. Soc. \textbf{105} (1989), no.~4, 876--880.

\bibitem[Siu77]{Siu}
Y.~T. Siu, \emph{Every {S}tein subvariety admits a {S}tein neighborhood},
  Invent. math. \textbf{38} (1976/77), no.~1, 89--100.

\bibitem[Sko01]{Skorobogatov}
A.~Skorobogatov, \emph{Torsors and rational points}, Cambridge Tracts in
  Mathematics, vol. 144, Cambridge University Press, Cambridge, 2001.

\bibitem[Tia15]{tiancubic}
Z.~Tian, \emph{Weak approximation for cubic hypersurfaces}, Duke Math. J.
  \textbf{164} (2015), no.~7, 1401--1435.

\bibitem[Tog96]{Tognoli}
A.~Tognoli, \emph{Approximation theorems in real analytic and algebraic
  geometry}, Lectures in real geometry ({M}adrid, 1994), De Gruyter Exp. Math.,
  vol.~23, de Gruyter, Berlin, 1996, pp.~113--166.

\bibitem[TZ14]{TianZong}
Z.~Tian and H.~R. Zong, \emph{One-cycles on rationally connected varieties},
  Compositio Math. \textbf{150} (2014), no.~3, 396--408.

\bibitem[Voi06]{voisinthreefolds}
C.~Voisin, \emph{On integral {H}odge classes on uniruled or {C}alabi-{Y}au
  threefolds}, Moduli spaces and arithmetic geometry, Adv. Stud. Pure Math.,
  vol.~45, Math. Soc. Japan, Tokyo, 2006, pp.~43--73.

\bibitem[Voi07]{voisinsomeaspects}
\bysame, \emph{Some aspects of the {H}odge conjecture}, Jpn. J. Math.
  \textbf{2} (2007), no.~2, 261--296.

\bibitem[Vos98]{voskresenski}
V.~E. Voskresenski\u{\i}, \emph{Algebraic groups and their birational
  invariants}, Translations of Mathematical Monographs, vol. 179, American
  Mathematical Society, Providence, RI, 1998, translated from the Russian
  manuscript by Boris Kunyavski.

\bibitem[Whi36]{Whitney}
H.~Whitney, \emph{Differentiable manifolds}, Ann. of Math. (2) \textbf{37}
  (1936), no.~3, 645--680.

\bibitem[Wit37]{Witt}
E.~Witt, \emph{Theorie der quadratischen {F}ormen in beliebigen {K}\"orpern},
  J. reine angew. Math. \textbf{176} (1937), 31--44.

\end{thebibliography}
\end{document}